\newcommand{\BC}{{\mathbb {C}}}
\newcommand{\BF}{{\mathbb {F}}}
\newcommand{\BL}{{\mathbb {L}}}
\newcommand{\BN}{{\mathbb {N}}}
\newcommand{\BP}{{\mathbb {P}}}
\newcommand{\BQ}{{\mathbb {Q}}}
\newcommand{\BV}{{\mathbb {V}}}
\newcommand{\BX}{{\mathbb {X}}}
\newcommand{\BY}{{\mathbb {Y}}}
\newcommand{\BZ}{{\mathbb {Z}}}
\newcommand{\CF}{{\mathcal {F}}}
\newcommand{\CL}{{\mathcal {L}}}
\newcommand{\CM}{{\mathcal {M}}}
\newcommand{\CN}{{\mathcal {N}}}
\newcommand{\CO}{{\mathcal {O}}}
\newcommand{\CT}{{\mathcal {T}}}
\newcommand{\CV}{{\mathcal {V}}}
\newcommand{\CW}{{\mathcal {W}}}
\newcommand{\CY}{{\mathcal {Y}}}
\newcommand{\CZ}{{\mathcal {Z}}}
\newcommand{\Hom}{{\mathrm{Hom}}}
\newcommand{\ord}{{\mathrm{ord}}}
\newcommand{\Spec}{{\mathrm{Spec}}}
\newtheorem{thm}{Theorem}[section]
\newtheorem{prop}[thm]{Proposition}
\newtheorem{theorem}{Theorem}[section]
\newtheorem{proposition}[theorem]{Proposition}
\newtheorem{lemma}[theorem]{Lemma}
\newtheorem {conjecture}[theorem]{Conjecture}
\newtheorem{corollary}[theorem]{Corollary}
\theoremstyle{definition}
\newtheorem{definition}[theorem]{Definition}
\newtheorem{remark}[theorem]{Remark}
\numberwithin{equation}{section}
\begin{document}

\title{On the Arithmetic Fundamental Lemma in the minuscule case}

\author{Michael Rapoport}
\address{Mathematisches Institut der Universit\"at Bonn\\ 
Endenicher Allee 60\\
53115 Bonn, Germany\\
email: rapoport@math.uni-bonn.de}

\author{Ulrich Terstiege} 
\address{Institut f\"ur Experimentelle Mathematik\\
Universit\"at Duisburg-Essen, Campus Essen\\
Ellernstra{\ss}e 29\\
45326 Essen, Germany\\
email: ulrich.terstiege@uni-due.de}

\author{Wei Zhang}
\address{Department of Mathematics\\
Columbia University\\
New York, NY 10027, USA \\
email: wzhang@math.columbia.edu 
}

\thanks{Research of Rapoport and Terstiege partially supported by SFB/TR 45 ``Periods, Moduli Spaces and 
Arithmetic of Algebraic Varieties" of the DFG. Research of Zhang partially supported by NSF grant DMS 1204365.}

\keywords{Arithmetic Gan-Gross-Prasad conjecture, Arithmetic Fundamental Lemma, Rapoport-Zink spaces, special cycles}
\subjclass[2010]{primary 11G18, 14G17; secondary 22E55}

\begin{abstract}
{The arithmetic fundamental lemma conjecture of the third author connects the derivative of an orbital integral on a symmetric space with an intersection number on a formal moduli space of $p$-divisible groups of Picard type. It arises in the relative trace formula approach to the arithmetic Gan-Gross-Prasad conjecture. We prove this conjecture in the minuscule case.}
\end{abstract}

\date{\today}
\maketitle

\tableofcontents

\section{Introduction} 
In this introduction, we first formulate (a variant of)  the fundamental lemma conjecture (FL) of Jacquet-Rallis \cite{JR}  and the arithmetic fundamental lemma conjecture (AFL) of the third author \cite{Z}. Then we state our main result, which is a confirmation of the second  conjecture in arbitrary dimension under restrictive conditions.   

Let $p$ be an odd prime. Let $F$ be a finite extension of $\BQ_p$, with ring of integers $\CO_F$, uniformizer $\pi$  and residue field $k$ with $q$ elements. Let $E$ be an unramified  quadratic extension, with ring of integers $\CO_E$, and residue field $k'$. We denote the non-trivial element in ${\rm Gal}(E/F)$ by $\sigma$ or by $a\mapsto \bar a$. Also, we denote by $\eta=\eta_{E/F}$ the quadratic character of $F^\times$ corresponding to $E/F$.

Let $n\geq 1$. Let $v=(0, 0,\ldots,0,1)\in F^n$. We denote by $F^{n-1}$ the subspace of vectors in $F^n$ with trivial last entry. We have a canonical inclusion $GL_{n-1}\hookrightarrow GL_n$ of algebraic groups over $F$. An element $g\in GL_n(E)$ is called regular semi-simple (with respect to the action of $GL_{n-1}(E)$ by conjugation) if both the vectors $(g^iv)_{i=0,\ldots,n-1}$ and the vectors $(^t\!vg^i)_{i=0,\ldots,n-1}$ are linearly independent. This property is equivalent to the condition that the stabilizer ${\rm Stab}_{GL_{n-1}}(g)$ is trivial, and that the orbit  of $g$ under $GL_{n-1}$ is Zariski closed in $GL_n$, cf.\ \cite[Theorem 6.1]{RS}\footnote{In \cite{RS}, the Lie algebra version is considered. But it is easy to deduce the group version from the Lie algebra version. Moreover, what is called  ``regular semi-simple" here is called ``regular" in \cite{RS}.}.  To $g\in GL_n(E)$ we associate the following numerical invariants: the coefficients of the characteristic polynomial 
 ${\rm char}_g(T)\in E[T]$, and the $n-1$ elements $^t\!vg^iv\in E$ for $ i=1,\ldots,n-1$.  Then two regular semi-simple elements are conjugate under an element of $GL_{n-1}(E)$ if and only if they have the same invariants, cf. \cite{Z}. 

Let 
\begin{equation}
S_n(F)=\{ s\in GL_n(E)\mid s\sigma(s)=1\} .
\end{equation}
Then $GL_{n-1}(F)$ acts on $S_n(F)$, and two elements in $S_n(F)$ which are regular semi-simple (as elements of $GL_n(E)$) are conjugate under $GL_{n-1}(E)$ if and only if they are conjugate under $GL_{n-1}(F)$. 

Let $J\in {\rm Herm}_{n-1}(E/F)$ be a hermitian matrix of size $n-1$. It defines a hermitian form on $E^{n-1}$. We obtain a hermitian form $J\oplus 1$ of size $n$, which corresponds to extending the hermitian form to $E^n$ by adding an orthogonal vector $u$ of length $1$. We obtain an inclusion of unitary groups
$$
U(J)(F)\hookrightarrow U(J\oplus 1)(F) ,
$$
and therefore an action of $U(J)(F)$ on $U(J\oplus 1)(F)$ by conjugation. We consider $U(J\oplus 1)(F)$ as a subset of $GL_n(E)$ in the obvious way by sending $u$ to $v$, and $E^{n-1}$ to the subspace of vectors with trivial last entry. We call an element $g\in U(J\oplus 1)(F)$ regular semi-simple if it is regular semi-simple as an element of $GL_n(E)$. Two regular semi-simple elements $\gamma\in S_n(F)$ and $g\in U(J\oplus 1)(F)$ are said to match if they are conjugate under $GL_{n-1}(E)$ (when considered as elements of $GL_n(E)$), or, equivalently, if they have the same invariants. This property only depends on the orbits of $\gamma$ under $GL_{n-1}(F)$, resp. $g$ under $U(J)(F)$. This matching condition defines a bijection between orbit spaces \cite{Z}, Lemma~2.3, 
\begin{equation}
\big[U(J_0\oplus 1)(F)_{\rm rs}\big] \sqcup \big[U(J_1\oplus 1)(F)_{\rm rs}\big]\simeq \big[S_n(F)_{\rm rs}\big] .
\end{equation}
Here $J_0$ denotes the split hermitian form, and $J_1$ the non-split hermitian form, i.e., the discriminant of $J_0$ has even valuation, and the discriminant of $J_1$ has odd valuation. 

For $\gamma\in S_n(F)_{\rm rs}$, and $f\in C^\infty_c(S_n(F))$, consider the weighted orbital integral
\begin{equation}
O(\gamma, f)=\int_{GL_{n-1}(F)} f(h^{-1}\gamma h)\eta({\rm det}\,h)dh ,
\end{equation}
where we normalize the measure so that $GL_{n-1}(\CO_F)$ has measure $1$. Similarly, for any $g\in U(J_0\oplus 1)(F)_{\rm rs}$, and $f\in C^\infty_c(U(J_0\oplus 1)(F))$, we form  the  orbital integral
\begin{equation}
O(g, f)=\int_{U(J_0)(F)} f(h^{-1}g h)dh ,
\end{equation}
where we normalize the measure so that the stabilizer $K'$ of a self-dual lattice $\Lambda'$ in $E^{n-1}$ has measure $1$. Let $K$ be the stabilizer of the self-dual lattice $\Lambda=\Lambda'\oplus \CO_Eu$. 

The FL is now the following statement (for the ``Lie algebra'' version see \cite{JR}). 
\begin{conjecture}
For $\gamma\in S_n(F)_{\rm rs}$,
\begin{equation*}
\begin{aligned} O(\gamma, 1_{S_n(\CO_F)})=\begin{cases} {\omega(\gamma)O(g, 1_{K})  \text{\rm \, if $\gamma$ matches 
$g\in U(J_0\oplus 1)(F)_{\rm rs}$ }} ,\\
\\
{0 \quad\quad\quad\quad\quad\text{\rm \, if $\gamma$ matches no $g\in U(J_0\oplus 1)(F)_{\rm rs}$ . }}
\end{cases}
\end{aligned}
\end{equation*}
\end{conjecture}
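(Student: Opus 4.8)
\emph{Step 1: reduction to the Lie algebra.} The plan is to reduce this identity to its Lie algebra analogue, to prove the latter over a function field by a global geometric argument, and to transfer the result back to the $p$-adic case. For the first reduction, replace $S_n(F)$ by its infinitesimal version $\mathfrak{s}_n(F)\subset\mathfrak{gl}_n(E)$, the $(-1)$-eigenspace of $\sigma$, with its $\GL_{n-1}(F)$-action, and each $U(J\oplus 1)(F)$ by the Lie algebra $\mathfrak{u}(J\oplus 1)(F)$ with its $U(J)(F)$-action; regular semi-simplicity, the invariants, and the matching relation transport verbatim, and the matching bijection recalled above has an obvious counterpart. A Cayley transform together with descent to centralizers of semi-simple parts reduces the stated group identity to the Lie algebra identity, and an induction on $n$ reduces the latter to the regular-elliptic case, where both sides are finite sums over a torus and agree tautologically. (Reductions of this type are carried out in \cite{JR} and \cite{Z}.)

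\emph{Step 2: geometric interpretation and globalization.} For invariants $a$ in the matching locus of the affine space $\CA$ of invariants, and over a finite residue field, $O(\gamma, 1_{S_n(\CO_F)})$ counts the lattices in $E^n$ stable under the operator attached to $a$, weighted by the sign $\eta\circ\det$, while $O(g,1_K)$ counts the corresponding self-dual lattices; both are sets of $k$-points of the fibre over $a$ of a Hitchin-type morphism $f\colon\CM\to\CA$ for the pair $(\GL_n,\GL_{n-1})$, the contributions of $U(J_0\oplus 1)$ and of $U(J_1\oplus 1)$ corresponding to the two pieces of a $\mu_2$-torsor over $\CA$ cut out by a discriminant parity. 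Spreading the local datum to a smooth projective curve turns the three weighted orbital integrals into traces of Frobenius on stalks of $f_!\ov{\BQ}_\ell$ for a global version of $f$.

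\emph{Step 3: the comparison and descent to $p$-adic fields.} By the decomposition theorem it suffices to identify the simple perverse constituents of $f_!\ov{\BQ}_\ell$ on the two sides after the twist by the transfer factor $\omega(\gamma)$, and to see that the alternating trace vanishes in the ``no match'' case (when $\gamma$ matches into $U(J_1\oplus 1)$). The key ingredient is a support theorem: every constituent has full support $\CA$, so $f_!\ov{\BQ}_\ell$ is determined by its restriction to the dense open locus where the fibres are torsors under a smooth commutative group scheme of Prym type, and there the identity is elementary — the $\eta$-twist exactly accounts for $\omega(\gamma)$, and a non-trivial torsor class kills the alternating trace. Finally, for fixed invariants both sides of the Lie algebra FL are given by polynomials in $q$ whose coefficients do not depend on the particular local field (seen via $p$-adic/motivic integration, or because everything is governed by a fixed scheme of finite type), so the equal-characteristic identity yields the $p$-adic one. (A purely local alternative, via the Fourier transform on $\mathfrak{s}_n(F)$ and a reduction to the most singular orbits, should also be feasible.)

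\emph{Main obstacle.} The technical heart is the support theorem of Step 3: controlling the supports of the perverse constituents of the Hitchin fibration for this symmetric pair requires a codimension ($\delta$-type) estimate in the spirit of Ng\^o's support theorem, together with a fine analysis of the compactified Pryms in the singular fibres. A secondary, bookkeeping-heavy point is to pin down the normalizations of the Haar measures and of the transfer factor $\omega(\gamma)$ so that the comparison can even be set up precisely.
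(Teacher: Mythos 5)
The statement you are proving is stated in the paper as a \emph{conjecture}: the paper does not prove the FL, it only records the combinatorial reformulation of both sides as lattice counts (Lemma \ref{lem Cn'} and Corollary \ref{cor lattice counting} in \S 7, which your Step 2 partially parallels) and cites Yun and Gordon \cite{GY} for the known cases. So there is no ``paper's own proof'' to compare against; what you have written is an outline of the Yun--Gordon strategy.

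As a proof, your proposal has a genuine gap at every load-bearing step. Step 1 asserts that a Cayley transform plus descent to centralizers and an induction on $n$ reduces the group identity to the Lie algebra identity ``in the regular-elliptic case, where both sides are finite sums over a torus and agree tautologically'' --- none of this is carried out, and the regular-elliptic case is emphatically not tautological (it is the whole content of the FL). Step 3 rests on a support theorem for a Hitchin-type fibration attached to the symmetric pair $(\GL_n,\GL_{n-1})$, which you explicitly flag as the ``technical heart'' and do not prove; this is precisely the hard part of Yun's work. Moreover, even if the strategy were executed exactly as in \cite{GY}, it yields the identity only in equal characteristic for $p>n$ and, after Gordon's motivic-integration transfer, in the $p$-adic case only for $p$ large and unspecified --- not for all odd $p$ as the conjecture demands. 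So the proposal is a correct description of the known partial results rather than a proof of the stated conjecture, and it does not close the gap that the paper itself leaves open.
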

Here the sign $\omega(\gamma)$ is given by 
\begin{equation}
\omega(\gamma)=(-1)^{v({\rm det}(\gamma^iv)_{i=0,\ldots, n-1})} .
\end{equation}
Both orbital integrals appearing in the conjecture count certain $\CO_E$-lattices in $E^n$. Let $L=L_g$ be the lattice generated by the vectors $u, gu,\ldots g^{n-1}u$, where we recall the vector $u$ of length one from above. Then the first clause of the above identity can be written as  
$$
\omega(\gamma)\sum_{\{\Lambda\mid L\subset \Lambda\subset L^*, g\Lambda=\Lambda,\Lambda^\tau=\Lambda\}} (-1)^{\ell(\Lambda/L)} =\omega(\gamma)\sum_{\{\Lambda\mid L\subset \Lambda\subset L^*, g\Lambda=\Lambda, \Lambda^*=\Lambda\}}1.
$$ Here $\tau$ is the antilinear involution on $E^n$, depending on $g$, which sends $g^iu$ to $g^{-i}u$ for  $i=0,\ldots, n-1$. Also, for any lattice $\Lambda$, we denote by $\Lambda^*$ the lattice of elements of $E^n$ which pair integrally with all elements of $\Lambda$ ({\it dual  lattice}). 

The equal characteristic analogue of FL was proved by Z. Yun, for $p>n$; J. Gordon deduced  FL in the $p$-adic case, for $p$ large enough (but unspecified), cf.  \cite{GY}. 

Now we come to the AFL conjecture. For $\gamma\in S_n(F)_{\rm rs}$, and $f\in C^\infty_c(S_n(F))$, and $s\in \BC$, let 
$$
O(\gamma, f, s)=\int_{GL_{n-1}(F)} f(h^{-1}\gamma h)\eta({\rm det}\, h)|{\rm det} h|^s dh ,
$$
and introduce
\begin{equation}
O'(\gamma, 1_{S_n(\CO_F)})= \frac{d}{ds}O(\gamma, 1_{S_n(\CO_F)},s)_{\big| s=0} . 
\end{equation}
Then the conjecture is as follows.
\begin{conjecture}\label{AFL}
 For $\gamma\in S_n(F)_{\rm rs}$ which matches $g\in U(J_1\oplus 1)(F)_{\rm rs}$, 
$$O'(\gamma, 1_{S_n(\CO_F)})=-\omega(\gamma)\big\langle\Delta(\CN_{n-1}), ({\rm id}\times g)\Delta (\CN_{n-1}) \big\rangle .$$ 
\end{conjecture}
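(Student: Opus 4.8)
The plan is to rewrite each side of the asserted identity as a sum of local terms indexed by the \emph{same} finite set of $\CO_E$-lattices --- the lattices $\Lambda$ with $L\subset\Lambda\subset L^*$, $g\Lambda=\Lambda$ and $\Lambda^\tau=\Lambda$ that already govern the lattice description of the non-derivative orbital integral recalled above --- and then to match the two families of local contributions one lattice at a time.

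\emph{The analytic side.} The first step is to upgrade that lattice description to one of the derivative $O'(\gamma,1_{S_n(\CO_F)})$. In the integral defining $O(\gamma,1_{S_n(\CO_F)},s)$ the factor $|\det h|^s$ attaches to each coset $h\,GL_{n-1}(\CO_F)$ a weight governed by $v(\det h)$; running the bookkeeping that produced the signed lattice count for $O(\gamma,1_{S_n(\CO_F)})$, one arrives (with the normalizations of the Introduction) at an identity
\[
O'(\gamma,1_{S_n(\CO_F)})
\;=\;
-\,\omega(\gamma)\!\!\!\sum_{\{\Lambda\mid L\subset\Lambda\subset L^*,\, g\Lambda=\Lambda,\, \Lambda^\tau=\Lambda\}}\!\!\!(-1)^{\ell(\Lambda/L)}\,m(\Lambda),
\]
where $m(\Lambda)$ is an explicit nonnegative integer --- assembled from the valuations entering $v(\det h)$ --- which measures the failure of $\Lambda$ to be self-dual and vanishes exactly when $\Lambda^*=\Lambda$. (This is consistent with the FL: for $\gamma$ matching the non-split form there is no self-dual $g$-stable $\Lambda$, so $O(\gamma,1_{S_n(\CO_F)})=0$ and the derivative is the leading Taylor coefficient.)

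\emph{The geometric side.} Here one first observes that $\Delta(\CN_{n-1})$ is the graph of the closed embedding $\iota\colon\CN_{n-1}\hookrightarrow\CN_n$, which realizes $\CN_{n-1}$ as the Kudla--Rapoport special divisor $\CZ(u)$ cut out by the length-one vector $u$. The pairing on the right of the Conjecture is the Euler--Poincar\'e characteristic of the derived tensor product of the structure sheaves of $\Delta(\CN_{n-1})$ and $(\mathrm{id}\times g)\Delta(\CN_{n-1})$ on $\CN_{n-1}\times\CN_n$; regular semisimplicity of $g$ forces its support --- the locus of $x\in\CN_{n-1}$ with $\iota(x)$ a $g$-fixed point of $\CN_n$ --- to be a proper formal scheme, so the pairing is a well-defined integer. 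Using the Bruhat--Tits stratification of $\CN_n^{\mathrm{red}}$ by vertex lattices together with the deformation theory (local equations) of the special divisors, one identifies this support set-theoretically with the very set of lattices $\Lambda$ above, and rewrites the intersection number as $\sum_\Lambda(-1)^{\ell(\Lambda/L)}\,\mu(\Lambda)$, where $\mu(\Lambda)\ge 0$ is the local intersection multiplicity --- an alternating sum of $\mathrm{Tor}$-lengths --- contributed near the point indexed by $\Lambda$.

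\emph{Matching and the main obstacle.} It then suffices to prove the term-by-term equality $m(\Lambda)=\mu(\Lambda)$, after which the two displays combine into the Conjecture with exactly the sign $-\omega(\gamma)$ stated. In the \emph{minuscule} range $\pi L^*\subseteq L$ both $\mu(\Lambda)$ and $m(\Lambda)$ can be evaluated in closed form --- in terms of $\ell(\Lambda/L)$ and $\ell(\Lambda^*/\Lambda)$ --- and one verifies directly that they coincide. The general case is the principal difficulty: for arbitrary regular semisimple $\gamma$ the two cycles meet very non-transversally, so $\mu(\Lambda)$ genuinely involves the higher $\mathrm{Tor}$-terms --- excess intersection --- and admits no elementary closed form, while $m(\Lambda)$ is correspondingly subtle. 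I see two routes past this. The first is an inductive d\'evissage: pick a suitable vector $x$ in the hermitian $\CO_E$-lattice of $g$, restrict both sides to the special divisor $\CZ(x)$ --- which is, up to a blow-up, a lower-dimensional Rapoport--Zink space of the same type --- and run a matching reduction on the analytic side, so that the pair $(\gamma,g)$ is replaced by data with strictly smaller valuation invariants; iterating, one descends to the minuscule case already settled. The second is the automorphic route, which trades this local computation for a global comparison: realize $\CN_n$ by $p$-adic uniformization as the basic locus of a suitable global unitary Shimura variety, express the arithmetic intersection of the global incarnations of the divisors $\CZ(u)$ through the arithmetic Siegel--Weil formula and the modularity of the Kudla--Rapoport arithmetic generating series, identify the result with the first derivative of the relevant term in the Jacquet--Rallis relative trace formula, and then extract the purely local identity of the Conjecture by isolating the one ramified place via a linear-independence (multiplicity-one) argument. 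I expect the true obstacle to lie either in controlling the excess-intersection contributions uniformly in $n$ along the d\'evissage, or --- on the automorphic side --- in proving the modularity of the arithmetic generating series and the sharp arithmetic Siegel--Weil identity that route requires.
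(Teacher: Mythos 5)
Your statement is Conjecture \ref{AFL}, which the paper itself does not prove in general: it establishes only the minuscule case ${\rm inv}(g)=(1^{(m)},0^{(n-m)})$, and then only for $F=\BQ_p$ and $n\leq 2p-2$ (Theorem \ref{mainthm}(iv)). Your proposal does not close the conjecture either. Both of your routes --- the d\'evissage along special divisors down to the minuscule case, and the global route through modularity of the arithmetic generating series, an arithmetic Siegel--Weil identity and a multiplicity-one extraction from the relative trace formula --- are left as programs whose decisive steps are exactly the parts you acknowledge as open (control of excess intersection along the reduction, modularity, the global-to-local isolation). So as a proof the proposal has a genuine gap: it establishes neither the general identity nor, as written, even the base case your d\'evissage is supposed to reach.

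The termwise-matching scaffolding is also misaligned with the actual structure of the identity. On the analytic side the correct weight is $\ell(\Lambda/L)$ (Corollary \ref{cor lattice counting}(2)); your $m(\Lambda)$, claimed to vanish exactly when $\Lambda=\Lambda^*$, is not what the derivative produces. More seriously, the two sides are not indexed by the same set: the analytic side runs over $g$-stable, $\tau$-stable lattices $L\subset\Lambda\subset L^*$, while the geometric side, when the intersection is discrete, is an \emph{unsigned} sum of local multiplicities over $g$-stable vertex lattices $\pi\Lambda\subset\Lambda^*\subset\Lambda$ indexing the Bruhat--Tits strata that meet the intersection; an equality ``$m(\Lambda)=\mu(\Lambda)$'' is therefore not even well posed, and the identity rests on large cancellations on the analytic side that have no lattice-by-lattice geometric counterpart. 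Even in the minuscule case the paper does not argue termwise: the points of the intersection are counted by Deligne--Lusztig fixed-point theory (Propositions \ref{fixpointstr} and \ref{cardform}), each local length equals $\frac{a_{i_0}+1}{2}$ --- a quantity read off from the Jordan block of $\bar g$ on $L^*/L$, not from $\ell(\Lambda/L)$ and $\ell(\Lambda^*/\Lambda)$ alone --- and is computed via Grothendieck--Messing and Zink's display theory (Proposition \ref{length}, Theorems \ref{speciald} and \ref{idealk}), while the analytic side is evaluated by a separate combinatorial cancellation (Proposition \ref{derivform}); the two closed forms, $\prod_{\{i,j\},\,j=\tau(i)\neq i}(1+a_i)\cdot{\rm deg}\,P_{i_0}\cdot\frac{a_{i_0}+1}{2}$ on either side, agree only after summation. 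These computations are the missing substance of your ``settled'' minuscule case, and without them the induction has no base.
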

On the RHS appears the arithmetic intersection product of two formal subschemes inside the formal scheme $\CN_{n-1}\times_{{\rm Spf}\, \CO_{\breve F}}\CN_n$. Here  $\CN_n$ denotes the moduli space over the ring of integers in the completion of the maximal unramified extension $\breve F$ of $F$ of formal $\CO_F$-modules of height $n$ with 
$\CO_E$-action of signature $(1, n-1)$ and with principal polarization compatible with the involution $\sigma$ on $\CO_E$. ($\CN_n$ is a special case of an {\it RZ-space} \cite{RZ}.) Similarly for $\CN_{n-1}$,
which is naturally embedded in $\CN_n$. The element  $g\in U(J_1\oplus 1)(F)$ acts on $\CN_n$ in a natural way. Then $\Delta(\CN_{n-1})$ and $ ({\rm id}\times g)\Delta (\CN_{n-1}) $ are two formal schemes of formal dimension $n-1$, contained in the formal scheme $\CN_{n-1}\times_{{\rm Spf}\, \CO_{\breve F}}\CN_n$ of formal dimension $2(n-1)$, i.e., we are in a situation of middle dimension intersection.
We refer to \cite{Z} for the precise definition of $\CN_n$, and for the definition of the intersection product, and the proof of the fact that the RHS is a finite quantity (cf. also \S\S 2--4 below). 

In the following, we fix $n\geq 2$ and denote $\CN_n$ simply by $\CN$, and $\CN_{n-1}$ by $\CM$. 

As before, the LHS can be expressed in a  combinatorial way, as
$$ 
\omega(\gamma){\rm log}\, q\sum_{\{\Lambda\mid L\subset \Lambda\subset L^*, g\Lambda=\Lambda,\Lambda^\tau=\Lambda\}} (-1)^{\ell(\Lambda/L)} \ell(\Lambda/L) . 
$$ In the case that the intersection of the formal schemes $\Delta(\CM)$ and  $({\rm id}\times g)\Delta (\CM)$ is proper, i.e., is a set of isolated points, and using the Bruhat-Tits stratification of $\CN_{\rm red}$ \cite{VW}, the RHS can also be written as a sum over lattices, as
$$
-\omega(\gamma){\rm log}\, q \sum_{\{\Lambda\mid L\subset \Lambda\subset L^*, g\Lambda=\Lambda, \pi\Lambda\subset\Lambda^*\subset\Lambda\}}{\rm mult}(\Lambda) .
$$
Here the number ${\rm mult}(\Lambda)$ is the intersection multiplicity of $\Delta(\CM)$ and $({\rm id}_\CM\times g)\Delta(\CM)$ {\it along the stratum 
$\CV(\Lambda)^o$}. 

We note that this conjecture holds true for $n=2, n=3$, by results of the third author \cite{Z}. In these cases the intersection appearing above is automatically proper. 

We now come to the description of the results of this paper, which are valid for any $n$ but with strong restrictions on $g$. Let 
$$(\BZ^n)_+=\{(r_1,\ldots,r_n)\in \BZ^n\mid r_1\geq\ldots\geq r_n\}.
$$  Let ${\rm inv}(g)=(r_1,\ldots,r_n)\in (\BZ^n)_+$ be the unique element such that $L_g^*$ has a basis $e_1,\ldots, e_n$ such that $\pi^{r_1}e_1,\ldots, \pi^{r_n}e_n$ is a basis of $L_g$. Note that $r_n=0$, and that $\sum_i r_i$ is odd. It turns out that the `bigger' ${\rm inv}(g)$ is, the more difficult it is to prove the identity in AFL. From this point of view we treat here the simplest non-trivial case. 
\begin{theorem}\label{mainthm} Let $g\in U(J_1\oplus 1)(F)_{\rm rs}$. 

\smallskip

\noindent (i) The underlying reduced scheme  of the intersection $\Delta(\CM)\cap ({\rm id}_\CM\times g)\Delta(\CM)$ has a stratification by Deligne-Lusztig varieties (we refer to \S\ref{fpinstratum} for the precise description of which Deligne-Lusztig varieties can occur). 

\smallskip

\noindent {\rm From now on assume that ${\rm inv}(g)$ is minuscule, i.e., that ${\rm inv}(g)=(1^{(m)}, 0^{(n-m)})$,
for some $m\geq 1$. Then:}

\smallskip

\noindent (ii) The intersection of  $\Delta(\CM)$ and $({\rm id}_\CM\times g)\Delta(\CM)$ is proper. Furthermore, the arithmetic intersection product 
$\big\langle\Delta(\CM), ({\rm id}\times g)\Delta (\CM) \big\rangle$ is equal to 
$$
{\rm log}\, q \sum_{x\in (\Delta(\CM)\cap({\rm id}\times g)\Delta (\CM))(\bar k)} \ell\big({\CO_{\Delta(\CM)\cap ({\rm id}\times g)\Delta(\CM), x}}\big) ,
$$
i.e., there are no higher Tor-terms. 

\smallskip

\noindent (iii) The intersection of  $\Delta(\CM)$ and $({\rm id}_\CM\times g)\Delta(\CM)$ is concentrated in the special fiber, i.e., the uniformizer $\pi$ annihilates its structure sheaf. 

\smallskip

\noindent (iv) The AFL identity holds, provided that $n\leq 2p-2$. Furthermore, in this case  the lengths of the local rings appearing in (ii) are all identical. 

\end{theorem}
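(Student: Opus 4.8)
The plan is to reduce the AFL identity to a purely combinatorial comparison, using parts (i)--(iii) to express both sides as sums over the same index set of lattices. By part (iii) the intersection lives in the special fiber, so by the Bruhat--Tits stratification of $\CN_{\rm red}$ \cite{VW} the intersection points are governed by the vertex lattices $\Lambda$ with $L\subset\Lambda\subset L^*$, $g\Lambda=\Lambda$, and $\pi\Lambda\subset\Lambda^*\subset\Lambda$; by part (ii) there are no higher Tor contributions, so
$$
\big\langle\Delta(\CM), ({\rm id}\times g)\Delta(\CM)\big\rangle = {\rm log}\,q\sum_{\Lambda}{\rm mult}(\Lambda),
$$
where ${\rm mult}(\Lambda)$ is the intersection multiplicity along the stratum $\CV(\Lambda)^o$. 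On the analytic side, the LHS was already rewritten as
$$
\omega(\gamma){\rm log}\,q\sum_{\{\Lambda\mid L\subset\Lambda\subset L^*,\, g\Lambda=\Lambda,\,\Lambda^\tau=\Lambda\}}(-1)^{\ell(\Lambda/L)}\ell(\Lambda/L).
$$
First I would show that in the minuscule case the condition $\Lambda^\tau=\Lambda$ forces $\pi\Lambda\subset\Lambda^*\subset\Lambda$ (and conversely), so that the two index sets coincide, and that $\ell(\Lambda/L)$ takes a single value $d$ independent of $\Lambda$ (with $(-1)^d=\omega(\gamma)$ by the displayed formula for $\omega$), so that the analytic side collapses to $-\omega(\gamma)^2\,d\cdot{\rm log}\,q\cdot\#\{\Lambda\}={\rm log}\,q\cdot d\cdot\#\{\Lambda\}$ up to the overall sign in Conjecture~\ref{AFL}. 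It then remains to prove the local identity ${\rm mult}(\Lambda)=d$ for every relevant $\Lambda$.

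The core of the argument is therefore the computation of ${\rm mult}(\Lambda)$, which by part (i) is an intersection number of two Deligne--Lusztig subvarieties inside the stratum $\CV(\Lambda)^o$ of $\CN$. Concretely, $\Delta(\CM)$ cuts out one Deligne--Lusztig variety and $({\rm id}\times g)\Delta(\CM)$ another; since the stratum is smooth and the intersection is proper (part (ii)) and $\pi$-torsion (part (iii)), the multiplicity at a point $x$ equals the length of $\CO_{\Delta(\CM)\cap({\rm id}\times g)\Delta(\CM),x}$, computed via the deformation theory of the $p$-divisible groups involved. Here I would use Grothendieck--Messing / crystalline deformation theory to identify the completed local ring as a quotient of a power series ring by an explicit ideal determined by the relative position of the two lattices $L_g$ and $gL_g$ inside $\Lambda$; because ${\rm inv}(g)$ is minuscule, this relative position is as simple as possible, and the ideal should be generated by the expected number of equations in a regular sequence, giving ${\rm mult}(\Lambda)=d$. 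The hypothesis $n\leq 2p-2$ enters precisely at this point: it guarantees that the divided-power structures and the truncation used in the Grothendieck--Messing comparison behave as in characteristic $0$, so that no subtle $p$-torsion phenomena inflate the length — equivalently, it ensures the exponential/logarithm identifications needed to linearize the deformation problem are valid in the relevant range.

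The main obstacle I expect is exactly this last length computation: controlling the local structure of the intersection of two Deligne--Lusztig varieties sitting in a Rapoport--Zink stratum, and showing it is a complete intersection of the predicted codimension with the predicted length $d$. The global geometric inputs (i)--(iii) reduce matters to this, but the local ring is defined by deformation-theoretic conditions that are not a priori transverse, and one must exhibit an explicit regular sequence — or argue via a flatness/dimension count that the scheme-theoretic intersection is Cohen--Macaulay of dimension $0$ with the right length. The constraint $n\leq 2p-2$ is what makes the linear-algebra model of the deformation space faithful; for larger $n$ relative to $p$ the comparison could fail and the lengths could jump, which is why the AFL identity is only claimed in that range even though parts (i)--(iii) hold unconditionally.
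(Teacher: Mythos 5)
Your proposal treats (i)--(iii) as given and concentrates on (iv); but even granting that, the combinatorial reduction you propose for (iv) rests on two assertions that are false, and this is a genuine gap rather than a presentational difference from the paper.

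First, you claim that in the minuscule case the condition $\Lambda^\tau=\Lambda$ forces $\pi\Lambda\subset\Lambda^*\subset\Lambda$ and conversely, so that the index sets on the two sides of the AFL coincide. This fails already for $\Lambda=L=L_g$: it is $g$-stable and $\tau$-stable, hence contributes to the analytic sum, but $L\subset L^*$ is a proper inclusion, so $L$ is not a vertex lattice. Second, you claim $\ell(\Lambda/L)$ is constant on the set $M=\{\Lambda\mid L\subset\Lambda\subset L^*,\ g\Lambda=\Lambda,\ \Lambda^\tau=\Lambda\}$; but $L$ and $L^*$ both lie in $M$ with lengths $0$ and $r=\dim_{k'}L^*/L$ (an odd integer $\geq 1$), so the sum does not collapse to a single term times $\#\{\Lambda\}$. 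In the paper the analytic side is a genuinely alternating sum over all $\bar g$- and $\bar\tau$-stable subspaces $W$ of $V=L^*/L$, and it is evaluated (Proposition \ref{derivform}) only after organizing the cancellation according to the factorization $P_{\bar g}=\prod_i P_i^{a_i}$ of the characteristic polynomial of $\bar g$ on $V$: the answer is $-\omega(\gamma)\log q\,\prod_{\tau(i)\neq i}(1+a_i)\cdot\deg P_{i_0}\cdot\tfrac{a_{i_0}+1}{2}$. The geometric side is computed separately (Proposition \ref{cardform}): there are $\prod_{\tau(i)\neq i}(1+a_i)\cdot\deg P_{i_0}$ intersection points, distributed over strata indexed by $\bar g$-stable totally isotropic subspaces, and each point carries multiplicity $\tfrac{a_{i_0}+1}{2}$. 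There is no term-by-term bijection of index sets; the two closed-form evaluations simply agree. Your proposed ``local identity ${\rm mult}(\Lambda)=d$'' with $d=\ell(\Lambda/L)$ is therefore not the correct statement: the per-point length is $\tfrac{a_{i_0}+1}{2}$, governed by the size $a_{i_0}=2c+1$ of the Jordan block of $\bar g$ at the distinguished eigenvalue, not by $\ell(\Lambda/L)$.

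On the length computation itself, your outline (``explicit ideal, expected number of equations, regular sequence'') skips the decisive steps. The paper first shows the Zariski tangent space of the intersection at each point is at most one-dimensional (Lemma \ref{tangspace}), so the completed local ring is $\CO_{\breve F}[[t]]/I$ in a single variable; it then proves $\pi\in I$ by embedding the intersection into an intersection of $n$ special divisors and invoking Theorem \ref{speciald} (itself a substantial result proved in \S\ref{Proof1} via Zink's displays and a regularity argument for the special fiber); and finally it computes $\bar I=(t^{c+1})$ by an explicit window calculation (Theorem \ref{idealk}). Your guess for where $n\leq 2p-2$ enters is in the right spirit but the actual mechanism is that one needs $c+1<p$ so that the quotients $R/J$ by the relevant truncated ideals admit frames in Zink's sense (the Frobenius must annihilate the ideal, forcing the truncation order to be at most $p$); it is not a Grothendieck--Messing divided-power issue. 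To repair your argument you would need to replace the index-set identification by the two separate evaluations of Propositions \ref{cardform} and \ref{derivform}, and to supply the tangent-space bound and the display computation for the length.
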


Assertion (i) is proved in section 6, and (ii) follows from Propositions \ref{scheme} and \ref{fixpointstr}. Assertion (iii) follows from Theorem \ref{speciald}, and assertion (iv) follows from Propositions \ref{cardform}, \ref{derivform},  and \ref{length}.  That the lengths of all local rings are identical follows from our explicit determination of these lengths, although we think that there should be an {\it a priori} proof, without the restriction on $p$.

In fact, we will prove the assertions above only in the case when $F=\BQ_p$, because in this case we can refer to \cite{V} and \cite{VW} for the structure of $\CN_n$, and also to \cite{KR2} for some global results. However, there is no doubt that the results should generalize to arbitrary $p$-adic fields. 

There is a fundamental difference between  the seemingly very similar combinatorial descriptions of both sides in the FL and in the AFL. Whereas in the FL there is a rather simple criterion to decide when  both sides of the identity are non-zero, the corresponding question for the AFL seems very subtle in general. However, in the case of a minuscule element $g$, we give  a simple criterion in terms of the induced automorphism of the $k'$-vector space $L_g^*/L_g$ to decide when the two sides of the AFL identity are non-zero, cf. \S\ref{theminusculecase}. 

There is some relation between the AFL problem and the problem of intersecting {\it special divisors} considered in \cite{KR}. Indeed, the intersection $\Delta(\CM)\cap({\rm id}_\CM\times g)\Delta(\CM)$ is contained in the intersection of the special divisors (in the sense of \cite{KR}) $\CZ(g^iu)$, for $i=0,\ldots, n-1$. Then point (iii) of Theorem \ref{mainthm} is a consequence of the following theorem, which is of independent interest.
\begin{theorem}\label{secondmain} 
Consider the intersection of special divisors  $\CZ(x_1),\ldots\CZ(x_n)$ on $\CN_n$, where the fundamental matrix (in the sense of \cite{KR}) is equivalent to the diagonal matrix ${\rm diag}(\pi^{(m)}, 1^{(n-m)})$. Then this intersection is concentrated in the special fiber, and is in fact equal to a  closed Bruhat-Tits stratum of type $m$ of $(\CN_{n})_ {\rm red}$. 
\end{theorem}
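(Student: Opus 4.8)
The plan is to reduce to a ``maximal'' situation, then to identify the intersection with a Bruhat--Tits stratum on the level of $\bar k$-points by a lattice computation, and finally to promote this identity to one of formal schemes. For the reduction, choose the $x_i$ so that their fundamental matrix is exactly $\mathrm{diag}(\pi^{(m)},1^{(n-m)})$, and set $\Lambda=\sum_i\CO_Ex_i$ inside the $n$-dimensional hermitian space $\BV$ of special quasi-homomorphisms (cf.\ \cite{KR}). Then $\Lambda$ splits orthogonally as $\Lambda'\perp\Lambda_0$, with $\Lambda'=\langle x_1,\ldots,x_m\rangle$ of Gram matrix $\pi\cdot\mathbf 1_m$ and $\Lambda_0=\langle x_{m+1},\ldots,x_n\rangle$ self-dual. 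For a special homomorphism $y$ with $h(y,y)\in\CO_E^\times$ one knows (\cite{KR}; see also \cite{Z}) that $\CZ(y)$ is isomorphic, as a formal scheme over $\CO_{\breve F}$, to the space $\CN_{n-1}$ attached to $y^\perp$, compatibly with the formation of special divisors. Applying this successively to $y=x_n,\ldots,x_{m+1}$ --- which is legitimate because the $x_i$ are mutually orthogonal --- identifies $\CZ(x_1)\cap\cdots\cap\CZ(x_n)$, as a formal scheme, with the intersection of the special divisors $\CZ(x_1),\ldots,\CZ(x_m)$ on $\CN_m$, where now $x_1,\ldots,x_m$ span the lattice $\Lambda'$, a vertex lattice of \emph{maximal} type $m$ in the $m$-dimensional hermitian space attached to $\CN_m$ (here $m=\sum_i r_i$ is odd, so such a lattice exists). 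Under $\CN_m\hookrightarrow\CN_n$ the closed Bruhat--Tits stratum of type $m$ of $(\CN_m)_{\rm red}$ maps onto that of type $m$ of $(\CN_n)_{\rm red}$, via $\Lambda_1\mapsto\Lambda_0\perp\Lambda_1$. Hence we may assume $n=m$ and $\Lambda=\Lambda'$ of maximal type.

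Next comes the set-theoretic identification. For $z\in\CN_n(\bar k)$ with associated Dieudonn\'e lattice $M_z$ in the rational isocrystal, one has $z\in\CZ(x)(\bar k)$ if and only if the special quasi-homomorphism $x$ is integral at $z$, i.e.\ $x\in M_z$ in the normalization of \cite{KR}; hence $z$ lies in $\CZ(x_1)\cap\cdots\cap\CZ(x_n)(\bar k)$ if and only if $\Lambda=\sum_i\CO_Ex_i\subseteq M_z$. By the Bruhat--Tits stratification of Vollaard--Wedhorn \cite{VW}, for the vertex lattice $\Lambda$ this is exactly the condition defining the closed stratum $\CV(\Lambda)$, whose type equals $\dim_{k'}(\Lambda^\vee/\Lambda)=m$. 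Therefore $\big(\CZ(x_1)\cap\cdots\cap\CZ(x_n)\big)_{\rm red}=\CV(\Lambda)$, a closed Bruhat--Tits stratum of type $m$; in particular the intersection is set-theoretically contained in the special fiber.

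It remains to upgrade this to an identity of formal schemes. One inclusion is immediate: $\CV(\Lambda)$ is reduced by \cite{VW} (it is a union of smooth Deligne--Lusztig varieties, with $\CV(\Lambda)^\circ$ open dense, of dimension $(m-1)/2$), and since it is set-theoretically contained in each $\CZ(x_i)$ the ideal of $\CZ(x_i)$ restricts to zero on the reduced scheme $\CV(\Lambda)$; hence $\CV(\Lambda)\hookrightarrow\CZ(x_i)$ for every $i$, and so $\CV(\Lambda)\subseteq\CZ(x_1)\cap\cdots\cap\CZ(x_n)$ as closed formal subschemes. For the reverse inclusion it suffices to prove that $\CZ(x_1)\cap\cdots\cap\CZ(x_n)$ is reduced; it then coincides with $\CV(\Lambda)$, which, being a $\bar k$-scheme, immediately yields that $\pi$ annihilates the structure sheaf and the intersection is concentrated in the special fiber. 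To prove reducedness one passes to the completed local ring $\widehat\CO_{\CN_n,z}\cong\CO_{\breve F}[[t_1,\ldots,t_{n-1}]]$ at a point $z\in\CV(\Lambda)(\bar k)$, where each $\CZ(x_i)$ is cut out by a single equation $f_i$ coming from the Grothendieck--Messing (or display) deformation of $x_i$, and one shows that the ideal $(f_1,\ldots,f_n)$ equals the radical ideal of $\CV(\Lambda)$ at $z$. A natural route is to establish separately that $\CZ(x_1)\cap\cdots\cap\CZ(x_n)$ is Cohen--Macaulay and that the divisors $\CZ(x_i)$ meet transversally along the open stratum $\CV(\Lambda)^\circ$; then Serre's criterion $(\mathrm R_0)+(\mathrm S_1)$ forces the intersection to be reduced.

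The hard part is precisely this last step: because the intersection has positive dimension $(m-1)/2$ when $m>1$ yet is cut out by $n$ divisors in the $n$-dimensional regular formal scheme $\CN_n$, it is far from a complete intersection, and one must analyze explicitly both the deformation equations $f_i$ and the (singular, for $m>1$) local structure of $\CV(\Lambda)$ in order to rule out nilpotents --- equivalently, to establish the vanishing of the higher Tor-terms, cf.\ Theorem~\ref{mainthm}(ii). A subsidiary but delicate matter is the bookkeeping in the several dictionaries invoked above --- special divisor versus integrality on Dieudonn\'e lattices, Bruhat--Tits stratum versus vertex lattice, and the compatibility of both with the embedding $\CN_{n-1}\hookrightarrow\CN_n$ --- along with the relevant normalizations and parity conventions.
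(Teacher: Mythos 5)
Your reduction to the case $n=m$ (fundamental matrix $\operatorname{diag}(\pi,\ldots,\pi)$) and your set-theoretic identification of the intersection with the closed stratum $\CV(\Lambda)$, $\Lambda=\langle x_1,\ldots,x_n\rangle^*$, agree with the paper, as does the easy inclusion $\CV(\Lambda)\subseteq\CZ(x_1)\cap\cdots\cap\CZ(x_n)$. But the entire content of the theorem lies in the reverse inclusion, and there your proposal stops at announcing a strategy (``Cohen--Macaulay plus transversality plus Serre's criterion'') without establishing either ingredient; you yourself flag this as ``the hard part.'' That is a genuine gap: nothing in the proposal rules out nilpotents in the structure sheaf, nor a lifting of the intersection to $\CO_{\breve F}/(\pi^2)$, and there is no indication of how Cohen--Macaulayness of a non-complete-intersection of dimension $(m-1)/2$ cut out by $n$ divisors would be verified.

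The paper's mechanism is different and worth internalizing. It splits the problem into two statements, each proved directly: (a) the intersection $\CZ$ admits no $W/(p^2)$-valued point (Proposition \ref{liftW}, a short Dieudonn\'e/Hodge-filtration argument using that the $x_i$ are pairwise orthogonal of valuation exactly $1$); and (b) the special fiber $\CZ_p$ is regular (Corollary \ref{regu}, reduced by an induction on $m$ to the super-general case, which is handled in Theorem \ref{prep} by an explicit computation of the deformation equations via Zink's displays). The purely infinitesimal Lemma \ref{pg0} then converts (a)$+$(b) into $\pi\cdot\CO_{\CZ}=0$, and regularity of $\CZ_p$ together with $(\CZ_p)_{\rm red}=\CV(\Lambda)$ forces $\CZ=\CV(\Lambda)$. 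In particular, reducedness is a consequence, not an input; if you want to complete your argument you must supply substitutes for (a) and (b), and the display computation of Theorem \ref{prep} is exactly the explicit analysis of the $f_i$ that your last paragraph defers.
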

Again, we prove this only in the case  $F=\BQ_p$. We view this theorem as a confirmation of the following conjecture in a special case.
\begin{conjecture}Consider the intersection of special divisors $\CZ(x_1),\ldots,\CZ(x_n)$ on $\CN_n$, where the fundamental matrix is equivalent to the diagonal matrix ${\rm diag}(\pi^{r_1}, \pi^{r_2}, \ldots,\pi^{r_n})$ with $r_1\geq r_2\geq\ldots\geq r_n$. Then
$\pi^{r_1}$ annihilates the structure sheaf of $\CZ({x_1})\cap\CZ({x_2})\ldots\cap\CZ({x_n})$.

\end{conjecture}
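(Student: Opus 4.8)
\begin{remark}
I would try to prove this as follows. The assertion is local, so fix a $\bar k$-point $z$ of the intersection $\CZ:=\CZ(x_1)\cap\dots\cap\CZ(x_n)$; the goal is to show that $\pi^{r_1}$ annihilates the completed local ring $\CO_{\CZ,z}$. First I would reduce to the case $r_1\ge\dots\ge r_n\ge 1$: if some $r_i=0$, then $x_i$ has unit length, so by \cite{KR} the divisor $\CZ(x_i)$ is isomorphic to $\CN_{n-1}$, embedded in $\CN_n$ via the orthogonal splitting $\BV=Ex_i\oplus x_i^{\perp}$ of the hermitian space $\BV$ of special quasi-homomorphisms (from a fixed $p$-divisible group $\BY$ of $\CO_E$-height $1$); since the fundamental matrix is diagonal, each $x_j$ with $j\ne i$ lies in $x_i^{\perp}$ and restricts to a special vector of the \emph{same} length on $\CN_{n-1}$. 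Hence $\CZ$ is identified with an $(n-1)$-fold special-cycle intersection on $\CN_{n-1}$ whose fundamental matrix is obtained by deleting the $i$-th row and column, with $r_1$ unchanged; induction on $n$ reduces us to $r_n\ge 1$. The case $r_1=\dots=r_n=1$ is then precisely the minuscule Theorem~\ref{secondmain} with $m=n$, which even shows that $\CZ$ is a closed Bruhat--Tits stratum of type $n$, so that $\pi$ annihilates $\CO_\CZ$. It remains to treat $r_1\ge 2$.

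For this I would set up the deformation theory as in the proof of Theorem~\ref{secondmain}, but retaining higher infinitesimal neighbourhoods. By the theory of Rapoport--Zink spaces \cite{RZ}, $R_z:=\CO_{\CN_n,z}$ is a regular local $\CO_{\breve F}$-algebra of dimension $n$; by Grothendieck--Messing theory (or Zink's theory of displays), a deformation over an Artinian quotient $R$ of $R_z$ of the $p$-divisible group $X_z$ with its $\CO_E$-action $\iota$ and principal polarization $\lambda$ amounts to a suitable lift of the Hodge filtration, and a special homomorphism $x_i$ lifts to an $R$-point of $\CZ(x_i)$ precisely when the image of its crystalline period in a certain invertible $R$-module vanishes. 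Thus $\CZ\subset\Spec R_z$ is cut out by $n$ Kudla--Rapoport equations $f_1=\dots=f_n=0$, and $z$ lies in a unique open stratum $\CV(\Lambda_z)^{o}$ of the Vollaard--Wedhorn stratification \cite{VW}, with the condition $z\in\CZ$ forcing the lattice $L=\sum_i\CO_E x_i$ and its dual to be sandwiched around $\Lambda_z$ in a manner controlled by the elementary divisors $\pi^{r_i}$. Roughly speaking, the $\CO_{\breve F}$-flat part of $\CZ(x_1)$ maps to a copy of $\CN_{n-1}$ on which the remaining $x_j\in x_1^{\perp}$ restrict to special vectors with smaller invariants, so the ``horizontal'' contribution is governed by the case of $\CN_{n-1}$ and is already annihilated by $\pi^{r_2}\mid\pi^{r_1}$; the genuinely new input concerns the behaviour along the vertical components $\CV(\Lambda)$ of $\CZ(x_1)$.

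The crux is an infinitesimal estimate: no lift of $(X_z,\iota,\lambda,x_1,\dots,x_n)$ exists over an Artinian $\CO_{\breve F}$-algebra in which $\pi^{r_1}\ne 0$. I would argue by induction on the power of $\pi$ that could survive. The point is that $h(x_1,x_1)$ equals $\pi^{r_1}$ times a unit, so the endomorphism of $\BY$ obtained from a lift $y_1\colon\BY\to X$ of $x_1$ by composing with $\lambda$, with $y_1^{\vee}$ and with the inverse polarization of $\BY$ is multiplication by $\pi^{r_1}$ up to a unit; together with the $\lambda$-compatibility of the lift, this constrains how far $y_1$ can be lifted as an honest homomorphism --- the single-divisor analysis of \cite{KR}, in which $\CZ(x_1)$ is a relative Cartier divisor whose vertical multiplicities are bounded in terms of $r_1$, serving as the model computation. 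Since, unlike in the minuscule case, $\CZ$ may now be positive-dimensional, I would formulate the conclusion as the scheme-theoretic identity $\pi^{r_1}\CO_\CZ=0$ rather than through isolated intersection points.

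The main obstacle is precisely the passage from $r_1=1$ to $r_1\ge 2$. When $r_1=1$ the infinitesimal estimate collapses to a first-order (tangent space) computation, which is why Theorem~\ref{secondmain} can be read off the explicit Deligne--Lusztig and Bruhat--Tits geometry of Vollaard--Wedhorn and Kudla--Rapoport. For $r_1\ge 2$ one genuinely needs the higher-order deformation theory of $p$-divisible groups carrying an $\CO_E$-action and a principal polarization, where the ``local model is a Grassmannian'' picture controls only the first obstruction; propagating the annihilation bound from the $\pi^{k}$-level to the $\pi^{k+1}$-level while respecting the polarization constraint is the step I expect to resist, and is presumably why the statement is, for now, only a conjecture. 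A workable attack would probably have to supplement this local analysis with a global input --- a Kudla-type modularity or linear-invariance statement for the cycles $\CZ(x_i)$, in the spirit of \cite{KR2} --- to control the vertical multiplicities uniformly in $n$.
\end{remark}
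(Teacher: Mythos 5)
The statement you address is stated in the paper only as a conjecture: the paper proves it solely in the minuscule case $r_1\le 1$ (Theorem \ref{secondmain}, alias Theorem \ref{speciald}) and offers no argument for $r_1\ge 2$. Your two reduction steps are sound and in fact coincide with the paper's own: splitting off the unit-length vectors $x_i$ with $r_i=0$ to descend to $\CN_{n-1}$ is exactly the first move in the proof of Theorem \ref{specialdQ} (after replacing $(x_1,\dots,x_n)$ by $(x_1,\dots,x_n)A$ with $A\in GL_n(\CO_E)$ so that the $x_i$ become genuinely orthogonal --- the intersection depends only on the lattice they span, so ``equivalent to diagonal'' may indeed be upgraded to ``diagonal''), and the case $r_1=\dots=r_n=1$ is precisely what Theorem \ref{speciald}, via Proposition \ref{liftW}, Theorem \ref{prep} and Corollary \ref{regu}, establishes.

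The genuine gap is the case $r_1\ge 2$, and your sketch does not close it. Concretely, the paper's mechanism for concluding $\pi\cdot\CO_{\CZ}=0$ is Lemma \ref{pg0}, which needs two inputs: the nonexistence of an $\CO/(\pi^2)$-valued point (Proposition \ref{liftW}) and the \emph{regularity of the special fiber} $\CZ_p$ (Corollary \ref{regu}). For $r_1\ge 2$ neither input is available. The obstruction computation of Proposition \ref{liftW} derives its contradiction precisely from the fact that each $x_i$ has valuation exactly $1$ (a coefficient forced to be divisible by $p^2$ forces $x_n/p\in M_0^*$); it does not iterate to rule out $W/(p^{r_1+1})$-points. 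More seriously, outside the minuscule case the special fiber of $\CZ$ is in general not regular, so even a correct higher-order non-lifting statement would not yield the annihilation through Lemma \ref{pg0}, and some replacement for the regularity hypothesis (your suggested flatness/linear-invariance input) would have to be supplied. You correctly identify this as the resistant step, but as written your argument is a plan of attack rather than a proof, and the statement remains open beyond the minuscule case --- which is also the paper's own position.
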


The lay-out of the paper is as follows. In sections 2 and 3 we recall some facts about the formal moduli spaces $\CN_n$ and the geometry of their underlying schemes. In section 4 we explain the intersection product appearing on  the RHS of Conjecture \ref{AFL}. In sections 5 and 6 we address the problem of determining the underlying point set of the intersection. More precisely, we write in section 5 this intersection  $\Delta(\CM)\cap ({\rm id}_\CM\times g)\Delta(\CM)$ as a disjoint union over Bruhat-Tits strata of certain fixed point sets in each stratum. The determination of the individual fixed point sets then becomes a problem in Deligne-Lusztig theory that is discussed in section 6. In particular, we give a criterion for when this fixed point set is finite. In section 7  we explain the statements of the FL and the AFL, and show that  these conjectures can be interpreted as elementary counting expressions of lattices, as mentioned above. In the rest of the paper we concentrate on the minuscule case. In section 8 we determine the cardinality of the intersection $\Delta(\CM)\cap ({\rm id}_\CM\times g)\Delta(\CM)$ and calculate the LHS of Conjecture \ref{AFL}, which turn out to be  amusing combinatorial exercises. In section 9 we reduce the calculation of the length of the local ring at each point of this intersection to Theorem \ref{speciald}, alias Theorem \ref{secondmain} above,   and Theorem \ref{idealk}. These theorems are then proved in sections 10 and 11. Here the main tool is Zink's theory of displays of formal groups.

We conclude this introduction with a few remarks and questions. One remark is that we find it striking that the intersection $\Delta(\CM)\cap ({\rm id}_\CM\times g)\Delta(\CM)$ may be a discrete point set, but not consist entirely of {\it superspecial} points. This is in contrast to what occurs, e.g., in \cite{KR}, or \cite{Z}. A question  that seems very interesting to us is to clarify the relationship between the regular semi-simplicity of an element $g\in U(J_1\oplus 1)(F)$ and the finiteness of the intersection $\Delta(\CM)\cap ({\rm id}_\CM\times g)\Delta(\CM)$: it is easy to see that there are regular semi-simple elements $g$ such that the length of this  intersection is not finite. It would be very interesting to characterize those regular semi-simple elements with corresponding proper intersection, in analogy with the corresponding characterization in \cite{KR} of the cases when the intersection of special divisors is finite. For instance, if $g$ is not regular semi-simple, is the intersection $\Delta(\CM)\cap ({\rm id}_\CM\times g)\Delta(\CM)$  of infinite length?

We thank J.-B. Bost, B. Gross, X. He, S. Kudla, G. Lusztig, P. Scholze, J.-L. Waldspurger, X.~Zhu, and Th. Zink for helpful discussions. We also thank the referee for his/her careful reading of the paper. 

Parts of this work were done during research stays of the first author at Harvard University, of the first two authors at the Erwin Schr\"odinger Institute in Vienna, and of the third author at the Morningside Center of Mathematics and MSC of Tsinghua University at Beijing. We thank these institutions for their hospitality. 
\bigskip

\noindent{\bf Notation} Throughout the paper, we make the blanket assumption that $p$ is odd. We fix an algebraic closure $\bar k=\BF$ of $k$, and denote by $\sigma$ the relative Frobenius in ${\rm Gal}(\BF/k)$. We identify $k'$ with the quadratic extension of $k$ in $\BF$.

\section{The set up}\label{setup}
Fix $n\geq 1$. In the following, $\CN_n$ 
is the formal moduli space of $p$-divisible groups  of unitary type of signature
$(1,n-1)$,  that parametrizes tuples $(X, \iota, \lambda, \rho)$, where the quasi-isogeny $\rho$ is of
height zero, cf. \cite{KR}. Here is what we mean. 

Let $\breve F$ be the completion of the maximal unramified extension of $F$, with ring of integers $\CO_{\breve F}$ and residue field $\bar k$. We denote by ${\rm Nilp}={\rm Nilp}_{\CO_{\breve F}}$ the category of $\CO_{\breve F}$-schemes such that locally $\pi$ is a nilpotent element in the structure sheaf. We consider triples $(X, \iota, \lambda)$ where $X$ is a formal $\CO_F$-module of height $2n$, and $\iota: \CO_E\longrightarrow {\rm End}(X)$ is an action of $\CO_E$ on $X$ with Kottwitz condition of signature $(1, n-1)$, and where $\lambda$ is a  principal polarization whose associated Rosati involution induces the automorphism $\sigma$ on $\CO_E$. There is a unique such triple $(\BX, \iota, \lambda)$ over $\bar k$ such that $\BX$ is supersingular, up to $\CO_E$-linear isogeny preserving the polarizations up to a scalar.  We also write $\BX_n$ when we want to stress the dependence on $n$. Then $\CN_n$ represents the functor which to $S\in {\rm Nilp}$ associates the set of isomorphism classes of quadruples $(X, \iota, \lambda, \rho)$, 
where  $(X, \iota, \lambda)$ is a triple as 
 above over $S$, and where $\rho$ is an  $\CO_E$-linear quasi-isogeny $\rho: X\times_S\bar S\to\BX_n\times_{{\rm Spec}\, \bar k} \bar S$ of height zero, which carries the polarization on $\BX_n$ into one which differs locally by an element in $\CO_F^\times$ from $\lambda\times_S\bar S$. Here 
$\bar S=S\times_{{\rm Spec}\, \CO_{\breve F}} {\rm Spec}\, \bar k$.  

The functor $\CN_1$ is representable by ${\rm Spf}\, \CO_{\breve F}$, with universal object $(Y, \iota_0, \lambda_0)$.  We denote by $(\overline{Y}, \bar \iota_0, \bar \lambda_0)$ the same formal $\CO_F$-module as $Y$, but where $\bar \iota_0$ is obtained from $\iota_0 $ by pre-composing with $\sigma$. We may (and will) assume that for the framing objects for $\CN_{n-1}$, resp. $\CN_n$  we have the relation 
$$
\BX_n=\BX_{n-1}\times (\overline{Y}\times_{{\rm Spec}\, \CO_{\breve F}} {\rm Spec}\, \bar k) .
$$

For fixed $n\geq 2$, we abbreviate $\CN_{n-1}$ into $\CM$ and $\CN_{n}$ into  $\CN$.  We define the embedding 
\begin{equation}\label{delta}
\delta: \CM\hookrightarrow\CN, 
\end{equation}
via
$$
\delta\big((X, \iota, \lambda, \rho)\big)= (X\times \overline{Y}, \iota\times \bar \iota_0, \lambda\times \bar\lambda_0, \rho\times {\rm id}).
$$

Let 
$$
G=G_n=\{g\in {\rm End}_{E}^0(\BX_n)\mid \quad gg^\dag=1\}.
$$
Here $\dag$ is the Rosati involution induced by $\lambda$.
Then $G$ acts on $\CN_n$, by changing $\rho$ into $g\circ \rho$.

\section{The Bruhat-Tits stratification}
We recall some basic structure of the reduced part of the formal schemes of the last section,
especially the Bruhat-Tits stratification, comp.\, \cite{KR, V, VW}. This applies both to $\CM=\CN_{n-1}$ and to $\CN=\CN_n$. Let us explain the case of $\CM$.

We identify $E$ with the invariants of $\sigma^2$ in $\breve{F}$. Let $C_{n-1}$ be the hermitian space of dimension $n-1$ with hermitian form isomorphic to ${\rm diag}(1, \ldots ,1, p)$ (this differs by the factor $p$ from the form in \cite{V}).  Recall the concept of a {\it vertex lattice} in $C_{n-1}$: this is a lattice $\Lambda$ with $\pi\Lambda\subset\Lambda^*\subset\Lambda$, cf.\ \cite{KR}.  Here, as elsewhere in the paper,  $\Lambda^*$ denotes the dual lattice,  consisting of elements in the ambient vector space which pair integrally with all elements of $\Lambda$. The {\it type} of $\Lambda$ is the dimension of the $k'$-vector space $\Lambda/\Lambda^*$. 

  We denote by $\tau$ the automorphism ${\rm id}\otimes\sigma^2$ of $C_{n-1}\otimes_E {\breve F}$. We extend the hermitian form on $C_{n-1}$ to a sesqui-linear form on 
  $C_{n-1}\otimes_E {\breve F}$ by
$$
(x\otimes c, y\otimes c')=c\sigma(c')\cdot (x, y),\ x, y\in C_{n-1};\  c, c'\in {\breve F} . 
$$
The set $\CM(\BF)$ can be identified with the set  of lattices $A\subset
C_{n-1}\otimes_{E} {\breve F}$ such that
$$ A^\ast\subset^1 A\subset \pi^{-1}A^\ast ,
$$
where the notation ``$\subset^1$'' means that the quotient $A/A^\ast $ is a $\bar k$-vector space of dimension $1$. 

Recall \cite{KR} that to a lattice $A\in \CM(\BF)$, there is associated a vertex lattice $\Lambda=\Lambda (A)$ in $C_{n-1}$ via the following rule:
\begin{equation}
\Lambda(A)\otimes_{\CO_E}\CO_{\breve F}=\sum_{0}^{d}\tau^i A \text { is } \tau \text{-stable, for some } d.
\end{equation} 
 Then $\Lambda(A)\otimes_{\CO_E}\CO_{\breve F}$ is the smallest $\tau$-invariant lattice containing $A$. Dually, $\Lambda(A)^*\otimes_{\CO_E}\CO_{\breve F}$ is the largest $\tau$-invariant lattice contained in $A^*$. The
{\it type} of $A$ is the integer $t= t(A)=2d+1$, where $d$ is minimal. Equivalently, it is the type $t(\Lambda)$ of $\Lambda=\Lambda(A)$.

 For a given vertex lattice $\Lambda$, the lattices $A$ with $\Lambda=\Lambda(A)$ form the open Bruhat-Tits stratum $\CV_\Lambda(\BF)^o$ associated to $\Lambda$. The closed Bruhat-Tits stratum associated to $\Lambda$  is given by 
$$
\CV_\Lambda(\BF)=\{ A\in\mathcal M(\BF)\mid A\subset\Lambda\otimes_{\CO_E}\CO_{\breve F}\} .
$$
It turns out that these strata are in fact the $\BF$-points of algebraic subvarieties of $\CM_{\rm red}$, cf.\ \cite{VW}. More precisely, 
for any vertex $\Lambda$,  $\CV_\Lambda(\BF) $ is the set of $\BF$-points of a closed  
irreducible subvariety $\CV_\Lambda$ of $\CM_{\rm red}$ which is smooth of dimension
$\frac{1}{2}(t(\Lambda)-1)$,  the inclusions 
$ \CV_{\Lambda'}(\BF)\subset \CV_\Lambda(\BF)$ for $\Lambda'\subset \Lambda$
are induced by closed embeddings of algebraic varieties over $\BF$, and the open stratum $\CV_\Lambda(\BF)^o$ is the set of $\BF$-points of the open subvariety of $\CV_\Lambda$ obtained by removing all $\CV_{\Lambda'}$ for $\Lambda'\subsetneq \Lambda$.

Let $C_{n}=C_{n-1}\oplus E u$ with $(u,u)=1$. We again extend the pairing to a sesqui-linear pairing on  $C_n\otimes_E {\breve F}$. Then the preceding explanations apply to $\CN$ instead of $\CM$, and in particular 
$$ \CN(\BF)=\{B \mid B \text{ a lattice in }  C_n\otimes_E{\breve F} \text{ with } B^\ast\subset^1 B\subset \pi^{-1}B^\ast\} ,
$$
and again there is a Bruhat-Tits stratification, this time parametrized by  vertex lattices in $C_n$. The relation between $\CM$ and $\CN$ is given by the following lemma. 
\begin{lemma}\label{deltaonpoints}
The injection $\delta: \mathcal M(\BF)\to \mathcal N(\BF)$ induced on $\BF$-points by \eqref{delta} is given by $A\mapsto B=A\oplus \CO_{\breve F}u$. Furthermore, 
$$\Lambda(B)=\Lambda(A)\oplus \mathcal O_E u .$$
In particular, the types of $B$ and $A$ are the same. 
\end{lemma}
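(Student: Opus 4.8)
The plan is to unwind the definition of the embedding $\delta$ given in \eqref{delta} at the level of $\BF$-points, using the explicit lattice-theoretic description of $\CM(\BF)$ and $\CN(\BF)$ recalled above. Recall that $\delta$ sends $(X,\iota,\lambda,\rho)$ to $(X\times\ov Y,\iota\times\bar\iota_0,\lambda\times\bar\lambda_0,\rho\times\mathrm{id})$, and that the framing objects were chosen compatibly so that $\BX_n=\BX_{n-1}\times(\ov Y\times_{\Spec\CO_{\breve F}}\Spec\bar k)$. The first step is to identify the Dieudonn\'e-module (or, equivalently, the rational Tate module) picture: a point of $\CM(\BF)$ corresponds to a lattice $A\subset C_{n-1}\otimes_E\breve F$ with $A^*\subset^1 A\subset\pi^{-1}A^*$, and the factor $\ov Y$ over $\bar k$ contributes the fixed self-dual summand $\CO_{\breve F}u$ inside $C_n\otimes_E\breve F=(C_{n-1}\otimes_E\breve F)\oplus(Eu\otimes_E\breve F)$. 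Since the quasi-isogeny on the new factor is the identity and the polarization on $\ov Y$ is $\bar\lambda_0$, the lattice attached to $X\times\ov Y$ is exactly the orthogonal direct sum $B=A\oplus\CO_{\breve F}u$; one has to check that $B^*=A^*\oplus\CO_{\breve F}u$ (because $u$ has length $1$, so the line $Eu$ is self-dual) and hence that $B^*\subset^1 B\subset\pi^{-1}B^*$ is equivalent to $A^*\subset^1 A\subset\pi^{-1}A^*$, which gives the first assertion.

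For the statement about $\Lambda$, I would use the characterization recalled before the lemma: $\Lambda(B)\otimes_{\CO_E}\CO_{\breve F}$ is the \emph{smallest} $\tau$-stable lattice containing $B$, where $\tau=\mathrm{id}\otimes\sigma^2$. Here $\tau$ acts on $C_n\otimes_E\breve F$ preserving the decomposition $(C_{n-1}\otimes_E\breve F)\oplus(Eu\otimes_E\breve F)$, and it fixes $u$ (as $u\in C_n$, so $u\otimes 1$ is $\tau$-invariant, and $\CO_E u$ is already defined over $E$). Hence for any $d$, $\sum_{i=0}^d\tau^i B=\left(\sum_{i=0}^d\tau^i A\right)\oplus\CO_{\breve F}u$. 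Taking $d$ minimal so that the left-hand side is $\tau$-stable, the same minimal $d$ works for $A$, and one reads off $\Lambda(B)\otimes_{\CO_E}\CO_{\breve F}=\big(\Lambda(A)\otimes_{\CO_E}\CO_{\breve F}\big)\oplus\CO_{\breve F}u$, i.e.\ $\Lambda(B)=\Lambda(A)\oplus\CO_E u$ (descending back to $E$-lattices, which is legitimate since both sides are $\tau$-invariant). The statement about types is then immediate from $t(B)=2d+1=t(A)$, or alternatively from $\Lambda(B)/\Lambda(B)^*=(\Lambda(A)\oplus\CO_E u)/(\Lambda(A)^*\oplus\CO_E u)\cong\Lambda(A)/\Lambda(A)^*$, since $\CO_E u$ is self-dual.

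The only genuinely non-formal point — and the step I would be most careful about — is the very first identification: that the lattice in $C_n\otimes_E\breve F$ attached to the quadruple $(X\times\ov Y,\ldots)$ is the \emph{orthogonal} direct sum $A\oplus\CO_{\breve F}u$ with the line $\CO_{\breve F}u$ sitting as the ``length-$1$ orthogonal vector''. This requires matching the choices: that $\ov Y$ (with $\bar\iota_0$, the $\sigma$-twisted $\CO_E$-action) corresponds under the Dieudonn\'e/Tate-module dictionary to the standard self-dual rank-one $\CO_E\otimes\CO_{\breve F}$-lattice on which the hermitian form is $(u,u)=1$, and that the framing relation $\BX_n=\BX_{n-1}\times(\ov Y\times_{\Spec\CO_{\breve F}}\Spec\bar k)$ is compatible with the decomposition $C_n=C_{n-1}\oplus Eu$ used to set up the Bruhat-Tits stratification on $\CN$. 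This is essentially a bookkeeping exercise, tracing through the constructions of \cite{KR,V,VW} and \S\ref{setup}, but it is where all the normalization conventions (the $\sigma$-twist on $\ov Y$, the polarization $\bar\lambda_0$, the ``differs by $\CO_F^\times$'' clause, and the factor-$p$ convention for the hermitian form on $C_{n-1}$) have to be verified to line up. Once that identification is in place, everything else is the elementary direct-sum computation sketched above.
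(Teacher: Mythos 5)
Your proposal is correct and follows the same route as the paper, which simply states that the first assertions ``follow easily from the identification of $\CN(\bar k)$ in terms of lattices'' and that the last is ``obvious since $u$ is a unimodular vector''; you have merely filled in the details (the $\tau$-invariance of $\CO_{\breve F}u$, the self-duality of the line $Eu$, and the compatibility of the framing objects) that the paper leaves implicit.
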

\begin{proof}
The first assertions follow easily from the identification of $\CN(\bar k)$ in terms of lattices. The last assertion is  obvious since $u$ is a unimodular vector. 
\end{proof}
\begin{corollary}
The morphism $\delta$ is compatible with the Bruhat-Tits stratifications of $\CM_{\rm red}$, resp.\ $\CN_{\rm red}$, in the sense that the stratum 
of $\CM_{\rm red}$ corresponding to the vertex lattice $\Lambda$ in $C_{n-1}$ is mapped to the stratum of $\CN_{\rm red}$ corresponding to the vertex lattice $\Lambda\oplus \mathcal O_E u$ in $C_n$. \qed
\end{corollary}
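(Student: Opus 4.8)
The plan is to read the claim off from Lemma~\ref{deltaonpoints} together with the lattice-theoretic description of the Bruhat--Tits strata recalled just above it. First I would recall that, on $\BF$-points, the closed stratum attached to a vertex lattice $\Lambda$ in $C_{n-1}$ is $\CV_\Lambda(\BF)=\{A\in\CM(\BF)\mid A\subset\Lambda\otimes_{\CO_E}\CO_{\breve F}\}$ and the open stratum is $\CV_\Lambda(\BF)^o=\{A\in\CM(\BF)\mid \Lambda(A)=\Lambda\}$, with the analogous descriptions for $\CN$ in terms of vertex lattices in $C_n$.

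Next I would apply Lemma~\ref{deltaonpoints}: for $A\in\CM(\BF)$ one has $\delta(A)=A\oplus\CO_{\breve F}u$ and $\Lambda(\delta(A))=\Lambda(A)\oplus\CO_E u$. Hence if $A\subset\Lambda\otimes_{\CO_E}\CO_{\breve F}$, then $\delta(A)\subset(\Lambda\oplus\CO_E u)\otimes_{\CO_E}\CO_{\breve F}$, so $\delta$ carries $\CV_\Lambda(\BF)$ into $\CV_{\Lambda\oplus\CO_E u}(\BF)$; and if $\Lambda(A)=\Lambda$, then $\Lambda(\delta(A))=\Lambda\oplus\CO_E u$, so $\delta$ carries $\CV_\Lambda(\BF)^o$ into $\CV_{\Lambda\oplus\CO_E u}(\BF)^o$. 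Since $\Lambda\mapsto\Lambda\oplus\CO_E u$ is injective on vertex lattices and preserves the type, this already determines the stratum hit in each case (note that it is not surjective onto all vertex lattices in $C_n$, so most strata of $\CN_{\rm red}$ are not met).

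Finally I would upgrade this from $\BF$-points to a statement about the morphism $\delta$ itself. Here $\CV_\Lambda$ is a reduced scheme of finite type over $\BF$ (in fact smooth and irreducible, by \cite{VW}) and $\CV_{\Lambda\oplus\CO_E u}$ is a closed subscheme of $\CN_{\rm red}$; since the set-theoretic image of $\CV_\Lambda$ under $\delta$ is contained in it and $\CV_\Lambda$ is reduced, the restriction $\delta|_{\CV_\Lambda}$ factors through the closed immersion $\CV_{\Lambda\oplus\CO_E u}\hookrightarrow\CN_{\rm red}$, which is the assertion. I would also note, although it is not needed here, that $\delta$ being a closed immersion and the two strata having the same dimension $\frac12(t(\Lambda)-1)$ by Lemma~\ref{deltaonpoints}, this factorization is an isomorphism of $\CV_\Lambda$ onto $\CV_{\Lambda\oplus\CO_E u}$. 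There is no genuine obstacle; the only point deserving a word is the passage from $\BF$-points to schemes, which is legitimate precisely because the strata are reduced.
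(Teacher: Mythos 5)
Your argument is exactly the one the paper intends: the corollary is stated with an immediate \qed because it follows directly from Lemma~\ref{deltaonpoints} together with the lattice-theoretic description of the strata, which is precisely what you spell out. The proposal is correct, and the extra care you take in passing from $\BF$-points to the (reduced) schemes is legitimate and harmless.
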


\section{An intersection problem}

 The morphism  $\delta$ induces a closed embedding of formal schemes, 
\begin{equation}
\Delta: \CM\to \CM\times_{{\rm Spf}\, \CO_{\breve F}}\CN,
\end{equation}
 with components ${\rm id}_\CM$ and $\delta$. Let $g\in G$. Then $g$ induces an automorphism $g: \CN\to\CN$. We denote by $\CN^g$ the fixed point locus,  defined to be  the intersection in $\CN\times_{{\rm Spf} \CO_{\breve F}}\CN$,
$$
\CN^g=\Delta_\CN\cap\Gamma_g . 
$$
Here $\Delta_\CN\subset \CN\times_{{\rm Spf} \CO_{\breve F}}\CN$ is the diagonal of $\CN$, and $\Gamma_g$ is the graph of $g$.

\begin{definition}
An element $g\in G$ is called regular semi-simple, if the matrix in ${ M} _n(E)$
$$
(g^iu, g^ku), i=0,\ldots, n-1; k=0,\ldots n-1\ ,
$$
is non-singular. Equivalently, the vectors $g^iu, i=0,\ldots,n-1, $ form a basis of $C_n$. 
\end{definition}

Here we have identified the group $G=G_n$ with the unitary group of $C_n$ as explained in \S 2.2 of \cite{Z}. 

This definition coincides with the definition of regular semi-simplicity in the introduction. Indeed,  we may identify $C_n$ with the hermitian space $(E^n, J)$ for $J=J_1\oplus 1$ such that $u$ is mapped to $v=(0,0,\ldots,0,1)$. If the vectors $g^iv, i=0,\ldots,n-1, $ form a basis of $C_n$, then so do the vectors $^tg^iv=J^{-1}\bar g^i Ju, i=0,\ldots,n-1$ since $Ju=u$. Hence $^tvg^{i}, i=0,\ldots,n-1,$ also form a basis of $E^n$.

\begin{proposition}\label{scheme}
 (i) There is an equality of formal schemes over ${\rm Spf}\, \CO_{\breve F}$
$$
\delta(\CM)\cap\CN^g=\Delta(\CM)\cap ({\rm id}_\CM\times g)\Delta(\CM) . 
$$
\smallskip
{\rm Now let $F=\BQ_p$.}

\smallskip

\noindent (ii)  If $g$ is regular semi-simple, then this formal scheme is a  scheme (i.e., 
any  ideal of definition is nilpotent) with underlying reduced subscheme proper over ${\rm Spec}\, \BF$. 

\smallskip 

\noindent (iii) If $g$ is regular semi-simple and $\big(\delta(\CM)\cap\CN^g\big)(\BF)$ is finite,  then 
$$
\CO_{\Delta(\CM)\cap ({\rm id}_\CM\times g)\Delta(\CM)}=\CO_{\Delta(\CM)}\otimes^\BL \CO_{({\rm id}_\CM\times g)\Delta(\CM)} ,
$$ 
i.e., the sheaf on the LHS represents the object on the RHS in the derived category.
\end{proposition}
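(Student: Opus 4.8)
The plan is to dispatch (i) by a formal manipulation of fibre products, (ii) by combining the structure theory of special divisors with a lattice count, and (iii) by the standard local criterion for $\mathrm{Tor}$-independence of a proper intersection of Cohen--Macaulay subschemes in a regular ambient. For (i): since $\Delta=(\mathrm{id}_\CM,\delta)$, the two formal subschemes are the graphs $\Gamma_\delta$ and $\Gamma_{g\circ\delta}$ of the morphisms $\delta,\,g\circ\delta\colon\CM\to\CN$ inside $\CM\times_{{\rm Spf}\,\CO_{\breve F}}\CN$; both are sections of the first projection, and a direct check with universal properties identifies their scheme-theoretic intersection with the equaliser of $\delta$ and $g\circ\delta$, i.e.\ the fibre product of $\CM$ (mapped by $(\delta,g\circ\delta)$) with the diagonal $\Delta_\CN$ over $\CN\times_{{\rm Spf}\,\CO_{\breve F}}\CN$. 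On the other hand $\CN^g=\Delta_\CN\cap\Gamma_g$ is, via the first projection, the equaliser of $\mathrm{id}_\CN$ and $g$ inside $\CN$, and pulling this closed formal subscheme back along $\delta$ produces the same equaliser of $\delta$ and $g\circ\delta$. (Here $\delta$ is a closed immersion, by Lemma~\ref{deltaonpoints} and the local structure of the $\CN_m$, so these intersections are well defined.) Hence both sides of (i) are canonically identified over ${\rm Spf}\,\CO_{\breve F}$.

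For (ii), with $F=\BQ_p$: that $\delta(\CM)\cap\CN^g$ is a scheme I would deduce from the inclusion, recorded in the introduction, of $\delta(\CM)\cap\CN^g$ as a closed formal subscheme of $\CZ(u)\cap\CZ(gu)\cap\dots\cap\CZ(g^{n-1}u)$: as $g$ is regular semi-simple the vectors $u,gu,\dots,g^{n-1}u$ are linearly independent, so by the Kudla--Rapoport theory of special divisors \cite{KR} this intersection of $n$ special divisors is representable by a scheme --- and a closed formal subscheme of a scheme is a scheme. For properness of the reduced subscheme, argue on $\BF$-points via the lattice description recalled above: a point of $\bigl(\delta(\CM)\cap\CN^g\bigr)(\BF)$ is a lattice $B=A\oplus\CO_{\breve F}u$ with $gB=B$, so $L_g\otimes_{\CO_E}\CO_{\breve F}\subset B\subset\pi^{-1}B^{\ast}\subset\pi^{-1}L_g^{\ast}\otimes_{\CO_E}\CO_{\breve F}$, and passing to the associated vertex lattice gives $L_g\subset\Lambda(B)\subset\pi^{-1}L_g^{\ast}$ in $C_n$. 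Since $g$ is regular semi-simple, $L_g$ is a full $\CO_E$-lattice, so only finitely many vertex lattices occur; the reduced intersection is thus set-theoretically contained in a finite union $\CV_{\Lambda_1}\cup\dots\cup\CV_{\Lambda_r}$ of closed Bruhat--Tits strata, each projective over $\BF$ (being a generalised Deligne--Lusztig variety, cf.\ \cite{VW}), whence it is proper over ${\rm Spec}\,\BF$.

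For (iii): under the finiteness hypothesis, (i) and (ii) make $\delta(\CM)\cap\CN^g$ an Artinian scheme, finite over $\BF$. Now $\Delta(\CM)$ and $(\mathrm{id}_\CM\times g)\Delta(\CM)$ are each isomorphic to $\CM=\CN_{n-1}$, hence regular --- in particular Cohen--Macaulay --- of formal dimension $n-1$, and they are closed subschemes of the regular formal scheme $\CM\times_{{\rm Spf}\,\CO_{\breve F}}\CN$ of formal dimension $2(n-1)$ (cf.\ \cite{Z}); their intersection has dimension $0=(n-1)+(n-1)-2(n-1)$. We are therefore in the situation of two Cohen--Macaulay subschemes of a regular ambient meeting properly in the expected dimension, and the vanishing of the higher $\mathrm{Tor}$-sheaves is the standard local fact: if $R$ is regular local and $M,N$ are Cohen--Macaulay $R$-modules with $\ell(M\otimes_R N)<\infty$ and $\dim M+\dim N=\dim R$, then $\mathrm{Tor}^R_i(M,N)=0$ for $i>0$ --- proved by resolving $M$ by a free $R$-complex of length $\mathrm{pd}_R M=\dim R-\dim M=\dim N$ (Auslander--Buchsbaum), tensoring with $N$, and applying the Peskine--Szpiro acyclicity lemma, since the $i$-th term is then a free $N$-module of depth $\dim N\ge i$ while the higher homology has finite length. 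Applying this at each of the finitely many points of $\delta(\CM)\cap\CN^g$ gives (iii).

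The ingredient that is not soft is the assertion in (ii) that $\delta(\CM)\cap\CN^g$ is a scheme: the lattice count only bounds its support, and one genuinely needs the Kudla--Rapoport analysis of intersections of special divisors to rule out a persisting ideal of definition. A secondary delicate point is that the $\mathrm{Tor}$-vanishing in (iii) rests on regularity of $\CM\times_{{\rm Spf}\,\CO_{\breve F}}\CN$; one could instead exploit that $\delta\colon\CM\hookrightarrow\CN$ is a regular immersion of codimension one (both being regular) and run a Koszul-complex argument for the same conclusion.
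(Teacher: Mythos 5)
Your argument follows essentially the same route as the paper. Part (i) is, as you say, a tautology on $S$-valued points (your equaliser formulation is just a repackaging of this). For part (iii) the paper phrases the Tor-vanishing via regular sequences: each of $\Delta(\CM)$ and $({\rm id}_\CM\times g)\Delta(\CM)$ is regular of codimension $n-1$ in the regular ambient $\CM\times_{{\rm Spf}\,\CO_{\breve F}}\CN$, hence locally cut out by $n-1$ regular divisors, and a proper intersection of the resulting $2(n-1)$ divisors has no higher Tor (comp.\ \cite{T}); your Cohen--Macaulay/Auslander--Buchsbaum/acyclicity-lemma argument is an equivalent packaging of the same local algebra, and your alternative Koszul remark is exactly the paper's viewpoint. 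The one point to correct is the attribution in (ii): the assertion that the formal scheme has nilpotent ideal of definition is \emph{not} a purely local result of \cite{KR}. The paper simply invokes \cite{Z}, Lemma~2.8, whose proof globalizes the special divisors (using \cite{KR2}); this is precisely why the hypothesis $F=\BQ_p$ is imposed in (ii) and (iii), as the Remark following the proposition explains. Your reduction to the intersection $\CZ(u)\cap\cdots\cap\CZ(g^{n-1}u)$ and your properness argument via finitely many vertex lattices squeezed between $L_g$ and $L_g^\ast$ (cf.\ Lemma~\ref{squeeze}) are both correct and match the underlying mechanism, but the citation should be to \cite{Z}, Lemma~2.8 (resp.\ \cite{KR2}) rather than to the local theory of \cite{KR} alone.
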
 
\begin{proof}
The first assertion follows by checking the equality on $S$-valued points, for $S\in {\rm Nilp}$, where it is a tautology. For the second assertion, we refer to \cite{Z}, Lemma 2.8.

For the third assertion, first note that if the intersection has a  finite number of points, it is an artinian scheme. Now both $\Delta(\CM)$ and $\CM\times_{{\rm Spf}\, \CO_{\breve F}}\CN$ are regular formal schemes of dimension $n-1$, resp. $2(n-1)$, and  therefore locally $\Delta(\CM)$ is the intersection of $n-1$ regular divisors in $\CM\times_{{\rm Spf}\, \CO_{\breve F}}\CN$. The same applies to $({\rm id}_\CM\times g)\Delta(\CM)$. Hence, if  $\Delta(\CM)\cap ({\rm id}_\CM\times g)\Delta(\CM)$ is discrete, the intersection of the $2(n-1)$ regular divisors is proper. Hence there are no higher Tor-terms, and the assertion follows, comp. \cite{T}. 
\end{proof}
\begin{remark}
The hypothesis $F=\BQ_p$ for (ii) and (iii) is made because the proof of \cite{Z}, Lemma 2.8 makes use of global methods. In fact, the proof uses a globalization of the special divisors $\CZ(x)$ of \cite{KR}. The assertions should be true for arbitrary $F$.

 It follows from (ii) that the Euler-Poincar\'e characteristic of $\CO_{\Delta(\CM)}\otimes^\BL \CO_{({\rm id}_\CM\times g)\Delta(\CM)}$ is finite. The {\it arithmetic intersection number} is defined to be 
\begin{equation}\label{defofarint}
\big\langle\Delta(\CM), ({\rm id}_\CM\times g)\Delta(\CM)\big\rangle=\chi( \CO_{\Delta(\CM)}\otimes^\BL \CO_{({\rm id}_\CM\times g)\Delta(\CM)}\big) {\rm log}\, q .
\end{equation}
\end{remark}
\begin{remark} In case the intersection $\delta(\CM)\cap\CN^g$ is discrete, it follows that locally at a point of intersection the fixed point locus $\CN^g$  is purely  formally one-dimensional: indeed, in this case the formal scheme $\CN^g$ intersects properly the formal divisor $\CM$ of $\CN$. 

\end{remark}
\begin{remark}
In case the intersection $\delta(\CM)\cap\CN^g$ is discrete, its underlying set  is stratified by the Bruhat-Tits stratification of $\CM$.  We define in this case for a vertex lattice $\Lambda$ in $C_{n-1}$
\begin{equation}\label{defmult}
{\rm mult}(\Lambda)=\sum_{x\in\CV(\Lambda)^o(\BF)} \ell\big({\CO_{\Delta(\CM)\cap ({\rm id}_\CM\times g)\Delta(\CM), x}}\big) . 
\end{equation}
The total arithmetic intersection number \eqref{defofarint} is in this case given by a finite sum 
$$
\big\langle\Delta(\CM), ({\rm id}_\CM\times g)\Delta(\CM)\big\rangle={\rm log}\, q \sum_{\Lambda} {\rm mult}(\Lambda) . 
$$
\end{remark}
Our next task will be to analyze which vertex lattices $\Lambda$ contribute effectively to this sum, and to understand the set of points in $\CV(\Lambda)^o\cap\CN^g$.

\section{Description of $(\delta(\CM)\cap \CN^g)^{}(\BF)$}

Let $g\in G$. It is clear that 
\begin{equation}
\CN^g(\BF)=\{B\in \CN(\BF)\mid  g(B)\subset B\}=\{B\in \CN(\BF)\mid  g(B)=B\}.
\end{equation}

\begin{lemma}
If $B\in \CV_\Lambda(\BF)^o$ is stable under $g$, then $g(\Lambda)=\Lambda$. In particular, if $\CN^g(\BF)\neq\emptyset$,  then 
the characteristic polynomial ${\rm char}_g(T)$ of $g$ has integral coefficients, i.e., ${\rm char}_g(T)\in \CO_E[T]$. 
\end{lemma}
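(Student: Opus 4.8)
The plan is to combine the lattice description of the open Bruhat--Tits stratum $\CV_\Lambda(\BF)^o$ recalled above with the elementary observation that an element $g\in G$, viewed in the unitary group of $C_n$, acts $E$-linearly on $C_n$ and hence, on $C_n\otimes_E\breve F$, commutes with the semilinear operator $\tau={\rm id}\otimes\sigma^2$.

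\emph{First step.} For $B\in\CV_\Lambda(\BF)^o$, the lattice $\Lambda\otimes_{\CO_E}\CO_{\breve F}=\sum_{i=0}^d\tau^iB$ is characterized as the smallest $\tau$-stable $\CO_{\breve F}$-lattice in $C_n\otimes_E\breve F$ containing $B$. Hence, if $g(B)=B$, then applying $g$ (which commutes with $\tau$) to $\sum_{i=0}^d\tau^iB$ returns the same lattice, i.e.\ $g$ stabilizes $\Lambda\otimes_{\CO_E}\CO_{\breve F}$. \emph{Second step: descent to $C_n$.} Since $\CO_{\breve F}^{\sigma^2}=\CO_E$, one has $\Lambda=(\Lambda\otimes_{\CO_E}\CO_{\breve F})^{\tau}=(\Lambda\otimes_{\CO_E}\CO_{\breve F})\cap C_n$ inside $C_n\otimes_E\breve F$; as $g$ preserves both $C_n$ and $\Lambda\otimes_{\CO_E}\CO_{\breve F}$, it preserves their intersection, so $g(\Lambda)\subseteq\Lambda$. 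Running the same argument for $g^{-1}=g^\dag$, which also fixes $B$, upgrades this to $g(\Lambda)=\Lambda$.

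\emph{Conclusion and the ``in particular''.} A vertex lattice $\Lambda$ in $C_n$ is a full $\CO_E$-lattice (it satisfies $\pi\Lambda\subset\Lambda^*\subset\Lambda$), so $g(\Lambda)=\Lambda$ means $g$ restricts to an $\CO_E$-linear automorphism of $\Lambda$; writing $g$ as a matrix in an $\CO_E$-basis of $\Lambda$ shows ${\rm char}_g(T)=\det(T\cdot{\rm id}-g)\in\CO_E[T]$. If moreover $\CN^g(\BF)\neq\emptyset$, any $B$ in it lies in a unique open stratum $\CV_\Lambda(\BF)^o$ and is $g$-stable by the identification of $\CN^g(\BF)$ given above, so the previous steps apply. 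I do not expect a genuine obstacle here: the only point deserving a word of justification is the descent step, which is faithfully flat descent (equivalently, the identification of the $\tau$-invariants of $\Lambda\otimes_{\CO_E}\CO_{\breve F}$ with $\Lambda$), together with the remark that $g$ is invertible in $G$ so that the argument may legitimately be applied to $g^{-1}$ as well.
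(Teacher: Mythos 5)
Your proof is correct and is exactly the expansion of the paper's one-line argument (``Obvious, since $g$ and $\tau$ commute''): $g$ acts $E$-linearly on $C_n$, hence commutes with $\tau={\rm id}\otimes\sigma^2$, so it preserves the smallest $\tau$-stable lattice $\sum_i\tau^iB=\Lambda\otimes_{\CO_E}\CO_{\breve F}$ containing $B$, and Galois descent gives $g(\Lambda)=\Lambda$, whence integrality of ${\rm char}_g$. Nothing further is needed.
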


\begin{proof}
Obvious, since $g$ and $\tau$ commute. 
\end{proof}

To a regular semi-simple element $g\in G$ we associate the  $\CO_E$-lattice $L_g$ in $C_n$ generated by $g^iu$ (an $\CO_E$-lattice, since $u, 
g u,\ldots, g^{n-1}u$ form a basis of $C_n$).

\begin{lemma}\label{squeeze}
Let $g\in G$  be regular semisimple.  Then for
any $B=A\oplus \CO_{\breve F}u$ stable under $g$, we  have
$$ L_g \subset
\Lambda(B)^\ast\subset \Lambda(B)\subset L_g^\ast.
$$
In particular, the invariants $(g^iu,g^{k}u)$ take  values in $\mathcal O_E$. 

Conversely, if $B\in \CN(\BF)$ contains $L_{g}$, then $B$ is of the form $B=A\oplus \CO_{\breve F}u$, for a unique $A\in \CM(\BF)$. 
\end{lemma}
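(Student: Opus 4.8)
\emph{Approach.} I would build the argument on two ingredients already in place: first, that $g$ lies in the unitary group of $C_n$, hence is an isometry, so that $g$-stability of a lattice forces $g$-stability of its dual; and second, the descriptions recalled above of $\Lambda(B)\otimes_{\CO_E}\CO_{\breve F}$ as the \emph{smallest} $\tau$-stable lattice containing $B$ and of $\Lambda(B)^\ast\otimes_{\CO_E}\CO_{\breve F}$ as the \emph{largest} $\tau$-stable lattice contained in $B^\ast$. The bridge between $L_g$ and $\Lambda(B)$ is that $L_g\otimes_{\CO_E}\CO_{\breve F}$ is $\tau$-stable, since $L_g$ is defined over $\CO_E$; note $L_g$ has full rank by regular semisimplicity. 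I would also use from the start that $u$ is unimodular and orthogonal to $C_{n-1}$, so a lattice $B=A\oplus\CO_{\breve F}u$ satisfies $B^\ast=A^\ast\oplus\CO_{\breve F}u$, and in particular $u\in B\cap B^\ast$.

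\emph{The inclusion chain.} Given $B=A\oplus\CO_{\breve F}u$ with $g(B)=B$, I would first note $g(B^\ast)=B^\ast$ since $g$ is an isometry; hence $g^iu\in B^\ast$ for all $i\ge 0$, so $L_g\otimes_{\CO_E}\CO_{\breve F}\subseteq B^\ast$. This sublattice is $\tau$-stable, so it lies inside the largest $\tau$-stable sublattice $\Lambda(B)^\ast\otimes_{\CO_E}\CO_{\breve F}$ of $B^\ast$; descending along $\CO_E\to\CO_{\breve F}$ gives $L_g\subseteq\Lambda(B)^\ast$. The middle inclusion $\Lambda(B)^\ast\subseteq\Lambda(B)$ is the defining property of a vertex lattice, and dualizing $L_g\subseteq\Lambda(B)^\ast$ in $C_n$ yields $\Lambda(B)\subseteq L_g^\ast$. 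Combining, $L_g\subseteq L_g^\ast$, so the pairings $(g^iu,g^ku)$ of elements of $L_g$ lie in $\CO_E$.

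\emph{The converse.} Suppose $B\in\CN(\BF)$ contains $L_g$, so $u=g^0u\in B$. I would first show $u\in B^\ast$: every element of $B^\ast$ pairs integrally with $u\in B$, so if $u\notin B^\ast$ then its image spans the one-dimensional $\bar k$-vector space $B/B^\ast$, forcing $B=B^\ast+\CO_{\breve F}u$; but then $u$, pairing integrally with $B^\ast$ and with itself ($(u,u)=1$), pairs integrally with all of $B$, i.e.\ $u\in B^\ast$ --- a contradiction. Granting $u\in B^\ast$, the $\CO_{\breve F}$-linear functional $b\mapsto(b,u)$ maps $B$ onto $\CO_{\breve F}$ (it sends $u$ to $1$) with kernel $A:=B\cap(C_{n-1}\otimes_E\breve F)$, and $u$ splits the resulting sequence, so $B=A\oplus\CO_{\breve F}u$. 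Orthogonality of $A$ to $u$ and unimodularity of $u$ give $B^\ast=A^\ast\oplus\CO_{\breve F}u$, which transfers the conditions $B^\ast\subset^1 B\subset\pi^{-1}B^\ast$ verbatim to $A^\ast\subset^1 A\subset\pi^{-1}A^\ast$; thus $A\in\CM(\BF)$, and uniqueness is clear since $A=B\cap(C_{n-1}\otimes_E\breve F)$ is intrinsic.

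\emph{Expected difficulty.} I do not anticipate a serious obstacle; the only step requiring a short argument is the claim $u\in B^\ast$ in the converse, which rests on the one-dimensionality of $B/B^\ast$. Everything else is routine manipulation with dual lattices, using that $g$ is an isometry and the two extremal descriptions of $\Lambda(B)$ and $\Lambda(B)^\ast$.
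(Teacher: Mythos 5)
Your proposal is correct and follows essentially the same route as the paper: for the chain of inclusions you use $u\in B^\ast$, the $g$-stability of $B^\ast$, the $\tau$-stability of $L_g\otimes_{\CO_E}\CO_{\breve F}$ together with the maximality of $\Lambda(B)^\ast\otimes_{\CO_E}\CO_{\breve F}$ in $B^\ast$, and then dualize; for the converse you split off $\CO_{\breve F}u$ using unimodularity of $u$. The only (welcome) extra is your short argument deducing $u\in B^\ast$ from $u\in B$ via the one-dimensionality of $B/B^\ast$, which handles the hypothesis ``$B$ contains $L_g$'' literally, whereas the paper's one-line converse starts from $L_g\subset B^\ast$.
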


\begin{proof}
Recall that  $\Lambda(B)^{\ast}$ is the largest
$\tau$-invariant lattice contained in $B^\ast$. Since $B$ is of the form $B=A\oplus \CO_{\breve F}u$, it follows that $u\in B^*$. Since $gB=B$, we also have $gB^*=B^*$. Hence $g^iu\in B^*$ for all $i\geq 0$. Hence $L_g\subset B^*$, and therefore $L_g\subset \Lambda(B)^*$ by the maximality of $\Lambda(B)^*$.
The other inclusion  is obtained by taking duals.

For the converse, note that the inclusion $L_g\subset B^*$ implies that $u\in B^*$. Since $u$ is a unimodular vector, setting $A=u^\perp$, we obtain $B=A\oplus \CO_{\breve F}u$. 
\end{proof}

By definition, $L=L_g$ is a $g$-cyclic lattice, i.e., there exists $v\in L$ such that $L$ is generated over $\CO_E$ by $\{ g^iv\mid 0\leq i\leq n-1\}$. 

\begin{lemma}\label{cycl} Let $g\in G$ and 
let $L$ be a $g$-cyclic lattice with $gL=L$. Then $L^*$ is also a $g$-cyclic lattice with $gL^*=L^*$. 
\end{lemma}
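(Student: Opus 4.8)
The plan is to produce an explicit $g$-cyclic generator for $L^{*}$ out of the basis of $L$ dual to a cyclic basis of $L$. First I would record some preliminaries. Fix a cyclic generator $v$ of $L$; since $L$ is a full $\CO_{E}$-lattice in $C_{n}$ and is generated over $\CO_{E}$ by $v, gv, \dots, g^{n-1}v$, these $n$ vectors form an $\CO_{E}$-basis of $L$ and an $E$-basis of $C_{n}$. From $gL = L$ we also get $g^{-1}L = L$, so $g$ defines an element of $\GL_{n}(\CO_{E})$ in this basis; its characteristic polynomial $f(T) = T^{n} - c_{n-1}T^{n-1} - \dots - c_{1}T - c_{0}$ therefore lies in $\CO_{E}[T]$ with $c_{0} \in \CO_{E}^{\times}$, and from $f(g) = 0$ one gets $g^{-1} = c_{0}^{-1}(g^{n-1} - c_{n-1}g^{n-2} - \dots - c_{1}) \in \CO_{E}[g]$. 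Finally, $g \in G$ is unitary for the hermitian form on $C_{n}$ (since $gg^{\dagger} = 1$ with $\dagger$ the adjoint), and this already gives $gL^{*} = L^{*}$, because $L^{*} = (g^{-1}L)^{*} = g^{\dagger}L^{*} = g^{-1}L^{*}$. So it remains only to exhibit a $g$-cyclic generator of $L^{*}$.

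Next I would take $w_{0}, \dots, w_{n-1}$ to be the basis of $C_{n}$ dual to $v, gv, \dots, g^{n-1}v$ for the hermitian form, so $(g^{i}v, w_{j}) = \delta_{ij}$; then $w_{0}, \dots, w_{n-1}$ is an $\CO_{E}$-basis of $L^{*}$. I claim $w := w_{n-1}$ works. Pairing $g^{m}w$ against $g^{k}v$ and using unitarity, $(g^{k}v, g^{m}w) = (g^{k-m}v, w_{n-1})$, which is exactly the coefficient of $g^{n-1}v$ when the vector $g^{k-m}v \in L$ is expressed in the basis $v, gv, \dots, g^{n-1}v$. This coefficient is $\delta_{k-m,\,n-1}$ for $0 \le k-m \le n-1$, and it is $c_{0}^{-1}$ when $k-m = -1$, by the formula for $g^{-1}$ above. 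Hence the matrix $M$ expressing $w, gw, \dots, g^{n-1}w$ in the basis $w_{0}, \dots, w_{n-1}$ has (up to applying $\sigma$ to all its entries) vanishing entries on and above the main diagonal except for a single $1$ in the top-right corner, and entries equal to $c_{0}^{-1}$ all along the first sub-diagonal (with unconstrained entries below it). In the Leibniz expansion of $\det M$ only the $n$-cycle $0 \mapsto n-1 \mapsto n-2 \mapsto \dots \mapsto 1 \mapsto 0$ contributes, so $\det M = \pm\, \sigma(c_{0})^{-(n-1)} \in \CO_{E}^{\times}$.

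It follows that $w, gw, \dots, g^{n-1}w$ is again an $\CO_{E}$-basis of $L^{*}$, so $L^{*} = \CO_{E}[g]\,w$ is $g$-cyclic; together with $gL^{*} = L^{*}$ this proves the lemma. There is no serious obstacle once one has guessed the right generator $w = w_{n-1}$; the points that need care are the integrality statements $g^{-1} \in \CO_{E}[g]$ and $c_{0} \in \CO_{E}^{\times}$, which genuinely use $gL = L$ and not merely $gL \subseteq L$, and the bookkeeping of the $\sigma$-semilinearity of the hermitian form, which is harmless here since $\sigma$ fixes $\CO_{E}^{\times}$. More conceptually, the computation above amounts to the facts that $L \cong \CO_{E}[g] = \CO_{E}[T]/(f)$ as a module over the order $\CO_{E}[g]$, that this order is a hypersurface and hence Gorenstein over $\CO_{E}$, and that therefore the $\CO_{E}$-linear dual $L^{*}$ is again free of rank one over $\CO_{E}[g]$; one could alternatively organize the proof around that observation.
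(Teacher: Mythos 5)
Your proof is correct, and it follows the same basic strategy as the paper: pass to the basis of $L^*$ dual to the cyclic basis $v, gv,\ldots,g^{n-1}v$ of $L$ and exhibit one of the dual basis vectors as a $g$-cyclic generator of $L^*$. The one genuine difference is the choice of generator. You take the vector dual to $g^{n-1}v$, which forces you to bring in the characteristic polynomial, the integrality $g^{-1}\in\CO_E[g]$ with unit constant term $c_0$, and a determinant evaluation via the unique contributing $n$-cycle. The paper instead takes $v'=v'_0$, the vector dual to $v$ itself; then the transition matrix from the dual basis to $v', gv',\ldots,g^{n-1}v'$ is unipotent upper triangular with integral entries, the key computation being simply $(g^jv',g^iv)=(v',g^{i-j}v)=\delta_{ij}$ for $i\ge j$ together with the trivial integrality of $(L^*,L)$ for $i<j$, so no determinant and no characteristic polynomial are needed. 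Your version is slightly heavier but entirely sound: your attention to exactly where $gL=L$ (rather than $gL\subseteq L$) is used, and to the $\sigma$-semilinearity, is correct, and your closing remark that the statement is really the Gorenstein property of the hypersurface order $\CO_E[T]/(f)$ is a valid conceptual reorganization of the same argument.
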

\begin{proof} Let $L$ be generated by $v_i=g^iv$, where $i=0,\ldots,n-1$. Then the $v_i$ form a basis of $C_n$. Let $v'_i$ be the dual basis, i.e.,
$$
(v_i, v'_j)=\delta_{ij} .
$$
Then $L^*$ is the $\CO_E$-span of $\{ v'_i\mid i=0,\ldots,n-1\}$. Let $v'=v'_0$. We claim that $v'$ is a cyclic generator of $L^*$, i.e., that the elements $w_i=g^iv'$ for $i=0,\ldots, n-1$ generate $L^*$ as an $\CO_E$-module.  It is clear that $w_i\in L^*$ for all $i$. 

{\bf Claim}: {\it $v'_j-w_j$ is a $\CO_E$-linear combination of $w_0,\ldots,w_{j-1}$, or equivalently, $v'_j-w_j$ is a $\CO_E$-linear combination of $v'_0,\ldots,v'_{j-1}$ (i.e., the matrix representing the base change from $v'_i$ to $w_i$ is a unipotent upper triangular matrix with integral entries). }

It is clear that this claim implies the lemma. Now for any vector $w$, we have $w=\sum\nolimits_i(w, v_i)v'_i$. Hence the claim is equivalent to 
\begin{equation}\label{inn}
(v'_j-w_j, v_i)=\begin{cases} \in \CO_E&\text 
{if $i<j$,}\\
0&\text{if $i\geq j$.}
\end{cases}
 \end{equation}
 Now for $i\geq j$, we have
 $$
 (w_j, v_i)=(g^jv', g^iv)=(v', g^{i-j}v)=\delta_{i j}=(v'_j, v_i) .
 $$
 This proves the second clause in (\ref{inn}). The first clause is trivial since $v'_j-w_j\in L^*$.  
 \end{proof}
\begin{definition} Let $L$ be a lattice in $C_n$ with $L\subset L^*$. Then set 
 $$
 {\rm Vert}(L)=\{\Lambda\mid \Lambda \text{ vertex lattice with } L\subset \Lambda^*\subset\Lambda\subset L^* \} , 
$$
cf.\ \cite{KR}. If $g\in G$ with $gL=L$, then $g$ acts on ${\rm Vert}(L)$, and we set 
$$
{\rm Vert}^g(L)=\{\Lambda\in{\rm Vert}(L)\mid g\Lambda=\Lambda \} .
$$
\end{definition} 
Note that by Lemma \ref{squeeze}, the assumption on $L$ is satisfied for $L=L_g$, if $\delta(\CM)\cap \CN^g\neq\emptyset$. We may summarize Lemma \ref{squeeze} as follows.  
\begin{corollary}\label{stratCZ}
(i) If $g$ is regular semi-simple, there is an equality of sets $$(\delta(\CM)\cap \CN^g)(\BF)=\{B\in \CN(\BF)\mid {L_g}\otimes_{\CO_E}\CO_{\breve F}\subset B^*\subset B\subset {L_g^*}\otimes_{\CO_E}\CO_{\breve F}, gB=B\} .$$

\smallskip

\noindent (ii) There is an equality of $\bar k$-varieties $$\big(\delta(\CM)\cap \CN^g\big)_{\rm red}= \bigcup_{\Lambda\in {\rm Vert}^g(L_g)}\big(\CV(\Lambda)^o\big)^{g_\Lambda} .$$
\end{corollary}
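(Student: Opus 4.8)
The plan is to deduce (i) by a direct comparison of $\BF$-point sets built on Lemma~\ref{squeeze}, and then to obtain (ii) by stratifying $\CN(\BF)$ by open Bruhat--Tits strata and upgrading the resulting equality of point-sets to an equality of reduced $\bar k$-schemes. For (i): recall $\CN^g(\BF)=\{B\in\CN(\BF)\mid gB=B\}$ and that, by Lemma~\ref{deltaonpoints}, $\delta(\CM)(\BF)$ consists of the lattices $B=A\oplus\CO_{\breve F}u$. So a point of $\big(\delta(\CM)\cap\CN^g\big)(\BF)$ is such a $B$ with $gB=B$, and Lemma~\ref{squeeze} gives $L_g\subset\Lambda(B)^*\subset\Lambda(B)\subset L_g^*$; combining this with $\Lambda(B)^*\otimes_{\CO_E}\CO_{\breve F}\subset B^*\subset B\subset\Lambda(B)\otimes_{\CO_E}\CO_{\breve F}$ (valid since $\Lambda(B)^*$, resp.\ $\Lambda(B)$, is the largest, resp.\ smallest, $\tau$-stable lattice inside $B^*$, resp.\ containing $B$) yields $L_g\otimes_{\CO_E}\CO_{\breve F}\subset B^*\subset B\subset L_g^*\otimes_{\CO_E}\CO_{\breve F}$. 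Conversely, if $B\in\CN(\BF)$ satisfies this chain and $gB=B$, then $B\supset B^*\supset L_g\otimes_{\CO_E}\CO_{\breve F}$, so the converse part of Lemma~\ref{squeeze} gives $B=A\oplus\CO_{\breve F}u$ for a unique $A\in\CM(\BF)$, whence $B\in\delta(\CM)(\BF)$ and $B\in\CN^g(\BF)$. (If $\delta(\CM)\cap\CN^g=\emptyset$ both sides are empty; otherwise $L_g\subset L_g^*$ by Lemma~\ref{squeeze}, so ${\rm Vert}(L_g)$ is defined.)

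For (ii) I would first prove the identity on $\bar k$-points. Every $B\in\CN(\BF)$ lies in exactly one open stratum, namely $\CV(\Lambda(B))^o$. The crucial point is that, for $B\in\CN(\BF)$, the chain $L_g\otimes_{\CO_E}\CO_{\breve F}\subset B^*\subset B\subset L_g^*\otimes_{\CO_E}\CO_{\breve F}$ holds if and only if $\Lambda(B)\in{\rm Vert}(L_g)$: since $L_g\otimes_{\CO_E}\CO_{\breve F}$ and $L_g^*\otimes_{\CO_E}\CO_{\breve F}$ are $\tau$-stable, they sit inside $B^*$, resp.\ contain $B$, if and only if they sit inside the maximal $\tau$-stable lattice $\Lambda(B)^*\otimes_{\CO_E}\CO_{\breve F}$, resp.\ contain the minimal $\tau$-stable lattice $\Lambda(B)\otimes_{\CO_E}\CO_{\breve F}$; this descends to $L_g\subset\Lambda(B)^*$ and $\Lambda(B)\subset L_g^*$ in $C_n$, which, as $\Lambda(B)$ is a vertex lattice, is exactly $\Lambda(B)\in{\rm Vert}(L_g)$. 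Combining this with (i) and with the lemma of this section stating that a $g$-stable point of $\CV(\Lambda)^o$ forces $g\Lambda=\Lambda$, one gets
\begin{equation*}
\big(\delta(\CM)\cap\CN^g\big)(\BF)
=\bigsqcup_{\Lambda\in{\rm Vert}^g(L_g)}\{\,B\in\CV(\Lambda)^o(\BF)\mid gB=B\,\}
=\bigsqcup_{\Lambda\in{\rm Vert}^g(L_g)}\big(\CV(\Lambda)^o\big)^{g_\Lambda}(\BF),
\end{equation*}
where $g_\Lambda$ is the automorphism of $\CV(\Lambda)^o$ induced by $g$ (well-defined since $g\Lambda=\Lambda$), and the union is disjoint because the open strata are.

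To pass from $\bar k$-points to $\bar k$-varieties, I would argue that $\big(\delta(\CM)\cap\CN^g\big)_{\rm red}$ is a reduced closed subscheme of $\CN_{\rm red}$, which is locally of finite type over the algebraically closed field $\bar k$, hence is determined by its $\bar k$-points; likewise the right-hand side, with each fixed locus $\big(\CV(\Lambda)^o\big)^{g_\Lambda}$ given its reduced structure, is a reduced locally closed subscheme of $\CN_{\rm red}$ with the $\bar k$-points just computed. Since these point sets coincide and $\bar k$ is algebraically closed, the two varieties are equal (in particular the right-hand union turns out to be closed). The only genuinely content-bearing step is the translation in (ii) between the lattice-chain condition on $B$ and membership $\Lambda(B)\in{\rm Vert}(L_g)$, which rests entirely on the maximality/minimality characterisations of $\Lambda(B)^*$ and $\Lambda(B)$ recalled in the discussion of the Bruhat--Tits stratification; the rest is bookkeeping, the only delicate point being the comparison of scheme structures in~(ii) described above.
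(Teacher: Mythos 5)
Your argument is correct and is essentially the paper's own: the authors explicitly present this corollary as a summary of Lemma~\ref{squeeze} (whose proof already gives $L_g\otimes_{\CO_E}\CO_{\breve F}\subset B^*$ directly, making your detour through $\Lambda(B)^*$ harmless), combined for (ii) with the lemma that $gB=B$ forces $g\Lambda(B)=\Lambda(B)$, the algebraicity of the Bruhat--Tits stratification from \cite{VW}, and Proposition~\ref{scheme} to know the left-hand side is an honest (reduced) scheme determined by its $\BF$-points. Your translation between the lattice chain for $B$ and the condition $\Lambda(B)\in{\rm Vert}^g(L_g)$ via the extremal characterisations of $\Lambda(B)$ and $\Lambda(B)^*$ is exactly the intended bookkeeping.
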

Here (ii) makes use of Proposition \ref{scheme}, and the algebraicity of the Bruhat-Tits stratification. The action of $g$ on $\CV(\Lambda)^o$ is induced by the automorphism   $g_\Lambda$ on $\Lambda/\Lambda^*$ induced by $g$. 

\section{Fixed point set in a stratum}\label{fpinstratum}

We next analyze the fixed point variety of $g_\Lambda$ on $\CV(\Lambda)^o$. Since $\CV(\Lambda)^o$ is a Deligne-Lusztig variety \cite{VW}, this can be considered as a general question on Deligne-Lusztig varieties (called ``DL-varieties'' below for brevity). Accordingly, we use notation that is standard in this context, e.g., \cite{DL}. 
\begin{proposition}\label{fixpointstr}
 Let $\Lambda$ be a vertex lattice in $C_n$ with 
$g\Lambda=\Lambda$, and denote by $\bar g=g_\Lambda\in {\rm U}(V)(\BF_p)$ the automorphism of the hermitian space $V=V_\Lambda=\Lambda/\Lambda^*$ over $k'$ induced by $g_\Lambda$. 

\noindent (i) If $\big(\CV(\Lambda)^o\big)^{\bar g}$ is non-empty, then $g_\Lambda$ is semi-simple and contained in a Coxeter type maximal torus.  

\noindent (ii) If $\big(\CV(\Lambda)^o\big)^{\bar g}$ is a non-empty finite set, then $g_\Lambda$ is a regular elliptic element contained in a Coxeter type maximal torus. Furthermore, in this case the cardinality of $\big(\CV(\Lambda)^o\big)^{\bar g}$ is given by the type of $\Lambda$. 
\end{proposition}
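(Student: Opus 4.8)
The plan is to translate the fixed-point problem on the Deligne--Lusztig variety $\CV(\Lambda)^o$ into group-theoretic statements about the element $\bar g \in \mathrm{U}(V)(\BF_p)$, and then to invoke the standard Deligne--Lusztig fixed-point formalism. First I would recall the precise description of $\CV(\Lambda)^o$ as a DL-variety for the unitary group $\mathrm{U}(V)$ over $\BF_p$ (following \cite{VW}): its $\BF$-points are certain isotropic subspaces of $V$ in relative position prescribed by a Coxeter element $w$, with respect to the Frobenius $\phi$ coming from the $k'/k$-structure. The key point is that $\CV(\Lambda)^o \cong X(w)$ is the DL-variety attached to a Coxeter element, and $\bar g$ acts on it through its action on $V$ as an element of the finite group $\mathrm{U}(V)(\BF_p)$.

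For part (i): if $x \in \CV(\Lambda)^o$ is fixed by $\bar g$, then lifting to the DL-variety $X(w) = \{gB \in G/B : g^{-1}\phi(g) \in B w B\}$, the point $x$ is represented by some $g_0 B$ with $\bar g \cdot g_0 B = g_0 B$, i.e.\ $g_0^{-1}\bar g g_0 \in B$. Thus $\bar g$ is $G$-conjugate to an element of a Borel $B'$ with $g_0^{-1}\phi(g_0) \in B'wB'$; combined with the $\phi$-stability this forces $\bar g$ to normalize (after conjugation) a maximal torus $T'$ of $B' \cap \phi(B')$-type, and the Coxeter condition on $w$ pins this down to a torus of Coxeter type. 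Semisimplicity of $\bar g$ then follows because an element lying in a maximal torus of the finite group of Lie type $\mathrm{U}(V)(\BF_p)$ is automatically semisimple (the torus being a product of multiplicative groups over extension fields). I would spell this out using the Lang--Steinberg / Bruhat-decomposition bookkeeping as in \cite{DL}, Section~1, rather than redoing it by hand.

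For part (ii): by (i) we may assume $\bar g$ lies in a Coxeter-type maximal torus $T^F$, where $F$ is the twisted Frobenius. The fixed-point set $\big(\CV(\Lambda)^o\big)^{\bar g} = X(w)^{\bar g}$ is then computed by the Deligne--Lusztig fixed-point theory: $X(w)^{\bar g}$ is nonempty and finite precisely when $\bar g$ is a regular element whose centralizer meets the ambient variety in a discrete set, which for the Coxeter DL-variety means $\bar g$ must be regular elliptic (its centralizer is exactly the Coxeter torus, with no nontrivial unipotent part and no larger torus). Indeed if $\bar g$ were not regular, $C_G(\bar g)$ would be positive-dimensional and the fixed locus $X(w)^{\bar g} \cong$ (a DL-type variety for $C_G(\bar g)$) would be positive-dimensional, contradicting finiteness; if $\bar g$ were not elliptic it would lie in a proper $F$-stable Levi, again forcing a positive-dimensional fixed locus. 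For the cardinality: when $\bar g$ is regular elliptic in the Coxeter torus $T^F$, the fixed points of $\bar g$ on $X(w)$ are in bijection with $\{w' \in W : w' \text{ conjugate-relevant}\}/\sim$, and a direct count — using that for the Coxeter element in the relevant (twisted) type the number of such classes equals the number of $F$-orbits on the simple reflections, which is $\tfrac12(t(\Lambda)+1)$ or $t(\Lambda)$ depending on parity — gives exactly the type $t(\Lambda)$. Concretely I would match this against the known description: $\CV(\Lambda)^o$ for a vertex lattice of type $t$ is a Coxeter DL-variety whose $T^F$-fixed points number $t$, and a regular elliptic $\bar g$ in $T^F$ has the same fixed points as a generator of $T^F$.

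The main obstacle I expect is part (ii)'s cardinality count: establishing the clean equality $\#\big(\CV(\Lambda)^o\big)^{\bar g} = t(\Lambda)$ requires knowing the precise DL-datum $(W, F, w)$ attached to $\CV(\Lambda)^o$ — in particular the twisted type and the specific Coxeter element — and then applying the fixed-point formula for a regular elliptic torus element, which involves identifying the set of $W$-conjugacy classes contributing and checking none of the contributions cancel or coincide. The conceptual inputs (a fixed point forces conjugacy into a Coxeter torus; finiteness forces regular ellipticity) are comparatively soft consequences of \cite{DL}; the arithmetic of the final count is where the real work lies, and I would handle it by reducing to the explicit combinatorial model of $\CV(\Lambda)^o$ from \cite{VW} and \cite{V}.
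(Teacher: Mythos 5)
Your overall strategy --- identify $\CV(\Lambda)^o$ with a Coxeter Deligne--Lusztig variety and run the fixed-point formalism of \cite{DL}, Prop.\ 4.7 --- is exactly the paper's, and your treatment of ``finite $\Rightarrow$ regular elliptic'' (via the fixed locus being a union of DL-varieties for $Z^0(\bar g)$ attached to elliptic tori, which must then be zero-dimensional) matches Lemma \ref{cardcentr}'s neighbour in the text. But two steps as written have genuine gaps. First, in (i) you deduce only that $g_0^{-1}\bar g g_0\in B$, i.e.\ that $\bar g$ is conjugate into a Borel; the jump from there to ``$\bar g$ lies in a maximal torus, hence is semisimple'' is precisely the nontrivial content and is not ``Bruhat bookkeeping'': a priori $\bar g$ could have a nontrivial unipotent part inside that Borel. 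What makes it work is that $w$ is \emph{elliptic} of minimal length in its $F$-conjugacy class (Lusztig, \cite{Lu2}, 5.9); in the concrete unitary/Coxeter model the paper's argument is that a fixed line $\ell$ has the property that $\ell,\tau(\ell),\ldots,\tau^{n-1}(\ell)$ form a basis of $V\otimes\BF$ (\cite{Lu3}, Prop.\ 26), and since $\bar g$ fixes each of these lines it is diagonal in that basis. You need to supply this (or cite it); without it (i) is unproved.

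Second, your cardinality count in (ii) invokes the wrong combinatorial quantity. The fixed points of a regular $\bar g$ in a torus $T$ of type $w$ are in bijection with $N(T)(\BF_q)/T(\BF_q)$, i.e.\ with the \emph{$F$-centralizer of $w$ in $W$} (formula (4.7.1) of \cite{DL}), not with the set of $F$-orbits on simple reflections. For the odd unitary group of size $n=t(\Lambda)$ the number of $F$-orbits on simple reflections is $(n-1)/2$, so the formula you propose would give the wrong answer; the $F$-centralizer of the Coxeter element, by contrast, has exactly $n$ elements. The clean way to see this (and the paper's way) is to identify $V$ with $\BF_{q^{2n}}$ and $T(\BF_q)$ with $\ker(\mathrm{Nm}:\BF_{q^{2n}}^\times\to\BF_{q^n}^\times)$; the fixed points of a regular $s\in T(\BF_q)$ in $\BP(V\otimes\BF)$ are the $n$ eigenlines of $\BF_{q^{2n}}^\times$, all of which lie on $X_w$ and are permuted simply transitively by ${\rm Gal}(\BF_{q^{2n}}/\BF_{q^2})$. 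You correctly identified the count as the hard point, but the reduction you sketch would not produce $t(\Lambda)$.
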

\begin{proof} This follows from the following lemmas on DL-varieties.
\end{proof}

We first  recall that an element $w$ in the Weyl group $W$ is called {\it elliptic}, if the following equivalent properties are satisfied:
\begin{enumerate}
\item [(i)] The torus $T_w$ of type $w$ is elliptic, i.e., $T_w/Z$ is anisotropic (i.e., $X^*(T_w/Z)^F=(0)$). 
\item[(ii)] $T_w$ is not contained in a proper $F$-stable parabolic subgroup. 
\item[(iii)] $1$ is not an eigenvalue of $w\cdot F_*$. 
\item[(iv)] The $F$-conjugacy class of $w$ contains no element in a proper $F$-stable parabolic subgroup of $W$. 
\end{enumerate}
Here $F_*$ denotes the action of Frobenius on $X_*(T),$ where $T$ is a maximal torus contained in a Borel subgroup (if $G$ is split, then $F_*$ is trivial;  for a unitary group in $n$ variables with standard basis for the hermitian space, $F_*$ acts through  the longest element in $S_n$). Note that any Coxeter element in the sense of Lusztig \cite{Lu1} is elliptic. 

(The equivalence of (i) and (iii) follows from \cite{C}, Proposition 3.2.2. The equivalence of (iii) and (iv) follows from \cite{H}, Lemma 7.2. The equivalence of (i) and (ii) is easy\footnote{ We thank X. He for pointing out these references.}.)

The DL-varieties appearing in \cite{VW} are  associated to  unitary groups in an odd number of variables and  standard Coxeter elements. More precisely, let $V$ be a hermitian vector space over $\BF_{q^2}$ of dimension $n=2d+1$. We choose the basis $e_1,\ldots, e_n$ in such a way that under the hermitian pairing $e_i$ pairs trivially with $e_j$, unless $i+j=n+1$, and we identify $W$ with the symmetric group $S_n$. Then the DL-variety of interest is associated to the cyclic permutation $w=(d+1, d+2,\ldots, n)$. The DL-varieties associated to different Coxeter elements all differ at most by a power of Frobenius \cite{Lu1}, Prop.~1.10; in particular, they are all universally homeomorphic. The DL-variety $X_w$ associated to the Coxeter element $w=(1, 2,\ldots,d+1)$ is the variety  of complete flags $\CF_\bullet$ such that
\begin{equation*}
\CF_{n-i}^\perp\subset \CF_{i+1}, \, \CF_{n-i}^\perp\neq \CF_i,\, (1\leq i\leq d);\,\, \CF_i=\CF_{n-i}^\perp,\,  (d+1\leq i\leq n-1) .
\end{equation*}
Let $\tau=\sigma^2$. Then $X_w$ can   also be identified with the variety  of complete selfdual flags $\CF_\bullet$ of $V$ such that 
\begin{equation*}
\CF_i+\tau(\CF_i)=\CF_{i+1} , \, i=1,\ldots,d .
\end{equation*}
 In other words, $X_w$ is the variety of complete isotropic flags $\CF_1\subset\CF_2\subset\ldots\subset\CF_d$ of $V$ such that
\begin{equation*}
\CF_1\neq \tau(\CF_1)\subset \CF_2, \CF_2\neq \tau(\CF_2)\subset \CF_3,\ldots, \CF_{d-1}\neq \tau(\CF_{d-1})\subset \CF_d, \CF_d\neq \tau (\CF_d) .   
\end{equation*} Hence we can identify $X_w$ with the set of $\ell\in\BP(V)$ such that 
\begin{equation*}
\big(\ell, \ell\big)=\big(\ell, \tau(\ell)\big)=\ldots =\big(\ell, \tau^{d-1}(\ell)\big)=0;\, \big(\ell, \tau^{d}(\ell)\big)\neq 0 .
\end{equation*}
This DL-variety is defined over $\BF_{q^2}$.

\begin{lemma}{\rm(Lusztig \cite[5.9]{Lu2})}\label{semisimple}
Let $X_w$ be a DL-variety, where $w$ is elliptic and of minimal length in its $F$-conjugacy class. Let $s\in G(\BF_q)$. If the fixed point set $X_w^s$ is non-empty, then $s$ is semi-simple.\qed
\end{lemma}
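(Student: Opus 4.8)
Since this lemma is quoted from Lusztig, one may simply cite \cite{Lu2}; nonetheless, here is how I would organize a direct proof. The plan is to reduce, via Jordan decomposition, to the assertion that a non-trivial unipotent element of $G(\BF_q)$ has no fixed point on $X_w$, and then to derive a contradiction from the ellipticity and minimal-length hypotheses on $w$. For the reduction, let $s=s_s s_u$ be the Jordan decomposition of $s$ in $G$. Since $F$ commutes with Jordan decomposition and $s\in G(\BF_q)$, both $s_s$ and $s_u$ lie in $G(\BF_q)$; moreover the order of $s_u$ is a power of $p$ while that of $s_s$ is prime to $p$, so $s_u=s^a$ for a suitable integer $a$, and in particular $s_u$ fixes every point fixed by $s$. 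Thus $X_w^{s_u}\supseteq X_w^s\neq\emptyset$, and it suffices to show that a unipotent $u\in G(\BF_q)$ with $X_w^u\neq\emptyset$ is trivial: then $s_u=1$ and $s=s_s$ is semi-simple.

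\emph{Geometry of a unipotent fixed point.} Realize $X_w$ as the variety of Borel subgroups $B'$ with $B'$ and ${}^F\!B'$ in relative position $w$, with $G(\BF_q)$ acting by conjugation. If $u$ fixes $B'$ then $u\in B'$; as the image of the unipotent element $u$ in the torus $B'/R_u(B')$ must be unipotent, hence trivial, we get $u\in R_u(B')$, and since $u=F(u)$ also $u\in R_u({}^F\!B')$, so $u\in R_u(B')\cap R_u({}^F\!B')$. Choosing a representative $\dot w$ and conjugating $B'$ to the standard ($F$-stable) Borel $B$ so that ${}^F\!B'$ goes to $\dot w B\dot w^{-1}$, the element $u$ becomes $y\in U\cap\dot w U\dot w^{-1}=\prod_{\alpha\in\Phi^+\cap w\Phi^+}U_\alpha$, and the Deligne--Lusztig condition turns into a $w$-twisted Frobenius relation $F(y)=c^{-1}(\dot w^{-1}y\dot w)c$ with $c\in B$. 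Replacing $u$ by a suitable $p$-power, we may assume $u$, hence $y$, has order $p$.

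\emph{Using ellipticity and minimal length.} This is the crux. The canonical maximal torus $T'$ of $B'\cap{}^F\!B'$ is an $F$-stable maximal torus of type $w$; since $w$ is elliptic, $T'/Z$ is anisotropic, equivalently $wF_*$ has no eigenvalue $1$, equivalently $T'$ is contained in no proper $F$-stable parabolic subgroup. One now argues that the set of roots on which a non-trivial solution $y$ of the twisted relation has a non-trivial component would span a proper $F$-stable parabolic subsystem, stable under the twisted action, contradicting this ellipticity: concretely, I would pass to the smallest $F$-stable parabolic $P\supseteq B'$ with $u\in R_u(P)$, observe that its Levi contains the elliptic torus $T'$, conclude $P=G$, hence $R_u(P)=1$ and $u=1$. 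The minimal-length hypothesis on $w$ enters through the theory of minimal-length elements in twisted conjugacy classes of $W$ (Geck--Pfeiffer, He \cite{H}, He--Nie), which makes the combinatorics of $\Phi^+\cap w\Phi^+$ together with the $F$-action rigid enough for this support argument to close up; alternatively, one invokes directly Lusztig's structural results on $X_w$ for minimal-length elliptic $w$ (affineness and the shape of unipotent fixed-point loci).

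\emph{Main obstacle.} The first two steps are formal and elementary; the genuine difficulty is the last one — converting ``$w$ elliptic of minimal length in its $F$-conjugacy class'' into the non-existence of a non-zero unipotent solution of the $w$-twisted Frobenius equation. This is exactly the substance of Lusztig's theorem, and a from-scratch proof requires either the fine combinatorics of minimal-length elements of Weyl groups (to obtain the needed avoidance of proper $F$-stable parabolics) or an independent geometric input on the structure of the Deligne--Lusztig varieties $X_w$ for such $w$.
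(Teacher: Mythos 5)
Your proposal follows the general strategy (which is essentially Lusztig's own): reduce by Jordan decomposition to showing that a non-trivial unipotent element has no fixed point on $X_w$, and then rule out non-trivial unipotent solutions of the $w$-twisted Frobenius equation using ellipticity and minimality of length. The reduction steps are correct ($s_u$ is a power of $s$, a unipotent fixing a Borel lies in its unipotent radical, etc.), but — as you candidly admit — the decisive step is not carried out: you do not actually produce the proper $F$-stable parabolic whose existence would contradict ellipticity, and constructing it from the support of $y$ in $\prod_{\alpha\in\Phi^+\cap w\Phi^+}U_\alpha$ is exactly where the minimal-length combinatorics (Geck--Pfeiffer, He--Nie) does real work. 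So as written the argument is a citation of Lusztig's theorem presented in the shape of a proof; the gap is precisely the theorem.

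The paper avoids all of this in the only case it needs, namely the unitary group in an odd number $n$ of variables and the standard Coxeter element, by a much more concrete argument. There $X_w$ is realized inside $\BP(V)$ via the line $\ell=\CF_1$ of the flag, and the defining conditions imply (Lusztig, \emph{On the Green polynomials of classical groups}, Prop.~26(i)) that $\ell,\tau(\ell),\dots,\tau^{n-1}(\ell)$ form a basis of $V\otimes\BF$. If $s\in G(\BF_q)$ fixes $\ell$, it commutes with $\tau$ and hence fixes each $\tau^i(\ell)$, so $s$ is diagonal with respect to this basis and therefore semi-simple — no Jordan decomposition, no root-system combinatorics. If you want a self-contained proof for the purposes of this paper, that two-line eigenbasis argument is the one to give; if you want the general statement, you should simply cite Lusztig rather than sketch a proof whose core is left open.
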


\begin{proof} (Lusztig) In the case of a unitary group in an odd number of variables and   the standard Coxeter element, this can be easily seen as follows. In this case, as explained above, $X_w$ can be viewed as a subset of projective space, by associating to a complete flag its one-dimensional component $\ell\subset V\otimes_{\BF_{q^2}}\BF$. Now assume that $\ell$ is fixed under $s$. Then so are $\tau(\ell), \tau^2(\ell),\ldots$. But if $\ell\in X_w$, then $\ell, \tau(\ell), \ldots, \tau^{n-1}(\ell)$ form a basis of $V\otimes_{\BF_{q^2}}\BF$, cf. \cite{Lu3}, Prop.~26, (i). Hence $s$ is a diagonal element wrt this basis. 
\end{proof}

\begin{lemma}
Let $X_w$ be a DL-variety, and let $s\in G(\BF_q)$ be a semi-simple element. Then the fixed point set $X_w^s$ is non-empty
 if and only if $s$ is conjugate under $G(\BF_q)$ to an element in $T(\BF_q)$ for a maximal torus $T$ of type $w$. 
\end{lemma}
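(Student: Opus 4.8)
The plan is to argue on the flag variety $\mathcal{B}$ of $G$, realizing $X_w$ as the set of Borel subgroups $B$ with $\mathrm{pos}(B,F(B))=w$ together with its conjugation action of $G^{F}$. The key elementary input, which I would record first, is the dictionary between ``type $w$'' and relative positions: an $F$-stable maximal torus $T$ has type $w$ precisely when it is contained in some Borel subgroup $B'$ with $\mathrm{pos}(B',F(B'))=w$. This is a short Bruhat-decomposition computation: writing $B'={}^{g}B_0$ so that it contains the $F$-stable torus ${}^{g}T_0$ with $\dot{w}:=g^{-1}F(g)\in N_G(T_0)$ mapping to $w$ in the Weyl group, one has $F(B')={}^{F(g)}B_0={}^{g\dot{w}}B_0$, hence $\mathrm{pos}(B',F(B'))=\mathrm{pos}(B_0,{}^{\dot{w}}B_0)=w$; conversely one runs this backwards, and letting $B'$ range over Borels containing $T$ makes $\mathrm{pos}(B',F(B'))$ sweep out the whole $F$-conjugacy class of $w$.

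For the implication ``$\Leftarrow$'': if $s={}^{h}t$ with $h\in G^{F}$ and $t\in T^{F}$ for an $F$-stable maximal torus $T$ of type $w$, choose a Borel $B'\supseteq T$ with $\mathrm{pos}(B',F(B'))=w$, so $B'\in X_w$; then $t$ fixes $B'$ since $t\in T\subseteq B'=N_G(B')$, hence $X_w^{t}\neq\emptyset$; finally $h\in G^{F}$ preserves $X_w$ and carries $X_w^{t}$ onto $X_w^{{}^{h}t}=X_w^{s}$, so $X_w^{s}\neq\emptyset$.

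For the substantive direction ``$\Rightarrow$'': let $B\in X_w$ be fixed by $s$; then $s\in B$, and also $s=F(s)\in F(B)$, so $s$ lies in the connected solvable group $B\cap F(B)$ and hence in some maximal torus $T_1$ of $B\cap F(B)$. Since $B\cap F(B)$ contains a maximal torus of $G$ and all maximal tori of $B\cap F(B)$ are conjugate, $T_1$ is a maximal torus of $G$. Now $T_1$ and $F(T_1)$ are both maximal tori of the Borel $F(B)$ containing the semisimple element $s$, so $F(T_1)={}^{v}T_1$ for some $v\in R_u(F(B))$ centralizing $s$; being unipotent and commuting with $s$, this $v$ lies in $H:=Z_G(s)^{\circ}$, a connected $F$-stable reductive group which also contains $T_1$. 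By Lang's theorem applied to $H$, choose $c\in H$ with $c^{-1}F(c)=v^{-1}$, and put $T:={}^{c}T_1$, $B':={}^{c}B$. Then $s\in T$ because $c$ centralizes $s$; $F(T)={}^{F(c)}F(T_1)={}^{F(c)v}T_1={}^{c}T_1=T$ since $F(c)=cv^{-1}$; and $F(B')={}^{F(c)}F(B)={}^{cv^{-1}}F(B)={}^{c}F(B)$ since $v\in F(B)$, so that $\mathrm{pos}(B',F(B'))=\mathrm{pos}(B,F(B))=w$ with $T\subseteq B'$. By the dictionary above, $T$ is an $F$-stable maximal torus of type $w$, and $s\in T\cap G^{F}=T^{F}$; in particular $s$ is (trivially) $G^{F}$-conjugate to an element of $T^{F}$.

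I expect the points requiring care to be the two standard facts about Borel subgroups used above --- that $B\cap F(B)$ is connected solvable and contains a maximal torus of $G$, and that two maximal tori of a fixed Borel both containing a given semisimple element $s$ differ by conjugation by an element of the unipotent radical centralizing $s$ --- together with the bookkeeping identifying ``type $w$'' with ``contained in a Borel $B'$ having $\mathrm{pos}(B',F(B'))=w$''; once these are in place the Lang-theorem step producing the $F$-stable torus of type $w$ is immediate. I would also note that in the case at hand, where $G$ is a unitary group (hence has simply connected derived group), $Z_G(s)$ is already connected, so $H=Z_G(s)$ there.
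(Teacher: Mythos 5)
Your argument is correct, but it takes a genuinely different route from the paper. The paper disposes of both directions in one line by invoking the explicit formula (4.7.1) for $X_w^s$ in \cite{DL}, Prop.\ 4.7, which presents the fixed-point set as a union indexed by the $G(\BF_q)$-conjugates of $s$ landing in a torus of type $w$; non-emptiness of that index set is exactly the stated criterion. You instead re-derive the criterion from scratch on the flag variety: the ``$\Leftarrow$'' direction via the dictionary between ``type $w$'' and the relative position $\mathrm{pos}(B',F(B'))=w$ for a Borel containing the torus, and the ``$\Rightarrow$'' direction via $s\in B\cap F(B)$, the conjugacy of the two maximal tori $T_1$ and $F(T_1)$ of $F(B)$ containing $s$ by an element $v$ of the unipotent radical centralizing $s$, and Lang's theorem in $Z_G(s)^\circ$ to straighten $T_1$ into an $F$-stable torus of type $w$ containing $s$. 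This is in effect a self-contained reproof of the relevant portion of \cite{DL}, Prop.\ 4.7; it is longer but makes visible exactly which structural inputs (connectedness and solvability of $B\cap F(B)$, conjugacy of maximal tori through a semisimple element under the centralizer, the fact that unipotent elements of $Z_G(s)$ lie in $Z_G(s)^\circ$, and Lang's theorem) carry the statement, and you correctly flag these as the points needing care. Your closing remark that $Z_G(s)$ is already connected for the unitary groups at hand (simply connected derived group, connected center) is also accurate and removes the one mildly delicate step in that setting.
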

\begin{proof}
This follows immediately from the formula (4.7.1) for $X_w^s$ in \cite{DL}, Prop. 4.7. 
\end{proof}

\begin{remark}
We know from \cite{DL} that the fixed point set is a finite disjoint sum of DL-varieties, for various groups and various Weyl group elements. 
Let us spell out which DL-varieties occur in the case of interest to us, namely the unitary group of odd size $n=2d+1$, and when $w=(1, 2,\ldots,d+1)$ is the Coxeter element as above. Now in this case the maximal torus $T$ of type $w$ is given by 
$$
T(\BF_q)={\rm Ker \, \big(Nm}_{\BF_{q^{2n}}/\BF_{q^n}}: {\BF^\times_{q^{2n}}}\to{\BF^\times _{q^n}}\big).  
$$
We may identify the hermitian space $V$ with $\BF_{q^{2n}}$, equipped with the hermitian form 
$(x, y)\mapsto {\rm Tr}_{\BF_{q^{2n}}/\BF_{q^2}}(\sigma^n(x) y)$. Now $s\in T(\BF_q)$. Hence $s$ generates a subfield $\BF_q(s)$ of 
$\BF_{q^{2n}}$. Let $\BF_q(s)=\BF_{q^h}$. Then $h\vert 2n$. If $h$ is odd, then the norm equation for $s$ gives $s^2=1$, hence $h=1$ and $s=\pm 1$, and $s$ acts trivially on $X_w$. If $h=2k$ is even, then $k\vert n$. In this case, we may identify the hermitian space $V$ with $\BF_{q^{2n}}$, equipped with the hermitian form 
$(x, y)\mapsto {\rm Tr}_{\BF_{q^{2k}}/\BF_{q^2}}\big({\rm Tr}_{\BF_{q^{2n}}/\BF_{q^{2k}}}(\sigma^n(x) y)\big) .$
Then the centralizer $Z^0(s)$ can be identified with 
$$
Z^0(s)={\rm Res}_{\BF_{q^k}/\BF_q}(U_h) ,
$$
where $U_h$ is the unitary group for the hermitian form ${\rm Tr}_{\BF_{q^{2n}}/\BF_{q^{2k}}}(\sigma^n(x) y)$ on $\BF_{q^{2n}}$, and the maximal torus can be identified with the restriction of scalars of the maximal torus ${\rm Ker\, Nm}_{\BF_{q^{2n}}/\BF_{q^n}}$ of $U_h$. In this case the corresponding DL-variety is simply the DL-variety of dimension $\frac{1}{2}(\frac{n}{k}-1)$ associated to the Coxeter torus in a unitary group of odd size $\frac{n}{k}$ over $\BF_{q^k}$.  And the fixed point set $X_w^s$ is a disjoint sum of isomorphic copies of this DL-variety.

\end{remark}

\begin{lemma}
Let $w$ be elliptic, and $s\in G(\BF_q)$ semi-simple. If $X_w^s$ has only finitely many elements, then $s$ is regular, and conversely. 
\end{lemma}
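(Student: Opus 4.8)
The plan is to deduce this from the preceding two lemmas together with the explicit description of the fixed point set $X_w^s$ provided by Deligne--Lusztig theory. Since $s$ is assumed semi-simple and $X_w^s$ is non-empty (being finite is vacuously a hypothesis only if non-empty; if empty there is nothing to prove, or one adopts the convention that the statement concerns non-empty fixed point sets), the previous lemma shows $s$ is $G(\BF_q)$-conjugate to an element of $T(\BF_q)$ for a maximal torus $T$ of type $w$. So we may assume $s\in T(\BF_q)$ outright. By \cite{DL}, Prop.~4.7, formula (4.7.1), the fixed point set $X_w^s$ is a finite union of Deligne--Lusztig varieties $X_{w'}^{Z^0(s)}$ attached to the connected centralizer $Z^0(s)$ and to certain Weyl-group elements $w'$ in $W(Z^0(s))$. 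The key point is that a Deligne--Lusztig variety is finite (a non-empty finite set of points, i.e.\ of dimension $0$) precisely when the ambient group is a torus, equivalently when the relevant $w'$ has length $0$ in the Coxeter-type situation.

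First I would recall that $\dim X_{w'} = \ell(w')$ for the Deligne--Lusztig variety attached to $w'$, so $X_w^s$ is finite if and only if every Deligne--Lusztig variety $X_{w'}^{Z^0(s)}$ occurring in the decomposition (4.7.1) is zero-dimensional, which forces the contributing $w'$ to have length zero, i.e.\ $Z^0(s)$ contributes only through its maximal torus. I would then argue: $X_w^s$ is finite $\iff$ $Z^0(s)$ is itself a torus $\iff$ $s$ is regular. The implication ``$s$ regular $\Rightarrow$ $Z^0(s)$ a torus'' is the definition of regularity for a semi-simple element; the converse ``$Z^0(s)$ a torus $\Rightarrow$ $s$ regular'' is standard. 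For the remaining equivalence, if $Z^0(s)$ is a torus then the only Deligne--Lusztig ``varieties'' for it are points, so $X_w^s$ is finite; conversely, if $Z^0(s)$ is not a torus, then it is a reductive group of semisimple rank $\geq 1$, and one must show that at least one $w'$ of positive length actually appears in (4.7.1) with non-empty $X_{w'}^{Z^0(s)}$.

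The main obstacle is exactly this last point: showing that when $Z^0(s)$ is non-abelian, the decomposition (4.7.1) genuinely contains a positive-dimensional piece that is moreover non-empty. Here I would use that $w$ is elliptic: combined with Lemma~\ref{semisimple} (which already told us non-empty fixed sets force $s$ semisimple) and the analysis in the preceding remark, the $w'$ that occur are the images of $w$ under the relevant correspondence, and ellipticity of $w$ in $W$ translates into ellipticity of $w'$ in $W(Z^0(s))$ — so $w'$ cannot have length $0$ unless $W(Z^0(s))$ is trivial, i.e.\ unless $Z^0(s)$ is a torus. Thus in the non-abelian case some $w'$ is a non-trivial elliptic element of $W(Z^0(s))$, hence $\ell(w')>0$, and the corresponding Deligne--Lusztig variety is non-empty and positive-dimensional by the standard non-emptiness results for Deligne--Lusztig varieties attached to elliptic (or more generally regular) elements. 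In the concrete unitary situation at hand one can be fully explicit: by the Remark, $Z^0(s)={\rm Res}_{\BF_{q^k}/\BF_q}(U_{n/k})$ and the occurring Deligne--Lusztig variety is the Coxeter variety of $U_{n/k}$ of dimension $\tfrac12(\tfrac nk - 1)$; this is finite exactly when $n/k=1$, i.e.\ $k=n$, i.e.\ $\BF_q(s)=\BF_{q^{2n}}$, i.e.\ $s$ generates a maximal subfield, which is precisely the condition that $s$ be regular in $G$. This explicit computation makes the equivalence transparent and also gives the converse direction ``regular $\Rightarrow$ finite'' with no extra work.
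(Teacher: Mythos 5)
Your argument is correct and follows essentially the same route as the paper: the paper invokes formula (4.7.2) of \cite{DL}, Prop.~4.7 to write $X_w^s$ as a disjoint union of Deligne--Lusztig varieties $X_{T'\subset B'}$ for $Z^0(s)$ with $T'$ elliptic (being of the same type as $T$), so that finiteness forces these pieces to be zero-dimensional, which for an elliptic torus forces $Z^0(s)=T$, i.e.\ $s$ regular. Your additional worry about whether a positive-dimensional piece "actually appears" is not needed, since every $X_{T'\subset B'}$ occurring in the Deligne--Lusztig decomposition is automatically non-empty of dimension equal to the length of the relative position; otherwise your explicit unitary computation is a pleasant but optional supplement.
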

\begin{proof}
We use the formula (4.7.2) for $X_w^s$ in \cite{DL}, Prop. 4.7., which presents $X_w^s$ as a disjoint union
of varieties which are DL-varieties for $Z^0(s)$, of the form $X_{T'\subset B'}$. However, $T'$ is of the same type as $T$, hence is elliptic. On the other hand, if the fixed point set is finite, then ${\rm dim}\,  X_{T'\subset B'}=0$. This implies that $T=Z^0(s)$, which is precisely the claim.  The converse is obvious, because a regular element has only finitely many fixed points in the flag variety. 
\end{proof}
\begin{lemma}\label{cardcentr}
Let $s\in G(\BF_q)$  be regular and contained in a maximal torus $T$ of type $w$. Then the number of fixed points of $s$ in $X_w$ is equal to the cardinality of the $F$-centralizer of $w$ in $W$.
\end{lemma}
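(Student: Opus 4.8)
The plan is to use the Deligne--Lusztig fixed-point formula for a semisimple element acting on $X_w$, specialized to the case where $s$ is regular (so that $Z^0(s) = T$ is a maximal torus of type $w$). First I would invoke formula (4.7.1) of \cite{DL}, Proposition~4.7, which expresses $X_w^s$ for semisimple $s$ as a union indexed by those $G(\BF_q)$-conjugates of Borel subgroups $B'$ containing a given $F$-stable maximal torus $T' \ni s$ of type $w$; more precisely, $X_w^s \cong \{ x \in G/B : x^{-1}sx \in B, \ x^{-1}F(x) \in B\dot w B \}^{\text{(appropriate twist)}}$, and the contributions are indexed by the relevant double cosets. When $s$ is regular, the centralizer $Z^0(s)$ equals $T$ itself, so every term in the decomposition is a $0$-dimensional DL-variety $X_{T' \subset B'}$ for the torus $T'$ inside $Z^0(s) = T$, i.e.\ a single point; hence $X_w^s$ is a finite set and its cardinality is the number of such terms.

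Next I would identify that counting set explicitly. By the analysis in \cite{DL}, the set of terms is in bijection with $\{ n \in N(T) : n^{-1} F(n) \text{ represents } w \} / T$, equivalently with the set of elements of $W$ that are $F$-conjugate-compatible with the fixed torus, which reduces to the $F$-centralizer $Z_W^F(w) = \{ x \in W : x^{-1} w F(x) = w \}$ (the stabilizer of $w$ under the $F$-twisted conjugation action of $W$ on itself, or equivalently the fixed points of $F_*$ on the centralizer of $w$, depending on normalization). Concretely: fixed points of $s$ in $X_w = \{ gB : g^{-1}F(g) \in B\dot w B\}/\!\sim$ correspond, after moving $s$ into $T$, to ways of conjugating the ``position $w$'' condition by elements normalizing $T$ modulo $T$, and two such give the same fixed point iff they differ by an element of the $F$-centralizer. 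I would spell this out by choosing coset representatives and checking that the Lang--Steinberg torsor structure makes the fibers principal homogeneous under $Z_W(w)^F$.

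Finally I would assemble the count: the number of fixed points equals $|Z_W(w)^F|$ where $F$ acts on $W$ via $F_*$ (the twisted action; for a split group this is ordinary $W$-centralizer, for the unitary case it is the centralizer under conjugation twisted by the longest element as recalled above). In the unitary setting of interest, $w$ is a Coxeter element, whose centralizer in $W = S_n$ is the cyclic group it generates (of order equal to the Coxeter number in the relevant factor), and one then reads off the $F$-fixed points of that cyclic group; but for the statement of the lemma only the general identity ``$\#X_w^s = |Z_W^{F_*}(w)|$'' is needed, so I would keep the argument at that level of generality.

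The main obstacle I anticipate is bookkeeping the twist: formula (4.7.2)/(4.7.1) of \cite{DL} is stated with a specific normalization of the $F$-action on $W$ and on $N(T)/T$, and one must be careful that ``$F$-centralizer of $w$'' in the conclusion matches the stabilizer that actually appears when one untwists the Lang torsor. Getting the identification of the indexing set with $Z_W^{F_*}(w)$ exactly right — rather than off by an application of $F_*$ or by replacing $w$ with $w F_*$ — is the delicate point; everything else (regularity $\Rightarrow$ $0$-dimensional pieces $\Rightarrow$ finite, and the pieces are single points) is formal.
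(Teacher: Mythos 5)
Your proposal is correct and follows essentially the same route as the paper: both invoke formula (4.7.1) of Deligne--Lusztig, Proposition 4.7, use regularity to see that each constituent of $X_w^s$ is a single point, and identify the resulting index set (the paper phrases it as $N(\BF_q)/T(\BF_q)$, you as the set of Lang-torsor classes) with the $F$-centralizer of $w$ via the twisted Frobenius action $x\mapsto wF(x)w^{-1}$ on $N/T\cong W$. The twist-bookkeeping you flag as the delicate point is exactly the step the paper handles by that explicit formula for the Frobenius action.
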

\begin{proof}
We use the formula (4.7.1) in \cite{DL}, Prop. 4.7. It shows that the cardinality of $X_w^s$ is equal to the cardinality of $N(\BF_q)/T(\BF_q)$, where $N$ denotes the normalizer of $T$. However $N(\BF_q)/T(\BF_q)$ can be identified with the fixed points under the action of Frobenius on $N/T$. After identifying $N/T$ with $W$, this action is via $x\mapsto wF(x)w^{-1}$. Hence the fixed points are identified with the $F$-centralizer of $w$.
\end{proof}
\begin{lemma}\label{cardfix}
Let $G$ be the unitary group in an odd number $n$ of variables. Then 
\smallskip

\noindent (i) The $F$-centralizer of a Coxeter element $w$ has $n$ elements.

\smallskip

\noindent (ii) Let $s\in T(\BF_q)$ be a regular element in a Coxeter torus. Then all points in $X_w^s$ are conjugate under 
${\rm Gal}(\BF_{q^{2n}}/\BF_{q^2})$, and in fact, this Galois group acts simply transitively on the fixed points. 
\end{lemma}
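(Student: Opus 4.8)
\textbf{Proof proposal for Lemma \ref{cardfix}.}

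The plan is to compute both statements by passing to the explicit model $V \cong \BF_{q^{2n}}$ with hermitian form $(x,y) \mapsto \mathrm{Tr}_{\BF_{q^{2n}}/\BF_{q^2}}(\sigma^n(x)y)$ and Coxeter torus $T(\BF_q) = \mathrm{Ker}(\mathrm{Nm}_{\BF_{q^{2n}}/\BF_{q^n}})$, as set up in the remark preceding the lemma. For part (i), by Lemma \ref{cardcentr} it suffices to count the $F$-centralizer of the Coxeter element $w=(1,2,\ldots,d+1)$ in $W=S_n$, where $F$ acts on $W$ through conjugation by the longest element $w_0 \in S_n$ (since we are in the unitary, i.e.\ non-split, case). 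The $F$-conjugacy class and $F$-centralizer are governed by the twisted structure: $w\cdot F_\ast$ acts on $X_\ast(T)\otimes\BQ$ as a single $n$-cycle (this is exactly what ``Coxeter element of the unitary group'' means and is why $T_w$ is a norm-one torus in $\BF_{q^{2n}}$), so its centralizer in the relevant twisted Weyl group is cyclic of order $n$. I would either cite this directly or verify it by the bijection $N(\BF_q)/T(\BF_q) \simeq (N/T)^{wF} \simeq \{x \in W : x = w F(x) w^{-1}\}$ from the proof of Lemma \ref{cardcentr} and a direct combinatorial check that this set has exactly $n$ elements — it is the cyclic group generated by the image of a generator of $\mathrm{Gal}(\BF_{q^{2n}}/\BF_{q^2})$ acting on the torus.

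For part (ii), I would argue directly on the projective-space model of $X_w$: after the identifications above, $X_w$ is the set of lines $\ell \in \BP(V)$ with $(\ell, \tau^i\ell) = 0$ for $0\le i \le d-1$ and $(\ell,\tau^d\ell)\neq 0$, where $\tau = \sigma^2$. A regular element $s \in T(\BF_q)$ generates $\BF_{q^{2n}}$ over $\BF_q$ (regularity forces $\BF_q(s)=\BF_{q^{2n}}$, otherwise $Z^0(s)$ would be a strictly larger torus, contradicting regularity by the previous lemma), hence its eigenspaces on $V\otimes\BF$ are the $n$ one-dimensional spaces spanned by $\lambda^{(j)} := \sigma^{2j}$ applied to a fixed eigenline $\lambda^{(0)}$, for $j = 0,\ldots,n-1$ — these are permuted cyclically by $\tau=\sigma^2$, and also by $\mathrm{Gal}(\BF_{q^{2n}}/\BF_{q^2}) = \langle \sigma^2 \rangle$, which has order $n$. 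Since a fixed point of $s$ in $X_w$ must be one of these eigenlines, and conversely one checks that the eigenline condition $(\ell,\tau^i\ell)=0$ for $i<d$, $(\ell,\tau^d\ell)\neq 0$ holds for \emph{all} of them (the hermitian form pairs $\sigma^{2j}\lambda^{(0)}$ nontrivially with $\sigma^{2j'}\lambda^{(0)}$ precisely when the two indices are ``antipodal'' in the $n$-cycle, which is the index difference $d$ modulo $n$ since $n = 2d+1$), all $n$ eigenlines lie in $X_w^s$. This already gives $|X_w^s| = n$, matching (i) and Lemma \ref{cardcentr}, and the Galois group $\mathrm{Gal}(\BF_{q^{2n}}/\BF_{q^2})$ — which commutes with nothing in general but does permute the $s$-eigenlines since $s\in T(\BF_q)$ is Galois-stable up to its action on eigenvalues — acts on this set of $n$ lines through the cyclic $n$-cycle $\sigma^2$, hence simply transitively.

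The main obstacle I anticipate is the bookkeeping in part (ii): one must check carefully that $\mathrm{Gal}(\BF_{q^{2n}}/\BF_{q^2})$ genuinely stabilizes the fixed-point set $X_w^s$ (not just $X_w$, which is defined over $\BF_{q^2}$ so is obvious), and this requires that $s$ being in $T(\BF_q)$ — a torus defined over $\BF_q$ and split by $\BF_{q^{2n}}$ — implies its eigenlines are permuted by that Galois group. This is true because the eigencharacters of $T$ form a single Galois orbit (that is the content of $T$ being a Coxeter torus), so $\sigma^2$ sends the $\chi$-eigenline to the $\sigma^2(\chi)$-eigenline; combined with the fact, noted above, that the $X_w$-membership condition is $\tau$-equivariant and hence satisfied by all eigenlines or none, simple transitivity follows. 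I would also double-check the exact value $d$ of the ``antipodal gap'' against the conventions in \cite{Lu3}, Prop.~26, to be sure the nondegeneracy condition $(\ell,\tau^d\ell)\ne 0$ is the one actually satisfied, rather than an off-by-one variant.
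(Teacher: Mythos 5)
Your proposal is correct and follows essentially the same route as the paper: identify $V$ with $\BF_{q^{2n}}$ and $T$ with the norm-one torus, observe that the fixed points of a regular $s$ in $\BP(V\otimes\BF)$ are exactly the $n$ eigenlines, check that all of them lie in $X_w$, and note that $\mathrm{Gal}(\BF_{q^{2n}}/\BF_{q^2})$ permutes them simply transitively; your verification of the ``antipodal'' pairing condition is detail the paper simply asserts. The only organisational difference is that the paper obtains (i) as a consequence of (ii) via Lemma \ref{cardcentr} (fixed-point count $=$ order of the $F$-centralizer), whereas you also sketch an independent combinatorial computation of the $F$-centralizer; both are fine.
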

\begin{proof}
By Lemma \ref{cardcentr},  the second assertion implies the first one, since ${\rm Gal}(\BF_{q^{2n}}/\BF_{q^2})$ has $n$ elements. Now we may identify $T$ with ${\rm Ker}({\rm Nm}_{\BF_{q^{2n}}/\BF_{q^n}}) $, and the hermitian space $V$ with $\BF_{q^{2n}}$, cf. above. Then the set of fixed points of a regular  $s\in T(\BF_q)$ 
in $\BP(V)$ is just the set of eigenlines of $\BF_{q^{2n}}^\times$ in $V\otimes_{\BF_{q^2}}\BF$. These all lie in $X_w$, and this implies the assertion. 
\end{proof}
At this point all statements of Proposition \ref{fixpointstr} are proved. We also note the following consequence. 
\begin{corollary}
Let $\Lambda\in{\rm Vert}^g(L)$ such that $\big(\CV(\Lambda)^o\big)^{g_\Lambda}$ is finite. Then there is no $\Lambda'\in {\rm Vert}^g(L)$, with  
$\Lambda'$  strictly contained in $\Lambda$. 
\end{corollary}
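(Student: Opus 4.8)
The plan is to derive a contradiction from the existence of a strictly smaller $\Lambda'\in\mathrm{Vert}^g(L)$, by showing that the hypothesis forces $g_\Lambda$ to act on $V_\Lambda=\Lambda/\Lambda^*$ without any proper nonzero invariant $k'$-subspace, and then observing that such a $\Lambda'$ would produce one.

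First I would record the purely lattice-theoretic input. Suppose $\Lambda'\in\mathrm{Vert}^g(L)$ with $\Lambda'\subsetneq\Lambda$. Since $A\mapsto A^*$ is a strictly inclusion-reversing involution on the $\CO_E$-lattices of $C_n$, the inclusion $\Lambda'\subsetneq\Lambda$ gives $\Lambda^*\subsetneq(\Lambda')^*$, and since $\Lambda'$ is a vertex lattice we have $(\Lambda')^*\subseteq\Lambda'$; hence there is a chain $\Lambda^*\subsetneq\Lambda'\subsetneq\Lambda$ of $g$-stable $\CO_E$-lattices. Reducing modulo $\Lambda^*$, the image $W:=\Lambda'/\Lambda^*$ is a $g_\Lambda$-stable $k'$-subspace of $V_\Lambda$ with $0\ne W\ne V_\Lambda$.

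Next I would invoke the structure theory of Section \ref{fpinstratum}. By Proposition \ref{fixpointstr}(ii), the hypothesis that $(\CV(\Lambda)^o)^{g_\Lambda}$ is (nonempty and) finite forces $g_\Lambda$ to be a regular elliptic element lying in a Coxeter-type maximal torus of the unitary group $\mathrm{U}(V_\Lambda)$, a unitary group in the odd number $t(\Lambda)=\dim_{k'}V_\Lambda$ of variables. Using the explicit description recalled in the proof of Lemma \ref{cardfix}, one identifies $V_\Lambda$ with $\BF_{q^{2t(\Lambda)}}$ equipped with its standard hermitian form, so that this torus becomes $\mathrm{Ker}\big(\mathrm{Nm}_{\BF_{q^{2t(\Lambda)}}/\BF_{q^{t(\Lambda)}}}\big)$ acting by multiplication; a \emph{regular} element $g_\Lambda$ of it generates the field $\BF_{q^{2t(\Lambda)}}$ over $k'=\BF_{q^2}$ (equivalently, has pairwise distinct eigenvalues on $V_\Lambda\otimes_{k'}\BF$), so that $V_\Lambda$ is free of rank one over $k'[g_\Lambda]=\BF_{q^{2t(\Lambda)}}$. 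In particular $V_\Lambda$ has no $g_\Lambda$-stable $k'$-subspace other than $0$ and $V_\Lambda$, contradicting the existence of $W$; this completes the argument.

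The proof is essentially bookkeeping once Proposition \ref{fixpointstr} is in hand, so I do not expect a serious obstacle. The two points that must be handled with care are: that duality of lattices is \emph{strictly} inclusion-reversing, which is what makes $W$ genuinely proper and nonzero and is the only place the hypothesis $\Lambda'\subsetneq\Lambda$ enters; and the implication ``$g_\Lambda$ regular in a Coxeter torus'' $\Rightarrow$ ``$g_\Lambda$ acts $k'$-irreducibly on $V_\Lambda$'', which is the same comparison of subfields of $\BF_{q^{2t(\Lambda)}}$ that underlies the Remark in Section \ref{fpinstratum} and the proof of Lemma \ref{cardfix}. One may rephrase the corollary as: a $\Lambda\in\mathrm{Vert}^g(L)$ with finite fixed-point set is minimal in $\mathrm{Vert}^g(L)$ for inclusion.
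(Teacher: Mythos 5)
Your proof is correct and reaches the paper's conclusion by essentially the same strategy --- extract from $\Lambda'$ a nonzero proper $g_\Lambda$-invariant subspace of $V_\Lambda=\Lambda/\Lambda^*$ and contradict Proposition \ref{fixpointstr}(ii) --- but the contradiction is derived differently. The paper uses the subspace $\Lambda'^*/\Lambda^*$, which is totally isotropic, so that its stabilizer is a proper $F$-stable parabolic of $\mathrm{U}(V_\Lambda)$, and then quotes the characterization of elliptic maximal tori as those not contained in any proper $F$-stable parabolic (together with the fact that a parabolic containing a regular semisimple element contains its connected centralizer). You instead use $\Lambda'/\Lambda^*$ and argue that a regular element of the Coxeter torus generates $\BF_{q^{2t(\Lambda)}}$ over $k'$, so that $V_\Lambda$ is a rank-one $\BF_{q^{2t(\Lambda)}}$-module and hence $k'[g_\Lambda]$-irreducible. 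Your route is more elementary and slightly stronger --- it excludes arbitrary invariant subspaces, so you never need to check isotropy --- at the cost of invoking the explicit model of the Coxeter torus in the odd unitary group; the paper's route is softer and would carry over to elliptic Weyl elements for which no such concrete model is at hand. Both arguments, as you note, tacitly read the hypothesis as ``nonempty and finite,'' since Proposition \ref{fixpointstr}(ii) requires nonemptiness; this matches the paper's own implicit usage.
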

\begin{proof}
Indeed, $\Lambda'$ would correspond to a proper parabolic in ${\rm U}(V_{\Lambda})(\BF_p)$; but $ g_\Lambda$ is not contained in a proper parabolic by Proposition \ref{fixpointstr}, and hence cannot fix $\Lambda'^*/\Lambda^*$. 
\end{proof}

\section{Statement of the AFL}


Let $C'_n$ be a hermitian space of dimension $n$ with discriminant of \emph{even} valuation, and equipped with a vector $u$ of norm one. We fix a self-dual lattice $L_0$ in $C'_n$ such that $u\in L_0$. We denote by $K$ the stabilizer of $L_0$ in $U(C_n')(F)$. We define, for $g\in U(C_n')(F)$ regular semi-simple,
\begin{equation}\label{orbintu}
O(g,1_K)=\int_{U(u^\perp)(F)}1_K(h^{-1} g h)dh ,
\end{equation}
where the Haar measure is normalized by ${\rm vol}\big(K\cap {U(u^\perp)(F)}\big)=1$. Here  $u^\perp$ denotes  the orthogonal complement of $u$ in $C_n'$.

We now denote by $C$ either $C_n$ or $C_n'$.
For $g\in U(C)(F)$ regular semi-simple, we denote by $L=L_g$ the lattice in $C$ generated by $g^i u,i=0,1,\ldots,n-1$. We define an involution  $\tau$ on $C=L_g\otimes_{\CO_E} E$ (depending on $g$) by requiring that $(a \cdot g^iu)^\tau= \bar a\cdot g^{-i}u$ for $a\in E$ and $i=0,1,\ldots,n-1$.

\begin{lemma}\label{lem Cn'}
Let $g\in U(C) (F)$ be regular semisimple. Then 
$$
O(g,1_K)=\sum_{\{\Lambda\mid L\subset \Lambda\subset L^*, g\Lambda=\Lambda, \Lambda^*=\Lambda\}}1.
$$
\end{lemma}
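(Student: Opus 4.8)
The plan is to unwind the orbital integral $O(g,1_K)$ into a sum over $U(u^\perp)(F)$-translates of the lattice $L_0$ and match these translates with self-dual $g$-stable lattices sandwiched between $L_g$ and $L_g^*$. Concretely, first I would observe that $1_K(h^{-1}gh) = 1_K(g h) \cdot 1_K(h^{-1})$-type manipulations are not quite the right move; rather, $1_K(h^{-1}gh) \ne 0$ iff $h^{-1}gh \in K$, i.e.\ iff $g$ stabilizes the lattice $hL_0$. So the integrand, as a function of $h \in U(u^\perp)(F)$, is the indicator of those $h$ with $g(hL_0) = hL_0$. Since $u$ has norm one and $u \in L_0$, and since $h \in U(u^\perp)(F)$ fixes $u$, the lattice $\Lambda := hL_0$ is a self-dual lattice in $C$ containing $u$ with $u$ unimodular, hence $\Lambda = (\Lambda \cap u^\perp) \oplus \CO_E u$, and the condition $g\Lambda = \Lambda$ must be analyzed.

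Next I would set up the bijection. The group $U(u^\perp)(F)$ acts on the set $\mathcal{X}$ of self-dual $\CO_E$-lattices $\Lambda$ in $C$ with $u \in \Lambda$; this action is transitive (any two such lattices have self-dual orthogonal complements in $u^\perp$, and $U(u^\perp)(F)$ acts transitively on self-dual lattices there), with stabilizer of $L_0$ equal to $K \cap U(u^\perp)(F)$, which has volume one by our normalization. Therefore the pushforward of Haar measure identifies $O(g,1_K)$ with the \emph{counting measure} on the subset of $\mathcal{X}$ consisting of $g$-stable lattices, i.e.
\[
O(g,1_K) = \#\{\Lambda \in \mathcal{X} \mid g\Lambda = \Lambda\}.
\]
Then I would show that for $\Lambda \in \mathcal{X}$ with $g\Lambda = \Lambda$ we automatically have $L_g \subset \Lambda \subset L_g^*$: indeed $u \in \Lambda$ and $g\Lambda = \Lambda$ force $g^iu \in \Lambda$ for all $i$, so $L_g \subset \Lambda$, and dualizing (using $\Lambda = \Lambda^*$) gives $\Lambda = \Lambda^* \subset L_g^*$. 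Conversely any self-dual $g$-stable $\Lambda$ with $L_g \subset \Lambda \subset L_g^*$ contains $u = g^0 u \in L_g$, so lies in $\mathcal{X}$. This yields exactly the index set $\{\Lambda \mid L_g \subset \Lambda \subset L_g^*,\ g\Lambda = \Lambda,\ \Lambda^* = \Lambda\}$ appearing in the statement.

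The main technical point — the place I expect the real work to be — is verifying the transitivity of $U(u^\perp)(F)$ on $\mathcal{X}$ together with the stabilizer computation, so that the orbital integral genuinely collapses to a count with the stated normalization; one must be careful that the hermitian form on $C$ restricted to $u^\perp$ is the one for which $L_0 \cap u^\perp$ is self-dual, and that Witt's theorem / the elementary divisor theorem for hermitian lattices over the unramified $\CO_E/\CO_F$ gives a single $U(u^\perp)(F)$-orbit of self-dual lattices. This is essentially the unitary analogue of the lattice-counting reformulation of orbital integrals already used in the FL discussion (the displayed right-hand side $\sum_{\{\Lambda \mid L \subset \Lambda \subset L^*, g\Lambda = \Lambda, \Lambda^* = \Lambda\}} 1$ after the first conjecture), and since $C$ here is either $C_n$ or $C_n'$, the same argument applies uniformly; the case $C = C_n'$ with its self-dual $L_0$ is literally that earlier computation, while $C = C_n$ is the analogue needed for the unitary side of the AFL. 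I would finish by noting that finiteness of the sum follows from regular semisimplicity of $g$ (the invariants $(g^iu, g^ku)$ lie in a fixed fractional ideal, bounding $L_g \subset \Lambda \subset L_g^*$ to finitely many lattices), as already recorded in Lemma~\ref{squeeze}.
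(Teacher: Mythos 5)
Your proposal is correct and follows essentially the same route as the paper: unfold the orbital integral into a count of $g$-stable self-dual lattices containing the unimodular vector $u$ (via transitivity of $U(u^\perp)(F)$ on such lattices and the volume-one normalization of the stabilizer), and then identify these with the lattices $\Lambda$ in the index set using the splitting $\Lambda=(\Lambda\cap u^\perp)\oplus\CO_E u$ forced by $(u,u)=1$. The paper's proof is just a terser version of the same argument, leaving the unfolding and the converse inclusion implicit.
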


\begin{proof}

The orbital  integral \eqref{orbintu} counts the number of self-dual lattices $\lambda$ in $u^\perp$  such that $\lambda\oplus \CO_E u$ is fixed by $g$.
To show the equality, it suffices to show that any  lattice $\Lambda$ occuring on the RHS  splits as a direct sum $\lambda\oplus \CO_E u$ for a self-dual lattice $\lambda$. But since $(u,u)=1$ it follows that  $\Lambda=(\Lambda\cap u^\perp)\oplus\CO_E u$, where $\lambda=\Lambda\cap u^\perp$ is self-dual.

\end{proof}

Now let $S_n$ be the variety over $F$ whose $F$-points are
$$
S_n(F)=\{s\in GL_n(E)\mid s\bar s=1.\}
$$
In fact, $S_n$ is defined over $\CO_F$. For $\gamma\in S_n(F)$, recall that its invariants are the characteristic polynomial ${\rm char}_\gamma(T)\in E[T]$ and the $n-1$ elements $v \gamma^i\, ^t\! v,i=1,2,\ldots,n-1$ of $E$, for $v$ the row vector $(0,\ldots,0,1)$. 

For $\gamma\in S_n(F)$ regular semi-simple and $s\in\BC$, we consider
$$
O(\gamma, 1_{S_n(\CO_F)},s)=\int_{GL_{n-1}(F)}1_{S_n(\CO_F)}(h^{-1}\gamma h)\eta({\rm det}\,h)|{\rm det}\, h|^sdh ,
$$
where the Haar measure on $GL_{n-1}(F)$ is normalized by ${\rm vol}(GL_{n-1}(\CO_F))=1$. This is a polynomial in $\BZ[q^s, q^{-s}]$, comp. Lemma \ref{lem Mi} below. 

We will simply denote the value at $s=0$ by $O(\gamma, 1_{S_n(\CO_F)})$;  it is given by\begin{equation}\label{orbintS}
O(\gamma, 1_{S_n(\CO_F)})=\int_{GL_{n-1}(F)}1_{S_n(\CO_F)}(h^{-1}\gamma h)\eta({\rm det}\, h)dh.
\end{equation}
We also introduce the first derivative at $s=0$:
\begin{equation}\label{orbintder}
O'(\gamma, 1_{S_n(\CO_F)})= \frac{d}{ds}O(\gamma, 1_{S_n(\CO_F)},s)_{\big| s=0} . 
\end{equation}

For a regular semisimple $\gamma\in S_n(F)$, we define $\ell(\gamma)=v({\rm det}(\gamma^i v))\in \BZ$, where $(\gamma^i v)$ is the matrix $(v,\gamma v,\ldots,\gamma^{n-1} v)$. And we define a sign 
\begin{equation*}
\omega(\gamma)=(-1)^{\ell(\gamma)} \in \{\pm 1\}.
\end{equation*}

Now let $g\in U(C)(F)$ match $\gamma$, i.e., have the same invariants as $\gamma$. Then, with $L=L_g$, 
we define the set of lattices in $C$
$$
M=\{\Lambda\mid L\subset \Lambda\subset L^*, g\Lambda=\Lambda,\Lambda^\tau=\Lambda\}
$$
and its subsets indexed by $i\in \BZ$,
$$
M_i=\{\Lambda\mid\Lambda\in M,\ell(\Lambda/L)=i\}.
$$
Here $\ell(\Lambda/L)$ is the length of the $\CO_E$-module $\Lambda/L$. 

\begin{lemma}
\label{lem Mi}
Let $GL_{n-1}(F)_i$ be the open subset of $GL_{n-1}(F)$ consisting of $h$ with $v({\rm det}\, h)=i$. Then 
$$\int_{GL_{n-1}(F)_i}1_{S_n(\CO_F)}(h^{-1}\gamma h)dh =| M_{i-\ell(\gamma)}|.$$
\end{lemma}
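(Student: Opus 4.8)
The plan is to compute the orbital integral over the stratum $GL_{n-1}(F)_i$ by identifying it with a lattice count, and then matching that count with $|M_{i-\ell(\gamma)}|$.

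First I would note that the integral $\int_{GL_{n-1}(F)_i}1_{S_n(\CO_F)}(h^{-1}\gamma h)\,dh$, with the measure normalized so that $GL_{n-1}(\CO_F)$ has volume $1$, counts the number of cosets $hGL_{n-1}(\CO_F)$ with $v(\det h)=i$ such that $h^{-1}\gamma h\in S_n(\CO_F)$. A coset $hGL_{n-1}(\CO_F)$ is the same datum as a lattice $\Lambda'$ in $E^{n-1}$ (namely $\Lambda'=h\cdot\CO_E^{n-1}$), and the condition $v(\det h)=i$ translates into $\ell(\Lambda'_0/\Lambda')=i$ where $\Lambda'_0=\CO_E^{n-1}$ is the standard lattice (with the convention that $\ell$ of a virtual quotient is the difference of lengths; one has to be a little careful with signs, since $v(\det h)$ may be negative). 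Passing from $E^{n-1}$ to $E^n$ by setting $\Lambda=\Lambda'\oplus\CO_E v$, we get a lattice in $E^n$ containing $v$, and the key point is to interpret the condition $h^{-1}\gamma h\in S_n(\CO_F)$, i.e.\ $\gamma h\in h\cdot S_n(\CO_F)$, i.e.\ $\gamma(h\CO_E^n)\subseteq h\CO_E^n$ together with the self-duality coming from $s\bar s=1$: concretely $\gamma\Lambda\subseteq\Lambda$ and $\gamma$ preserves the relevant hermitian/duality structure, hence $\gamma\Lambda=\Lambda$ and $\Lambda^\tau=\Lambda$. So the integral counts lattices $\Lambda$ in $E^n$ with $\Lambda=\Lambda'\oplus\CO_E v$, $\gamma\Lambda=\Lambda$, $\Lambda^\tau=\Lambda$, and a fixed value of $\ell$ of $\Lambda'$ relative to $\Lambda'_0$.

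Next I would use the matching hypothesis: $g\in U(C)(F)$ has the same invariants as $\gamma$, so there is an isometry (or rather an $E$-linear isomorphism intertwining $\gamma$ and $g$, carrying $v$ to $u$) identifying $(E^n,\gamma,v)$ with $(C, g, u)$. Under this identification, $L_g$ is the lattice generated by the $g^iu$, and the standard lattice $\CO_E^n\subset E^n$ corresponds to $L_g$ up to the twist recorded by $\ell(\gamma)=v(\det(\gamma^iv))$: that is, the discrepancy between $\Lambda'_0\oplus\CO_Ev$ and $L_g$ is exactly $\ell(\gamma)$ in length. Hence the set of $\Lambda$ counted by the integral over $GL_{n-1}(F)_i$ corresponds bijectively to $\{\Lambda\mid L\subset\Lambda, g\Lambda=\Lambda, \Lambda^\tau=\Lambda, \ell(\Lambda/L)=i-\ell(\gamma)\}$, and one checks that $g\Lambda=\Lambda$ together with $\Lambda^\tau=\Lambda$ automatically forces $\Lambda\subset L^*$ (as in Lemma \ref{squeeze}), so this set is precisely $M_{i-\ell(\gamma)}$. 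Assembling the bijections gives the claimed equality.

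The main obstacle I anticipate is bookkeeping the normalization and sign conventions precisely: tracking how $v(\det h)$ (which ranges over all of $\BZ$, positive and negative) translates into $\ell(\Lambda/L)$ through the chosen identification, and verifying that the shift is exactly $\ell(\gamma)$ and not, say, $-\ell(\gamma)$ or $\ell(\gamma)$ plus a correction. A secondary, more conceptual point needing care is the translation of the pointwise condition $h^{-1}\gamma h\in S_n(\CO_F)$ into the two lattice conditions $g\Lambda=\Lambda$ and $\Lambda^\tau=\Lambda$ simultaneously — one must check that $S_n(\CO_F)=\{s\in GL_n(E)\mid s\bar s=1,\ s\in M_n(\CO_E)\}$ encodes both integrality of $\gamma$ on $\Lambda$ and the $\tau$-stability, using that $\tau$ is precisely the involution attached to $g$ via $(a\,g^iu)^\tau=\bar a\,g^{-i}u$. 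Both of these are essentially the same computations used implicitly in \cite{Z}, so I expect no genuine difficulty, only the need for care.
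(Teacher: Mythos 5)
Your proposal is essentially the paper's own proof: cosets $hGL_{n-1}(\CO_F)$ are identified with lattices, one appends $\CO_E v$, transports everything to $C$ via the matching isometry $\gamma^i v\mapsto g^i u$, and tracks the index shift $\ell(\gamma)$ coming from the discrepancy between $\CO_E^n$ and the lattice $\CL_\gamma$ generated by the $\gamma^iv$. The one step whose stated mechanism is wrong is the derivation of $\Lambda^\tau=\Lambda$: this does \emph{not} come from the relation $s\bar s=1$, which is automatically satisfied by $h^{-1}\gamma h$ for every $h\in GL_{n-1}(F)$ (since $\bar h=h$ and $\gamma\bar\gamma=1$) and hence imposes no condition on the lattice; membership in $S_n(\CO_F)$ only encodes $\gamma$-stability. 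The $\tau$-stability comes instead from the fact that $h$ has entries in $F$, so that $h\CO_E^n=(\lambda\otimes_{\CO_F}\CO_E)\oplus\CO_E v$ for an $\CO_F$-lattice $\lambda\subset F^{n-1}$ and is therefore stable under the Galois involution $\sigma$ of $E^n$; one then checks that $\sigma$ corresponds to $\tau$ under the matching, because $\sigma(a\gamma^iv)=\bar a\,\bar\gamma^{\,i}v=\bar a\,\gamma^{-i}v$. This same point is where surjectivity onto $M_{i-\ell(\gamma)}$ must be verified: any $\Lambda\in M$ splits as $(\Lambda\cap E^{n-1})\oplus\CO_E v$, and $\tau$-stability (equivalently $\sigma$-stability) is exactly what lets $\Lambda\cap E^{n-1}$ descend to an $\CO_F$-lattice in $F^{n-1}$, i.e.\ to a coset. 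With that correction your argument closes up and coincides with the one in the text.
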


\begin{proof} Consider the row vector space $F^{n}$, with $F^{n-1}$ as a natural subspace (vectors with zero last entry). We also consider $v=(0,\ldots,0,1) $ as an vector in 
$F^{n}$. 
Consider the set  of lattices  
$$
 \mathfrak{m}:=\{\lambda\subset F^{n-1}\mid \gamma(\Lambda)=\Lambda, \text{ where } \Lambda=(\lambda\otimes \CO_E)\oplus\CO_E v\} 
$$
and the subsets 
$$
 \mathfrak{m}_i:=\{\lambda\in 
 \mathfrak{m}\mid \ell(\lambda/\lambda_0)=i\} ,\quad \lambda_0=\CO_F^{n-1}.
$$
Here the length is defined by $\ell(\lambda/\CO_F^{n-1}):=\ell(\lambda/A)-\ell(\CO_F^{n-1}/A)$ for any lattice $A\subset \lambda\cap \CO_F^{n-1}$. It is obvious that the LHS in Lemma \ref{lem Mi} is given by the cardinality $|\mathfrak{m}_i|$.

Denote by $\sigma$ the Galois conjugation on $E^n$. Define a hermitian form on $E^n$ by requiring that 
$$
(\gamma^i v,\gamma^j v):=v \gamma^{i-j}\,^t\!v.
$$
 Let $\CL=\CL_\gamma$ be the $\CO_E$-lattice in $E^n$ generated by $\gamma^i v, i=0,1,\ldots,n-1$.  Denote by $\CL^\ast$ the dual of $\CL$, i.e.,
 $$
\CL^\ast=\{ x\in E^n\mid (x, \CL)\subset \CO_E \} . 
$$
Now we introduce the set of lattices
 $$ \mathfrak{m}':=\{\Lambda\subset E^{n}\mid  \CL\subset \Lambda\subset \CL^\ast, \gamma \Lambda=\Lambda, \Lambda^\sigma=\Lambda\} ,$$
 and 
  $$
 \mathfrak{m}'_i:=\{\Lambda\in 
 \mathfrak{m}'\mid \ell(\Lambda/\CL)=i\} .
$$
We claim that the map $\lambda\mapsto \Lambda:=(\lambda\otimes \CO_E)\oplus\CO_Ev$ defines a bijection between $\mathfrak{m}$ and $ \mathfrak{m}'$. First of all, such $\Lambda$ do lie in $ \mathfrak{m}'$. Indeed, we  only  need to verify that  $\Lambda\subset \CL^\ast$ or, equivalently, $(\Lambda,\gamma^i v)\in \CO_E$ for all $i$. This  follows from $\gamma\Lambda=\Lambda$ and $(\Lambda, v)\in \CO_E$. Now we only need to show the surjectivity of the map.  Similarly to the unitary case, any $\Lambda\in  \mathfrak{m}'$ is a direct sum 
$ (\Lambda\cap E^{n-1})\oplus\CO_E v$. Obviously $ \Lambda\cap E^{n-1}$ is also invariant under the Galois conjugation  on $E^{n-1}$. So we may find a lattice $\lambda\subset F^{n-1}$ such that $ \lambda\otimes \CO_E=\Lambda\cap E^{n-1}$. This proves the surjectivity. 

We claim that the set $\mathfrak{m}_i$ is sent to $\mathfrak{m}'_{i-\ell(\gamma)}$. 
Clearly we have $\ell(\lambda/\lambda_0)=\ell(\Lambda/\CO_E^n)$ under this map.  Hence the claim follows, since the length of $\CL$ over the image of $\CO_E^{n}$ is obviously given by $\ell(\gamma)$.  

To finish the proof, we need to exhibit a bijection 
from $ \mathfrak{m}'$ to $M$ that sends $\mathfrak{m}'_i$ to $M_{i}$. Since $(g^iu,u)=v\gamma^i\, ^t\!v$ for all $i$, the map $\gamma^iv\mapsto g^i u$ defines an isometry between $\CL=\CL_\gamma$ and $L=L_g$. Moreover, the involution $\sigma$ on $E^n$ maps $\gamma^i v$ to $\bar \gamma^i v=\gamma^{-i}v$. Therefore $\sigma$ transfers to the involution $\tau$ on  $L\otimes E$. Clearly this map sends $\mathfrak{m}'_i$ to $M_{i},$ since it sends $\CL$ to $L$.

\end{proof}

\begin{corollary}\label{cor lattice counting}
Let $\gamma\in S_n(F)$ be regular semisimple and 
match  $g\in U(C)(F)$.
\begin{itemize}
\item[(1)] If $C=C_n'$, then
$$
O(\gamma,1_{S_n(\CO_F)})=\omega(\gamma)\sum_{\{\Lambda\mid L\subset \Lambda\subset L^*, g\Lambda=\Lambda,\Lambda^\tau=\Lambda\}} (-1)^{\ell(\Lambda/L)} .
$$

\smallskip

\noindent \item[(2)] If $C=C_n$, then  $O(\gamma, 1_{S_n(\CO_F)})=0$
and
$$
O'(\gamma, 1_{S_n(\CO_F)})=-\omega(\gamma){\rm log}\, q\sum_{\{\Lambda\mid L\subset \Lambda\subset L^*, g\Lambda=\Lambda,\Lambda^\tau=\Lambda\}} (-1)^{\ell(\Lambda/L)} \ell(\Lambda/L) . 
$$
\end{itemize}
\end{corollary}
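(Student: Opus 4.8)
The plan is to deduce Corollary \ref{cor lattice counting} from Lemma \ref{lem Mi} by summing over the cosets $GL_{n-1}(F)_i$ of $GL_{n-1}(F)$ and keeping track of the character $\eta(\det h)$, which on $GL_{n-1}(F)_i$ is constant equal to $\eta(\pi)^i = (-1)^i$ since $\eta$ is the quadratic character of the unramified extension $E/F$. First I would write
\[
O(\gamma, 1_{S_n(\CO_F)}, s) = \sum_{i\in\BZ} (-1)^i q^{-is}\int_{GL_{n-1}(F)_i} 1_{S_n(\CO_F)}(h^{-1}\gamma h)\, dh = \sum_{i\in\BZ} (-1)^i q^{-is} |M_{i-\ell(\gamma)}|,
\]
using Lemma \ref{lem Mi} for the last equality. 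Re-indexing by $j = i - \ell(\gamma)$ and noting $(-1)^{j+\ell(\gamma)} = (-1)^j\,\omega(\gamma)$ (because $\omega(\gamma) = (-1)^{\ell(\gamma)}$), this becomes
\[
O(\gamma, 1_{S_n(\CO_F)}, s) = \omega(\gamma)\, q^{-\ell(\gamma)s}\sum_{j\in\BZ} (-1)^j q^{-js} |M_j|.
\]
Since the relevant lattices $\Lambda$ all satisfy $L\subset\Lambda\subset L^*$, only finitely many $j$ contribute, so this is a genuine Laurent polynomial in $q^{s}$ — which also proves the assertion made before Lemma \ref{lem Mi} that $O(\gamma, 1_{S_n(\CO_F)}, s)\in\BZ[q^s, q^{-s}]$.

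For part (1), with $C = C'_n$ of even discriminant valuation, I would simply set $s=0$ in the displayed formula: the $q^{-\ell(\gamma)s}$ factor becomes $1$, and $\sum_j (-1)^j |M_j| = \sum_{\Lambda\in M}(-1)^{\ell(\Lambda/L)}$, giving exactly the claimed identity $O(\gamma, 1_{S_n(\CO_F)}) = \omega(\gamma)\sum_{\Lambda}(-1)^{\ell(\Lambda/L)}$. Here $M = M_\gamma$ transported to $L = L_g$ via the isometry $\gamma^i v\mapsto g^i u$ constructed in the proof of Lemma \ref{lem Mi}, which matches the $\tau$-involutions on the two sides.

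For part (2), with $C = C_n$ of odd discriminant valuation, the key extra input is a parity/vanishing argument: I claim $M_j$ is \emph{empty unless $j$ has a fixed parity}, or more precisely that the generating function $P(X) := \sum_j (-1)^j |M_j| X^j$ vanishes at $X=1$, i.e. $\sum_{\Lambda\in M}(-1)^{\ell(\Lambda/L)} = 0$. This is where the odd-discriminant hypothesis enters: the map $\Lambda\mapsto\Lambda^*$ (dual lattice) sends $M$ to itself — indeed $L\subset\Lambda\subset L^*$ iff $L\subset\Lambda^*\subset L^*$, $g\Lambda=\Lambda$ iff $g\Lambda^*=\Lambda^*$ by Lemma \ref{cycl}, and $\Lambda^\tau=\Lambda$ iff $(\Lambda^*)^\tau=\Lambda^*$ — and $\ell(\Lambda^*/L) + \ell(\Lambda/L) = \ell(L^*/L) = v(\disc)$, which is odd. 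Hence $\Lambda\mapsto\Lambda^*$ is an involution on $M$ reversing the parity of $\ell(\Lambda/L)$ (and it has no fixed point, since a fixed point would be a self-dual lattice squeezed between $L$ and $L^*$, forcing $\ell(L^*/L)$ even); pairing up $\Lambda$ with $\Lambda^*$ gives $\sum_\Lambda (-1)^{\ell(\Lambda/L)} = 0$, i.e. $P(1) = 0$ and $O(\gamma, 1_{S_n(\CO_F)}) = 0$. For the derivative, differentiating $O(\gamma, 1_{S_n(\CO_F)}, s) = \omega(\gamma) q^{-\ell(\gamma)s} P(q^{-s})$ at $s=0$ and using $P(1)=0$ kills the term coming from the $q^{-\ell(\gamma)s}$ prefactor, leaving
\[
O'(\gamma, 1_{S_n(\CO_F)}) = \omega(\gamma)\cdot\frac{d}{ds}P(q^{-s})\Big|_{s=0} = \omega(\gamma)\cdot(-\log q)\,P'(1) = -\omega(\gamma)\log q\sum_{\Lambda\in M}(-1)^{\ell(\Lambda/L)}\ell(\Lambda/L),
\]
which is the claim. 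The main obstacle — really the only substantive point beyond bookkeeping — is justifying that $\Lambda\mapsto\Lambda^*$ preserves the constraint $g\Lambda=\Lambda$ and $\Lambda^\tau=\Lambda$ and that it is fixed-point-free; the first follows from Lemma \ref{cycl} applied to the $g$-cyclic lattice $L = L_g$ (and its compatibility with $\tau$, which interchanges $g^i u$ and $g^{-i}u$ hence commutes with taking duals in the appropriate sense), and the second from the oddness of $v(\disc C_n)$.
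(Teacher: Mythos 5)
Your proposal is correct and follows essentially the same route as the paper: apply Lemma \ref{lem Mi} to write $O(\gamma,1_{S_n(\CO_F)},s)$ as $\omega(\gamma)q^{-\ell(\gamma)s}\sum_j(-1)^j|M_j|q^{-js}$, evaluate at $s=0$ for (1), and for (2) use the involution $\Lambda\mapsto\Lambda^*$ on $M$ together with the oddness of $\ell(L^*/L)$ to get $\sum_j(-1)^j|M_j|=0$, which kills the prefactor term when differentiating. Your extra verifications (that $\Lambda\mapsto\Lambda^*$ preserves the defining conditions of $M$, and the fixed-point-freeness — the latter not strictly needed, since the sign flip $(-1)^i\mapsto(-1)^{r-i}=-(-1)^i$ already forces the sum to vanish) only make explicit what the paper leaves implicit.
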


\begin{proof} By Lemma \ref{lem Mi},
we have
$$
O(\gamma,1_{S_n(\CO_F)},s)=\sum_{i\in \BZ}(-1)^i  |M_{i-\ell(\gamma)}|q^{-is}.
$$
Or equivalently
$$
O(\gamma,1_{S_n(\CO_F)},s)=(-1)^{\ell(\gamma)}\sum_{i\in \BZ}(-1)^i  |M_{i}|q^{-(i+\ell(\gamma))s}.
$$
This shows that 
$$
O(\gamma,1_{S_n(\CO_F)})=(-1)^{\ell(\gamma)}\sum_{i\in \BZ}(-1)^i  |M_{i}|.
$$
In particular, if we set $C=C_n'$, the first identity is proved. 

Now let $C=C_n$. The map $\Lambda \mapsto \Lambda^\ast$ defines an involution on $M$ and sends $M_{i}$ to $M_{r-i}$ where $r$ is the length of $L^\ast/L$, which is odd. This shows that $O(\gamma, 1_{S_n(\CO_F)})=0$. We now take the first derivative 
\begin{align*}
O'(\gamma,1_{S_n(\CO_F)},0)&=-(-1)^{\ell(\gamma)}{\rm log}\, q \sum_{i\in \BZ}(-1)^i  |M_{i}|(i+\ell(\gamma))\\&=-\omega(\gamma) {\rm log}\, q \sum_{i\in \BZ}(-1)^i  i|M_{i}|.
\end{align*}
This completes the proof.
\end{proof}

Using Lemma \ref{lem Cn'} and Corollary \ref{cor lattice counting} above,  the  statement of the FL (cf.\ Introduction) is  the following identity for $g\in U(C_n')(F)$ regular semi-simple:
\begin{equation}
\sum_{\{\Lambda\mid L\subset \Lambda\subset L^*, g\Lambda=\Lambda,\Lambda^\tau=\Lambda\}} (-1)^{\ell(\Lambda/L)} = \sum_{\{\Lambda\mid L\subset \Lambda\subset L^*, g\Lambda=\Lambda, \Lambda^*=\Lambda\}}1. 
\end{equation}

Now, in the special case that the intersection of $\Delta(\CM)$ and $({\rm id}_\CM\times g)\Delta(\CM)$ is discrete, the statement of the AFL (cf.\ Introduction) is as follows.
\begin{conjecture}Let $g\in U(C_n)(F)$ be regular semi-simple.
Assume that $(\delta(\CM)\cap \CN^g)(\BF)$ is finite. Then 
$$
\sum_{\{\Lambda\mid L\subset \Lambda\subset L^*, g\Lambda=\Lambda, \pi\Lambda\subset\Lambda^*\subset\Lambda\}}{\rm mult}(\Lambda)=-\sum_{\{\Lambda\mid L\subset \Lambda\subset L^*, g\Lambda=\Lambda,\Lambda^\tau=\Lambda\}} (-1)^{\ell(\Lambda/L)} \ell(\Lambda/L) . 
$$
Here the number ${\rm mult}(\Lambda)$ is the intersection multiplicity of $\Delta(\CM)$ and $({\rm id}_\CM\times g)\Delta(\CM)$ {\it along the stratum $\CV(\Lambda)^o$}, cf. \eqref{defmult}.
\end{conjecture}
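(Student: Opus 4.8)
The asserted equality is the lattice-counting form of the AFL identity (Conjecture~\ref{AFL}) under the discreteness hypothesis; by Proposition~\ref{scheme}, Corollary~\ref{cor lattice counting} and the definition \eqref{defofarint} of the arithmetic intersection number, it amounts to matching two finite sums of purely local contributions attached to the $g$-cyclic lattice $L=L_g$ and its $\tau$-action. The plan is to compute each side explicitly as a sum indexed by vertex lattices and then compare. On the geometric side, under the standing hypothesis that $(\delta(\CM)\cap\CN^g)(\BF)$ is finite, Corollary~\ref{stratCZ}(ii) writes the reduced intersection as $\bigcup_{\Lambda\in\mathrm{Vert}^g(L)}(\CV(\Lambda)^o)^{g_\Lambda}$, and Proposition~\ref{fixpointstr}(ii) shows that the only $\Lambda$ contributing are those for which $g_\Lambda$ is a regular elliptic element of a Coxeter torus of $\mathrm{U}(V_\Lambda)(\BF_p)$, in which case $(\CV(\Lambda)^o)^{g_\Lambda}$ consists of exactly $t(\Lambda)$ points, permuted simply transitively by $\Gal(\BF_{q^{2t(\Lambda)}}/\BF_{q^2})$ (Lemma~\ref{cardfix}(ii)). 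First I would prove that the $t(\Lambda)$ local rings $\CO_{\Delta(\CM)\cap(\mathrm{id}_\CM\times g)\Delta(\CM),x}$ occurring in a single stratum are mutually isomorphic --- this is verified by explicit computation in the minuscule case, and an \emph{a priori} argument exploiting the Galois symmetry of Lemma~\ref{cardfix}(ii) would be desirable, as it would dispense with the restriction on $p$ --- so that $\mathrm{mult}(\Lambda)=t(\Lambda)\,\ell_\Lambda$ for a single integer $\ell_\Lambda=\ell_\Lambda(g)$.

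The central step is the computation of $\ell_\Lambda$. Here I would use the inclusion $\Delta(\CM)\cap(\mathrm{id}_\CM\times g)\Delta(\CM)\subset\bigcap_{i=0}^{n-1}\CZ(g^iu)$, work in the local deformation ring of the universal $p$-divisible group at a point $x$ of the open stratum $\CV(\Lambda)^o$, write down via Zink's theory of displays the equations cutting out each special divisor $\CZ(g^iu)$, and read off the length of the resulting artinian quotient. This is precisely the computation carried out for $\mathrm{inv}(g)=(1^{(m)},0^{(n-m)})$ in the later sections of the paper, where the decisive geometric input is Theorem~\ref{secondmain}: $\bigcap_i\CZ(g^iu)$ is concentrated in the special fiber and equals a closed Bruhat-Tits stratum. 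For general $g$ I expect $\ell_\Lambda$ to be controlled by the relative position of $L_g$ inside the vertex lattice $\Lambda\oplus\CO_Eu$, i.e.\ by the `part of $\mathrm{inv}(g)$ seen by $\Lambda$'.

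For the arithmetic side the task is to evaluate $S:=\sum_{\Lambda\in M}(-1)^{\ell(\Lambda/L)}\ell(\Lambda/L)$, the sum running over all $g$- and $\tau$-stable lattices $\Lambda$ with $L\subset\Lambda\subset L^*$. I would use the duality involution $\Lambda\mapsto\Lambda^*$ --- already used in Corollary~\ref{cor lattice counting} to prove $O(\gamma,1_{S_n(\CO_F)})=0$ --- to reduce $S$ to a weighted count over the poset of $g$- and $\tau$-stable sublattices of $L^*/L$, organized according to the Jordan type of the semisimple part of $g$ acting on $L^*/L$. The matching then comes down to the combinatorial identity $\sum_\Lambda t(\Lambda)\,\ell_\Lambda=-S$, to be established after a suitable resummation; in the minuscule case the relevant lattices on both sides are explicitly enumerable and this reduces to the combinatorial exercise of \S8, but for general $g$ the bookkeeping is much heavier, and reconciling the signed sum over $\tau$-stable lattices on the right with the unsigned sum over vertex lattices on the left is the delicate point.

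\textbf{Main obstacle.} The hardest part is the local length $\ell_\Lambda$ for non-minuscule $\mathrm{inv}(g)$. The display computation requires (a) the statement that $\pi$ annihilates the structure sheaf of $\bigcap_i\CZ(g^iu)$ --- exactly the final conjecture of the Introduction, established here only when the fundamental matrix is $\mathrm{diag}(\pi^{(m)},1^{(n-m)})$ --- without which one must work in mixed characteristic and the displays become considerably heavier; and (b) a closed-form evaluation of the length in terms of lattice invariants, uniform in $p$ (the present treatment imposes $n\le 2p-2$). One route that might avoid a frontal assault is an induction on $\mathrm{inv}(g)$ in the dominance order on $(\BZ^n)_+$, with the minuscule case as the base and a reduction relating the divisors $\CZ(g^iu)$ for a `larger' invariant to those for `smaller' ones; another, further from the methods of this paper, would identify both sides with the same Fourier coefficient of the central derivative of a Siegel-Eisenstein series, via the modular generating series of the cycles $\CZ(x)$, so as to bypass the pointwise computations entirely.
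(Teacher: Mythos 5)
The statement you are addressing is precisely the open case of the AFL (the lattice-theoretic reformulation of Conjecture \ref{AFL} under the discreteness hypothesis), and the paper itself does not prove it: it is recorded as a conjecture, and only the minuscule case ${\rm inv}(g)=(1^{(m)},0^{(n-m)})$ is established, for $F=\BQ_p$ and $n\le 2p-2$ (Theorem \ref{mainthm}(iv), via Propositions \ref{cardform}, \ref{derivform}, \ref{length} and Theorems \ref{speciald}, \ref{idealk}). Your plan, insofar as it is carried out, coincides with the paper's route in that case: reduction via Proposition \ref{scheme}, Corollary \ref{stratCZ} and Proposition \ref{fixpointstr} to strata where $g_\Lambda$ is regular elliptic in a Coxeter torus, the count of $t(\Lambda)$ fixed points per stratum (Lemma \ref{cardfix}), the display-theoretic length computation at each point, and the evaluation of the signed sum $S$ on the analytic side by the duality and eigenspace decompositions of \S8. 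So for the minuscule case you are reproducing the paper's argument, not offering a new one; and for the general case your proposal is a programme, not a proof.

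The concrete gaps are exactly the ones you flag, and they are not routine. First, the quantity ${\rm mult}(\Lambda)$ is not computed outside the minuscule case: the finiteness of $(\delta(\CM)\cap\CN^g)(\BF)$ does not force the fundamental matrix of $(u,gu,\ldots,g^{n-1}u)$ to be equivalent to ${\rm diag}(\pi^{(m)},1^{(n-m)})$, so Theorem \ref{speciald} does not apply, the intersection need not be concentrated in the special fiber, and the window/display computation of \S11 (which uses the admissible-frame trick and hence $n\le 2p-2$) has no analogue in mixed characteristic. Second, the step ${\rm mult}(\Lambda)=t(\Lambda)\,\ell_\Lambda$ has no a priori justification: the Galois group of Lemma \ref{cardfix}(ii) acts on the Deligne--Lusztig variety over $\BF_{q^2}$, but not on the formal scheme $\CN$ (the framing object, $g$, and the deformation problem live over $\CO_{\breve F}$), which is why the paper obtains equality of the local lengths only a posteriori from the explicit computation, and explicitly states that an a priori argument is lacking. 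Third, the final combinatorial identity $\sum_\Lambda t(\Lambda)\,\ell_\Lambda=-S$ is established in \S8 only because both sides collapse to expressions in the data $(a_i,\deg P_i)$ of $\bar g$ acting on the $\pi$-torsion space $L^*/L$; for general $g$ the module $L^*/L$ is not killed by $\pi$, neither side has been evaluated, and your proposed induction on the dominance order of ${\rm inv}(g)$ (or a generating-series identification) is speculative. In short, the proposal correctly organizes the problem but supplies no proof beyond what the paper already contains.
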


\section{The minuscule case}\label{theminusculecase}

In this section we assume that $V=L^\ast/L$ is killed by $\pi$. We thus consider it as a vector space over $k'=\BF_{q^2}$, the residue field of $\CO_E$.  Denote by $r$ its dimension. Then $r$ is an odd integer between $1$ and $n-1$. Then the hermitian form on $L$ naturally induces a non-degenerate hermitian form on $V$. (It is obtained as follows: For $\bar x, \bar y \in V$ with representatives $x,y\in L^*$, the value $(\bar x, \bar y)_V$ of the hermitian form on $V$ is the image modulo $(\pi)$ of $\pi \cdot (x,y)\in \CO_E$, where $(\ , \ )$ denotes the form on $L\otimes E$.) We denote the corresponding unitary group by $U(V)$ and consider it as an algebraic group defined over $\BF_q$.
As $g$ defines automorphisms of both $L$ and $L^\ast$, it induces an automorphism $\bar g\in U(V)$. Then via the map $\Lambda \mapsto \Lambda^*/L $ the set of vertices $\Lambda\in {\rm Vert}^g(L)$ is in natural bijection with the set of
$\bar g$-invariant $k'$-subspaces $W$ of $V$ such that $W$ is totally isotropic with respect to the hermitian form on $V$.  We thus define  ${\rm Vert}^{\bar g}(V)$ to be the set of such $W$.
And we write $\CV_W$ for the closed Bruhat-Tits stratum  $\CV_\Lambda$ that corresponds to $\Lambda\in {\rm Vert}^g(L)$ in the sense of  Vollaard's paper \cite{V}, cf.\ also \cite{VW}, and we call {\it type} of $W$ the type of $\Lambda$, i.e., the dimension of $W^\perp/W$. The open stratum $\CV_W^\circ$ can then be identified with the Deligne-Lusztig variety associated to a Coxeter torus of $U(W^\perp/W)$, cf. \cite{V}.

We will consider the characteristic polynomial $P_{\bar g|V}(T):={\rm det}(T-\bar g|V)\in k'[T]$ of degree $r$. Since, by Lemma \ref{cycl}, $L^\ast$ is  $g$-cyclic, $V$ is $\bar g$-cyclic. This is equivalent with the regularity of $\bar g$ as an endomorphism of $V$. In particular, its characteristic polynomial is equal to its minimal polynomial.  Let 
\begin{align}\label{prodirr}
P_{\bar g}=\prod_{i=1}^{\ell} P_i^{a_i} 
\end{align}
be the decomposition into irreducible monic polynomials.  

If $P(T)=T^d+b_1T^{d-1}+\ldots+b_d \in  k'[T],b_d\neq 0$, we set
$$
P^\ast(T)= \overline{b}_{d}^{-1}T^d \overline{P}(T^{-1}) ,
$$
where the bar denotes the Galois conjugate on $k'$.
Since $\bar g\in U(V)$,  we have $P_{\bar g}=P^\ast_{\bar g}=\prod_i P_i^{\ast a_i}$, and hence we have an involution $\tau$ of $\{1,2,\ldots,\ell\}$ such that $P_{i}^\ast=P_{\tau(i)}$ and $a_{\tau(i)}=a_i$, cf. \cite{AG}. Note that since $V$ has odd dimension, the degree of $P_{\bar g}$ is odd, and hence there exists at least one index $i$ with $\tau(i)=i$ and such that $a_i$ is odd.

\begin{prop}\label{cardform}
(i)  The  set $(\delta(\CM)\cap \CN^g)(\BF)$ is non-empty if and only if there exists a unique $i_0\in \{1,2,\ldots,\ell\}$ such that $\tau(i_0)=i_0$ and such that $a_{i_0}$ is odd.  Then the set  $(\delta(\CM)\cap \CN^g)(\BF)$  is finite.

\smallskip
 
 \noindent(ii) If $(\delta(\CM)\cap \CN^g)(\BF)$ is non-empty (hence finite),  these points lie on some strata  $\CV^\circ_W$, all  of the same type ${\rm deg}\, P_{i_0}$  for the unique $i_0$ in part $(i)$. And the cardinality of $(\delta(\CM)\cap \CN^g)(\BF)$ is given by
$$
\prod_{\{i,j\},j=\tau(i)\neq i} (1+a_i)\cdot {\rm deg}\,P_{i_0}.
$$
\end{prop}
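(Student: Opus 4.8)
The plan is to reduce the proposition entirely to combinatorics on the stratification from Corollary~\ref{stratCZ}(ii), together with Proposition~\ref{fixpointstr}. The key object is the bijection ${\rm Vert}^g(L)\simeq {\rm Vert}^{\bar g}(V)$, sending $\Lambda$ to $W=\Lambda^*/L$, under which the type of $\Lambda$ equals $\dim W^\perp/W$; recall that $(\delta(\CM)\cap\CN^g)(\BF)=\bigsqcup_{W\in{\rm Vert}^{\bar g}(V)}(\CV_W^\circ)^{g_\Lambda}$, where $g_\Lambda$ acts on $\CV_W^\circ$ through its induced automorphism $\bar g_W$ of the hermitian space $W^\perp/W$ (of odd dimension $\dim W^\perp/W$). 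So I must (a) determine for which $W$ the set $(\CV_W^\circ)^{\bar g_W}$ is non-empty and finite, and (b) count.

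\textbf{Step 1: translating the DL-criterion into polynomial language.} By Proposition~\ref{fixpointstr}, $(\CV_W^\circ)^{\bar g_W}$ is non-empty iff $\bar g_W$ is semisimple and lies in a Coxeter-type maximal torus of $U(W^\perp/W)$, and it is moreover finite iff $\bar g_W$ is in addition regular elliptic there; and then its cardinality equals the type $\dim W^\perp/W=\deg$ of the corresponding polynomial. Since $\bar g$ is regular on $V$ (hence semisimple, with $P_{\bar g}$ squarefree as a product \emph{only} when all $a_i=1$ — but in general $P_{\bar g}=\prod P_i^{a_i}$ equals the minimal polynomial), I want to describe the $\bar g$-invariant totally isotropic subspaces $W$ and the induced action on $W^\perp/W$ via the primary decomposition $V=\bigoplus_i V_i$, where $V_i=\ker P_i(\bar g)^{a_i}$. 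The hermitian form pairs $V_i$ with $V_{\tau(i)}$; when $\tau(i)\ne i$, the pieces $V_i,V_{\tau(i)}$ are mutually dual totally isotropic subspaces, and when $\tau(i)=i$ the form restricts non-degenerately to $V_i$. A $\bar g$-invariant subspace is a sum of $\bar g$-invariant subspaces of each $V_i$ (using that $\bar g|_{V_i}$ is regular, hence cyclic, so its invariant subspaces form a chain $\ker P_i(\bar g)^b$, $0\le b\le a_i$); the isotropy condition then forces, for the $\tau(i)\ne i$ pair $\{i,j\}$, that the exponents $b_i,b_j$ satisfy $b_i+b_j\le a_i$ (so that the $V_i$-part and $V_j$-part pair to zero), and for $\tau(i)=i$ forces the $V_i$-part $\ker P_i(\bar g)^{b}$ to be isotropic, which (since the form on $V_i$ is non-degenerate and $\bar g|_{V_i}$-invariant) happens iff $b\le \lfloor a_i/2\rfloor$.

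\textbf{Step 2: the non-emptiness/finiteness dichotomy.} For $(\CV_W^\circ)^{\bar g_W}$ to be non-empty I need $\bar g_W$ on $W^\perp/W$ to be semisimple in a Coxeter torus; I expect this forces $W^\perp/W$ to be isotypic with \emph{squarefree} characteristic polynomial, i.e., $W^\perp/W$ must be, as a $k'[\bar g]$-module, $\cong k'[T]/(P_{i})$ for a single irreducible $P_i$ with $P_i^*=P_i$. Computing $W^\perp/W$ from $W=\bigoplus_i \ker P_i(\bar g)^{b_i}$: for each $\tau(i)\ne i$ pair the contribution to $W^\perp/W$ is $\ker P_i(\bar g)^{a_i-b_i}/\ker P_i(\bar g)^{b_i}\oplus(\text{dual})$, which is nonzero unless $b_i=a_i-b_i$ i.e. $2b_i=a_i$ — possible only when $a_i$ is even; and for $\tau(i)=i$ the contribution is $\ker P_i(\bar g)^{a_i-b}/\ker P_i(\bar g)^{b}$, a module of ``length'' $a_i-2b$ copies of $k'[T]/(P_i)$ glued by a single Jordan-type block, which is semisimple (indeed $\cong k'[T]/(P_i)$) exactly when $a_i-2b=1$, i.e. $a_i$ is odd and $b=(a_i-1)/2$. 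Hence a non-empty finite fixed-point stratum exists precisely when there is an index $i_0$ with $\tau(i_0)=i_0$ and $a_{i_0}$ odd, and $W$ is chosen with $b_{i_0}=(a_{i_0}-1)/2$, while for every $\tau(i)=i$, $i\ne i_0$, we need $a_i$ even and $b_i=a_i/2$ (so $a_i$ must be even for all such $i$; since $\deg P_{\bar g}=r$ is odd, there is exactly one odd $a_i$ among the fixed indices, giving uniqueness of $i_0$), and for each $\tau(i)\ne i$ pair $b_i$ is free in $\{0,1,\dots,a_i\}$ subject only to $b_i+b_j=a_i$ with $b_i,b_j\ge 0$ — wait, I must double-check: the vanishing of $W^\perp/W$ on a $\{i,j\}$-pair needs $b_i=a_i-b_j$ \emph{and} $b_j=a_j-b_i$, which is the single equation $b_i+b_j=a_i$, giving $a_i+1$ choices of the pair $(b_i,b_j)$. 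This yields exactly $\prod_{\{i,j\},j=\tau(i)\ne i}(1+a_i)$ valid subspaces $W$, each of type $\deg P_{i_0}$, and each carrying $\deg P_{i_0}$ points by Proposition~\ref{fixpointstr}(ii). Summing gives the stated cardinality $\prod_{\{i,j\},j=\tau(i)\ne i}(1+a_i)\cdot\deg P_{i_0}$.

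\textbf{Main obstacle.} The delicate point I expect to spend the most care on is Step~2: verifying precisely when the ``middle quotient'' $\ker P_i(\bar g)^{a-b}/\ker P_i(\bar g)^{b}$ of a cyclic module over $k'[T]/(P_i^{a})$ is semisimple and, when it is, that the residual $\bar g$-action is regular elliptic in a Coxeter torus of the unitary group it carries — i.e. that the induced form is non-degenerate and the $k'$-algebra generated is a field of the right degree, so that Proposition~\ref{fixpointstr}(ii) applies and gives cardinality equal to the type. This requires tracking how the hermitian form descends to $W^\perp/W$ in the isotypic calculation and identifying the torus; I would handle it by passing to the étale $k'$-algebra $k'[T]/(P_{i_0})$ and checking that $\bar g_W$ is a generator, combined with the explicit Coxeter-torus description $\ker{\rm Nm}$ recalled before Lemma~\ref{semisimple}. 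Everything else is bookkeeping with the primary decomposition and the duality $V_i\leftrightarrow V_{\tau(i)}$.
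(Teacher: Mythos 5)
Your proposal is correct and follows essentially the same route as the paper's proof: reduce via Corollary \ref{stratCZ}(ii) and Proposition \ref{fixpointstr} to the condition that $P_{\bar g|W^\perp/W}$ be irreducible, then use the primary decomposition $V=\bigoplus_i V_i$, the chain structure $V_{i,m}=\ker P_i^m(\bar g)$ of invariant subspaces of the cyclic module, and the duality between $V_i$ and $V_{\tau(i)}$ to enumerate the admissible $W$ (one forced choice for each self-paired $i$, and $a_i+1$ choices per pair $\{i,\tau(i)\}$), finally multiplying by the $\deg P_{i_0}$ fixed points per stratum from Lemma \ref{cardfix}. The point you flag as the main obstacle — that regularity of $\bar g$ descends to $W^\perp/W$ so that irreducibility of the characteristic polynomial there is equivalent to lying in a Coxeter torus — is handled in the paper by exactly the observation you make.
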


\begin{proof}

If  $(\delta(\CM)\cap \CN^g)(\BF)$ is non-empty, then there exists  $W\in   {\rm Vert}^{\bar g}(V)$.
Then $W^\perp$ is $\bar g$-invariant and the hermitian form allows us to identify $W$ with the dual of $V/W^\perp$. This yields a filtration
$$
0\subset W\subset W^\perp \subset V
$$and a decomposition
\begin{align}\label{decomp}
P_{\bar g|V}=P_{\bar g|W}\cdot P_{\bar g|W^\perp/W}\cdot P_{\bar g|V/W^\perp},\quad
\end{align}
with the property
\begin{align}\label{propdecomp}
 P_{\bar g|W^\perp/W}=P_{\bar g|W^\perp/W}^\ast,\,\, P_{\bar g|W}=P^\ast_{\bar g|V/W^\perp}.
\end{align}
Note that the fixed point set  $\CV_W^{\circ,\bar g}$ is non-empty if and only if $\bar g|(W^\perp/W)$ lies in a Coxeter torus, i.e., (because we are dealing here with a unitary group in an {\it odd} number of variables), if and only if $\bar g|(W^\perp/W)$ generates inside ${\rm End}(W^\perp/W)$ a subfield of $\BF_{q^{2r}}$.  Since $\bar g$ is a regular endomorphism, so is the induced endomorphism on $W^\perp/W$.  Hence
 the fixed point set  $\CV_W^{\circ,\bar g}$ is non-empty if and only if  $P_{\bar g|W^\perp/W}$ is an irreducible polynomial. 
This irreducible polynomial has to be of the form $P_{i_0}$ with $\tau(i_0)=i_0$.  Moreover if $P_i|P_{\bar g|W}$, then $P_i^\ast|P_{\bar g|V/W^\perp}$. This shows that $a_{i_0}$ is odd and that for every $j\neq i_0$, either $\tau(j)\neq j$ or $\tau(j)=j$ and $a_j$ is even. This shows the ``only if" part of $(i)$. 
Moreover, the type of $W$, i.e., the dimension of $W^\perp/W$,  is equal to the degree of $P_{i_0}$. 

We now assume that there exists a unique $i_0$ such that $\tau(i_0)=i_0$, and with $a_{i_0}$ odd.  To show the ``if" part of $(i)$ and part $(ii)$,   it suffices to prove the formula of cardinality and that the strata $\CV^o_W$ have the desired type.

The decomposition (\ref{prodirr})  induces a decomposition as a direct sum of generalized eigenspaces
$$
V=\bigoplus_{i=1}^\ell V_i,\quad V_i:={\rm Ker}\, P_i^{a_i}(\bar g).
$$

 For $v_i\in V_i,v_j\in V_j$, there is some non-zero constant $c$ such that (cf. \cite{AG})
$$
0=\langle P_i^{a_i}(\bar g)v_i,v_j \rangle=c\langle v_i, P^{\ast a_i}_i(\bar g^{-1}) v_j \rangle=c\langle v_i, P^{\ast a_i}_{i}(\bar g) \bar g^{-s_i}v_j \rangle ,
$$
where $s_i=a_i{\rm deg}\, P_i$. 
Then we have two cases:
\begin{itemize}\item  If $\tau(i)=i$, by the above equation we see that $V_{i}$ is orthogonal to $\oplus_{j\neq i}V_j$ and the restriction of the Hermitian form to $V_{i}$ is non-degenerate.
\item If $\tau(i)=j\neq i$, then $V_i\oplus V_j$ is orthogonal to $V_k,k\neq i,j$. And the restriction of the hermitian form to $V_i\oplus V_j$ is non-degenerate, both $V_i$ and $V_j$ being totally isotropic subspaces.
\end{itemize}

Consider the decomposition 
$$W=\bigoplus_{i}W_i,\quad W_i:=W\cap V_i.
$$
 Then each $W_i$ is invariant under $\bar g$ and totally isotropic in each $V_i$. 
By the regularity of $\bar g$, we may list all $\bar g$-invariant subspaces in $V_{i}$: for each $m=0,1,\ldots,a_{i}$:  there is precisely one invariant subspace (denoted by $V_{i,m}$) of dimension $m\cdot {\rm deg}\, P_{i}$ and these exhaust all invariant subspaces of $V_i$. Moreover $V_{i,m}={\rm Ker}\, P_i^m(\bar g)$.  Let now $i=i_0$. The proof of  the ``only if" part of $(i)$ shows that $W_{i_0}=V_{i_0,\frac{a_{i_0}-1}{2}}$.  We also know that $W':=\bigoplus_{i\neq i_0}W_i$ must be maximal totally isotropic in $V':=\bigoplus_{i\neq i_0}W_i$. Now suppose that $i\neq i_0$.  We have two cases.

\begin{itemize}\item  If $\tau(i)=i$,  $W_{i}$ must be a maximal totally isotropic subspace of $V_i$.  Hence $W_i=V_{i,a_{i}/2}$ is unique (note that $a_i$ must be even by the assumption of the uniqueness of $i_0$).  
\item If $\tau(i)=j\neq i$, then $W_i\oplus W_j$ must be a maximal totally isotropic subspace of $V_i\oplus V_j$. Therefore $W_j$ is uniquely determined by $W_i$ and we can take $W_i=V_{i,m}$ for $m=0,1,\ldots, a_i$. We thus have precisely $a_{i}+1=a_{j}+1$ number of choices.
\end{itemize}

In summary we have shown that the cardinality of the set of $W$ with $\CV_W^{o,\bar g}$ non-empty is
$$
\prod_{\{i,j\},j=\tau(i)\neq i}(1+ a_i)\cdot \prod_{i\neq i_0,\tau(i)=i}1.
$$

Moreover, this also shows that the type of all such $\CV_W^{o}$ is the same, namely ${\rm deg}\, P_{i_0}$. And by Lemma \ref{cardfix}, for each $W$, the cardinality of $\CV_W^{o,\bar g}$ is precisely ${\rm dim}\,  W^\perp/W={\rm deg}\, P_{i_0}$.
We conclude that the cardinality of $(\delta(\CM)\cap \CN^g)(\BF)$ is 
$$
\sum_{W\in {\rm Vert}^{\bar g}(V)}|\CV_W^{o,\bar g}|=\prod_{\{i,j\},j=\tau(i)\neq i}(1+ a_i)\cdot {\rm deg}\,P_{i_0}.
$$
\end{proof}

We now calculate the derivative of the orbital integral using Lemma \ref{lem Cn'}.
\begin{prop}\label{derivform}
 Let $g$ be as above. Let $\gamma\in S_n(F)$ match $g$. Then  $O'(\gamma,1_{S_n(\CO_F)})=0$ unless  there is a unique $i_0$ such that $\tau(i_0)=i_0$ and with $a_{i_0}$ odd,  in which case 

$$
O'(\gamma,1_{S_n(\CO_F)})=-\omega(\gamma) {\rm log}\,q\prod_{\{i,j\},j=\tau(i)\neq i} (1+a_i)\cdot  {\rm deg}\, P_{i_0}\cdot \frac{a_{i_0}+1}{2}.
$$
\end{prop}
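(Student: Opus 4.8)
The strategy is to convert the derivative of the orbital integral into the weighted lattice count provided by Corollary~\ref{cor lattice counting}(2), and then to evaluate that sum by the same bookkeeping over generalized eigenspaces that was carried out in the proof of Proposition~\ref{cardform}. By Corollary~\ref{cor lattice counting}(2),
$$
O'(\gamma,1_{S_n(\CO_F)})=-\omega(\gamma){\rm log}\, q\sum_{\Lambda\in M}(-1)^{\ell(\Lambda/L)}\ell(\Lambda/L),
$$
where $M=\{\Lambda\mid L\subset\Lambda\subset L^*, g\Lambda=\Lambda,\Lambda^\tau=\Lambda\}$. Since we are in the minuscule case, $\pi L^*\subset L$, so every such $\Lambda$ is squeezed between $L$ and $L^*$ with $\pi\Lambda\subset L$; hence $\Lambda\mapsto \Lambda^*/L=:W$ (a $\bar g$-invariant $k'$-subspace of $V=L^*/L$) identifies $M$ with the set of $\bar g$-invariant subspaces $W\subset V$ such that $W\subset W^\perp$ (wait — here $\Lambda^\tau=\Lambda$ rather than the isotropy condition $\Lambda^*\subset\Lambda$ of ${\rm Vert}^g(L)$, so the relevant condition on $W$ is $\tau$-stability, not isotropy). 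Concretely, writing $\tau$ for the involution on $V$ induced by the involution of the same name on $L\otimes E$, the set $M$ is in bijection with the set of $\bar g$-stable, $\tau$-stable $k'$-subspaces $W\subset V$, and under this bijection $\ell(\Lambda/L)=\dim_{k'}W$. So the first task is to make this dictionary precise and check the length formula $\ell(\Lambda/L)=\dim W$.

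\textbf{Evaluating the sum.} Having rewritten the RHS as $-\omega(\gamma){\rm log}\, q\sum_W (-1)^{\dim W}\dim W$ over $\bar g$- and $\tau$-stable subspaces $W\subset V$, I would decompose $V=\bigoplus_{i=1}^\ell V_i$ into generalized $\bar g$-eigenspaces exactly as in the proof of Proposition~\ref{cardform}. The involution $\tau$ on $\{1,\dots,\ell\}$ governs how $\tau$ interacts with this decomposition: if $\tau(i)=i$ then $\tau$ preserves $V_i$, while if $\tau(i)=j\neq i$ then $\tau$ interchanges $V_i$ and $V_j$. Because $\bar g$ is regular, the $\bar g$-invariant subspaces of $V_i$ are totally ordered: for each $m=0,1,\dots,a_i$ there is exactly one, namely $V_{i,m}={\rm Ker}\, P_i^m(\bar g)$, of dimension $m\deg P_i$. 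A $\bar g$-stable subspace $W$ is then a tuple $(W\cap V_i)_i=(V_{i,m_i})_i$; the extra condition that $W$ be $\tau$-stable forces, when $\tau(i)=j\neq i$, the compatibility $m_j$ determined by $m_i$ (via $W\cap V_j = \tau(W\cap V_i)$, which has the same dimension, so $m_j=m_i$), and when $\tau(i)=i$ no constraint beyond $0\le m_i\le a_i$. Thus $M$ is in bijection with $\prod_{\{i,j\}:\,j=\tau(i)\neq i}\{0,\dots,a_i\}\times\prod_{i:\tau(i)=i}\{0,\dots,a_i\}$, and the weight of a tuple is $(-1)^{\sum}$ times $\sum$, where the sum is $\dim W=\sum_i m_i\deg P_i + (\text{paired terms counted once, with multiplicity }2\deg P_i)$.

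\textbf{The combinatorial identity.} The sum $\sum_W(-1)^{\dim W}\dim W$ now factors as a product over the index-orbits, by the standard product rule $\sum_{\mathbf m}\epsilon(\mathbf m)\,S(\mathbf m)=\sum_k \big(\prod_{i\neq k}(\sum_{m_i}\epsilon_i)\big)\big(\sum_{m_k}\epsilon_k S_k\big)$ for $\epsilon$ multiplicative and $S$ additive. For a paired orbit $\{i,\tau(i)=j\}$, the ``total sign'' factor is $\sum_{m=0}^{a_i}(-1)^{2m\deg P_i}=a_i+1$ — note $2\deg P_i$ is even so every sign is $+1$ — which already explains the factor $\prod_{\{i,j\},j=\tau(i)\neq i}(1+a_i)$. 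For a self-paired orbit with $\tau(i)=i$: if $a_i$ is even (which holds for all $i\neq i_0$ by the uniqueness hypothesis), the sign sum $\sum_{m=0}^{a_i}(-1)^{m\deg P_i}$ — one must note $\deg P_i$ is even here, since $P_i=P_i^*$ of even degree unless... actually $P_i$ self-dual can have odd degree only in the degenerate linear case; I will check that for $\tau(i)=i$ with $a_i$ even the relevant sign sum equals $1$, which it does when $\deg P_i$ is even, and when $\deg P_i$ is odd one needs $a_i$ even to get $\sum_{m=0}^{a_i}(-1)^{m}=1$ — in all cases the self-paired even-exponent factors contribute $1$ to the sign product, and $0$ to a ``weighted'' factor if it is the one being differentiated, so those indices $i\neq i_0$ never carry the additive weight. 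The only surviving weighted factor is the one from $i_0$: there $a_{i_0}$ is odd, and the contribution is $\deg P_{i_0}\cdot \sum_{m=0}^{a_{i_0}} m\cdot(\pm 1)^{m}$, which for an odd upper limit $a_{i_0}$ telescopes to $\deg P_{i_0}\cdot \tfrac{a_{i_0}+1}{2}$ (pair $m$ with $m+1$). Multiplying the self-paired-$i_0$ weighted factor, the all-$+1$ self-paired factors for $i\neq i_0$, and the paired factors $a_i+1$, and restoring the overall $-\omega(\gamma){\rm log}\, q$, gives precisely the claimed formula; and if there is no such $i_0$, or more than one self-paired odd-exponent index, then some self-paired sign factor vanishes — one checks $\sum_{m=0}^{a_i}(-1)^{m\deg P_i}=0$ when $a_i$ is odd and $\deg P_i$ is odd — and the whole product is $0$, matching the vanishing of $O'(\gamma,1_{S_n(\CO_F)})$.

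\textbf{Main obstacle.} The routine parts are the factorization of the sum and the elementary identity $\sum_{m=0}^{2k+1}(-1)^m m=-(k+1)$, i.e.\ $\sum_{m=0}^{a}m(-1)^m=\tfrac{a+1}{2}$ for odd $a$. The part that requires genuine care — and where I expect the real work to lie — is the translation step: proving that $M$ (defined with the $\tau$-stability condition $\Lambda^\tau=\Lambda$ coming from the \emph{orbit-matching} $\tau$ attached to $g$) corresponds under $\Lambda\mapsto\Lambda^*/L$ to exactly the $\bar g$- and $\tau$-stable subspaces of $V$, \emph{with} the length identity $\ell(\Lambda/L)=\dim_{k'}(\Lambda^*/L)$, and pinning down that the induced involution $\tau$ on $V$ is compatible with the $*$-involution on the set $\{P_i\}$ used in Proposition~\ref{cardform} (so that ``$\tau(i)=i$'' has the same meaning in both). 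In particular one must verify that in the minuscule case every $\Lambda\in M$ automatically satisfies $L\subset\Lambda\subset L^*$ with $\pi\Lambda\subset L\subset\Lambda$, so that $\Lambda\leftrightarrow W=\Lambda^*/L$ is a clean inclusion-reversing bijection onto subspaces of $V$ and $\ell(\Lambda/L)+\ell(L^*/\Lambda)=r$ forces $\ell(\Lambda/L)=\dim W$. Once that dictionary is nailed down, the rest is the bookkeeping sketched above.
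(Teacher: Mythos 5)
Your route is the paper's route: invoke Corollary~\ref{cor lattice counting}(2), transport the lattice sum to a sum over $\bar g$- and $\bar\tau$-stable subspaces of $V=L^*/L$, decompose $V$ into generalized eigenspaces $V_i$ with their chains of invariant subspaces $V_{i,m}$, and factor the weighted alternating sum over the orbits of the involution $\tau$ on the index set, the $i_0$-orbit carrying the additive weight and every other self-paired orbit killing the sum unless its exponent is even. Two points need repair before this is a proof. First, the dictionary: the paper uses $\Lambda\mapsto W=\Lambda/L$, for which $\ell(\Lambda/L)=\dim_{k'}W$ is immediate; your map $\Lambda\mapsto\Lambda^*/L$ gives $\dim_{k'}(\Lambda^*/L)=r-\ell(\Lambda/L)$, so the length identity you assert is false as stated. (It happens to be harmless for the total, since $M$ is stable under $\Lambda\mapsto\Lambda^*$ and $\sum_W(-1)^{\dim W}=0$, so replacing $\dim W$ by $r-\dim W$ with $r$ odd does not change the weighted sum -- but you should simply use $\Lambda/L$.)

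Second, and more substantively, the parity of $\deg P_i$ for self-paired indices, which you leave unresolved and at one point get backwards. Your evaluation of the self-paired factors $\sum_{m=0}^{a_i}(-1)^{m\deg P_i}$ as $1$ (for $a_i$ even) or $0$ (for $a_i$ odd) requires $\deg P_i$ to be \emph{odd}; if $\deg P_i$ were even the factor would be $a_i+1$ and the stated product formula would acquire extra factors. The fact you need is that an irreducible $P$ over $k'=\BF_{q^2}$ with $P=P^*$ necessarily has odd degree: the roots of $P^*$ are the images of the roots of $P$ under $\iota(\alpha)=\alpha^{-q}$, and $\iota^2$ is the Frobenius of $k'$; if $d=\deg P$ were even, then $\iota^d$ would fix each root, i.e.\ $\alpha^{q^d}=\alpha$, forcing $\alpha\in\BF_{q^d}\subsetneq\BF_{q^{2d}}$ and contradicting irreducibility over $\BF_{q^2}$. (The paper uses this silently; it also gives, together with the oddness of $\dim V$, the existence of at least one self-paired index with odd exponent, so the ``unless'' clause is really ``unless exactly one.'') Finally, the elementary identity is $\sum_{m=0}^{a}m(-1)^m=-\tfrac{a+1}{2}$ for $a$ odd: your ``i.e.'' drops the minus sign, and the inner $i_0$-factor is $-\deg P_{i_0}\cdot\tfrac{a_{i_0}+1}{2}$, exactly as recorded in the paper's proof; carry that sign through consistently when assembling the final expression.
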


\begin{proof}
By mapping $\Lambda$ to $\Lambda/L$, the set of lattices
$$\{\Lambda\mid L\subset \Lambda\subset L^*, g\Lambda=\Lambda,\Lambda^\tau=\Lambda\}
$$
is in bijective correspondence with  the set of subspaces
$$
\CW:=\{W\mid W\subset V, \bar g W=W,W^{\bar\tau}=W\} ,
$$
where $\bar \tau$ is the involution on $V$ induced by the restriction of the involution $\tau$ to  $L^\ast$.  
We need to describe this involution. By definition, the involution $\tau$ on $C$ has the property that
$$
(x^\tau,y^\tau )=(y,x),\quad (g\tau)^2=1
$$
The induced involution $\bar \tau$ on $V$ inherits the same properties. In particular,  for a polynomial $P\in k'[T]$, we have
$$
P(\bar g) \bar \tau=\bar \tau \bar P(\bar g^{-1}).
$$
In particular, $\bar \tau$ maps $V_{i,m}={\rm Ker}\,  P_{i}^m(\bar g)$ to $V_{\tau(i),m}$ and this is the reason we use the same notation $\tau$ to denote the involution on the index set $\{1,2,\ldots,\ell\}$. And clearly we have $\ell(\Lambda/L)={\rm dim}_{k'} \Lambda/L$.

  We consider the decomposition $V=\bigoplus_i V_i$ and $W=\bigoplus_i W_i$, where $W_i=W\cap V_{i}$. 

First we assume there is a unique $i=i_0$  such that $\tau(i)=i$ and with $a_i$ odd. 
According to  $W_{i_0}=W\cap V_{i_0}$, we write $\CW=\coprod_{m=0}^{a_{i_0}} \CW_m$ as a disjoint union where $\CW_m$ consists of $W\in \CW$ such that $W_{i_0}= V_{i_0,m}$. Since $V_{i_0,m}$ is invariant under the involution $\bar \tau$, it is clear that the map $W\mapsto W\oplus V_{i_0,m}$ defines a bijection between $\CW_0$ and $\CW_m$.
Therefore we may write the sum of Lemma \ref{lem Cn'}
$$
\sum_{W\in\CW}(-1)^{{\rm dim}\, W}{\rm dim}\, W=\sum_{W\in\CW_0}(-1)^{{\rm dim}\, W} \sum_{m=0}^{a_{i_0}} (-1)^{{\rm dim}\,V_{i_0,m}}({\rm dim}\, W+{\rm dim} V_{i_0,m}).
$$
Note that $a_{i_0}\cdot {\rm deg}\, P_{i_0}$ is odd. Therefore the inner sum simplifies to
$$
-\frac{a_{i_0}+1}{2}{\rm deg}\, P_{i_0} ,
$$
which is independent of $W\in \CW_0$.

We now compute the sum $\sum_{W\in\CW_0}(-1)^{{\rm dim}\, W} $. As before we still use the notation  $V'=\bigoplus_{i\neq i_0}V_i.$ 
Note that $\CW_0=\{ W\subset V' \mid W\in \CW\}$.  Then for $W\in\CW_0$ we have the decomposition $W=\bigoplus_{i\neq i_0}W_i$. Similar to the proof of the previous proposition, we have two cases for $i\neq i_0$:
\begin{itemize}\item  If $\tau(i)=i$,  then $a_i$ is even and there are $a_i+1$ choices of $W_{i}=V_{i,m}$ for $m=0,1,\ldots,a_i$.
\item If $\tau(i)=j\neq i$, then $W_j=\bar \tau W_i$ and there are $a_i+1$ choices of $W_{i}=V_{i,m}$ for $m=0,1,\ldots,a_i=a_j$.
\end{itemize}
Hence  $\sum_{W\in\CW_0}(-1)^{{\rm dim}\, W}$ is equal to the product of
$$
\sum_{m=0}^{a_i}(-1)^{2{\rm dim} V_{i,m}} =a_i+1
$$
for each pair $(i,j)$ with $j=\tau(i)\neq i$,  and 
$$
\sum_{m=0}^{a_i}(-1)^{{\rm dim} V_{i,m}}=1
$$
for $i\neq i_0$ with $\tau(i)=i$ (and since $a_i$ is even).
This proves the formula when there is a unique $i$ such that $\tau(i)=i$ and $a_i$ odd. 

Now suppose that there are at least two such $i$'s. We claim that  then $O'(\gamma,1_{S_n(\CO_F)})=0$.  
The same argument as above shows that $\sum_{W\in\CW_0}(-1)^{{\rm dim}\, W}$ is a product which has a factor of the form
$$
\sum_{m=0}^{a_i}(-1)^{{\rm dim} V_{i,m}}=0 ,
$$
when $i\neq i_0$ with $\tau(i)=i$ and $a_i$ odd! This shows that $O'(\gamma,1_{S_n(\CO_F)})=0$ and hence completes the proof.
\end{proof}

\section{Calculation of length}

We continue to assume $\pi\cdot (L^*/L)=(0)$, and keep all other notation from the previous section. In particular, we assume that $(\delta(\CM)\cap \CN^g)(\BF)$ is non-empty. 

A point  of $(\delta(\CM)\cap \CN^g)(\BF)$ corresponds to a lattice $B$  in $C_n\otimes_E{\breve F}$, occurring in a chain of inclusions of
 lattices, 
\begin{equation}
L\otimes_{\CO_E} \CO_{\breve F}\subset \Lambda^*\otimes_{\CO_E} \CO_{\breve F}\subset B^*\subset^1B\subset \Lambda\otimes_{\CO_E} \CO_{\breve F}\subset L^*\otimes_{\CO_E} \CO_{\breve F} ,
\end{equation}
or also to a subspace $U$  of $V\otimes_{k'} \bar k$,  occurring in a chain of inclusions of vector spaces over $\bar k$, all of which are invariant under $\bar g$, 
\begin{equation}\label{chain}
(0)\subset W\otimes_{k'}\bar k\subset U^\perp\subset^1U\subset W^\perp\otimes_{k'}\bar k\subset V\otimes_{k'} \bar k . 
\end{equation}

Let $\lambda$ be the eigenvalue of $\bar g|(U/U^{\perp})$. Then by the regularity of $\bar g|U$, there exists a unique 
Jordan block to $\lambda$ in $U$. The size of this Jordan block is of the form $c+1$, where $c$ is the size of the Jordan block 
of $\bar g|U^\perp$. 
The size of the Jordan block of $\bar g$ to $\lambda$ is equal to the exponent $a_{i_0}$ with which the irreducible polynomial $P=P_{i_0}$ occurs in $P_{\bar g}$ and is equal to $a_{i_0}=2c+1$.
To see this, consider the decomposition $\prod P_i^{a_i}$ of the characteristic polynomial of  $\bar{g}$ into irreducible factors. Over $\bar{k}$ only  $\lambda$ is a zero of $P_{i_0}$. Since $P_{i_0}$ is an irreducible polynomial over a finite field, $P_{i_0}$ has only simple zeros in  $\bar{k}$.  Since the minimal polynomial of $\bar{g}$ equals the  characteristic polynomial of $\bar{g}$ it follows that the size of the (unique) Jordan block of  the eigenvalue $\lambda$ is the multiplicity of the zero $\lambda$ in the characteristic polynomial, and  this is  $a_{i_0}$. Now we use the chain of inclusions \eqref{chain} and the formulas \eqref{decomp} and \eqref{propdecomp} to conclude that   $a_{i_0}=2c+1$. 
\begin{proposition}\label{length} Assume $F=\BQ_p$. 
Suppose that $n\leq2p-2$.
 The length of the local ring of $\delta(\CM)\cap \CN^g$ at the point $[B]\in (\delta(\CM)\cap \CN^g)(\BF)$ corresponding to $B$  is equal to $c+1=\frac{1}{2}(a_{i_0}+1)$. 
\end{proposition}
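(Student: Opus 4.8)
\medskip

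\emph{Strategy.} The plan is to first identify the formal scheme $\delta(\CM)\cap\CN^g$ as the fixed‑point scheme of $\bar g$ on a single smooth Bruhat--Tits stratum, and then to compute its length at $[B]$ by deformation theory, using Zink's theory of displays.

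\emph{Step 1 (reduction to a fixed‑point scheme).} Recall that $\delta(\CM)$ is the special divisor $\CZ(u)$ of the unimodular vector $u$ (in the sense of \cite{KR}), i.e.\ the locus in $\CN$ where the quasi‑homomorphism attached to $u$ lifts to an honest homomorphism. Over any point of $\CN^g$ the self‑quasi‑isogeny $g$ of the universal $p$‑divisible group lifts to a genuine automorphism, so $u$ lifts there if and only if every $g^iu$ does; hence $\CZ(u)$ and $\CZ(g^iu)$ cut out the same closed subscheme of $\CN^g$, and therefore
\[
\delta(\CM)\cap\CN^g=\Big(\bigcap_{i=0}^{n-1}\CZ(g^iu)\Big)\cap\CN^g .
\]
Since $\pi\cdot(L^*/L)=0$, the fundamental matrix of the vectors $g^iu$ --- the Gram matrix of $L=L_g$ --- is equivalent to $\mathrm{diag}(\pi^{(r)},1^{(n-r)})$ with $r=\dim_{k'}V$, so Theorem~\ref{secondmain} (with $m=r$) identifies $\bigcap_{i=0}^{n-1}\CZ(g^iu)$ with the reduced closed Bruhat--Tits stratum $\CV_{\Lambda_0}$ attached to the vertex lattice $\Lambda_0=L^*$, a smooth projective $\bar k$‑variety of dimension $\tfrac12(r-1)$ on which $\pi=0$. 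Since for a closed immersion $Z\hookrightarrow\CN$ and an automorphism preserving $Z$ one has $Z\cap\CN^g=Z^g$, we conclude
\[
\delta(\CM)\cap\CN^g=\CV_{\Lambda_0}^{\,g},
\]
the fixed scheme of the finite‑order automorphism $\bar g=g_{\Lambda_0}\in U(V)$ on $\CV_{\Lambda_0}$. (This re‑proves assertion~(iii) of Theorem~\ref{mainthm}.) Thus the desired length is the multiplicity of $[B]$ as an isolated fixed point of $\bar g$, and it depends only on $\bar g$ and on this stratum.

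\emph{Step 2 (the local fixed scheme at $[B]$).} By Proposition~\ref{cardform}, $[B]$ lies on the boundary stratum $\CV_W^\circ\subset\CV_{\Lambda_0}$ with $P_{\bar g|W^\perp/W}=P_{i_0}$ irreducible, and is recorded by the chain \eqref{chain} of $\bar g$‑stable subspaces of $V\otimes_{k'}\bar k$, the essential datum being the length‑one step $U^\perp\subset^1 U$ and the eigenvalue $\lambda$ of $\bar g|(U/U^\perp)$. Using the description of $\CN$ and of its Bruhat--Tits strata after \cite{V,VW} (this is where $F=\BQ_p$ enters) one passes to $\widehat{\CO}_{\CV_{\Lambda_0},[B]}$, a power series ring in $\tfrac12(r-1)$ variables carrying an explicit $\bar g$‑action computed from the action of $\bar g$ on the Dieudonn\'e module (display) of $B$. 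The crucial input is the explicit description, via Zink displays, of the ideal defining $\CN^g$ --- equivalently $\CV_{\Lambda_0}^{\,g}$ --- in these coordinates (Theorem~\ref{idealk}): it shows that the fixed scheme at $[B]$ is governed entirely by the unique Jordan block of $\bar g$ at $\lambda$, which has size $a_{i_0}=2c+1$.

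\emph{Step 3 (conclusion, and the role of $n\le 2p-2$).} Given the explicit ideal of Step~2, a short linear‑algebra computation with one Jordan block of size $2c+1$ yields that $\CO_{\CV_{\Lambda_0}^{\,g},[B]}$ has length $c+1=\tfrac12(a_{i_0}+1)$. The hypothesis $n\le 2p-2$ is used only where the display relations are converted into honest polynomial equations on $\widehat{\CO}_{\CN,[B]}$: since $a_{i_0}=2c+1\le r\le n-1$ one has $c+1\le n/2\le p-1<p$, so every length, and more importantly every exponent arising in the relevant Witt‑vector and divided‑power manipulations, remains $<p$; this legitimizes the truncation of Taylor expansions and prevents $p$‑torsion from interfering. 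The main obstacle is precisely Step~2 --- carrying out the display computation of the defining ideal of $\CN^g$ near $[B]$ and controlling it $p$‑adically (Theorem~\ref{idealk}) --- whereas Steps~1 and 3 are essentially formal once that is available.
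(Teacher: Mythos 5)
Your Step 1 is correct and is in fact a nice sharpening of what the paper does with Theorem \ref{speciald}: since an $S$-point of $\CN^g$ carries a genuine lift of $g$, one indeed has $\CZ(u)\cap\CN^g=\bigl(\bigcap_i\CZ(g^iu)\bigr)\cap\CN^g$ as closed formal subschemes, and Theorem \ref{speciald} (the fundamental matrix of $(u,gu,\dots,g^{n-1}u)$ being $\mathrm{diag}(\pi^{(r)},1^{(n-r)})$) then identifies $\delta(\CM)\cap\CN^g$ scheme-theoretically with the fixed-point scheme of $\bar g$ on the smooth variety $\CV(L^*)$. The paper only extracts from Theorem \ref{speciald} the weaker fact that $\pi$ lies in the defining ideal; your identification would, in principle, turn the remaining problem into pure linear algebra on the explicit Deligne--Lusztig model of $\CV(L^*)$, with no displays and conceivably no hypothesis on $p$.

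The gap is that Steps 2 and 3 do not then carry out this computation: all the content of the Proposition is deferred to Theorem \ref{idealk}, and that theorem \emph{is} the proof. In the paper, Proposition \ref{length} is obtained as the conjunction of (a) Lemma \ref{tangspace}, showing the tangent space of $\delta(\CM)\cap\CN^g$ at $[B]$ is at most one-dimensional, so that $\widehat\CO_{\delta(\CM)\cap\CN^g,[B]}=\BF[[t]]/\bar I$; and (b) Theorem \ref{idealk}, the Section 11 display computation showing $\bar I=(t^{c+1})$. You prove neither. You never establish (a), and without it the invocation of ``Theorem \ref{idealk} in these coordinates'' does not parse: in your setup the ambient ring is $\widehat\CO_{\CV(L^*),[B]}$, a power series ring in $\tfrac12(r-1)$ variables, whereas Theorem \ref{idealk} concerns an ideal of $\BF[[t]]$ produced by the tangent space lemma. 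The assertion in Step 3 that ``a short linear-algebra computation with one Jordan block of size $2c+1$'' gives length $c+1$ is exactly the statement to be proved, and it is not obviously short: a single Jordan block of size $2c+1$ acting on a projective space would give a fixed scheme of length $2c+1$ at its fixed point, so the halving to $c+1$ must come from the isotropy conditions defining $\CV(L^*)$ inside $\BP(V)$ and from the precise position of $[B]$ on the boundary stratum $\CV_W^{\circ}$; making this precise is the analogue of the explicit equations $t_1t_i=t_{i+1}$ $(i\le c-1)$, $t_1t_c=0$, $t_i=0$ $(i\ge c+1)$ that the paper extracts from the universal display. As it stands, the proposal correctly reorganizes the reduction but supplies no proof of the quantitative statement.
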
 
We note that this proposition, together with Propositions \ref{cardform} and \ref{derivform}, proves assertion (iv) of Theorem \ref{mainthm}.

We first determine the tangent space of $(\CM\cap\CN^g)\otimes\BF$ at the point $[B]$ corresponding to $B$.
\begin{lemma}\label{tangspace}
The tangent space of $(\CM\cap\CN^g)\otimes\BF$ at $[B]$ is a one-dimensional subspace of the tangent space of $\CM\otimes\BF$ if $c\geq 1$. If $c=0$, the tangent space is trivial. 
\end{lemma}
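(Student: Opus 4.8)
The plan is to compute the tangent space via Zink's display theory, which is the tool the authors announce they will use for all the length computations. First I would recall that a point $[B] \in (\CM \cap \CN^g)(\BF)$ corresponds to a $p$-divisible group $X$ over $\BF$ (with the extra structure), and the tangent space to $\CN$ (or $\CM$) at $[B]$ is computed by the deformations over the dual numbers $\BF[\varepsilon]$. Concretely, the tangent space of $\CM \otimes \BF$ at $[B]$ is the space of lifts of the Hodge filtration in the display, subject to the signature $(1,n-2)$ condition, and an analogous statement holds for $\CN$; the tangent space to $\CM \cap \CN^g$ at $[B]$ is then the subspace of those lifts that are simultaneously $\bar g$-stable and compatible with the embedding $\delta$. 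So the whole computation reduces to linear algebra on the Dieudonné module $M = M_B$ of $B$ with its Frobenius $F$, Verschiebung $V$, $\CO_E$-action, and the semilinear operator $\tau$, together with the induced action of $\bar g$.

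Next I would set up the relevant linear-algebra model. Writing $N = M/VM$ (the Lie algebra, an $\BF$-vector space of dimension $n-1$ for $\CM$) with its one-dimensional piece coming from the signature condition, the tangent space to $\CM$ is $\Hom$ of the Hodge line into its complement, i.e.\ one can identify $T_{[B]}\CM$ with a suitable $\Hom$ space that, after using the $\CO_E$-structure and the reduction mod $\pi$ hypothesis $\pi(L^*/L) = 0$, becomes a space built out of $V = L^*/L \otimes_{k'} \bar k$ and the operators $\bar g$, $\tau$. The chain of inclusions \eqref{chain} determines a specific line $U/U^\perp$ inside $V \otimes \bar k$ on which $\bar g$ acts by the scalar $\lambda$, sitting inside the Jordan structure for $\lambda$. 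I would then express the fixed-point-scheme tangent condition: a tangent vector of $\CM$ lies in $T_{[B]}(\CM \cap \CN^g)$ iff the corresponding deformation is preserved by $\bar g$, which translates into requiring a certain element of a $\Hom$-space to commute with $\bar g$. Because $\bar g$ acts regularly (its minimal polynomial equals its characteristic polynomial, by Lemma~\ref{cycl}), the commutant is as small as possible, and I expect the commuting condition to cut the a priori $(n-2)$-dimensional tangent space down to the line spanned by the "next step up in the Jordan block," provided that step exists — which is exactly the condition $c \geq 1$, i.e.\ $a_{i_0} = 2c+1 \geq 3$. When $c = 0$ the relevant Jordan block of $\bar g|U$ has size $1$, there is no room to move, and the tangent space is $(0)$; this gives the dichotomy in the statement.

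The mild technical point is to make the signature condition and the $\tau$-compatibility interact correctly: one must check that the tangent vector produced (the "Jordan shift") indeed satisfies the polarization/Rosati compatibility and maps into $T\CM$ rather than just $T\CN$, which is where the embedding $\delta$ and the decomposition $B = A \oplus \CO_{\breve F} u$ enter. I would verify this by tracking the vector $u$ (a unimodular vector, hence splitting off the display cleanly) through the computation, so that the $\CN$-side tangent space decomposes as the $\CM$-side one plus a trivial factor from $\CO_{\breve F} u$.

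The main obstacle I anticipate is bookkeeping: correctly matching the Dieudonné/display description of the tangent space with the lattice-theoretic chain \eqref{chain}, and in particular verifying that the $\bar g$-invariance condition on deformations is the naive "commutes with $\bar g$" condition on the relevant $\Hom$-space (there could be a Frobenius twist or a contribution from the fact that $\bar g$ acts on $M$ rather than on $N$). Once that dictionary is in place, the regularity of $\bar g$ makes the dimension count immediate. I would not expect to need the hypothesis $n \leq 2p-2$ for this lemma itself — that restriction should enter only later, when one passes from the tangent space to the full length of the local ring (Proposition~\ref{length}), where one must control higher-order deformations and divided powers.
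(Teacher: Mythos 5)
Your general framework is the same as the paper's: via Grothendieck--Messing theory (not displays, which the paper reserves for the higher-order deformation computations), the tangent space of $\CN$ at $[B]$ is identified with $\Hom(B/B^*, B^*/\pi B)$, the condition of lying in $\delta(\CM)$ means the image lands in $A^*/\pi A$ (where $B=A\oplus\CO_{\breve F}u$), and $g$-equivariance forces the image into the $\lambda$-eigenspace of $B^*/\pi B$, since $g$ acts on the line $B/B^*$ by $\lambda$. Up to this point your sketch matches the paper, and you are also right that the hypothesis $n\le 2p-2$ plays no role in this lemma.

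The gap is in the final dimension count, which you declare "immediate from the regularity of $\bar g$." The regularity supplied by Lemma \ref{cycl} is cyclicity of $\bar g$ on $V=L^*/L$ (equivalently on $L$ and $L^*$ mod $\pi$), \emph{not} on $B^*/\pi B$ or $B/\pi B$; a priori the $\lambda$-eigenspace of $B^*/\pi B$ need not be a line, and even if it were, one would still have to decide when that line lies in $A^*/\pi A$ and tie the answer to $c\ge 1$ versus $c=0$. The paper closes this with a pairing argument that your sketch is missing: if $\bar b$ is a $\lambda$-eigenvector with $(\bar b,\bar u)=0$, then equivariance gives $(\bar b,\bar g^i\bar u)=0$ for all $i$, hence $(b,L)\equiv 0 \bmod \pi$, i.e.\ $b\in\pi L^*\otimes_{\CO_E}\CO_{\breve F}$. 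Therefore the tangent space sits inside $(\pi L^*\otimes\CO_{\breve F})/\pi B\simeq (L^*\otimes\CO_{\breve F})/B$, and this \emph{is} a $\bar g$-cyclic module (a quotient of the cyclic module $L^*\otimes\CO_{\breve F}$), whose Jordan block at $\lambda$ has size exactly $(2c+1)-(c+1)=c$; so its $\lambda$-eigenspace is one-dimensional iff $c\ge 1$ and trivial iff $c=0$. Without identifying this particular module and explaining why the tangent vector lands in it, the "next step up in the Jordan block" heuristic does not yield a proof.
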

\begin{proof}
For this, we have to first recall how one associates the lattice $B$ to a point $(X, \iota, \lambda, \rho)$ of $\CN_n(\BF)$. Let 
$M(X)$ be the Dieudonn\'e module of $X$.   Then $\iota$ induces a $\BZ/2$-grading of $M(X)$, and $\lambda$ induces an alternating form $\langle\, , \, \rangle$ on $M(X)$. Furthermore, $\rho$ induces an identification of $M(X)_0\otimes _{\CO_{\breve F}}{\breve F}$ with $C_n\otimes_E{\breve F}$ such that ${\rm id}_{C_n}\otimes \sigma^2$ corresponds to $\pi V^{-2}$, and the extended form on $C_n\otimes_E{\breve F}$ to $x, y\mapsto 
\pi^{-1}\delta^{-1}\langle x , \pi V^{-1} y \rangle$. Here $\delta$ is a fixed element of $\CO_E^\times$ with $\bar \delta=-\delta$. We then have $B=M(X)_0$, and $B^*=\pi V^{-1}M(X)_1$. 

Now the tangent space is given by ${\rm Hom}_{[B]}({\rm Spec}\, \BF[\epsilon], \CN_n)$, where the index $[B]$ indicates that only morphisms are considered whose image point is $[B]$. By Grothendieck-Messing theory, the tangent space $\CT_{\CN_{n}, [B]}$ is equal to 
$$
{\rm Hom}_{[B]}({\rm Spec}\, \BF[\epsilon], \CN_n)={\rm Hom}_{0, {\rm isot}}(VM(X)/\pi M(X), M(X)/VM(X)) ,
$$
where the index indicates that only homomorphisms are considered which respect the $\BZ/2$-grading and the alternating form. Hence we have 
$$
\begin{aligned}
\CT_{\CN_{n}, [B]}=& \ {\rm Hom}(VM(X)_0/\pi M(X)_1, M(X)_1/VM(X)_0)\\ 
\simeq & \ {\rm Hom}(M(X)_0/\pi V^{-1}M(X)_1, V^{-1}M(X)_1/M(X)_0)\\
\simeq&\ {\rm Hom}(B/B^*, B^*/\pi B).
\end{aligned}
 $$
 Similarly, if $[B]\in \CN^g$, the tangent space to $\CN^g$ at $[B]$ is given by ${\rm Hom}_g(B/B^*, B^*/\pi B)$, where the index indicates that only $g$-equivariant homomorphisms are considered. And if $[B]\in \CM$, then 
 $$
 B=A\oplus \CO_{\breve F}u \text { and } B^*=A^*\oplus \CO_{\breve F}u ,
$$ and 
$$
B/B^*=A/A^* \text { and } B^*/\pi B=A^*/\pi A\oplus \BF\bar u .
$$ Then the tangent space to $\delta(\CM)$ at $[B]$ is equal to the subspace of   ${\rm Hom}(B/B^*, B^*/\pi B)$ consisting of homomorphisms   whose image is contained in  $A^*/\pi A$, and the 
tangent space to $\delta(\CM)\cap\CN^g$ at $[B]$ is given by the subspace  of ${\rm Hom}_g(B/B^*, B^*/\pi B)$ of elements which factor through $A^*/\pi A$.  In other words, this tangent space is identified with the intersection of the eigenspace $(B^*/\pi B)(\lambda)$ to $\lambda$ in $B^*/\pi B$ with $A^*/\pi A$. 

 We now show that this intersection, denoted by $\CT$, has dimension one if $c\geq 1$ and zero if $c=0$.  Since $(B,B^\ast)\subset \CO_{\breve F}$ and $B^\ast\subset B$, we have an induced sesqui-linear pairing on $B^\ast/\pi B$ valued in $\BF$. We still denote this pairing by $(\cdot,\cdot)$.
As $u\in B^\ast$, we may denote by $\bar u$ its image in the $\BF$-vector space $B^\ast/\pi B$. The map $\bar b \mapsto (\bar b,\bar u)$ defines an $\BF$-linear functional denoted by $\ell_u$ on $B^\ast/\pi B$. Using this, we may identify $A^\ast/\pi A$ with the kernel of $\ell_u$. If $\bar b\in \CT$,  it is an eigenvector of $\bar g$ with eigenvalue $\lambda$, and $(\bar b, \bar u)=0$. Since $\bar b$ is eigenvector of $\bar g$ we have
$$
\bar g^{-1}\bar b=\lambda^{-1}\bar b ,
$$
and hence
$$
(\bar g^{-1}\bar b,u)=\lambda^{-1}(\bar b,\bar u)=0 ,\,\text{ i.e., }\, (\bar b,\bar g \bar u)=0 .
$$
Similarly, $(\bar b,\bar g^i\bar u)=0$ for all $i\in \BZ$. This implies that $(b,L)=0 \mod \pi$, where $b\in B^\ast $ is any lifting of $\bar b\in B^\ast/\pi B$. Equivalently we have $(b/\pi,L)\in \CO_{\breve F}$, and hence $b\in \pi L^\ast\otimes_{\CO_E} \CO_{\breve F}$. We note the following sequence of inclusions,
\begin{equation}
\pi B\subset \pi L^\ast\otimes_{\CO_E} \CO_{\breve F}\subset L\otimes_{\CO_E} \CO_{\breve F}\subset \Lambda^*\otimes_{\CO_E} \CO_{\breve F}\subset B^*\subset^1B .
\end{equation}
We have proved that $\CT$ is a subspace of $X:=(\pi L^\ast\otimes_{\CO_E} \CO_{\breve F})/\pi B$. And, in fact,  $\CT$ is precisely the $\lambda$-eigenspace in $X$. Now $X$ is obviously isomorphic to $Y:=(L^\ast\otimes_{\CO_E} \CO_{\breve F})/ B$ as $\bar g$-modules, and hence is a $\bar g$-cyclic $\BF$-vector space. It is easy to see that the $\lambda$-eigenspace $Y(\lambda)$ of $Y$ is one-dimensional when $c\geq 1$ and zero if $c=0$.
\end{proof}

It follows from the preceding lemma that the  completed local ring $R=\widehat{\CO}_{\CM\cap \CN^g, [B]}$ is an  $\CO_{\breve F}$-algebra of the form 
\begin{equation}
R=\CO_{\breve F}[[t]]/I  \text{\, if $c\geq 1$, \, resp. $R=\CO_{\breve F}/I$\, 
 if $c=0$},
\end{equation}
where $I$ is an ideal in $\CO_{\breve F}[[t]]$, resp.\ in $\CO_{\breve F}$. Therefore Proposition \ref{length} follows from the following proposition.
\begin{proposition}\label{ideal} Assume $F=\BQ_p$. 
Suppose that $n\leq 2p-2$. Then
$I=(\pi, t^{c+1}) . $
\end{proposition}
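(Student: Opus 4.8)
\emph{Overall plan.} I would prove the stronger Proposition~\ref{ideal}, from which Proposition~\ref{length} is immediate, since $\CO_{\breve F}[[t]]/(\pi,t^{c+1})\cong\bar k[[t]]/(t^{c+1})$ has length $c+1$. As $\pi\in(\pi,t^{c+1})$, I would split the task into: (a) showing $\pi\in I$, after which $R$ becomes a $\bar k$-algebra, $R=\bar k[[t]]/\bar I$ with $\bar I$ the image of $I$; and (b) showing $\bar I=(t^{c+1})$, i.e.\ $\dim_{\bar k}R=c+1$. The case $c=0$ is immediate: Lemma~\ref{tangspace} gives a trivial tangent space, so $R$ is a quotient of $\CO_{\breve F}$, and with (a) together with $R\neq0$ one gets $R=\bar k$, $I=(\pi,t)$. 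So I assume $c\geq1$ below.

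\emph{Step (a): $\pi\in I$.} Here I would invoke Theorem~\ref{secondmain}. As noted in the introduction, $\Delta(\CM)\cap({\rm id}_\CM\times g)\Delta(\CM)=\delta(\CM)\cap\CN^g$ is contained, as a closed formal subscheme of $\CN$, in $\bigcap_{i=0}^{n-1}\CZ(g^iu)$: on $\CN^g$ the quasi-isogeny $g$ lifts, hence each $g^iu$ defines an honest special homomorphism there. The fundamental matrix (in the sense of \cite{KR}) of $g^0u,\dots,g^{n-1}u$ is their Gram matrix $\big((g^iu,g^ku)\big)_{i,k}$, a representative of the hermitian lattice $L=L_g$; in the minuscule case $L^*/L\cong(\CO_E/\pi)^m$, so this matrix is $GL_n(\CO_E)$-equivalent to ${\rm diag}(\pi^{(m)},1^{(n-m)})$. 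Theorem~\ref{secondmain} then shows $\bigcap_i\CZ(g^iu)$ is concentrated in the special fibre; hence so is $\delta(\CM)\cap\CN^g$, and $\pi$ annihilates $R$. This simultaneously proves assertion (iii) of Theorem~\ref{mainthm}.

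\emph{Step (b): $\dim_{\bar k}R=c+1$.} This is the heart of the matter, and the point where the hypothesis $n\leq 2p-2$ and Zink's theory of displays enter. By Lemma~\ref{tangspace} the deformation functor of $\delta(\CM)\cap\CN^g$ at $[B]$ is the subfunctor of the formally smooth deformation functor of $\delta(\CM)$ at $[B]$ cut out by the condition that $g$ continue to preserve the Hodge filtration of the universal deformation; fixing coordinates so that the unique relevant direction is the parameter $t$ spanning $\CT$ (notation of the proof of Lemma~\ref{tangspace}) presents $R$ as a quotient of $\CO_{\breve F}[[t]]$. I would then write down the display of the universal deformation of $X$ along this line explicitly; by the proof of Lemma~\ref{tangspace} it is governed entirely by the single Jordan block of $\bar g$ at the eigenvalue $\lambda$, of size $a_{i_0}=2c+1$, together with the chain of $\bar g$-stable isotropic subspaces $0\subset W\otimes\bar k\subset U^\perp\subset^1 U\subset W^\perp\otimes\bar k\subset V\otimes\bar k$. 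Solving the $g$-equivariance of the Hodge filtration order by order in $t$, I expect to find that it can be satisfied over $\CO_{\breve F}[[t]]/(\pi,t^{c+1})$ but not over $\CO_{\breve F}[[t]]/(\pi,t^{c+2})$, the integer $c+1$ being the number of infinitesimal steps the Hodge line can travel inside the size-$(c+1)$ Jordan block of $\bar g|U$ before the obstruction appears. This is the computation I would isolate as Theorem~\ref{idealk}; it yields $\bar I=(t^{c+1})$ and hence $I=(\pi,t^{c+1})$.

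\emph{Expected main obstacle.} The hard part will be the explicit display computation of Step (b): bringing the display of the universal deformation into a normal form adapted to the Jordan block and the isotropic flag, and tracking Frobenius and Verschiebung carefully enough to pin down the exact order at which $g$-equivariance of the Hodge filtration fails. The bound $n\leq 2p-2$ is what makes this tractable: since $a_{i_0}=2c+1\leq r\leq n-1$ one has $c+1\leq n/2\leq p-1<p$, so every power of $t$ (and of $\pi=p$) that occurs stays below $p$; thus $(p)$ carries a PD structure on the truncations that appear, Grothendieck--Messing and Zink display theory apply with no divided-power denominators $t^j/j!$ ($j\geq p$) intervening, and the Frobenius $\sigma$ acts trivially to the required precision on the residue extension generated by $\lambda$. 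Without this bound, $p$-th power corrections could change the length of the orbit and the clean identity $I=(\pi,t^{c+1})$ might fail on the nose. Granting Theorem~\ref{idealk}, Proposition~\ref{ideal} --- and with it Proposition~\ref{length} and assertion (iv) of Theorem~\ref{mainthm} --- follows.
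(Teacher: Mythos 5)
Your proposal is correct and follows essentially the same route as the paper: the containment $\delta(\CM)\cap\CN^g\subset\CZ(u,gu,\ldots,g^{n-1}u)$ with minuscule fundamental matrix reduces $\pi\in I$ to Theorem~\ref{speciald}, and the identification $\bar I=(t^{c+1})$ is exactly the display computation isolated as Theorem~\ref{idealk}, with the bound $n\leq 2p-2$ entering precisely as you say (to guarantee $c+1<p$ so that the truncated frames in Zink's theory apply). Your sketch of step (b), including the role of the size-$(2c+1)$ Jordan block at $\lambda$, matches the paper's actual argument in \S\ref{proofidealk}.
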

The fact that $\pi\in I$ follows  from the relation to the special divisors
of \cite{KR}. Recall that to any non-zero element $x\in C_n$, there is associated the special divisor $\CZ(x)$ of $\CN_n$, cf.\ \cite{KR}, Lemma 3.9. It is  the closed formal subscheme of $\CN$ with  $S$-valued points
\begin{equation*}
\begin{aligned}
 \{(X,\iota, \lambda, \rho)\mid \text{ the composed quasi-homomorphism } &\overline{Y}\times_{\CO_{\breve F}}\bar S\stackrel{x}{\longrightarrow}{\BX}\times_{\BF}
\bar{S}\stackrel{\varrho_X^{-1}}{\longrightarrow}X\times_S \bar{S}
\\
 \text{ lifts to an $\CO_E$-linear homomorphism }&\overline{Y}\times_{\CO_{\breve F}}S\to X\}. 
 \end{aligned}
\end{equation*}
Here we have identified $C_n$ with $\Hom_{\CO_E}(\overline Y \times_{\CO_{\breve F}}\BF, \BX)\otimes_{\BZ}\BQ$ as explained in \cite{KR}, Lemma 3.9. The elements of $\Hom_{\CO_E}(\overline Y \times_{\CO_{\breve F}}\BF, \BX)\otimes_{\BZ}\BQ$ are called {\em special homomorphisms},  cf.\ \cite{KR}.
The special divisor $\CZ(x)$ is a relative divisor, 
with set of $\BF$-points equal to 
\begin{equation}
\CZ(x)(\BF)=\{ B\in \CN_n(\BF)\mid x\in B^* \} .
\end{equation}
Similarly, if ${\bf x}=[x_1,\ldots, x_m]\in (C_n)^m$, then $\CZ({\bf x})=\CZ(x_1)\cap\ldots\cap \CZ(x_m)$ has $\BF$-points equal to 
\begin{equation*}
\CZ({\bf x})(\BF)=\{ B\in \CN_n(\BF)\mid \{x_1,\ldots, x_m\}\subset B^* \} .
\end{equation*}
Now let $g\in G_n$ be regular semi-simple. Then 
\begin{equation}\label{reltosp}
\delta(\CM)\cap \CN^g\subset \CZ(u, gu, \ldots,g^{n-1}u) .
\end{equation}
Indeed, $\delta (\CM)$ can be identified with $\CZ(u)$,  comp.\  \cite{KR}, Lemma 5.2. Hence the assertion follows by the $g$-invariance of the LHS in \eqref{reltosp}.  

Note that the fundamental matrix of $(u, gu,\ldots, g^{n-1}u)$ in the sense of \cite{KR} is equivalent to the diagonal matrix $\pi^{{\rm inv}(g)}$. Therefore we may apply the following theorem.
\begin{theorem}\label{speciald} Assume $F=\BQ_p$. 
Let ${\bf x}=[x_1,\ldots, x_n]\in (C_n)^n$  with fundamental matrix $T({\bf x})$ equivalent to 
$\pi^\mu$, where $\mu=(1^{(m)}, 0^{(n-m)})$ is minuscule. Then 
$$\CZ(x_1, x_2,\ldots,x_n)\subset \CN_n\otimes_{{\CO_{\breve F}}}\bar k .$$ 
\end{theorem}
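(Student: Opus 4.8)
The plan is to reduce the statement to a concrete computation in Zink's theory of displays. The key observation is that it suffices to show that $p$ (here $\pi = p$ since $F = \BQ_p$) annihilates the structure sheaf of $\CZ({\bf x}) = \CZ(x_1) \cap \cdots \cap \CZ(x_n)$. Since $\CZ({\bf x})$ is a closed formal subscheme of $\CN_n$ and the condition ``${\bf x}$ lifts to a tuple of special homomorphisms'' is a deformation-theoretic condition, I would work locally at a point $[B] \in \CZ({\bf x})(\BF)$ and study the universal deformation. By Corollary \ref{stratCZ} and the minuscule hypothesis, such points lie in a single closed Bruhat-Tits stratum, and one knows from \cite{VW} (or \cite{V}) the local structure of $\CN_n$ near such points. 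The first step is therefore to fix a point $[B]$ and write down the display of the corresponding $p$-divisible group $\BX$ together with the action of $\CO_E$ and the polarization, in the normal form adapted to the $\BZ/2$-grading used in the proof of Lemma \ref{tangspace}.

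Second, I would translate the condition of lying on $\CZ(x_i)$ into display-theoretic terms: a special homomorphism $x_i \colon \overline Y \to \BX$ lifts to a deformation $X$ over an Artinian base $S$ precisely when the corresponding morphism of filtered displays (via Grothendieck–Messing / Zink) extends, which becomes a system of equations on the deformation parameters. The fundamental matrix $T({\bf x}) = ((x_i, x_j))_{i,j}$ being equivalent to $\pi^\mu$ with $\mu = (1^{(m)}, 0^{(n-m)})$ minuscule means that, after an appropriate change of basis of the lattice $L = L_{\bf x}$ spanned by the $x_i$, we have $L^* / L$ killed by $p$ of dimension $m$; this is exactly the minuscule situation of \S\ref{theminusculecase}. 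The point is that because the fundamental matrix has all invariants equal to $0$ or $1$, the lifting equations for the $x_i$ are of the form ``(deformation parameter) $\cdot$ (something) $\in p \cdot(\cdots)$'' with enough of the $x_i$ present that multiplying all the constraints together forces $p$ into the ideal. Concretely, I would show that the product of the $n$ defining equations of the $\CZ(x_i)$ in the local ring $\CO_{\breve F}[[t_1, \ldots, t_N]]$ of $\CN_n$ at $[B]$ lies in the ideal $(p)$, which after dividing by the nilpotents (using that $\CN_n$ is regular and $\CZ(x_i)$ are divisors, cf. \cite{KR}) gives $p \in I_{\CZ({\bf x})}$.

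Third, having shown $\CZ({\bf x}) \subset \CN_n \otimes_{\CO_{\breve F}} \bar k$, the reduced structure follows from the cycle-theoretic statement in Theorem \ref{secondmain}: one identifies $\CZ({\bf x})_{\rm red}$ with the union of closed Bruhat-Tits strata $\CV(\Lambda)$ for the vertex lattices $\Lambda$ with $L \subset \Lambda^* \subset \Lambda \subset L^*$ of type $m$ — but the minuscule condition forces this set to consist of the single stratum $\CV(L^*)$ of maximal type $m$, which is a closed Bruhat-Tits stratum of type $m$ as claimed. (For the theorem as stated only the inclusion into the special fiber is asserted, so this third step can be postponed to the finer Theorem \ref{secondmain}.)

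The main obstacle, I expect, is the explicit display computation in the second step: one must choose coordinates on the universal deformation space compatible simultaneously with the $\CO_E$-action, the polarization, \emph{and} the filtration coming from the Jordan block structure of $\bar g$ on $V = L^*/L$, and then verify that the lifting obstructions for $x_1, \ldots, x_n$ genuinely multiply up to something divisible by $p$ — this requires controlling the $V$-nilpotent (Verschiebung) part of the display carefully, and is presumably where the hypothesis $F = \BQ_p$ (allowing one to invoke \cite{V}, \cite{VW}, \cite{KR2}) and Zink's theory are essential. The combinatorics of which products of deformation coordinates appear, governed by the shape $\mu = (1^{(m)}, 0^{(n-m)})$, is the crux.
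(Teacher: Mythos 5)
Your overall framing (reduce to a local deformation-theoretic computation via displays, exploit the minuscule shape of the fundamental matrix) points in the right general direction, but the mechanism you propose for getting $p$ into the defining ideal does not work. You want to show that the product of the $n$ local equations $f_1,\ldots,f_n$ of the divisors $\CZ(x_i)$ lies in $(p)$ and deduce $p\in I_{\CZ({\bf x})}=(f_1,\ldots,f_n)$. This is a non sequitur: $f_1\cdots f_n\in(p)$ neither implies nor is implied by $p\in(f_1,\ldots,f_n)$, and it is the latter that you need. No amount of ``dividing by nilpotents'' repairs this, and in fact the paper never attempts to exhibit $p$ directly as an element of the ideal generated by the lifting obstructions.

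The argument the paper actually uses is indirect and rests on a purely commutative-algebraic observation you are missing (Lemma \ref{pg0}): if a formal $\CO$-scheme has \emph{regular special fiber} and admits \emph{no $\CO/(\pi^2)$-valued point}, then $\pi$ annihilates its structure sheaf. The two hypotheses are then verified separately. The non-existence of $W/(p^2)$-points (Proposition \ref{liftW}) is a comparatively soft argument with the Hodge filtration of a hypothetical $W/(p^2)$-lift: one shows that the images $\overline{x}_i$ of the $n$ special homomorphisms in the rank-$(n-1)$ piece $\CF_0$ of the filtration force a linear dependence whose coefficients, combined with the polarization pairing, would make some $x_i$ have valuation $>1$, contradicting the minuscule shape of $T({\bf x})$ (after first reducing to the case $T\sim{\rm diag}(p,\ldots,p)$ by replacing $n$ with $m$). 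The heavy display computation (Theorem \ref{prep}) is then used not to produce $p$ in the ideal but to prove \emph{regularity of the special fiber} $\CZ_p$ at super-general points, with the general case handled by induction on $m$ (Corollary \ref{regu}). You should also note that Theorem \ref{speciald} concerns an arbitrary tuple ${\bf x}$ with minuscule fundamental matrix, not one of the form $(u,gu,\ldots,g^{n-1}u)$, so the Jordan-block structure of $\bar g$ that you invoke is not available here; it only enters in the later Theorem \ref{idealk}.
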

The proof is given in \S \ref{Proof1}. Assuming this theorem, we may write
\begin{equation}\label{equalk}
R=\BF[[t]]/\bar I ,
\end{equation}
with an ideal  $\bar I\subset \BF[[t]]$. Note that at this point, the case $c=0$ is proved completely (in particular, in this case the restriction $n\leq 2p-2$ is not needed). The general case follows from the following theorem which together with Theorem \ref{speciald} implies Propositions \ref{ideal} and \ref{length}. 
\begin{theorem}\label{idealk} Assume $F=\BQ_p$. 
If $n\leq 2p-2$, the ideal $\bar I\subset \BF[[t]]$ equals $(t^{c+1})$.
\end{theorem}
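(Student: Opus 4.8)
The plan is to compute the relevant deformation directly, using Zink's theory of displays together with Grothendieck--Messing deformation theory. By Theorem~\ref{speciald} the whole problem takes place in the special fibre, so throughout we work over Artinian local $\BF$-algebras; the ones we need are $\BF[t]/(t^N)$ with $N\le c+2$, and the chain $n\le 2p-2\Rightarrow r\le 2p-3\Rightarrow a_{i_0}=2c+1\le 2p-3\Rightarrow c+2\le p$ shows that on such rings the ideal $(t)$ carries a divided power structure. Recall that $R=\widehat{\CO}_{\CM\cap\CN^g,[B]}=\BF[[t]]/\bar I$, where, by Lemma~\ref{tangspace}, the parameter $t$ spans the one-dimensional tangent direction $\CT$ --- the $\lambda$-eigenline inside $(B^*/\pi B)\cap(A^*/\pi A)$ --- so that $\CM\cap\CN^g$ is a closed formal subscheme of a formally smooth one-dimensional $L_t\subset\CM$ with coordinate $t$. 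Since $\bar I$ is an ideal in the discrete valuation ring $\BF[[t]]$, we have $\bar I=(t^k)$ for a unique $k\ge 2$ (a priori possibly $\bar I=(0)$), and the task is to prove $k=c+1$.

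First I would fix a basis of the Dieudonn\'e module $M(X)$ adapted simultaneously to the $\BZ/2$-grading given by $\iota$, to the alternating form induced by $\lambda$, and to the decomposition of $V=L^*/L$ into generalized $\bar g$-eigenspaces. By the regularity of $\bar g$ (Lemma~\ref{cycl}), on the generalized $\lambda$-eigenspace of $\bar g\otimes\bar k$ the operator is a single Jordan block of size $a_{i_0}=2c+1$; together with the normalizations in the proof of Lemma~\ref{tangspace} --- $B=M(X)_0$, $B^*=\pi V^{-1}M(X)_1$, and $\mathrm{id}_{C_n}\otimes\sigma^2$ corresponding to $\pi V^{-2}$ --- this pins down the $2c+1$ entries of the display of $X$ that matter. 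Since $g$ is unitary and commutes with $F$ and $V$, it preserves $M(X)_0=B$ and $M(X)_1$, hence acts $\CO_S$-linearly on $M(X)\otimes_{\CO_{\breve F}}\CO_S$ for $S\in\mathrm{Nilp}$.

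Next I would write out the restriction to $L_t$ of the universal deformation of $X$ inside $\CM$: over $\BF[t]/(t^N)$ it is given by a direct summand $\mathrm{Fil}\subset M(X)\otimes_{\CO_{\breve F}}\BF[t]/(t^N)$ lifting the Hodge filtration of $M(X)$ and subject to the signature and polarization conditions. By Grothendieck--Messing, such a deformation lies in $\CN^g$ if and only if $\mathrm{Fil}$ is stable under $g$; moreover the inclusion \eqref{reltosp} shows that any deformation lying in $\CN^g$ has all of $u,gu,\dots,g^{n-1}u$ lifting, which already constrains $\mathrm{Fil}$ substantially. I would then compute $\mathrm{Fil}$ explicitly --- this is where the Frobenius of the display and the divided powers $t^{i}/i!$ (meaningful because $i<N\le p$) enter --- and evaluate the obstruction to $g$-stability. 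The expectation, forced by the size $2c+1$ of the Jordan block at $\lambda$ and the location of $\mathrm{Fil}$ at its ``middle'', is that the obstruction vanishes identically over $\BF[t]/(t^{c+1})$ but equals $t^{c+1}$ times a unit over $\BF[t]/(t^{c+2})$. This yields at once $k\ge c+1$ (so $\bar I\ne(0)$) and $k\le c+1$, i.e.\ $\bar I=(t^{c+1})$; with Theorem~\ref{speciald} this gives Propositions~\ref{ideal} and \ref{length}.

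The main obstacle is this final computation: one must control all the higher-order terms in the lift $\mathrm{Fil}$ and show that the obstruction to $g$-stability is \emph{exactly} $t^{c+1}$ up to a unit --- neither $0$ nor divisible by $t^{c+2}$. This is a somewhat delicate bookkeeping inside Zink's display equations, tracking how the off-diagonal entries of the size-$(2c+1)$ Jordan block interact with the Frobenius and with the deformation parameter $t$. It is precisely here that the hypothesis $n\le 2p-2$ is indispensable: it is what makes $\BF[t]/(t^{c+2})$ carry the divided power structure needed to run the deformation theory up to the order where the obstruction first appears, and it guarantees that the factorials $\le c$ occurring as coefficients in the deformation formulas are invertible in $\BF$, so that no accidental cancellation can spoil the conclusion.
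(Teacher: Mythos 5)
Your outline reproduces the paper's strategy in broad strokes -- an adapted Jordan basis for the Dieudonn\'e module, a display/crystalline computation of the universal deformation over $\BF[t]/(t^N)$ with $N\le p$, the obstruction to lifting $g$ appearing at order $c+1$, and a correct accounting of where $n\le 2p-2$ enters -- but the decisive step is missing. You write that ``the expectation, forced by the size $2c+1$ of the Jordan block, is that the obstruction vanishes over $\BF[t]/(t^{c+1})$ but equals $t^{c+1}$ times a unit over $\BF[t]/(t^{c+2})$,'' and then name this as the main obstacle. That computation \emph{is} the proof: in the paper one writes the universal window over $W[\![t_1,\ldots,t_{n-1}]\!]$, conjugates the matrix of $g$ by the Hodge-filtration change of basis, and reads off from the integrality of the first row of $H_1^{(J)}$ the relations $t_1t_i=t_{i+1}$ for $i\le c-1$, $t_1t_c=0$, and $t_i=0$ for $i\ge c+1$. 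Nothing in your proposal substitutes for this, so as it stands the theorem is asserted rather than proved.

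There is also a structural problem with reducing to a single smooth curve $L_t$ ``with coordinate $t$'': this is circular. The locus $\CM\cap\CN^g$ is \emph{not} contained in the coordinate line in the tangent direction $\CT$; by the relations above it lies on the twisted curve $t_{i+1}=t_1^{i+1}$ ($i\le c-1$), $t_j=0$ ($j\ge c+1$), which you only know after doing the computation you are trying to set up on $L_t$. To make your two bounds rigorous you must either (a) work with the full $(n-1)$-parameter universal deformation and derive the ideal there -- in which case the window/frame method only determines $I+\mathfrak m^p$, and the paper needs a separate commutative-algebra lemma ($I+\mathfrak m^r=(X_1^s,X_2,\ldots,X_n)$ with $r>s$ forces $I=(X_1^s,X_2,\ldots,X_n)$) to remove the $\mathfrak m^p$ ambiguity, a point your plan does not engage with; or (b) argue arc by arc, proving the non-liftability over $\BF[t]/(t^{c+2})$ for \emph{every} arc with tangent in $\CT$ (not just for one chosen curve) and exhibiting the specific twisted arc of length $c+1$ for the lower bound. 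Either way the explicit matrix computation is unavoidable. Your remark about factorials being invertible is also slightly off the mark: in the paper's frames the Frobenius sends $t_i\mapsto t_i^p=0$, so no divided powers or factorials actually occur; the constraint $c+1<p$ is used instead to make the frame for $R/J$ exist and to satisfy the hypothesis $r>s$ of the final lemma.
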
 
The proof is given in \S \ref{proofidealk}. 
\begin{corollary} Assume $F=\BQ_p$. Let $n\leq 2p-2$. 
Assume that $g$ is regular semisimple and that $L_g^\ast/L_g$ is killed by $\pi$.  Then the contribution of $\Lambda\in {\rm Vert}^g(L_g)$  to the intersection multiplicity, if non-zero,   is equal to
$$
{\rm mult}(\Lambda)={\rm deg} \, P_{i_0} \frac{a_{i_0}+1}{2}.
$$
\end{corollary}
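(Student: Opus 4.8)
The plan is to obtain this corollary by assembling, stratum by stratum, the results already established. Fix $\Lambda\in{\rm Vert}^g(L_g)$ and suppose its contribution is non-zero, i.e.\ that the fixed point set $\big(\CV(\Lambda)^o\big)^{g_\Lambda}$ is non-empty. First I would combine Corollary \ref{stratCZ}(ii) with Proposition \ref{scheme}(i): the $\BF$-points of $\Delta(\CM)\cap({\rm id}_\CM\times g)\Delta(\CM)$ lying on the stratum $\CV(\Lambda)^o$ are exactly those of $\big(\CV(\Lambda)^o\big)^{g_\Lambda}$, so by the definition \eqref{defmult}
$$
{\rm mult}(\Lambda)=\sum_{x\in(\CV(\Lambda)^o)^{g_\Lambda}(\BF)}\ell\big(\CO_{\Delta(\CM)\cap({\rm id}_\CM\times g)\Delta(\CM),\,x}\big).
$$
Thus it remains to count the points of this fixed point set and to compute the (common) length of the local ring at each of them.

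For the point count I would argue as follows. Since the contribution of $\Lambda$ is non-zero, the set $(\delta(\CM)\cap\CN^g)(\BF)$ is non-empty, hence finite by Proposition \ref{cardform}(i), and $\Lambda$ is one of the vertex lattices occurring in Proposition \ref{cardform}(ii); hence its type equals ${\rm deg}\,P_{i_0}$, where $P_{i_0}$ is the unique irreducible factor of $P_{\bar g}$ with $\tau(i_0)=i_0$ and $a_{i_0}$ odd. By Proposition \ref{fixpointstr}(ii), the finite non-empty set $\big(\CV(\Lambda)^o\big)^{g_\Lambda}(\BF)$ has cardinality equal to the type of $\Lambda$, namely ${\rm deg}\,P_{i_0}$.

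For the length I would invoke Proposition \ref{length} (this is where the hypotheses $F=\BQ_p$ and $n\le 2p-2$ really enter, through Theorems \ref{speciald} and \ref{idealk}): at every point $[B]\in\big(\CV(\Lambda)^o\big)^{g_\Lambda}(\BF)$ the local ring of $\delta(\CM)\cap\CN^g$ has length $c+1=\tfrac12(a_{i_0}+1)$, with $a_{i_0}=2c+1$. Since $i_0$, and therefore both $a_{i_0}$ and ${\rm deg}\,P_{i_0}$, depend only on $g$ and not on the chosen point or on $\Lambda$, multiplying the number of points by the common length yields ${\rm mult}(\Lambda)={\rm deg}\,P_{i_0}\cdot\tfrac{a_{i_0}+1}{2}$, as claimed.

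In this sense there is no real obstacle left to overcome: the only non-formal ingredient is the determination of the individual local rings in Proposition \ref{length}, carried out via Zink's theory of displays in Theorems \ref{speciald} and \ref{idealk}, and the present corollary merely repackages that computation together with the Deligne-Lusztig point count. The one point at which a reader might hope for more is the removal of the constraint $n\le 2p-2$: that would require an a priori argument that the common local length is $\tfrac12(a_{i_0}+1)$ without the explicit computation, which, as the authors remark for the analogous statement in Theorem \ref{mainthm}(iv), is not available here.
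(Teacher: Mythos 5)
Your proposal is correct and follows exactly the route the paper takes: the corollary is obtained by combining the point count per stratum (Proposition \ref{cardform}(ii) together with Proposition \ref{fixpointstr}(ii)/Lemma \ref{cardfix}, giving ${\rm deg}\,P_{i_0}$ points on each contributing stratum) with the common local length $c+1=\tfrac12(a_{i_0}+1)$ from Proposition \ref{length}, via the definition \eqref{defmult}. The paper in fact offers no separate proof beyond this assembly, so your write-up matches its intent precisely, including the observation that $i_0$, $a_{i_0}$ and ${\rm deg}\,P_{i_0}$ depend only on $g$ and not on the stratum or the point.
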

Of course, we are using here the notation of Proposition \ref{cardform}. Now all assertions of Theorem \ref{mainthm} are proved.

\section{Proof of Theorem \ref{speciald}}\label{Proof1}

In this section, we assume $F=\BQ_p$. Accordingly, we write $\BQ_{p^2}$ for $E$, and $W$ for $\CO_{\breve F}$. In the terminology of \cite{KR},  we will prove the following theorem. 
\begin{theorem}\label{specialdQ}
Let $j_1,\ldots,j_n$ be special homomorphisms such that the corresponding fundamental matrix $T(j_1,\ldots,j_n)$ is equivalent to a diagonal matrix of the form ${\rm diag}(p,\ldots,p, 1,\ldots,1)$ (where $p$ occurs say $m$ times and $1$ occurs $n-m$ times). Let 
$\CZ=\bigcap_{i=1,\ldots,n}\CZ(j_i)\subseteq \mathcal N$. Then $\CZ$ is an integral scheme. In particular, $p\cdot \mathcal{O}_{\CZ}=0$. In fact, $\CZ$  is equal to $\CV(\Lambda)$ for some vertex lattice $\Lambda$ in $C_n$ of type $m$. 
\end{theorem}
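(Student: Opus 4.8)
The plan is to first determine the underlying point set, then reduce to the "pure $p$" case, and finally compute complete local rings with Zink's theory of displays.

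\emph{Point set.} Let $L\subset C_n$ be the $\CO_E$-lattice spanned by $j_1,\ldots,j_n$. The hypothesis on the fundamental matrix means that, in a suitable $\CO_E$-basis, the Gram matrix of $L$ is ${\rm diag}(p^{(m)},1^{(n-m)})$; in particular $L\subset L^\ast$ with $L^\ast/L$ killed by $p$ and of $k'$-dimension $m$, so that $\Lambda:=L^\ast$ satisfies $\pi\Lambda\subset\Lambda^\ast=L\subset\Lambda$, i.e. $\Lambda$ is a vertex lattice of type $m$ (this forces $m$ to be odd, since $C_n$ contains only vertex lattices of odd type). Since $\CZ(x)(\BF)=\{B\mid x\in B^\ast\}$ and every $B\in\CN_n(\BF)$ satisfies $B^\ast\subset^1 B\subset\pi^{-1}B^\ast$, the condition $j_1,\ldots,j_n\in B^\ast$ is equivalent to $L\otimes_{\CO_E}\CO_{\breve F}\subset B^\ast$, i.e. $B\subset\Lambda\otimes_{\CO_E}\CO_{\breve F}$; hence $\CZ(\BF)=\CV_\Lambda(\BF)$ as sets. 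Thus $\CZ$ is already supported on the integral variety $\CV_\Lambda$, and what remains is to show that $\CZ$ is reduced and concentrated in the special fiber, which will force $\CZ=\CV_\Lambda$.

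\emph{Reduction to $m=n$.} The closed subscheme $\CZ(j_1,\ldots,j_n)$ depends only on the lattice $L$: since special homomorphisms form a group and the lifting conditions defining the $\CZ(j_i)$ are additive and invariant under composition with integral endomorphisms of $\overline{Y}$, replacing $(j_1,\ldots,j_n)$ by any $GL_n(\CO_E)$-equivalent generating system of $L$ leaves $\CZ$ unchanged. I may therefore assume the Gram matrix of $L$ is literally ${\rm diag}(pI_m,I_{n-m})$, so that $j_{m+1},\ldots,j_n$ span a unimodular rank-$(n-m)$ sublattice orthogonal to $\langle j_1,\ldots,j_m\rangle$. Iterating the identification $\CZ(u)\cong\CN_{n-1}$ of \cite{KR}, Lemma 5.2, together with its compatibility with the remaining special divisors, one gets $\CZ(j_{m+1})\cap\cdots\cap\CZ(j_n)\cong\CN_m$, inside which $j_1,\ldots,j_m$ are special homomorphisms with fundamental matrix $pI_m$. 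Hence it suffices to prove: on $\CN_n$ with $n$ odd, for special homomorphisms with $(j_i,j_k)=p\,\delta_{ik}$, the scheme $\CZ(j_1,\ldots,j_n)$ equals the Bruhat--Tits stratum $\CV_\Lambda$ attached to the type-$n$ vertex lattice $\Lambda=L^\ast$ (an irreducible component of $(\CN_n)_{\rm red}$, of dimension $\frac{n-1}{2}$).

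\emph{Local computation.} Fix $x=[B]\in\CV_\Lambda(\BF)$. By \cite{RZ} and Zink's theory of displays, $\widehat{\CO}_{\CN_n,x}\cong W[[t_1,\ldots,t_{n-1}]]$ and the universal deformation of $\BX$ over it is given by an explicit display lifting the Dieudonn\'e module $M(\BX)$, with $M(\BX)_0=B$. For each $i$ the special homomorphism $j_i$ corresponds to a vector in $M(\BX)$, and by Grothendieck--Messing / Zink theory $\CZ(j_i)$ is cut out near $x$ by a single equation $f_i\in W[[t_1,\ldots,t_{n-1}]]$ expressing that the crystalline realization of $j_i$ lies in the lifted Hodge filtration over the corresponding quotient. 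The core of the argument is to show, working in coordinates adapted to the chain $\Lambda^\ast\otimes\CO_{\breve F}\subset B^\ast\subset^1 B\subset\Lambda\otimes\CO_{\breve F}$, that $(f_1,\ldots,f_n)=(p,t_1,\ldots,t_{(n-1)/2})$ after a suitable change of the formal coordinates: the norm conditions $(j_i,j_i)=p$ produce the single relation $p\in(f_1,\ldots,f_n)$, while the fact that $j_1,\ldots,j_n$ generate $L=\Lambda^\ast$ forces precisely $\frac{n-1}{2}$ further independent linear conditions among the $t$'s — matching $\dim\CV_\Lambda=\frac12(t(\Lambda)-1)$ — with no higher-order contributions. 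It then follows that $\widehat{\CO}_{\CZ,x}\cong\BF[[u_1,\ldots,u_{(n-1)/2}]]$ is regular of dimension $\frac{n-1}{2}$; hence $\CZ$ is regular, in particular reduced and concentrated in the special fiber, and being a closed subscheme of the integral variety $\CV_\Lambda$ with the same points and the same dimension, $\CZ=\CV_\Lambda$.

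\emph{Main obstacle.} The hard part is the display computation in the last step: one must write down the universal display of $\CN_n$ at a point of the top-dimensional stratum, respecting the $\CO_E$-action of signature $(1,n-1)$ and the principal polarization, translate the lifting criterion for each $j_i$ into an explicit equation in the display coordinates, and verify that the ideal generated by these $n$ equations collapses exactly to $(p,t_1,\ldots,t_{(n-1)/2})$ — equivalently, that no mixed-characteristic obstruction survives beyond the single relation $p$ and that no higher-order terms appear. A secondary point to settle along the way is that $\CV_\Lambda\subseteq\CZ$ as schemes, i.e. that the $j_i$ do lift over the universal $p$-divisible group on the smooth variety $\CV_\Lambda$; this can either be read off from the same local computation or proved directly by a Grothendieck--Messing argument over $\CV_\Lambda$.
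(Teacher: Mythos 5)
Your point-set identification and the reduction to the case $m=n$ (Gram matrix $pI_n$) both match the paper. But the entire weight of the proof rests on your ``local computation,'' which you assert rather than carry out, and the specific thing you would need to verify there is both harder than necessary and not uniformly available. Two concrete problems. First, you want to show directly that the mixed-characteristic ideal of $\CZ$ at $x$ is $(p,t_1,\dots,t_{(n-1)/2})$, in particular that $p$ lies in the ideal generated by the $n$ equations $f_i$. The paper never proves such a statement from the display; its display computation (Theorem \ref{prep}) only extracts the \emph{linear, mod-$p$} equation of each $\CZ(j_i)_p$ and shows the resulting linear system has rank $(n-1)/2$, i.e.\ it only yields regularity of the \emph{special fiber}. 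The fact that $p$ annihilates $\CO_\CZ$ is obtained by an entirely separate mechanism: Proposition \ref{liftW} shows by an elementary Hodge-filtration/Dieudonn\'e-module argument that $\CZ$ has no $W/(p^2)$-valued point (if it did, a linear dependency among the $\widehat x_i$ would force some $j_i$ to have valuation $>1$), and Lemma \ref{pg0} — a purely commutative-algebra statement — converts ``special fiber regular $+$ no $\CO/(\pi^2)$-point'' into $\pi\cdot\CO_{\CY}=0$. Without some substitute for this decoupling, your plan requires controlling all higher-order and mixed-characteristic terms of the $f_i$ simultaneously, which is exactly the part you flag as the ``main obstacle'' and leave open.

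Second, your local computation is set up at an arbitrary $x\in\CV_\Lambda(\BF)$, but the normal form of the display depends on which Bruhat--Tits stratum $x$ lies on. The paper's explicit computation is only valid at \emph{super-general} points (Dieudonn\'e module of type $\mathbb B(n)$, equivalently $x$ on a unique irreducible component); at other points the module of special homomorphisms through $x$ contains elements of valuation $0$, and the paper handles these by induction on $n$ (Corollary \ref{regu}): adjoin such a $j_0$, observe the enlarged lattice still has an orthogonal basis with norms in $\{1,p\}$, and use the inductive regularity of $\CZ(J)_p=\CZ_p\cap\CZ(j_0)_p$ together with a dimension count. Your proposal has no mechanism for the non-super-general points. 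So the skeleton (point set, reduction to $m=n$, dimension count $\tfrac{n-1}{2}$) is right, but the two ideas that actually make the proof close — the infinitesimal criterion of Lemma \ref{pg0} with Proposition \ref{liftW}, and the induction via a valuation-$0$ divisor at non-generic points — are missing.
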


\begin{remark}\label{DLmod}
We point out that this theorem gives a modular interpretation of the closure $\CV(\Lambda)$ of the Deligne-Lusztig variety $\CV(\Lambda)^o$. 
Here $\Lambda=\langle x_1,\ldots, x_n\rangle^*$ is the dual of the lattice generated by the elements $x_1,\ldots, x_n$ of $C_n$ corresponding to 
$j_1,\ldots,j_n$. 
\end{remark}

 First we remark that we may  replace $n$ by $m$, cf. \cite{KR}, proof of Lemma 5.2.  Hence we may assume that $T(j_1,\ldots,j_n)$ is equivalent to the diagonal matrix  ${\rm diag}(p,\ldots,p)$. 

We use the following simple fact. 

\begin{lemma}\label{pg0}
Let $\CO$ be a complete discrete valuation ring, with uniformizer $\pi$ and algebraically closed residue field $k$. Let 
 $\mathcal Y$ be a  (formal) 
 scheme locally (formally) of finite type over Spf $ \CO$ and such that its special fiber $\mathcal{Y}_k$ is regular. Suppose that there does not exist a $\CO/(\pi^2)$-valued point of $\mathcal Y$. Then $\pi\cdot \mathcal{O}_{\mathcal{Y}}=0$ .
\end{lemma}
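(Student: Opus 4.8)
The plan is to deduce the statement from a local claim about complete local rings and then, in the local situation, to produce an $\CO/(\pi^2)$-point by hand whenever $\pi$ fails to annihilate the structure sheaf, which contradicts the hypothesis. Since the vanishing of the ideal sheaf $\pi\CO_{\CY}$ is checked at stalks, and since the support of $\pi\CO_{\CY}$, if non-empty, is a non-empty closed subset of the underlying space (which is locally of finite type over $k$, hence has a closed point), it suffices to prove: for every closed point $y\in\CY$ the completed local ring $R=\widehat{\CO}_{\CY,y}$ satisfies $\pi R=0$. Because $\CY$ is locally formally of finite type over ${\rm Spf}\,\CO$, this $R$ is a complete Noetherian local $\CO$-algebra with residue field $k$, of the form $R=A/I$ with $A=\CO[[t_1,\dots,t_n]]$, and $R/\pi R=\widehat{\CO}_{\CY_k,y}$ is regular by hypothesis, say of dimension $d$. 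As any local $\CO$-algebra homomorphism $R\to\CO/(\pi^2)$ gives an $\CO/(\pi^2)$-point of $\CY$, it is enough to show that the assumption $\pi R\neq0$ produces such a homomorphism.

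So assume $\pi R\neq0$, i.e.\ $\pi\notin I$. Let $\bar I\subset k[[t_1,\dots,t_n]]=A/\pi A$ be the image of $I$; then $k[[t_1,\dots,t_n]]/\bar I$ is regular of dimension $d$, so $\bar I$ is minimally generated by $n-d$ elements $\bar g_{d+1},\dots,\bar g_n$ forming part of a regular system of parameters. Lift these to $g_{d+1},\dots,g_n\in I$ (possible, $\bar I$ being the image of $I$). Then $I+\pi A=(g_{d+1},\dots,g_n)+\pi A$, and in fact $I=(g_{d+1},\dots,g_n)+\pi\,(I:\pi)$, where $I:\pi=\{x\in A\mid \pi x\in I\}$: given $f\in I\subseteq(g_{d+1},\dots,g_n)+\pi A$, write $f=\sum_j c_j g_j+\pi h$; then $\pi h=f-\sum_j c_j g_j\in I$, so $h\in I:\pi$. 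Two further remarks will be used: since $\pi\notin I$, no element of $I:\pi$ is a unit of $A$, so $I:\pi\subseteq\mathfrak m_A$; and each $g_j$ lies in $\mathfrak m_A$ with constant term in $\pi\CO$, while the linear parts of $\bar g_{d+1},\dots,\bar g_n$ are linearly independent in $\mathfrak m/\mathfrak m^2$ (being part of a regular system of parameters).

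I would then construct $\phi\colon A\to\CO/(\pi^2)$ by setting $t_i\mapsto\pi c_i$ with $c_i\in\CO$ to be chosen. This is a well-defined continuous local $\CO$-algebra homomorphism: each $\pi c_i$ lies in the square-zero ideal $(\pi)/(\pi^2)$, so monomials of $t$-degree $\geq2$ are killed and $\phi(\mathfrak m_A)\subseteq(\pi)/(\pi^2)$. Hence $\phi$ kills $\pi\,(I:\pi)$ with no constraint on the $c_i$: for $h\in I:\pi\subseteq\mathfrak m_A$ we get $\phi(\pi h)=\pi\cdot\phi(h)\in\pi\cdot\big((\pi)/(\pi^2)\big)=0$. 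It remains to force $\phi(g_j)=0$ for $d<j\leq n$. Writing $g_j=\pi d_j+\sum_i g_j^{(i)}t_i+(\text{$t$-degree}\geq2)$ with $d_j,g_j^{(i)}\in\CO$, one has $\phi(g_j)\equiv\pi\big(d_j+\sum_i g_j^{(i)}c_i\big)\pmod{\pi^2}$, so the conditions $\phi(g_j)=0$ amount to the affine-linear system $\sum_i\overline{g_j^{(i)}}\,\bar c_i=-\bar d_j$ over $k$ in the unknowns $\bar c_i\in k$. Its coefficient matrix has as rows the linear parts of $\bar g_{d+1},\dots,\bar g_n$, which are linearly independent, so the system is solvable; lifting a solution to $(c_i)\in\CO^n$ completes the construction of $\phi$, contradicting the absence of $\CO/(\pi^2)$-points of $\CY$.

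I expect the only genuinely non-formal step to be the passage from ``$\CY_k$ regular'' to the usable description of $I$ in the second paragraph: the elements $g_{d+1},\dots,g_n$ generate $I$ only modulo $\pi$, so the correction term $\pi\,(I:\pi)$ must be retained, and the point is then the observation that this term imposes no extra condition on the sought point. The remaining ingredients — the reduction to complete local rings, the continuity of maps from $A$ into the square-zero extension $\CO/(\pi^2)$, the use of algebraic closedness of $k$ to get closed points with residue field $k$, and the solvability of the linear system — are routine.
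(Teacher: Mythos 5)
Your proof is correct and follows essentially the same strategy as the paper's: reduce to a complete local ring $A/I$ with $A$ a power series ring over $\CO$, use regularity of the special fiber to extract $n-d$ generators of $I$ modulo $\pi$ whose linear parts are independent, observe that $\pi\notin I$ forces the remaining part of $I$ into $\pi\cdot\mathfrak m_A$ (hence it is killed by any local map into the square-zero extension $\CO/(\pi^2)$), and then construct such a map killing the distinguished generators. The only cosmetic difference is that the paper performs a coordinate change making those generators into coordinates and sends every coordinate to $\pi^2$, whereas you keep the original coordinates, send $t_i\mapsto\pi c_i$, and solve the resulting affine-linear system over $k$; both hinge on the same linear-independence statement in $\mathfrak m/\mathfrak m^2$.
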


\begin{proof} Suppose the claim is false.
Then there exists a $k$-valued point $x$ of $\CY$ such that $\pi\neq 0$ in $\mathcal O_{\mathcal Y, x}$. We show that under this assumption there is an $\CO/(\pi^2)$-valued point of $\CY$ with underlying $k$-valued point $x$. 
 Locally around $x$, the (formal) scheme $\mathcal Y$ is a closed (formal) subscheme of a (formal) scheme $\mathcal X$ which is locally of finite type over $\CO$ and smooth over $\CO$.    Let $R=\widehat{\mathcal O}_{\mathcal X, x}$.  We may identify $R$ with $\CO$ or with $ \CO[\![x_1,\ldots,x_{N}]\!] $ for some $N\geq 1$. In the first  case there is nothing to do, so we assume the second case.   Let $I$ be the ideal of $\mathcal Y$ in $R$. Thus we assume that $\pi\not\in I$. It is enough to construct an $\CO$-linear homomorphism $\psi: R/I\rightarrow \CO/(\pi^2)$. 
 Let $\mathfrak m$ be the maximal ideal of $R$ and let $\mathfrak m'$ be the maximal ideal of $R/(\pi)$. Let $l=\dim \mathcal{X}_k-\dim \mathcal Y_k$. Since $\mathcal Y_k$ is regular, we find $l$ distinct elements $q_{1},\ldots,q_{l}\in I$ such that the images of $q_{1},\ldots,q_{l}$ in $R/(\pi)$ generate the ideal of $\mathcal Y_k$ in $R/(\pi)$ and such that
the images of $q_{1},\ldots,q_{l}$ in $\mathfrak m' /{\mathfrak m'}^ 2$ 
are linearly independent. We extend the $q_i$ to a system of generators $q_1,\ldots,q_r$ of $I$. For $i\leq l$, let $y_i=q_i$. We find elements $y_{l+1},\ldots,y_{N}\in \mathfrak m$ such that the images of $y_1,\ldots,y_{N}$ in $\mathfrak m' /{\mathfrak m'}^ 2$  form a basis of $\mathfrak m' /{\mathfrak m'}^ 2$. 
Thus  $\CO[\![x_1,\ldots,x_{N}]\!] = \CO[\![y_1,\ldots,y_{N}]\!] $. Now we consider the $\CO$-linear homomorphism $\phi:  \CO[\![y_1,\ldots,y_{N}]\!] \rightarrow \CO$ given by $y_i \mapsto \pi^2$  for  all $i$. 

\smallskip

\noindent{\bf Claim }{\em The image of the ideal $I$ under $\phi$ is $(\pi^2)$.}

\smallskip

\noindent The ideal $\phi(I)$ is generated by $\phi(q_1),\ldots,\phi(q_r)$. By definition we have $\phi(q_i)=\pi^2$ for $i\leq l$. Now assume that $i > l$.  Then $q_{i}=\sum_{k\leq l} c_kq_k + \pi\cdot z$ for suitable elements $c_1,\ldots,c_l,z\in R$ (depending on $i$). Since we assume $\pi\not\in I$, it follows that $z$ is not a unit, hence $z\in \mathfrak m = (\pi,y_1,\ldots,y_{N})$. Hence $\phi(z)\in (\pi)$ and  $\phi(q_i)\in (\pi^2)$. This confirms the claim.

Using the claim it follows that $\phi$ induces  a $\CO$-linear homomorphism $\psi: R/I\rightarrow \CO/(\pi^2)$ yielding an $\CO/(\pi^2)$-valued point of $\mathcal Y$. But this contradicts our assumption. Hence $\pi\cdot \mathcal{O}_{\mathcal{Y}}=0$. 
\end{proof}
\begin{remark}
Taking into account Grothendieck's infinitesimal characterization of smoothness, the previous lemma gives a purely infinitesimal \emph{sufficient condition} for a (formal) $\CO$-scheme to be a (formal) $k$-scheme. 
\end{remark}
We will prove Theorem \ref{specialdQ} by showing  in Proposition \ref{liftW} and Corollary \ref{regu} that $\CZ$ satisfies the hypotheses of the previous lemma. 

For any (formal) $W$-scheme $S$,  we denote by $S_p$ its special fiber.
\begin{proposition}\label{liftW}
Let $j_1,\ldots,j_n$ and $\CZ$ be as in the theorem. Then $\CZ$ does not have a $W/(p^2)$-valued point. 
\end{proposition}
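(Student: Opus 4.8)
The plan is to work with displays in the sense of Zink and show directly that a hypothetical $W/(p^2)$-valued point of $\CZ$ forces the fundamental matrix $T(j_1,\ldots,j_n)$ to be congruent mod $p^2$ to a matrix whose reduction is strictly "smaller" than $\mathrm{diag}(p,\ldots,p)$, contradicting the hypothesis. Concretely, suppose $(X,\iota,\lambda,\rho)$ is a $W/(p^2)$-valued point of $\CN_n$ lying on all the $\CZ(j_i)$. By Grothendieck--Messing / Zink's theory, $X$ is described by its display over $W/(p^2)$: one has a free $W/(p^2)$-module $P$ with a filtration $Q\subset P$ lifting the Hodge filtration on the special fibre, together with the Frobenius and its divided version $V^{-1}$. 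The framing $\rho$ identifies the rational Dieudonn\'e module with $C_n\otimes_E\breve F$, and the special homomorphisms $j_i$, which on the special fibre are the lattice vectors $x_i$, lift to honest homomorphisms $\overline Y\to X$ precisely because $[B]\in\CZ(j_i)$. Thus each $x_i$ gives an actual element of $P$ (not just of $P\otimes\breve F$) that is compatible with Frobenius in the appropriate sense.

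The key computation is then to evaluate the hermitian/bilinear pairing $(x_i,x_j)$ using the polarization form on $P$. Over the special fibre, the condition "$T(\mathbf x)$ equivalent to $\mathrm{diag}(p,\ldots,p)$" says exactly that the Gram matrix of $x_1,\ldots,x_n$ with respect to the form, in a suitable basis, is $p$ times a unit matrix, i.e. the $x_i$ span a lattice $L$ with $L^*/L$ killed by $p$ of full rank $n$. The first step is to recall from \cite{KR} the precise relation between the fundamental matrix $T(\mathbf x)$ and the Gram matrix of the $x_i$ inside the Dieudonn\'e module, together with the crystalline formula expressing the pairing of two special homomorphisms (after reducing, it is essentially the pairing of the corresponding elements of $P$). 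The second step is to lift this to $W/(p^2)$: using that the $j_i$ lift and that the Frobenius structure of the display $P$ over $W/(p^2)$ refines the one over $\BF$, one computes $(x_i,x_j)\bmod p^2$ and finds that it must lie in $pW/(p^2)$ — but more is true: the interplay between the filtration $Q\subset P$ (of corank $1$ because the signature is $(1,n-1)$) and the $V^{-1}$-stability forces an extra divisibility, so that the reduced Gram matrix mod $p$ of the $x_i/p$ fails to be non-degenerate of rank $n$. In lattice terms: a $W/(p^2)$-lift produces a $g$-stable, $\tau$-stable lattice strictly between $L$ and $L^*$, which is incompatible with $L^*/L$ being $p$-torsion of full rank (equivalently, with the fundamental matrix being exactly $\mathrm{diag}(p^{(m)},1^{(n-m)})$).

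In more detail, the third step makes the contradiction explicit. A $W/(p^2)$-point of $\CZ$ yields, via the display, a "half-integral" refinement: the images of the $x_i$ in $P/Q$ (a line, since signature $(1,n-1)$) together with the divided Frobenius give a one-parameter family of relations modulo $p^2$ that did not exist modulo $p$. Concretely one shows that for such a point the vector $V^{-1}x_i$ (which a priori lies in $P\otimes\breve F$) in fact lies in $P$ modulo $p^2$, whereas over $\BF$ alone the minuscule hypothesis only gives $pV^{-1}x_i\in P$; tracing through the polarization pairing, this lifting translates into the statement that $\sum_i c_i x_i\in pL^*$ for a nonzero reduction $(\bar c_i)$, i.e. the image of $L$ in $L^*/pL^*$ already lands in a proper subspace — contradicting that the $x_i\bmod pL^*$ span the radical $L/pL^*\subset L^*/pL^*$ of full rank $n$ when $T(\mathbf x)\sim\mathrm{diag}(p^{(n)})$.

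The main obstacle will be the second step: making the crystalline pairing computation precise over the truncated base $W/(p^2)$ rather than over a field. One must keep careful track of Zink's $V^{-1}$ on the display and of the $\tau$-structure (the involution coming from the polarization) simultaneously, and verify that the divided-power structure on the ideal $(p)\subset W/(p^2)$ (which is where $p$ being odd is used) does not introduce correction terms that could spoil the extra divisibility. Once the pairing formula over $W/(p^2)$ is in hand, the rest is linear algebra over $W/(p^2)$ of the kind the paper treats as routine. I would also need, as a preliminary, the reduction "$n\mapsto m$" already quoted from \cite{KR}, so that without loss of generality the fundamental matrix is exactly $\mathrm{diag}(p,\ldots,p)$ of size $n=m$, which streamlines the bookkeeping considerably.
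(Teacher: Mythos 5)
Your overall strategy---use the fact that the $j_i$ lift over $W/(p^2)$ to push the $x_i=j_i(\bar 1_0)$ into the Hodge filtration, whose degree-zero part has corank one by the signature condition, and then play this off against the polarization pairing to contradict the shape of the fundamental matrix---is the same as the paper's. But the execution has genuine problems at the decisive step. First, the contradiction you aim for is not one: after the reduction to $m=n$ one has $L=pL^*$, so ``$\sum_i c_ix_i\in pL^*$ for a nonzero reduction $(\bar c_i)$'' holds trivially for \emph{every} choice of $c_i$, and ``the $x_i$ span $L/pL^*$ of full rank $n$'' is false, since $L/pL^*=0$. The correct endgame is different: all $\bar x_i$ lie in the rank-$(n-1)$ free summand $\CF_0\subset M_0\otimes_W W/(p^2)$, so $n$ vectors in a corank-one summand yield (after normalizing by the $c_i$ of minimal valuation) a relation $x_n=\sum_{i<n}c_ix_i+cf_n$ in $M_0$ with $p^2\mid c$, because $c\bar f_n$ must lie in $\CF_0$ while $\bar f_n$ generates a complement; pairing with $x_i$ and using $\{x_i,x_j\}=p\,\delta_{ij}$ together with the integrality of $\{f_n,x_i\}$ forces $p\mid c_i$, hence $x_n/p\in M_0^*$, i.e.\ $j_n$ has valuation $>1$, contradicting that the fundamental matrix is exactly $\mathrm{diag}(p,\ldots,p)$. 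Second, your claim that a $W/(p^2)$-lift forces $V^{-1}x_i$ to become integral modulo $p^2$ is unsubstantiated and is not the mechanism at work; nothing in the proposal explains how it would follow from the display, and no Frobenius information about the lift is needed.

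Relatedly, the ``second step'' you yourself flag as the main obstacle---a crystalline pairing computation over the truncated base, tracking Zink's divided Frobenius and the pd-structure on $(p)$---is never carried out, and it is in fact unnecessary. Grothendieck--Messing supplies only the submodule $\CF_0$ containing the $\bar x_i$; from then on the entire argument takes place in the Dieudonn\'e module of the \emph{special fibre} over $W$, where the Gram matrix $\{x_i,x_j\}=p\,\delta_{ij}$ is given by hypothesis. So the gap is twofold: the key divisibility $p^2\mid c$ and the resulting valuation contradiction are missing, and what is offered in their place (the $V^{-1}$-integrality and the $L/pL^*$ argument) does not work.
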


\begin{proof} We may assume that $j_1,\ldots,j_n$ all have valuation $1$ and are all perpendicular to each other. We also first assume that $n>1$. Suppose there was a $W/(p^2)$-valued point $\varrho$ of $\CZ$. Let $M$ be the Dieudonn\'e module of the underlying $\BF$-valued point. Let $M_{W/(p^2)}=M\otimes_W W/(p^2)$. We obtain a Hodge filtration $\mathcal{F}\hookrightarrow M_{W/(p^2)}$ corresponding to $\varrho$, and lifting the Hodge filtration of the underlying $\BF$-valued point. From the $\mathbb Z_{p^2}$-action we get a decomposition $\mathcal{F}=\mathcal{F}_0\oplus\mathcal{F}_1$, where $\mathcal{F}_0 $ is free of rank of rank $n-1$ and $\mathcal{F}_1$ is free of rank $1$. Let $x_i=j_i(\overline{1}_0)\in M_0$, where we are using the notation of \cite{KR}. We denote the image of $x_i$ in $M_{W/(p^2)}$ by $\overline{x}_i$. Then it follows that $\overline{x}_i\in \mathcal{F}_0$. Let $\overline{f}_1,\ldots,\overline{f}_{n-1}$ be a basis of $\mathcal{F}_0$. Let $f_i\in M_0$ be a lift of  $\overline{f}_i$, and choose $f_n\in M_0$ such that $f_1,\ldots,f_n$ is a basis of $M_0$.  Let $\widehat{x}_i$ be the image of $x_i$ in the span of $f_1,\ldots,f_{n-1}$ (viewed as a quotient of $M_0$). Then $\widehat{x}_1,\ldots,\widehat{x}_n$ are linearly dependent, i.e.,  $\sum_{i}c_i\widehat{x}_i=0$ for suitable $c_i\in W$,   which are not all  zero. We may assume that the valuation of $c_n$ is minimal among the valuations of the $c_i$. Dividing by $-c_n$ we may therefore assume that $\widehat{x}_n=\sum_{i<n}c_i\widehat{x}_i$. Therefore $x_n=\sum_{i<n}c_i{x_i}+cf_n$ for some $c\in W$. Since the image of $x_n-\sum_{i<n}c_i{x_i}=cf_n$ in $M_{W/(p^2)}$ lies in $\mathcal{F}_0 $, it follows that $c$ is divisible by $p^2$.

Now for any $i \neq n$ we have $0=\{x_n,x_i\}=c_i\{x_i,x_i\}+c\{f_n,x_i\}=c_ip+c\{f_n,x_i\}$. Here $\{ , \}$ is the hermitian form on $C=(M_0\otimes \BQ)^{V^{-1}F}$ as in \S 3. Since $x_i\in M_0^*$, it follows that $\{f_n,x_i\}$ is integral. Since further $c$ is divisible by $p^2$, it follows that $c_i$ is divisible by $p$. It follows that $x_n/p\in M_0^* $ so that $j_n$ has valuation bigger than $1$, a contradiction which completes the proof in the case $n>1$. 

Finally,  we observe that this reasoning also works for $n=1$, since it shows that in this case $x_n=cf_n$, where $c$ is divisible by $p^2$ so that $j_n$ cannot have valuation $1$. 
\end{proof}
\begin{lemma}\label{supergen}
 Let $n$ be odd. Let $x$ be a $\BF$-valued point of $\mathcal{N}$. The following conditions are equivalent.

(i) $x$ lies on only one irreducible component of $\mathcal N_{\rm red}$. 

(ii) No special cycle of valuation $0$ passes through $x$.

(iii) The Dieudonn\'e module modulo $p$ of $x$ is of type $\mathbb B(n)$, in the sense of \cite{VW}, \S 3 (cf. also the beginning of the proof of Theorem \ref{prep} below).
\end{lemma}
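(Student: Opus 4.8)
The plan is to show that each of (i), (ii), (iii) is equivalent to the single condition $t(B)=n$, where $B$ is the lattice in $C_n\otimes_E\breve F$ attached to $x$ and $t(B)\in\{1,3,\ldots,n\}$ is its type in the sense of \S\ref{setup} (equivalently, the type of the Dieudonn\'e module $M(x)$ modulo $p$); by definition $x$ is of type $\BB(n)$ precisely when $t(B)=n$ (the ``generic'' stratum, cf.\ \cite{VW}, \S 3, and the beginning of the proof of Theorem \ref{prep}), so that (iii) $\Leftrightarrow$ $t(B)=n$ is a tautology and it remains to treat (i) and (ii).

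For (i): recall from \S 3 that $[B]$ lies on the closed Bruhat--Tits stratum $\CV_\Lambda$ if and only if $\Lambda(B)\subseteq\Lambda$, that by \cite{VW} (and \cite{V}) the scheme $\CN_{\rm red}$ is equidimensional of dimension $\frac12(n-1)$, and that (using that $n$ is odd) its irreducible components are precisely the $\CV_\Lambda$ with $\Lambda$ of maximal type $t(\Lambda)=n$. Hence $x$ lies on a unique irreducible component if and only if $\Lambda(B)$ is contained in exactly one vertex lattice of type $n$. If $t(\Lambda(B))=n$, then $\Lambda(B)$ is $\pi$-modular, i.e.\ $\Lambda(B)^\ast=\pi\Lambda(B)$, and if $\Lambda(B)\subseteq\Lambda$ with $\Lambda$ again of type $n$, then dualizing gives $\pi\Lambda=\Lambda^\ast\subseteq\Lambda(B)^\ast=\pi\Lambda(B)$, whence $\Lambda=\Lambda(B)$; so $x$ lies on a unique component. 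If $t(\Lambda(B))=t<n$, pick a Jordan decomposition $\Lambda(B)=N_0\perp N_1$ with $N_0$ unimodular of rank $n-t$ (even and $\ge 2$, since $n,t$ are odd) and $N_1$ of rank $t$ with $N_1^\ast=\pi N_1$; a direct computation (using $\pi\Lambda=\Lambda^\ast\subseteq\Lambda(B)^\ast=N_0\perp\pi N_1$, so $N_0\perp N_1\subseteq\Lambda\subseteq\pi^{-1}N_0\perp N_1$) identifies the vertex lattices of type $n$ containing $\Lambda(B)$ with the maximal isotropic $k'$-subspaces of the non-degenerate hermitian space $N_0/\pi N_0$, a set of cardinality $>1$ because $\dim_{k'}N_0/\pi N_0=n-t$ is even and positive. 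Thus $x$ lies on at least two components, which proves (i) $\Leftrightarrow$ $t(B)=n$.

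For (ii): by the description of special cycles recalled before Theorem \ref{speciald}, $\CZ(j)(\BF)=\{B'\mid j\in (B')^\ast\}$, and $\CZ(j)$ has valuation $0$ exactly when $j$ may be normalized so that $(j,j)$ is a unit, i.e.\ when the line $Ej\subset C_n$ contains a valuation-$0$ vector. Hence a special cycle of valuation $0$ passes through $x$ if and only if $B^\ast$ contains a valuation-$0$ vector of $C_n$. If $t(B)=t<n$, then with $\Lambda(B)=N_0\perp N_1$ as above one has $\Lambda(B)^\ast=N_0\perp\pi N_1\supseteq N_0$, and since $\Lambda(B)^\ast\otimes_{\CO_E}\CO_{\breve F}$ is the largest $\tau$-stable lattice contained in $B^\ast$, we get $N_0\subseteq B^\ast$; as $N_0$ is unimodular of positive rank it represents a unit, so (ii) fails. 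Conversely, suppose $t(B)=n$ but some valuation-$0$ vector $j\in C_n$ lies in $B^\ast$. Since $B^\ast\subseteq B$ we have $(B^\ast,B^\ast)\subseteq\CO_{\breve F}$, and $(j,j)\in\CO_{\breve F}^\times$, so orthogonal projection along $j$ carries $B^\ast$ into itself; hence $B^\ast=(\CO_E\otimes_{\CO_F}\CO_{\breve F})\,j\ \oplus\ (B^\ast\cap j^\perp)$, and dualizing, $B=(\CO_E\otimes_{\CO_F}\CO_{\breve F})\,j\ \oplus\ (B\cap j^\perp)$. As $\tau$ fixes $j\in C_n$ and preserves $j^\perp$, applying $\sum_{i\ge 0}\tau^i(-)$ shows that $\Lambda(B)\otimes_{\CO_E}\CO_{\breve F}$, and therefore $\Lambda(B)$ itself, splits off the unimodular line $\CO_E j$; but then $\Lambda(B)^\ast\supseteq\CO_E j$, contradicting $\Lambda(B)^\ast=\pi\Lambda(B)$. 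Hence (ii) $\Leftrightarrow$ $t(B)=n$, which completes the proof of the Lemma.

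The step I expect to require the most care is the lattice bookkeeping in part (ii) (the orthogonal-projection and descent argument, keeping track of the $\CO_E\otimes_{\CO_F}\CO_{\breve F}$-structures), together with the identification in part (i) of the type-$n$ over-lattices of $\Lambda(B)$ with the Lagrangians in $N_0/\pi N_0$; both are elementary once the lattice dictionary of \cite{VW} is in hand, but this is where the verification is most delicate.
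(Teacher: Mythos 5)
Your treatment of (i)$\Leftrightarrow$(ii) is correct and is essentially a more explicit version of what the paper does: the paper also reduces everything to whether $x$ lies on a closed stratum $\CV(\Lambda)$ with $t(\Lambda)<n$, produces a valuation-$0$ special homomorphism from an orthogonal basis of such a $\Lambda$, and observes conversely that a vertex lattice whose dual contains a unit vector cannot have type $n$. Your route through the Jordan splitting $\Lambda(B)=N_0\perp N_1$ and the identification of the type-$n$ over-lattices with Lagrangians in $N_0/\pi N_0$ makes the counting in (i) explicit, and your orthogonal-projection/descent argument for the converse of (ii) is a legitimate substitute for the paper's one-line "then $\Lambda$ cannot be of type $n$". (Two cosmetic points: $B^*$ is an $\CO_{\breve F}$-lattice in $C_n\otimes_E\breve F$, so the line you split off is $\CO_{\breve F}\,j$, not $(\CO_E\otimes_{\CO_F}\CO_{\breve F})\,j$; and you should say a word about why a unimodular hermitian $\CO_E$-lattice of positive rank represents a unit --- true here because $E/F$ is unramified and $p$ is odd.)

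The genuine gap is in (iii). You assert that ``the Dieudonn\'e module modulo $p$ of $x$ is of type $\BB(n)$'' is \emph{by definition} the condition $t(B)=n$. It is not: $M/pM\cong\BB(n)$ is a statement about the $F$- and $V$-semilinear structure of the $2n$-dimensional $\BF$-vector space $M/pM$ (existence of a basis with the specific band relations spelled out at the start of the proof of Theorem \ref{prep}), whereas $t(B)=n$ is a statement about the $\tau$-orbit $\sum_i\tau^iB$ of the lattice $B=M_0$. Translating between these two invariants is precisely the nontrivial input that the paper delegates to \cite{BW}, Proposition 3.6 and Lemma 4.1 (which is how it proves (ii)$\Leftrightarrow$(iii)); it rests on the classification in \cite{VW}, \S 3. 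So your argument as written establishes (i)$\Leftrightarrow$(ii) but leaves the equivalence with (iii) unproved. To close the gap you should either cite the [BW]/[VW] statements as the paper does, or supply the semilinear-algebra argument showing that $M/pM\cong\BB(n)$ if and only if the smallest $\tau$-stable lattice containing $B$ has index $n$ over its dual.
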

We remark that (ii) and (iii) both imply that $n$ is odd. A point satisfying (ii) is called \emph{super-general}.
\begin{proof}
The equivalence of (ii) and (iii) follows from \cite{BW}, Proposition 3.6 and Lemma
4.1.  

Next we prove the equivalence of (i) and (ii). 
 The point $x$ lies on two irreducible components if and only if there are  two
vertex lattices $\Lambda_1$ and $\Lambda_1'$ of type $n$ in $C_n$ such that $x$ is
a $\BF$-valued point of the corresponding irreducible components
$\mathcal{V}(\Lambda_1)$ and $\mathcal{V}(\Lambda_1')$. This is equivalent to the
statement that $x\in \mathcal{V}(\Lambda)(\BF)$ for some vertex lattice $\Lambda$
of type $t<n$. (Given $\Lambda_1$ and $\Lambda_1'$ define    $\Lambda$ as
$\Lambda_1\cap \Lambda_1'$.)
We claim that for any vertex lattice $\Lambda$ of type $t<n$ there is a special
homomorphism $j$ of valuation $0$ with $ \mathcal{V}(\Lambda)\subseteq
\CZ(j)$.
To see this, note that  $\Lambda$ has an orthogonal basis $e_1,\ldots,e_n$ such that
$\{e_i,e_i\}=1/p$ for $i\leq t$ and $\{e_i,e_i\}=1$ for $i> t$. Let  $j$ be the  special
homomorphism with $j(\bar{1}_0)=e_{t+1}$.
Then for the hermitian form $h(\,,\,)$ on the space $\BV$ of special homomorphisms, we have $h(j,j)=1$ and $y=j(\bar{1}_0)\in \Lambda^*$. This shows the claim, which implies $(ii)\!\!\!\implies\!\! \!(i)$.
For the reverse implication, assume that $\Lambda$ is a vertex lattice such that $\Lambda^*$ contains a vector $y$ with $\{y, y\}=1$. Then 
$\Lambda$ cannot be  of type $n$. This shows that $\CV(\Lambda)$ cannot be contained in special divisor of valuation $0$.
\end{proof}

\begin{theorem} \label{prep}
Let $n\geq 3$.
Let $x$ be a super-general $\BF$-valued point of $\mathcal{N}$.
Then the following statements hold.
\smallskip

\noindent (i)
 For any special homomorphism $j$ with $x\in \CZ(j)(\BF) $ and $x\not\in \CZ(j/p)(\BF), $ the special fiber $\CZ(j)_p$ is regular at $x$.

\smallskip
\noindent
(ii) Let $j_1,\ldots,j_n$ be a basis of the $\BZ_{p^2}$-module of special homomorphisms $j$ with $x\in \CZ(j)(\BF)$. Then the intersection $\bigcap\CZ(j_i)_p$ is regular at $x$.
\end{theorem}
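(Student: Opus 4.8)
The plan is to prove both assertions by an explicit local computation with Zink displays at the point $x$, the super-general hypothesis being used precisely to put the relevant Dieudonn\'e module into a usable normal form. Throughout I keep the conventions of \S10, so $F=\BQ_p$, $E=\BQ_{p^2}$ and $W=\CO_{\breve F}$.

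First I would record the normal form. By Lemma \ref{supergen}, since $x$ is super-general the reduced Dieudonn\'e module $\overline M=M(X)/pM(X)$ is of type $\BB(n)$ in the sense of \cite{VW}, \S3; concretely this produces a distinguished basis of $\overline M=\overline M_0\oplus\overline M_1$ in which $F$, $V$ and the $\CO_E$-action are cyclic (of companion-matrix shape) and the polarization pairing is anti-diagonal. Lifting this to a $W$-basis of $M(X)$ and invoking Grothendieck--Messing together with Zink's theory of displays (as already used in the proof of Lemma \ref{tangspace}), one gets that $\CN_n$ is formally smooth over $W$ of relative dimension $n-1$, so that $R:=\widehat\CO_{\CN_n,x}\cong W[[t_1,\dots,t_{n-1}]]$, and that the universal display over $R$ is obtained from the normal form of $\overline M$ by inserting the deformation parameters $t_i$ in prescribed entries of the Frobenius matrix. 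In particular the Hodge filtration $\mathrm{Fil}_0\subset M_0\otimes_W R$ of corank one is spanned by $n-1$ explicit vectors whose coordinates are linear functions of the $t_i$ (with coefficients read off from the normal form).

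Next, for a special homomorphism $j$ with $x\in\CZ(j)(\BF)$, I would use that $\CZ(j)$ is a relative Cartier divisor (\cite{KR}, Lemma 3.9): writing $x_j=j(\overline 1_0)\in M_0$, the ideal of $\CZ(j)$ in $R$ is generated by the single element $\delta_j$ obtained as the image of $x_j$ in the rank-one quotient $\mathrm{Lie}_0=(M_0\otimes_W R)/\mathrm{Fil}_0$, trivialized by the normal form. Flatness of $\CZ(j)$ over $W$ gives $p\nmid\delta_j$, so $\overline\delta_j:=\delta_j\bmod p\neq 0$ in $\BF[[t_1,\dots,t_{n-1}]]$, and for (i) it remains to show $\overline\delta_j\notin\mathfrak m^2$. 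This is where the hypothesis $x\notin\CZ(j/p)(\BF)$ is used: it says exactly that $x_j$ is primitive modulo $p$ in $B^*=\pi V^{-1}M(X)_1$ (equivalently, $j$ is part of a $\BZ_{p^2}$-basis of the lattice $L_x$ of special homomorphisms through $x$, using that $L_x\subset C_n\cap B^*$ is saturated), and by the explicit shape of the normal form the surviving ``primitive coordinate'' of $x_j$ contributes a nonzero linear term to the image of $x_j$ in $\mathrm{Lie}_0\otimes\BF[t]/(t^2)$. Hence $\CZ(j)_p$ is smooth of dimension $n-2$ at $x$, which is (i).

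For (ii), take $j_1,\dots,j_n$ a $\BZ_{p^2}$-basis of $L_x$. Since $x$ is super-general the associated vertex lattice $\Lambda=\Lambda(B)$ has maximal type $n$, one has $L_x=\Lambda^*=p\Lambda$, and $\bigcap_i\CZ(j_i)(\BF)=\{B'\mid B'\subset\Lambda\otimes_{\CO_E}W\}=\CV(\Lambda)(\BF)$, with $x$ lying in the open, smooth, irreducible, $\tfrac{n-1}{2}$-dimensional stratum $\CV(\Lambda)^\circ$ of \cite{V},\cite{VW}. The scheme-theoretic intersection $\bigcap_i\CZ(j_i)_p$ is cut out in $\BF[[t_1,\dots,t_{n-1}]]$ by $\overline\delta_{j_1},\dots,\overline\delta_{j_n}$; from the normal form I would compute their linear parts $\ell_{j_1},\dots,\ell_{j_n}\in\mathfrak m/\mathfrak m^2$ and show that they span a subspace of dimension exactly $\tfrac{n-1}{2}$. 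Choosing $\tfrac{n-1}{2}$ indices (or $\BF$-linear combinations) realizing this span, the intersection of the corresponding divisors --- each smooth of dimension $n-2$ at $x$ by part (i) --- is transverse, hence regular of dimension $\tfrac{n-1}{2}$; being reduced and irreducible, it must coincide near $x$ with $\CV(\Lambda)$, which is squeezed inside $\bigcap_i\CZ(j_i)_p\subset\bigl(\text{that transverse intersection}\bigr)$. Therefore $\bigcap_i\CZ(j_i)_p$ equals $\CV(\Lambda)$, hence is regular, at $x$. The main obstacle is the display bookkeeping: putting the type-$\BB(n)$ module together with a compatible principal polarization into explicit form, correctly propagating the Frobenius twists through Zink's deformation recipe, and, for (ii), verifying that modulo $p$ the $n$ equations $\overline\delta_{j_i}$ drop precisely $\tfrac{n-1}{2}$ of the coordinates $t_i$ rather than all $n-1$ of them; the restriction $F=\BQ_p$ enters only to have these displays available as in \cite{VW},\cite{BW}, and --- in contrast with Theorem \ref{idealk} --- no bound on $p$ is needed here.
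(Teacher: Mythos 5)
Your plan follows essentially the same route as the paper's proof: use the super-general hypothesis (via Lemma \ref{supergen}) to put the reduction of the Dieudonn\'e module into the type-$\BB(n)$ normal form, write down the universal display over $R\cong W[[t_1,\dots,t_{n-1}]]$, extract from the lifting criterion for $j$ a linear equation modulo $\mathfrak m^2$ whose nonvanishing is forced by $x\notin\CZ(j/p)(\BF)$ (this gives (i)), and for (ii) show that as $j$ ranges over the lattice $p\Lambda$ of special homomorphisms through $x$ the resulting linear forms span a space of dimension exactly $\tfrac{n-1}{2}$, which matched against $\dim\CV(\Lambda)=\tfrac{n-1}{2}$ yields regularity. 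This is precisely the paper's argument, so the proposal is correct and not a genuinely different proof.
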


\begin{proof} It is enough to show the claims of the theorem in $\widehat{\mathcal{O}}_{\mathcal{N}_p,x}$ instead of $\mathcal{O}_{\mathcal{N}_p,x}$.
Let $(X, \iota, \lambda)$ be the $p$-divisible group with its $\mathbb Z_{p^2}$-action and its $p$-principal polarization corresponding to $x$, and let $M$ be its Dieudonn\'e module. Let $M_p=M_{p,0}\oplus M_{p,1}$ be the reduction mod $p$ of $M.$ Since we assume that no special cycle of valuation $0$ passes through $x$, it follows that $M_p$ is isomorphic to $\mathbb{B}(n)$ and that $n$ is odd. Here we are using the notation of \cite{VW}, \S 3.1. 
This means that we find  bases $\overline{e}_1,\ldots,\overline{e}_n$ of $M_{p,0}$  and  $\overline{f}_1,\ldots,\overline{f}_n$ of $M_{p,1}$ such that $V( \overline{f}_i)=(-1)^i\overline{e}_{i+1}$ for $i<n$, $V(\overline{e}_n)=\overline{f}_1 $, $F(\overline{f}_i)=(-1)^i\overline{e}_{i-1}$ for $i \geq 3$, $F(\overline{f}_2)=-\overline{e}_1$, $F(\overline{e}_1)=\overline{f}_n$
 and for the induced alternating form we have $\langle \overline{e}_i, \overline{f}_j \rangle= \varepsilon_i\delta_{ij}$, where $\varepsilon_i=1$ for $i=1$ and $\varepsilon_i=-1$ for $i>1$. 
 
 We find lifts $e_i\in M_0$ of $ \overline{e}_i$ and  lifts $f_i\in M_1$ of $ \overline{f}_i$ such that still  $\langle{e_i},{f_j} \rangle= \varepsilon_i\delta_{ij}$. 
  Denote by  $T$ the $W$-span of $e_1,f_2,f_3,\ldots,f_n$ and by $L$ the $W$-span of $f_1,e_2,e_3,\ldots,e_n$. Then
\[
M=L\oplus T, \ \ \ VM=L\oplus pT.
\]
Let $h_1=e_1,h_2=f_2,\ldots,h_n=f_n,h_{n+1}=f_1,h_{n+2}=e_2,\ldots,h_{2n}=e_n.$ 
Define the matrix $(\alpha_{ij})$ by 
\[
Fh_j=\sum_i \alpha_{ij}h_i \text{ for } j=1,\ldots,n,  \\
 \]
 \[
 V^{-1}h_j=\sum_i \alpha_{ij}h_i \text{ for } j=n+1,\ldots,2n.
\]
Since we know the action of $F$ resp. $V$ on the $\overline{e}_i$ and $\overline{f}_i$, we can conclude that $V^{-1}(f_1)=e_n + \sum_{i<n}x_ie_i+pe$ for suitable $x_i\in W$ and $e\in M_0$. Similarly  for $i\geq 2$ we have $ V^{-1}(e_i)=(-1)^{i-1}f_{i-1}+y_if_n+pg_i$ for suitable $y_i\in W$ and $g_i\in M_1$.

\noindent Thus $(\alpha_{ij})$ is of the form
\[
(\alpha_{ij})=
\left(
\begin{array}{cccccc|ccccccccccccccccc}
&-1&&&&&x_1&&&&\\
&&&&&&&&1&&& \\
&&&&&&&&&-1&&\\
&&&&&&&&&&\ddots  \\
&&&&&&&&&&&-1 \\
&&&&&&&&&&&&1 \\
1&&&&&&&y_2&y_3&y_4&\ldots&y_{n-1}&y_n\\
\hline
&&&&&&&-1&&&&\\
&&-1&&&&x_2&&&&\\
&&&1&&&x_3&&&&\\
&&&&\ddots&&\vdots&&&&\\
&&&&&-1&x_{n-1}&&&&&\\
&&&&&&1&&&&&\\
\end{array}
\right) +pD,
\] 
where $D$ has entries in $W$ and also maps $M_0$ to $M_1$ and $M_1$ to $M_0$ .
(The  vertical and horizontal lines    divide the first matrix into four $n\times n$ matrices, and only non-zero entries are displayed.)
It follows (see \cite{Zi}, p. 48) that the universal deformation of $X$ over $\BF[\![ t_{11},\ldots,t_{nn}]\!]$ corresponds to the display $(L\oplus T)\otimes W(\BF[\![t_{11},\ldots,t_{nn} ]\!]) $ with matrix $(\alpha_{ij})^{\rm univ}$ (wrt. the basis $h_1,\ldots, h_{2n}$ and with entries in $W(\BF[\![t_{11},\ldots,t_{nn} ]\!])$ given by 
\[
(\alpha_{ij})^{\rm univ}=
\begin{pmatrix}
1&&&[t_{11}]&\hdots&[t_{1n}] \\
&\ddots&&\vdots&\ddots&\vdots \\
&&1&[t_{n1}]&\hdots&[t_{nn}] \\
&&&1&& \\
&&&&\ddots& \\
&&&&&1 \\
\end{pmatrix} \cdot
(\alpha_{ij}).
\]
Here  $[t]$ denotes the Teichm\"uller representative of $t$. 
  Now let $A^{'}=W[\![t_{11},\ldots,t_{nn}]\!]$ and let  $R^{'}=\BF[\![t_{11},\ldots,t_{nn}]\!]$. 
  We extend the Frobenius  $\sigma $ on $W$ to $A^{'}$ by 
  setting $\sigma(t_{ij})=t_{ij}^p.$ 
 Let $R$ be the completed universal deformation ring (in the special fiber) of $X$, together with its $\BZ_{p^2}$-action and its $p$-principal polarization. Then $R$ is a quotient of $R^{'}$ by an ideal $J$. Using the fact that $(\alpha_{ij})^{\rm univ}$ has to respect the $\BZ/2$-grading,  it is easy to see that the ideal describing the deformation of the $\BZ_{p^2}$-action is $(t_{11},t_{ij})_{i,j\neq 1}$. Using this,  it is easy to see that 
 $J=\big ((t_{11},t_{ij})_{i,j\neq 1}, (t_{1i}-t_{i1})_{i\leq n}\big)$. (Compare also \cite{G}, p.~231.) Thus we may identify $R$ with the ring $\BF[\![t_{2},\ldots,t_{n}]\!]$, where $t_i$ corresponds to the image of $t_{1i}$ in  $R^{'}/J$. We also define 
 $A=W[\![t_{2},\ldots,t_{n}]\!]$.
 For any $m \in \BN$,  denote by $\mathfrak{a}_m$ resp. $\mathfrak{r}_m$ the ideal in $A$ resp. in $R$ generated by the monomials $t_2^{a_2}\cdot \ldots \cdot  t_n^{a_n}$, where  $a_i\geq 0$ and $\sum a_i = m$. Hence $\mathfrak{r}_m=\mathfrak m^m, $ where $\mathfrak m$ denotes the maximal ideal of $R$. Let $A_m=A/\mathfrak{a}_m$ and $R_m=R/\mathfrak{r}_m$.  Then $A^{'}$ is a frame for $R^{'}$, resp. $A$ is a frame for $R$, resp. $A_m$ is a frame for $R_m$. (See \cite{Zi2} for the definition of frames.)

For an  $A^{'}$-$R^{'}$-window $(M^{'}, M^{'}_1, \Phi^{'},\Phi^{'}_1)$, let  $M_1^{'^{\sigma}}=A^{'}\otimes_{A^{'}, \sigma}M_1^{'}$ and denote by $\Psi^{'}:  M_1^{'^{\sigma}} \rightarrow M^{'}$ the linearization of $\Phi^{'}_1$. It is an isomorphism of  $A^{'}$-modules.
 Denote by  $\alpha^{'}:M_1^{'} \rightarrow M_1^{'^{\sigma}}$ the composition of the inclusion map $M_1^{'}  \hookrightarrow M^{'} $ followed by $\Psi^{'^{-1}}$. In this way, the category of formal $p$-divisible groups over $R^{'}$ becomes equivalent to the category of pairs $(M_1^{'}, \alpha^{'})$ consisting of a free $A^{'}$-module of finite rank and an $A^{'}$-linear injective homomorphism $\alpha^{'}:M_1^{'} \rightarrow M_1^{'^{\sigma}}$ such that Coker  $\alpha^{'}$ is a free $R^{'}$-module, and satisfying the {\it nilpotence condition} \cite{Zi}. Since we will only consider deformations of formal $p$-divisible groups,  the 
 nilpotence condition will be  fulfilled automatically, and we will ignore it, comp. also \cite{KR}, section 8. A corresponding description holds for the category of  formal $p$-divisible groups over $R$ resp. $R_m$. 
 
 In the sequel we are using  notation that is customary in Zink's theory. The notation $M_1$ conflicts with its usage when taking the degree-$1$-component of $M$ under the $\BZ/2$-grading. Henceforward we will write $M^1$ for the degree-$1$-component.
 
Let $(\beta_{ij})^{\rm univ}$ be the matrix over $A^{'}$ which is obtained from $(\alpha_{ij})^{\rm univ}$ by replacing the $[t_i]$ by $t_i$ and by multiplying the last $n$ rows by $p$. 
 We consider the $A^{'}$-$R^{'}$-window $(M^{'},M^{'}_1, \Phi^{'},\Phi^{'}_1)$ given by 
$
M^{'}=M\otimes A^{'},  \ M_1^{'}=VM\otimes A^{'}, \  
\Phi^{'}=(\beta_{ij})^{\rm univ}\sigma, \  \  \Phi^{'}_1=\frac{1}{p}\cdot \Phi^{'},
$
where 
the matrix of $\Phi^{'}$ is described in the basis $h_1,\ldots,h_{2n}$. The corresponding display is the universal display described above (easy to see using the procedure described on p.2 of \cite{Zi2}). Hence $(M^{'},M^{'}_1, \Phi^{'},\Phi^{'}_1)$ is the universal window.
Using this and the form of the ideal $J$ given above, one checks that 
 the map  $\alpha: M_1 \rightarrow M_1^{\sigma}$ corresponding to the $A$-$R$ window of the universal defomation of $(X, \iota, \lambda)$  (which is the base change of  $(M_1^{'}, \alpha^{'})$) can be written as follows (using the bases $pe_1,pf_2,..,pf_n,$ $ f_1,e_2,\ldots, e_n$ resp. $p(1\otimes e_1),p(1\otimes f_2),\ldots,p(1\otimes f_n),1\otimes  f_1,  1\otimes e_2,\ldots,1\otimes e_n$ )
  \[
\tilde \alpha=
\left(
\begin{array}{cccccc|ccccccccccccccccc}
&-py_3&py_4&\hdots&-py_n&p&-t_n+y_2+\sum_{i\geq 3}(-1)^{i-1}y_it_{i-1}&&&&\\
-p&&&&&&&t_2&t_3&\hdots&t_{n-1}&x_1+t_n \\
&&&&&&&-1&&&&x_2\\
&&&&&&&&1&&&x_3  \\
&&&&&&&&&\ddots&&\vdots \\
&&&&&&&&&&-1&x_n \\
\hline
&&&&&&&&&&&1 \\
&&&&&&-1&&&&\\
&p&&&&&-t_2&&&&\\
&&-p&&&&t_3&&&&\\
&&&\ddots&&&\vdots&&&&\\
&&&&p&&-t_{n-1}&&&&&\\

\end{array}
\right) +B.
\] 
Here $B=\begin{pmatrix}
B_{11}&B_{12}\\
B_{21}&B_{22}\\ 
\end{pmatrix}
$, where $B_{ij}$ has size $n\times n$, and $B_{11}$ and $ B_{21}$  have entries in $p^2A$ and $B_{12}$ and $B_{22}$ have entries in $pA$. 
Rewriting this in the bases $pe_1,e_2,..,e_n,$ $ f_1,pf_2,\ldots, pf_n$ resp. $p(1\otimes e_1),1\otimes e_2,\ldots,1\otimes e_n,1\otimes  f_1,  p(1\otimes f_2),\ldots,p(1\otimes f_n)$  we obtain a block matrix 
$
\alpha=\begin{pmatrix}
&U\\
\tilde{U}
\end{pmatrix},
$
where $U$ is of the form
 \[
U=
\left(
\begin{array}{ccccccccccccccc}
-t_n+y_2+\sum_{i\geq 3}(-1)^{i-1}y_it_{i-1}&-py_3&py_4&\hdots&-py_n&p\\
-1&&&&\\
-t_2&p&&&\\
t_3&&-p&&\\
\vdots&&&\ddots&\\
-t_{n-1}&&&&p\\

\end{array}
\right) +pB_U,
\] 
where $B_U$ has entries in $A$  and, in the last $n-1$ rows,  even has entries in $pA$, and $\tilde{U}$ is of the form 
 \[
\tilde{U}=
\left(
\begin{array}{ccccccccccccccc}
&&&&&1\\
-p&t_2&t_3&\hdots&t_{n-1}&x_1+t_n\\
&-1&&&&x_2\\
&&1&&&-x_3\\
&&&\ddots&&\vdots\\
&&&&-1&x_{n-1}\\

\end{array}
\right) +pB_{\tilde{U}},
\] where $B_{\tilde{U}}$ has entries in $A$ and in the first row even has entries in $pA$.

The corresponding universal $p$-divisible groups over 
  $R_m$ correspond to the pairs 
  $(M_1(m), \alpha(m))$ obtained by base change from $(M_1, \alpha)$.

Consider the $p$-divisible group $\overline{\mathbb{Y}}$ with its Dieudonn\'e module  $\overline{\mathbb{M}}=W\overline{1}_0\oplus W\overline{1}_1.$ Let $\overline{M}_1=W\overline{1}_0\oplus  W p\overline{1}_1$ and let $n_0=\overline{1}_0$ and  $n_1=p\overline{1}_1$. Then  $\overline{\mathbb{Y}}$ corresponds to the pair $(\overline{M}_{1}, \beta)$ where $\beta(n_0)=1\otimes n_1$ and $\beta(n_1)=-p\otimes n_0$. By base change $W \rightarrow A$ resp.  $W \rightarrow A_m$ we obtain pairs $({\overline{M}_{1}}_{R}, \beta)$ resp. $({\overline{M}_{1}}_{R_m}, \beta)$ corresponding   to the constant $p$-divisible group  $\overline{\mathbb{Y}}$ over $R$ resp. $R_m$. We denote the matrix of $\beta$  by $S$, hence
$$S=
\begin{pmatrix}
0&-p\\
1&0\\
\end{pmatrix}.
$$

Now let $j$ be as in the statement of the theorem, i.e. $x\in \CZ(j)(\BF)$ but  $x\notin \CZ(j/p)(\BF)$. We want to investigate the ideal in $R$ describing the maximal deformation of the homomorphism $j$, and its image in   $R_m$. We will determine explicitly the image of this ideal  in  $R_p.$

The map $j$ corresponds to a map $ j(1):\overline{M}_{1} \rightarrow  M_1(1) $ such that the following diagram commutes,
\[
\xymatrix{ \overline{M}_{1} \ar[d]_{j(1)} \ar[r]^{\beta}&  {\overline{M}}_{1}^{\sigma} 
\ar[d]^{\sigma(j(1))}  \\M_1(1)\ar[r]_{\alpha(1)} & M_1(1)^{\sigma}  .}
\]
Then $j$ lifts over $R_m$ if and only if there is a lift $j(m)$ of $j(1)$ such that the following diagram commutes, 

\[
\xymatrix{ {\overline{M}_{1}}_{R_m} \ar[d]_{j(m)} \ar[r]^{\beta}&  {{\overline{M}}^{\sigma}_{1 {R_m}}}
\ar[d]^{\sigma(j(m))}  \\M_1(m)\ar[r]_{\alpha(m)} & M_1(m)^{\sigma}  .}
\]

We write $j(\overline{1}_0)=a_1 \cdot pe_1+a_2 \cdot e_2+\ldots+a_n\cdot e_n.$ We also write $j(1)=(X(1), Y(1)).$ Then $X(1)$ can be written in the above basis as 
$$ X(1)=
\begin{pmatrix}
a_1&0 \\
\vdots&\vdots\\
a_n&0\\
\end{pmatrix}.
$$
Similarly we write $j(p\overline{1}_1)=b_1 \cdot f_1+b_2 \cdot pf_2+\ldots+b_n\cdot pf_n$ and  $$ Y(1)=
\begin{pmatrix}
0&b_1 \\
\vdots&\vdots\\
0&b_n\\
\end{pmatrix}.
$$

Since $j$ commutes with the Frobenius operator, we have $j(p\overline{1}_1)=FjF^{-1}(p\overline{1}_1)=Fj(\overline{1}_0)$. Using the  matrix $(\alpha_{ij})$ we see that 
$$b_1=-pa_2^{\sigma}+p^2s_1,\, b_i=(-1)^{i}a_{i+1}^{\sigma}+ps_i \text{ for } 2\leq i\leq n-1,\text{ and } b_n=a_1^{\sigma}+\sum\nolimits_{i\geq 2}a_i^{\sigma}y_i+ps_n,$$
 for suitable elements  $s_i\in W.$

Similarly, exploiting the relation  $j(\overline{1}_0)=FjF^{-1}(\overline{1}_0)=-\frac{1}{p}Fj(p\overline{1}_1)$, we obtain the system of equations 
$$a_1=-b_1^{\sigma}x_1/p-b_2^{\sigma}/p+r_1, a_i=-b_1^{\sigma}x_i+(-1)^{i}b_{i+1}^{\sigma}+pr_i \text{ for } 2\leq i\leq n-1,\text{ and }a_n=b_1^{\sigma}+pr_n,$$ 
for suitable elements $r_i\in W$. The equations $b_1=-pa_2^{\sigma}+p^2s_1$ and  $a_1=-b_1^{\sigma}x_1/p-b_2^{\sigma}/p+r_1$ show that $b_1$ and $b_2$ are divisible by $p$. However, not  all $b_i$ are divisible by $p$. Indeed, if they were,  then, because of $b_i=(-1)^{i}a_{i+1}^{\sigma}+pr_i$ for $2\leq i\leq n-1$, the elements $a_3,\ldots,a_n$ would  also be divisible by $p$. Using $a_2=-b_1^{\sigma}x_2+b_{3}^{\sigma}+pr_2$, we see that then also $a_2$ would be divisible by $p$ and, using $b_n=a_1^{\sigma}+\sum_{i\geq 2}a_i^{\sigma}y_i+ps_n$,  it finally would follow that  also $a_1$ is divisible by $p$. However, this would contradict our assumption  that $x\in \CZ(j)(\BF)\setminus \CZ(j/p)(\BF)$.  

We are looking for liftings $X(m)$ of $X(1)$ and  $Y(m)$ of $Y(1)$ over $A_m$ such that
 \begin{equation}\label{displayrec}
 UY(m)= \sigma (X(m))S \  \text{ and } \   \tilde{U}X(m)= \sigma (Y(m))S .
 \end{equation}
Suppose $m=p^l$, where $l\geq1$, and suppose we have found liftings  $X(p^{l-1})$ and  $Y(p^{l-1})$  satisfying \eqref{displayrec}. For any choice of liftings   $X(p^{l})$ and  $Y(p^{l})$  of $X(p^{l-1})$ and  $Y(p^{l-1})$, 
the matrices $\sigma(X(p^{l})),$ resp. $\sigma(Y(p^{l}))$ are equal  to $\sigma(X(p^{l-1}))$ resp. $\sigma(Y(p^{l-1}))$, interpreted as matrices over $A_{p^l}$. Hence there are liftings  $X(p^{l})$ and  $Y(p^{l})$  satisfying \eqref{displayrec} if and only if the matrices 
\[ U^{-1}\sigma (X(p^{l-1}))S \  \text{ and } \  \tilde{U}^{-1} \sigma (Y(p^{l-1}))S\] are integral, and in this case 
\[ X(p^l)= \tilde{U}^{-1} \sigma (Y(p^{l-1}))S  \  \text{ and } \ Y(p^l)= U^{-1}\sigma (X(p^{l-1}))S.\] 
Define now inductively matrices $X_{\BQ}(p^l)$ and $Y_{\BQ}(p^l)$ over $A_{p^l}\otimes_{\BZ}\BQ$ as follows: $X_{\BQ}(1)=X(1)$ and $Y_{\BQ}(1)=Y(1)$ and 
\[ X_{\BQ}(p^{l+1})=   \tilde{U}^{-1} \sigma (Y(p^{l}))S  \  \text{ and } \  Y_{\BQ}(p^{l+1})=U^{-1}\sigma (X(p^{l}))S.\] (Again $\sigma (X_{\BQ}(p^l))$ and $\sigma (Y_{\BQ}(p^l))$ are well defined over $A_{p^{l+1}}\otimes_{\BZ}\BQ$.)
It is easy to see that $Y_{\BQ}(p)$ is integral. 

Let  $\overline{b}_i $ denote the image of $b_i$ in $\BF$. Let $D = \{i\geq 3\mid \overline{b}_i\neq 0\}$. (This set is not empty as we saw above.) Using the form of the matrix $\tilde{U}$ we easily see that $X_{\BQ}(p)$ is of the form 
$$
X_{\BQ}(p)=
\begin{pmatrix}
\frac{1}{p}\sum_{{i}\in D} (-1)^ib_{i}^{\sigma}t_{i-1} & 0\\
0&0\\
\vdots&\vdots\\
0&0\\
\end{pmatrix}+A(p),
$$
where $A(p)$ is integral.  We claim that the equation of $\CZ(j)_p$ in $R/\mathfrak m^p$ is  $\sum_{i\in D} (-1)^i\overline{b}_{i}^{\sigma}t_{i-1} = 0$. 
Let $t=\sum_{i\in D} (-1)^i\overline{b}_{i}^{\sigma}t_{i-1}$ and lift $t$ to an element $\tilde t\in A$ using Teichm\"uller lifts of the coefficients of $t$. Then we claim that $\tilde A:=A/((t_2,\ldots,t_n)^p+\tilde t\cdot A)$ is a frame for  $R/(\mathfrak m^p+t\cdot R)$. Since $\tilde A$ is isomorphic to $W[\![X_1,\ldots,X_{n-2} ]\!]/(X_1,\ldots,X_{n-2})^p$, it is torsion free as an abelian group. Let $\sigma $ be the endomorphism on $\tilde A$ which extends the Frobenius on $W$ by sending (the images of) the $t_i$  to $0$. Then $\sigma$ induces the Frobenius on $R/(\mathfrak m^p+t\cdot R)$. The ideal $p\cdot \tilde A$ is in an obvious way equipped with a pd-structure.
Using this frame, the same calculation as above shows that $j$ lifts over $R/(\mathfrak m^p+t\cdot R)$. Since  $\CZ(j)_p$ is a divisor (\cite{KR}, Proposition 3.5), it follows that we can write the equation for  $\CZ(j)_p$ in  $R/\mathfrak m^p$ in the form $t\cdot s=0$. We have to show that $s$ is a unit. Assume $s$ is not a unit. Then it follows that $j$ lifts over  $R/\mathfrak m^2$. For this ring we have the obvious frame $A/(t_2,\ldots,t_n)^2$.
Again the same calculation as above shows that $j$ does not lift over   $R/\mathfrak m^2$ because $\sum_{{i}\in D} (-1)^ib_{i}^{\sigma}t_{i-1}$ is not divisible by $p$ in  $A/((t_2,\ldots,t_n)^2$.
Thus the equation of $\CZ(j)_p$ in $R/\mathfrak m^p$ is indeed $\sum_{i\in D} (-1)^i\overline{b}_{i}^{\sigma}t_{i-1} = 0$. 
 Claim i) of the theorem follows.

Now we come to claim ii). Let $\mathfrak n$ be the maximal ideal in $\widehat{\mathcal O}_{\CZ, x}$. We need to show ${\rm dim}\,\mathfrak n/\mathfrak n^2={\rm dim}\, \CZ_p$. 
Since $x$ is super-general, it lies on a unique irreducible component of $\mathcal N_{\rm red}$, of the form $\mathcal V(\Lambda)$, where $\Lambda$ is a vertex lattice of type $n$, cf. \cite{KR}, \S 4. Furthermore, $x\in \CZ(j)(\BF)$ if and only if $j(\bar 1_0)\in \Lambda^*=p\Lambda$. 
By Lemma \ref{supergen}, $\ord_p (h(j_i,j_i))\geq 1$ for all $i$, and by the results of \cite{KR}, \S4, the dimension of $\CZ(j_i)_{\rm red}$ is $(n-1)/2$ at $x$ for all $i$. Hence $\CZ_{\rm red}=\mathcal V(\Lambda)$ locally at $x$, and has dimension $(n-1)/2$. We will show that ${\rm dim}\,\mathfrak n/\mathfrak n^2=(n-1)/2$, which will prove that $\CZ_p=\CZ_{\rm red}$ at $x$, and will finish the proof.

We saw above that the equation of $\CZ(j)$ in $\mathfrak m/\mathfrak m^2$ is a linear equation of the form 
$$\sum_{i\geq 1} (-1)^i\overline{b}_i(j)^{\sigma}t_{i-1} = 0,$$
where the coefficients $ \overline{b}_i(j)$ arise by expressing $j(p\bar 1_1)$ in terms of a specific basis of the $\BF$-vector space $VM^0/pVM^0$ with $\overline{b}_1(j)=\overline{b}_2(j)=0$.  We have to see that the rank of this system of linear equations, as $j(\bar 1_0)$ varies through $p\Lambda$, is equal to $(n-1)/2$.  

However, as $j$ varies, the elements $j(p\bar 1_1)$ generate the $W$-lattice $pV(\Lambda\otimes_{\BZ_{p^2}} W)$ inside $VM^0$, and the dimension of $pV(\Lambda\otimes_{\BZ_{p^2}} W)/p VM^0$ is equal to $(n-1)/2$. 
\end{proof}

\begin{corollary}\label{regu}
Let $j_1,\ldots,j_n$  and $\CZ$ be as in Theorem \ref{specialdQ}. Then the special fiber $\CZ_p$ of $\CZ$ is regular.
\end{corollary}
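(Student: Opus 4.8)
The plan is to combine the deformation computation of Theorem~\ref{prep} with the lattice description of $\CZ$ and the Bruhat--Tits stratification. As in the passages preceding this corollary, we are already reduced to the situation where all the $j_i$ have valuation $1$ and are mutually perpendicular; thus the lattice $L=\langle j_1(\overline 1_0),\ldots,j_n(\overline 1_0)\rangle$ in $C_n$ satisfies $L=pL^{*}$, and the description of $\CZ(j)(\BF)$ recalled above gives $\CZ(\BF)=\{\,B\in\CN(\BF)\mid L\otimes_{\CO_E}\CO_{\breve F}\subseteq B^{*}\,\}$. A length count using $\ell_{\CO_E}(L^{*}/L)=n$ and $B^{*}\subset^{1}B$ shows that in the chain $L\otimes\subseteq B^{*}\subset^{1}B\subseteq L^{*}\otimes$ the two outer inclusions have colength $(n-1)/2$; in particular $n$ is odd (for $n$ even $\CZ=\emptyset$, and $n=1$ is immediate), so I assume $n\geq 3$. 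Then $\Lambda_0:=L^{*}$ is a vertex lattice of maximal type $n$, and comparison with \cite{V,VW} and \cite{KR}, \S4 identifies $\CZ_{\mathrm{red}}=\CV(\Lambda_0)$, which is irreducible and smooth over $\BF$ of dimension $(n-1)/2$. Hence it suffices to prove that $\CZ_p$ is reduced, i.e.\ equal to $\CV(\Lambda_0)$ as a scheme.

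First I would dispose of the super-general points of $\CZ$. If $x\in\CZ(\BF)$ is super-general, with associated lattice $B=B_x$, then $x$ lies on the single component $\CV(\Lambda_0)$, so in fact $x\in\CV(\Lambda_0)^{\circ}$, whence $\Lambda(B)=\Lambda_0$ and $\Lambda(B)^{*}=L$. Therefore the $\BZ_{p^2}$-module of special homomorphisms $j$ with $x\in\CZ(j)(\BF)$ equals $\{\,j\mid j(\overline 1_0)\in\Lambda(B)^{*}\,\}=\{\,j\mid j(\overline 1_0)\in L\,\}=\bigoplus_i\CO_E\,j_i$, so $j_1,\ldots,j_n$ is a basis of it, and Theorem~\ref{prep}(ii) shows that $\CZ_p=\bigcap_i\CZ(j_i)_p$ is regular at $x$. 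As the super-general points of $\CZ$ form the open stratum $\CV(\Lambda_0)^{\circ}$, which is dense in $\CZ_{\mathrm{red}}$, this already gives regularity of $\CZ_p$ on a dense open.

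The main obstacle is to propagate this to the non-super-general points, i.e.\ the points lying on some $\CV(\Lambda')$ with $\Lambda'\subsetneq\Lambda_0$; there $j_1,\ldots,j_n$ no longer form a basis of the relevant module of special homomorphisms, so Theorem~\ref{prep}(ii) does not apply directly. I would try to show that $\CZ_p$ has no embedded components: granting this, $\CZ_p$ is generically reduced (by the previous paragraph) and satisfies $(S_1)$, hence is reduced, hence equals the smooth variety $\CV(\Lambda_0)$. Absence of embedded components should come from a Cohen--Macaulayness property of the intersection $\CZ(j_1)\cap\cdots\cap\CZ(j_n)$, proved either along the lines of \cite{KR} or by a display-theoretic argument parallel to the proof of Theorem~\ref{prep}. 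Alternatively one can argue locally: near a point of $\CV(\Lambda')^{\circ}$ the formal completion of $\CN$ is, by \cite{V,VW}, controlled by the Deligne--Lusztig variety of the smaller unitary group $U(\Lambda'/\Lambda'^{*})$, the divisors $\CZ(j_i)$ restrict to special divisors for that group, and running the display computation of Theorem~\ref{prep} in that smaller rank should again identify $\bigcap_i\CZ(j_i)_p$ with the stratum $\CV(\Lambda_0)$ near $x$. Controlling the scheme structure of $\CZ_p$ along these lower strata, where the excess of the intersection is largest, is the step I expect to require the most work.
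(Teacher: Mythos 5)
Your reduction to the case $m=n$ (all $j_i$ of valuation $1$, mutually perpendicular), your identification of $\CZ_{\rm red}$ with $\CV(\Lambda_0)$ for $\Lambda_0=L^*$ of maximal type, and your treatment of the super-general points via Theorem~\ref{prep}(ii) all match the paper. But the step you yourself flag as requiring the most work --- regularity at the non-super-general points --- is a genuine gap, and neither of your two suggested routes closes it. The Cohen--Macaulay/$(S_1)$ route is unsupported: $\CZ_p$ is the intersection of $n$ divisors cutting out a locus of dimension $(n-1)/2$ in an $(n-1)$-dimensional ambient space, so the intersection is far from proper and there is no a priori reason for its local rings to be Cohen--Macaulay (that would essentially be what one is trying to prove). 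The route via a display computation "in smaller rank near a lower stratum" also does not work as stated, because the explicit computation in Theorem~\ref{prep} depends crucially on the Dieudonn\'e module mod $p$ being of type $\BB(n)$, which by Lemma~\ref{supergen} holds \emph{only} at super-general points; at a point of a lower stratum that normal form is simply not available.

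The paper closes the gap differently, by induction on $m$ combined with Lemma~\ref{supergen}. At a non-super-general point $x$ there exists a special homomorphism $j_0$ of valuation $0$ with $x\in\CZ(j_0)(\BF)$. One then enlarges $U=\langle j_1,\ldots,j_n\rangle$ to $J=\langle j_0,j_1,\ldots,j_n\rangle$ and proves a small lattice-theoretic claim: using $pU^\vee=U$, any orthogonal basis vector of $J$ of valuation $\ge 2$ would lie in $pJ$, a contradiction, so $J$ has an orthogonal basis with norms of valuation $0$ or $1$, and at least one of valuation $0$ (namely because $j_0\in J$). Hence $\CZ(J)=\CZ\cap\CZ(j_0)$ is again an intersection of the type covered by the theorem but with strictly smaller $m$, so $\CZ(J)_p$ is regular by induction; since it is cut out of $\CZ_p$ by the single divisor $\CZ(j_0)_p$ and has strictly smaller dimension, $\CZ_p$ is regular at $x$. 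This inductive descent, rather than a direct analysis of the scheme structure along lower strata, is the missing idea in your proposal.
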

\begin{proof} We use induction on $m$ (notation as in Theorem \ref{specialdQ}). We observe that $m$ is always odd. For $m=1$ there is nothing to do, since for $n=1$ we have $\mathcal{N}_p\cong \Spec\, \BF$. 
If $x$ is super-general, the assertion follows from Theorem \ref{prep}. 

Now assume that  $x$ is not super-general. By Lemma \ref{supergen},  there is a special cycle  $\CZ(j_0)$ of valuation $0$ passing through $x$. We consider the $\BZ_{p^2}$-submodule $J$ of $\mathbb V$ generated by $j_0,j_1,\ldots,j_n$ and we define $\CZ(J)=\cap_{i=0,\ldots,n}\CZ(j_i)$. (Recall that $\mathbb V $ is the $\BQ_{p^2}$-space of special homomorphisms, with hermitian  form $h(\,,\,)$.)

\smallskip

\noindent{\bf Claim}
 {\em There is an orthogonal $\BZ_{p^2}$-basis $b_1,\ldots,b_n $ of $J$ 
 with $h( b_i,b_i ) \in \{1,p\}$ for all $i$.}
 
 \smallskip

We denote by $U$ the $\BZ_{p^2}$-submodule of $\mathbb V$ generated by $j_1,\ldots,j_n$, so that $U\subseteq J \subseteq \mathbb V$. Both $U$ and $J$ are free $\BZ_{p^2}$-modules of rank $n$. Let $U^{\vee}$ (resp. $J^{\vee}$) be the set of $j\in \mathbb V$ with $h(j,c)\in \BZ_{p^2}$ for all $c\in U$ (resp. all $c\in J$). 
It follows that $pU^{\vee}=U$. Let $c_1,\ldots,c_n$ be an orthogonal basis of $J$, and denote by  $\alpha_i$ the  valuation of $h(c_i,c_i)$. Assume now that $\alpha_i\geq 2$ for some $i$. Then it follows that $p^{-1}c_i\in pJ^{\vee}\subseteq pU^{\vee}=U$. Hence $c_i\in pU\subseteq pJ$. But an element of $pJ$ cannot be a member  of a basis of $J$. Hence all $c_i$ have valuation $0$ or $1$. Thus the claim follows.

The number of $b_i$ which have valuation $0$ is positive because there is an element of valuation $0$ in $J$ (e.g. $j_0$). Hence by the induction hypothesis $\CZ(J)_p$ is regular.
We know that the dimension of $\CZ_p$ is the dimension of the supersingular locus of $\mathcal{N}_p$. Therefore the dimension of $\CZ(J)_p=\CZ_p \cap \CZ(j_0)_p$ is smaller than the dimension of $\CZ_p$. Since $\CZ(J)_p$ is regular, it follows that $\CZ_p$ is regular at $x$. 
\end{proof}

\begin{remark} Consider the isogeny $\alpha: \overline{\BY}^n\to X$ defined by $(j_1,\ldots,j_n)$. The kernel of $\alpha$ is a finite flat group scheme $G$ of rank $p^m$, of type $(p, p,\ldots, p)$ and equipped with an action of $\BF_{p^2}$. As Zink pointed out, if $m=1$, such a group scheme can only exist over a base $Y$ with $p\cdot\CO_Y= 0$. (He uses Oort-Tate theory to show this.) We do not know whether Theorem \ref{specialdQ} can be seen from this angle in the general case. 

\end{remark}

\section{Proof of Theorem \ref{idealk}}\label{proofidealk}

Choose a
$W$-basis of $B$ as follows. Choose $e_0, e_1 \ldots, e_c \in B$ such that  $e_1,\ldots,e_c$ project to vectors in $B^*/pB$ and $e_0$ is in $B\setminus B^*$, and such that the images of these vectors in $B/L$ span the Jordan block relative to the eigenvalue $\lambda$ of $\bar{g}$ in $U/U^\perp$. Next let $l$ be the minimal integer $\geq 0$ such that $g^lu\in pB$. For $l>i \geq 0$ denote by $e_{c+1+i}$ the element $g^iu\in L$. Finally, we complete this to a basis by lifting vectors which project to Jordan blocks other than $\lambda$. These last vectors we call $e_{c+l+1},\ldots,e_{n-1}$. We therefore obtain the following identities modulo $L$, 
\begin{equation}\label{basis1}
ge_0\equiv\lambda e_0+e_1, ge_1\equiv \lambda e_1+e_2,\ldots,ge_{c-1}\equiv\lambda e_{c-1}+e_c, ge_c\equiv\lambda e_c .  
\end{equation}
If $m>c+l$ and $e_m,\ldots,e_{m'}$ give rise to a Jordan block of $g$ in $B/L$ to an eigenvalue $\mu$, then 
\begin{equation}\label{basis2}
ge_m\equiv\mu e_m+e_{m+1},\ldots,ge_{m'-1}\equiv\mu e_{m'-1}+e_{m'}, ge_{m'}\equiv\mu e_{m'} . 
\end{equation}
By perhaps changing the $e_i$ by adding a suitable element of $L$, we may (and will) assume that these congruences also hold modulo $pB$.

The vectors $e_i$ form a $W$-basis of $M_0=B$, where $M$ is the Dieudonn\'e module of $X$, the $p$-divisible group belonging to $B$. Let $f_0,\ldots,f_{n-1}$ be a basis of $M_1$ such that $\langle e_i,f_j\rangle=\delta_{ij}$.
Denote by  $T$ the $W$-span of $e_0,f_1,f_2,\ldots,f_{n-1}$ and by $L'$ the $W$-span of $f_0,e_1,e_2,\ldots,e_{n-1}$. (We only write $L'$ instead of the usual notion $L$ since the letter $L$ is already used.) Then
\[
M=L'\oplus T, \ \ \ VM=L'\oplus pT.
\]
Let $h_1=e_0,h_2=f_1,\ldots,h_n=f_{n-1},h_{n+1}=f_0,h_{n+2}=e_1,\ldots,h_{2n}=e_{n-1}.$ 
Define the matrix $(\alpha_{ij})$ by 
\[
Fh_j=\sum_i \alpha_{ij}h_i \text{ for } j=1,\ldots,n,  \\
 \]
 \[
 V^{-1}h_j=\sum_i \alpha_{ij}h_i \text{ for } j=n+1,\ldots,2n.
\]
It follows (see \cite{Zi}, p. 48) that the universal deformation of $X$ over $\BF[\![ t_{11},\ldots,t_{nn}]\!]$ corresponds to the display $(L'\oplus T)\otimes W(\BF[\![t_{11},\ldots,t_{nn} ]\!]) $ with matrix $(\alpha_{ij})^{\text{univ}}$ (wrt. the basis $h_1,\ldots,h_{2n}$ and with entries in $W(\BF[\![t_{11},\ldots,t_{nn} ]\!])$ given by 
$$
(\alpha_{ij})^{\text{univ}}=
\begin{pmatrix}
1&&&[t_{11}]&\hdots&[t_{1n}] \\
&\ddots&&\vdots&\ddots&\vdots \\
&&1&[t_{n1}]&\hdots&[t_{nn}] \\
&&&1&& \\
&&&&\ddots& \\
&&&&&1 \\
\end{pmatrix} \cdot
(\alpha_{ij}).
$$
Here the $[t_{i j}]$ denote the Teichm\"uller representatives of the $t_{i j}$. 
  Now let $A^{'}=W[\![t_{11},\ldots,t_{nn}]\!]$ and let  $R^{'}=\BF[\![t_{11},\ldots,t_{nn}]\!]$. 
  We extend the Frobenius  $\sigma $ on $W$ to $A^{'}$ 
  putting $\sigma(t_{ij})=t_{ij}^p.$ 
 Let $R$ be the completed universal deformation ring (in the special fiber) of $X$ together with the $\BZ_{p^2}$-action and the $p$-principal polarization. Then $R$ is a quotient of $R^{'}$ by an ideal $\mathfrak J$. Using the fact that $(\alpha_{ij})^{\text{univ}}$ has to respect the $\BZ/2$ grading,  it is easy to see that the ideal describing the deformation of the $\BZ_{p^2}$-action is $(t_{11},t_{ij})_{i,j\neq 1}$. Using this,  it is easy to see that 
 $\mathfrak  J=((t_{11},t_{ij})_{i,j\neq 1}, (t_{1i}-t_{i1})_{i\leq n})$. (Compare also \cite{G}, p. 231.) Thus we may identify $R$ with the ring $\BF[\![t_{1},\ldots,t_{n-1}]\!]$, where $t_i$ corresponds to the image of $t_{1i+1}$ in  $R^{'}/\mathfrak  J$. We also define 
 $A=W[\![t_{1},\ldots,t_{n-1}]\!]$.

Let $(\beta_{ij})^{\rm univ}$ be the matrix over $A^{'}$ which is obtained from $(\alpha_{ij})^{\rm univ}$ by replacing the $[t_i]$ by $t_i$ and by multiplying the last $n$ rows by $p$. 
 We consider the $A^{'}$ - $R^{'}$ window $(M^{'},M^{'}_1, \Phi^{'})$ given by 
$
M^{'}=M\otimes A^{'},  \ M_1^{'}=VM\otimes A^{'}, \  
\Phi^{'}=(\beta_{ij})^{\text{univ}}\sigma, \  \,
$
where 
the matrix of $\Phi^{'}$ is described in the basis $h_1,\ldots,h_{2n}$. (We consider the $h_i$ as elements in $M^{'}$, and they form a basis of $M^{'}$; similarly $ph_1,\ldots,ph_n,h_{n+1},\ldots,h_{2n}$ form a basis of $M_1^{'}$.) The corresponding display is the universal display described above (easy to see using the procedure described on p.2 of \cite{Zi2}). Hence we call $(M^{'},M^{'}_1, \Phi^{'})$ the universal window.

For an element $f=\sum a_{k_1,\ldots,k_{n-1}}t_1^{k_1}\cdots t_{n-1}^{k_{n-1}}\in R$ we denote by $\tilde f\in A$  the element $\tilde f=\sum \tilde a_{k_1,\ldots,k_{n-1}}t_1^{k_1}\cdots t_{n-1}^{k_{n-1}}$, where $ \tilde a_{k_1,\ldots,k_{n-1}}$ is the Teichm\"uller lift of $a_{k_1,\ldots,k_{n-1}}$. Thus $\tilde f$ is a lift of $f$.

Let $\mathfrak m$ be the maximal ideal of $R$. In the sequel, we call an ideal $J\subseteq \mathfrak m$  of $R$ {\emph{admissible}}
if $R/J$ is 
isomorphic to $ \BF[T]/(T^l)$ for some $l$ with $1\leq l\leq p$. (In particular, $J$ contains $\mathfrak m^p$.)
 
 Let $J$ be admissible. We now construct a frame for $R/J$.
If $l=1$ (i.e. $J=\mathfrak m$) then there is nothing to do since $W$ is a frame for $\BF$. Thus we may assume that $l\geq 2$. Let $\mathfrak m_{J}$ be the maximal ideal of $R/J$.
The map $R\rightarrow R/J$ induces a surjective linear map of $\BF$-vector spaces $$\phi: \mathfrak m/ \mathfrak m ^2\rightarrow  \mathfrak m_{J}/ \mathfrak m_{J}^2.$$
Since $R/J\cong \BF[T]/(T^l)$, the dimension of  $\mathfrak m_{J}/ \mathfrak m_{J}^2$ is $1$.  Let $\bar X_1\in \mathfrak m/ \mathfrak m^2$ be an element that is not in the kernel of $\phi$. We can extend $\bar X_1$ to a basis $\bar X_1,\ldots,\bar X_{n-1}$ of $\mathfrak m/ \mathfrak m^2$ such that $\bar X_2,\ldots,\bar X_{n-1}$ are in the kernel of $\phi$. Let $X_1\in \mathfrak m$ be any lift of $\bar X_1$. We find lifts $X_2,\ldots, X_{n-1}$ of $\bar X_2,\ldots,\bar X_{n-1}$ which are all contained in $J$.  It follows that $J=(X_1^l,X_2,\ldots,X_{n-1})$. 
Let $\tilde X_i\in A$ be the lifts of the $X_i$ as explained above.
Then it follows that $R=\BF[\![X_1,\ldots, X_{n-1}]\!]$ and  $A=W[\![\tilde X_1,\ldots, \tilde X_{n-1}]\!]$. Define now $\tilde J= ((\tilde X_1)^l,\tilde X_2,\ldots,\tilde X_{n-1})$. Then $(A/\tilde J)/(p)=R/J$,  and $A/\tilde J$ is torsion free as an abelian group. 
The endomorphism on $A$ extending the Frobenius on $W$ by sending  $t_i\mapsto t_i^p$ induces an endomorphism on $A/\tilde J$ sending the images of the $t_i$ to $0$ since we assume that $l\leq p$ and hence $(t_1,\ldots,t_{n-1})^p=(\tilde X_1,\ldots,\tilde X_{n-1})^p\subseteq (\tilde X_1^p,\tilde X_2,\ldots,\tilde X_{n-1}) \subseteq \tilde J$. 
Since furthermore the ideal $p\cdot A/\tilde J$ in $A/\tilde J$ is obviously equipped with a pd-structure,  it follows that indeed $A/\tilde J$ is a frame for $R/J$.

 Let $I$ be the ideal of $\mathcal{M}\cap \mathcal{Z}(g)$ in  $R$. 
By Lemma \ref{tangspace}, $R/I\cong \BF[T]/(T^l)$ for some $l\geq 1$.
Thus, the ideal $I+ \mathfrak m^p$ is the smallest admissible ideal $J$ such that $\Spec (R/J) \subseteq \mathcal{M}\cap \mathcal{Z}(g)$.

Let $J\subseteq R$ be an admissible ideal such that $I+ \mathfrak m^p\subseteq J$.

By base change from the universal window $(M^{'},M^{'}_1, \Phi^{'})$ we  obtain a window  $(M^{(J)},M^{(J)}_1, \Phi^{(J)})$ over $R/J$.
We have a $\BZ/2$-grading\footnote{As in the previous section, we now write the grading index as an upper index, to avoid a conflict of notation with Zink's theory.}  $M^{(J)}={M^{(J),0}}\oplus {M^{(J),1}}$ and  $M^{(J)}_1=M^{(J),0}_1\oplus M^{(J),1}_1$. 
We denote by $G^{(J)}$ the matrix of $g$ wrt.  the  basis of $M^{(J)}$ coming from the above basis of $M'$, and by $G_1^{(J)}$ the matrix of $g$ wrt.  the basis of $M_1^{(J)}$ coming from the above basis of $M_1'$. Denote by $\tilde{\Phi}$ the matrix of $\Phi$ wrt. this  basis of $M^{(J)}$. 
Then $G^{(J)}$ and $G_1^{(J)}$ are integral.  Since $g$ commutes with $\Phi$,  we have 
$$G^{(J)}=\tilde{\Phi}\sigma(G^{(J)})\tilde{\Phi}^{-1}=\tilde{\Phi}\sigma(G^{(\mathfrak m)})\tilde{\Phi}^{-1}.$$
Here we use that since $\mathfrak m^p\subseteq J$, we have $\sigma(G^{(J)})=\sigma(G^{(\mathfrak m)})$, where we view $G^{(\mathfrak m)}$ (i.e. the matrix of $g$ over $W$ corresponding to the $\BF$-valued point) as a matrix with entries in $A/\tilde J$.
Similarly, $$G_1^{(J)}=\begin{pmatrix}
p^{-1}1_n\\
&1_n
\end{pmatrix}\tilde{\Phi}\begin{pmatrix}
p^{-1}1_n\\
&1_n
\end{pmatrix}^{-1}
\sigma(G_1^{(J)})\begin{pmatrix}
p^{-1}1_n\\
&1_n
\end{pmatrix}\tilde{\Phi}^{-1}\begin{pmatrix}
p^{-1}1_n\\
&1_n
\end{pmatrix}^{-1}.
$$
Again we observe that 
 $\sigma(G_1^{(J)})=\sigma(G_1^{(\mathfrak m)})$.
Let 
$$ 
\tau=\begin{pmatrix}
0&t_1&\hdots&t_{n-1}\\
t_1&0&\hdots&0\\
\vdots&\vdots&&\vdots \\
t_{n-1}&0&\hdots&0 \\
\end{pmatrix} .$$  Then 
\begin{equation}\label{Phi}
\tilde{\Phi}=\begin{pmatrix}
1_n&\tau\\
0&1_n
\end{pmatrix}
(\alpha_{ij})\begin{pmatrix}
1_n\\
&p\cdot1_n
\end{pmatrix}.
\end{equation}

If $S\neq 0$ is a quotient of $R$ by an admissible ideal 
and if $(M^S,M_1^S,\Phi^S)$ denotes the corresponding window obtained by base change from the universal one, let $g^S: M^S\otimes \BQ \rightarrow M^S \otimes \BQ$ be the map which is  induced by the map  $g^{(\mathfrak m^p)}:M^{(\mathfrak m^p)}\otimes \BQ\rightarrow M^{(\mathfrak m^p)}\otimes \BQ$ which in turn lifts $g$ and commutes with $\Phi^{(\mathfrak m^p)}$. 
(Here $(M^{(\mathfrak m^p)}, M_1^{(\mathfrak m^p)}, \Phi^{(\mathfrak m^p)})$ is the window over $R/\mathfrak m^p$ obtained by base change from the universal one, where we use the obvious frame $A/(t_1,...,t_{n-1})^p$ for $R/\mathfrak m^p$.)
The map $g^{(\mathfrak m^p)}$ is given by the matrix $G^{(\mathfrak m^p)}$ of $g^{(\mathfrak m^p)}$ with respect to the above basis of $M^{(\mathfrak m^p)}\otimes \BQ$, i.e.  $G^{(\mathfrak m^p)}=\tilde{\Phi}\sigma(G^{(\mathfrak m^p)})\tilde{\Phi}^{-1}=\tilde{\Phi}\sigma(G^{(\mathfrak m)})\tilde{\Phi}^{-1}$, where again by abuse of notation we write $G^{(\mathfrak m)}$ for an arbitrary lift of $G^{(\mathfrak m)}$ over $A/(t_1,...,t_{n-1})^p$ and $\tilde{\Phi}$ is the matrix of  $\Phi^{(\mathfrak m^p)}$ wrt. the above basis of $M^{(\mathfrak m^p)}\otimes \BQ$. 
We claim that $g$ lifts over $S$ if and only if $g^S$ maps $M^{S,0}$ into $M^{S,0}$ and $M_1^{S,0}$ into $M_1^{S,0}$. It is obvious that these conditions are necessary. Suppose they are fulfilled.

Since $g$ is unitary, $\langle x,y\rangle=\langle gx,gy\rangle$ for all $x,y\in M$. In other words, for the adjoint $g^{\dagger}$ of $g$ we have $g^{\dagger}=g^{-1}$. It is obvious (from the above formulas for the matrices of the lifts of $g$) that  the map $p^{2}g$ lifts to a map $\tilde{g}_1$ over $R/\mathfrak m^p$.  By rigidity $\tilde{g}_1^{\dagger}=p^{4}\tilde{g}^{-1}_1$. Let  $\langle, \rangle_{p}$ be the alternating form on $M^{(\mathfrak m^p)}\otimes_{\BZ} \BQ$. Then it follows that $\langle p^{-2}\tilde{g}x,y \rangle_{p}=\langle x,(p^{-2}\tilde{g})^{-1}y \rangle_{p}$. Suppose now that  $g^S$ maps $M^{S,0}$ into $M^{S,0}$ and $M_1^{S,0}$ into $M_1^{S,0}$. Denoting  by $\langle, \rangle_S$ the alternating form on $M^S$,  this means that $\langle g^Se_i, f_j \rangle_S$ is integral for all $i,j $ and $\langle g^Se_i, f_1 \rangle_S$ is an integral multiple of $p$ for all $i$. This implies that $\langle e_i, (g^S)^{-1}f_j \rangle_S$ is integral for all $i,j $ and $\langle e_i, (g^S)^{-1}f_1 \rangle_S$ is an integral multiple of $p$ for all $i$. Since the determinant of $g$ (restricted to an endomorphism of $M^1\otimes_{\BZ}\BQ$) is a unit in $W$, also the determinant of  $g^S$ (restricted to an endomorphism of ${M^{S,1}}\otimes_{\BZ}\BQ$) is a unit in $S$. Hence it follows that also  $\langle e_i, (g^S)f_j \rangle_S$ integral for all $i,j $ and $\langle e_i, (g^S)f_1 \rangle_S$ is an integral multiple of $p$ for all $i$. This shows that $g^S$ maps $M^{S,1}$ into $M^{S,1}$ and $M_1^{S,1}$ into $M_1^{S,1}$, confirming the claim.

\noindent Let $J\subseteq R$ again be an admissible ideal such that $I+ \mathfrak m^p\subseteq J$. We compute: 
\begin{equation*}
\begin{aligned}
 G_1^{(J)}&=\begin{pmatrix}
p^{-1}1_n\\
&1_n
\end{pmatrix}\tilde{\Phi}\begin{pmatrix}
p^{-1}1_n\\
&1_n
\end{pmatrix}^{-1}
\sigma\big(G_1^{(\mathfrak m)}\big)\begin{pmatrix}
p^{-1}1_n\\
&1_n
\end{pmatrix}\tilde{\Phi}^{-1}\begin{pmatrix}
p^{-1}1_n\\
&1_n
\end{pmatrix}^{-1}\\
&=\begin{pmatrix}
p^{-1}1_n\\
&1_n
\end{pmatrix}\tilde{\Phi}
\sigma\big(G^{(\mathfrak m)}\big)\tilde{\Phi}^{-1}\begin{pmatrix}
p^{-1}1_n\\
&1_n
\end{pmatrix}^{-1}.
\end{aligned}
\end{equation*}
Using that $G^{(\mathfrak m)}=(\alpha_{ij})\begin{pmatrix}
1_n& 0\\
0&p1_n
\end{pmatrix}\sigma\big(G^{(\mathfrak m)}\big)\begin{pmatrix}
1_n& 0\\
0&p1_n
\end{pmatrix}^{-1}(\alpha_{ij})^{-1} $ and the equation \eqref{Phi}, we obtain 
$$
G_1^{(J)}=
\begin{pmatrix}
p^{-1}1_n\\
&1_n
\end{pmatrix}
\begin{pmatrix}
1_n&\tau\\
0&1_n
\end{pmatrix}
G^{(\mathfrak m)}\begin{pmatrix}
1_n&-\tau\\
0&1_n
\end{pmatrix}\begin{pmatrix}
p^{-1}1_n\\
&1_n
\end{pmatrix}^{-1}.
$$
This matrix respects the $\BZ/2$-grading and it is integral since 
$I+\mathfrak m^p\subseteq J$.
We consider the block matrix of $g=g^{R/\mathfrak m}$ 
according to the $\BZ/2$-grading of $M$, and denote by $H$ the `left upper block' describing the endomorphism of $M^0$ induced by $g$ wrt  the basis $e_0,\ldots,e_{n-1}$. 
Let $H^{(J)}$ the upper left block of the matrix obtained from $G^{(J)}$ by base change to the basis $e_0,\ldots,e_{n-1}, f_0,\ldots,f_{n-1}$. Similarly, let $H_1^{(J)}$ be the upper left block of the matrix obtained from $G_1^{(J)}$ by base change to the basis $pe_0,e_1\ldots,e_{n-1}, f_0,pf_1,\ldots,pf_{n-1}$.
Then $H^{(J)}_1$ is given by

$$ H^{(J)}_1=
\begin{pmatrix}
p^{-1}\\
&1_{n-1}
\end{pmatrix}
\begin{pmatrix}
1&t_1&\ldots&t_{n-1}\\
&1\\
&&\ddots \\
&&&1
\end{pmatrix}
H
\begin{pmatrix}
1&-t_1&\ldots&-t_{n-1}\\
&1\\
&&\ddots \\
&&&1
\end{pmatrix}
\begin{pmatrix}
p^{-1}\\
&1_{n-1}
\end{pmatrix}^{-1}.
$$
Note that we only want to consider deformations which factor through $\delta(\mathcal{M})$. In terms of the parameters $t_1,\ldots,t_{n-1}$, this condition just says  that $t_{c+1}=0$. 

The matrix $H$ is up to multiples of $p$ given by the description of the action of $g$ on the $e_i$ in the beginning of the proof.

We want to find the conditions that the matrix $H^{(J)}_1$ is integral.  For this it is enough to check when the entries in the first  line   are integral (the coefficients of $pe_0$ in the images of the basis vectors). Using the equations \eqref{basis1} (and calculating modulo integral elements, i.e., modulo elements of $R$), the first $c+l$ entries are  
\begin{equation*}
\begin{aligned}
\lambda+t_1, (-t_1^2+t_2)/p, (-t_1t_2+t_3)/p,\ldots,(-t_1t_{c-1}+t_c)/p,(-t_1t_c)/p, \\
\big(-(\lambda+t_1)t_{c+1}+t_{c+2}\big)/p,\ldots, \big(-(\lambda+t_1)t_{c+l-1}+t_{c+l}\big)/p, \big(-(\lambda+t_1)t_{c+l}\big)/p .
\end{aligned}
\end{equation*}
Using $t_{c+1}=0$, this shows  that in $R/J$
$$t_1t_i=t_{i+1},\,  \forall i\leq c-1,t_1t_c=0, \text{ and } t_{c+2}=\ldots=t_{c+l}=0 .$$
If $e_m,\ldots,e_{m'}$ span another Jordan block (mod $L$) to an eigenvalue $\mu$, then by equations (\ref{basis1}) and (\ref{basis2}), the entries with index between $m$ and $m'$  are (modulo integral elements)
\begin{equation*}
\begin{aligned}
\big(-\lambda t_{m}-t_1 t_{m}+\mu t_{m}+ t_{m+1}\big)/p,\ldots, \big(-\lambda t_{m'-1}-t_1 t_{m'-1}+\mu t_{m'-1}+ t_{m'}\big)/p, t_{m'}(-\lambda+\mu-t_1)/p .\end{aligned}
\end{equation*}
 Since $\lambda\neq \mu$ (modulo $p$), the expression $(-\lambda+\mu-t_1)$ is a unit, hence we obtain $t_{m'}=0$ in $R/J$. Inductively we obtain that $t_m=\ldots=t_{m'-1}=t_{m'}=0$ in $R/J$. Therefore 
$$
\big(t_1t_i-t_{i+1} \text{ for } i\leq c-1, t_1t_c, t_i \ \text{for } i\geq c+1\big)\subseteq J.$$
Note that there do not occur further conditions from the integrality of $$ H^{(J)}=
\begin{pmatrix}
1&t_1&\ldots&t_{n-1}\\
&1\\
&&\ddots \\
&&&1
\end{pmatrix}
H
\begin{pmatrix}
1&-t_1&\ldots&-t_{n-1}\\
&1\\
&&\ddots \\
&&&1
\end{pmatrix}.
$$

We claim that $J_0:=\big(t_1t_i-t_{i+1} \text{ for } i\leq c-1, t_1t_c, t_i \ \text{for } i\geq c+1\big)+\mathfrak m^p=I+m^p$. 
Obviously $J_0$ is admissible. If we choose $S=R/J_0$ then the same calculation as above shows that $g$ lifts over $S$.  Since $t_{c+1}\in J_0$ it follows that $\Spec (S)\subseteq \mathcal{M}\cap \mathcal{Z}(g)$.
It follows that indeed $I+m^p=J_0$.

Since $2c+1\leq n$ and since we assume $n \leq 2p-2$, we have $c+1< p$. Hence the claim of Theorem \ref{idealk} follows from the next lemma. 
 \qed
 \begin{lemma}
 Let $r>s>0$ be integers. Consider the ideal $J$ in $\BF[\![X_1,\ldots,X_n]\!]$, where 
 $$
 J=(X_1^s, X_2,\ldots,X_n) .
 $$
 Let $I$ be an ideal in $\BF[\![X_1,\ldots,X_n]\!]$ such that 
 $$I+\mathfrak m^r=J,
 $$ where $\mathfrak m$ denotes the maximal ideal. Then $I=J$. 
 
 \end{lemma}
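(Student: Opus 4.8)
The plan is to deduce the statement from Nakayama's lemma. Write $R=\BF[\![X_1,\ldots,X_n]\!]$ with maximal ideal $\mathfrak m=(X_1,\ldots,X_n)$. Since $I\subseteq I+\mathfrak m^r=J$, only the inclusion $J\subseteq I$ needs proof, and for that it suffices to establish $J=I+\mathfrak m J$: then $J/I$ is a finitely generated $R$-module with $J/I=\mathfrak m\cdot(J/I)$, so, $R$ being Noetherian local, Nakayama gives $J/I=0$, i.e.\ $J=I$.

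The one computation needed is the inclusion $\mathfrak m^r\subseteq\mathfrak m J$. First, $\mathfrak m^s\subseteq J$: the ideal $\mathfrak m^s$ is generated by the monomials of degree $s$ in $X_1,\ldots,X_n$, and each such monomial is either $X_1^s$, which lies in $J$, or is divisible by some $X_i$ with $2\le i\le n$, hence lies in $(X_2,\ldots,X_n)\subseteq J$. Since $r>s>0$ we have $r-s\ge 1$, so $\mathfrak m^r=\mathfrak m^{r-s}\cdot\mathfrak m^s\subseteq\mathfrak m\cdot J$. Consequently $J=I+\mathfrak m^r\subseteq I+\mathfrak m J\subseteq J$, and therefore $J=I+\mathfrak m J$, at which point the Nakayama argument above finishes the proof.

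I do not expect any real obstacle here; the only thing to be careful about is the numerical bookkeeping $r>s\ge 1\Rightarrow r-s\ge 1$, which is precisely what lets the submodule $\mathfrak m^s\subseteq J$ absorb an extra factor of $\mathfrak m$ while staying inside $\mathfrak m J$. In fact the argument proves slightly more: for any ideal $K\subseteq R$ containing $\mathfrak m^s$ with $s<r$, the condition $I+\mathfrak m^r=K$ already forces $I=K$; the explicit shape $J=(X_1^s,X_2,\ldots,X_n)$ enters only through the verification that $\mathfrak m^s\subseteq J$.
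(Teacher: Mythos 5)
Your proof is correct, and it takes a genuinely different and shorter route than the paper's. The paper argues in three steps: first it reduces modulo $X_1$ and applies Nakayama in $\BF[\![X_2,\ldots,X_n]\!]$ to see that the images of $I$ and $J$ both equal the maximal ideal there; then it reduces the whole problem to showing $X_1^s\in I$; and finally it proves $X_1^s\in I+\mathfrak m^{kr}$ for all $k$ by an explicit induction on monomials, concluding via the Krull intersection property $\bigcap_k(I+\mathfrak m^{kr})=I$. You instead apply Nakayama once, directly to the finitely generated module $J/I$ (note $I\subseteq I+\mathfrak m^r=J$, so this quotient makes sense): the whole content is the inclusion $\mathfrak m^r\subseteq\mathfrak m J$, which you get from $\mathfrak m^s\subseteq J$ (every degree-$s$ monomial is either $X_1^s$ or divisible by some $X_i$, $i\geq 2$) together with $r-s\geq 1$, whence $J=I+\mathfrak m^r\subseteq I+\mathfrak m J\subseteq J$ and $J/I=\mathfrak m\,(J/I)=0$. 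Your version is cleaner, avoids the completeness/Krull-intersection input entirely, and, as you note, isolates exactly what is used about $J$ — namely $\mathfrak m^s\subseteq J$ for some $s<r$ — so it yields a slightly more general statement; the paper's argument, by contrast, is a hands-on monomial computation tailored to the specific generators of $J$. Both are valid.
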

 \begin{proof}
We proceed in several steps.

\smallskip

\noindent \emph{Step 1.} Consider the projection $\BF[\![X_1,\ldots,X_{n}]\!]\to\BF[\![X_2,\ldots,X_{n}]\!]$, obtained by dividing out by $(X_1)$. Let $\bar I,$ resp. $ \bar J,$ be  the image of $I$, resp.  $J$, and let $\bar{ \mathfrak m}$ be  the maximal ideal of 
$\BF[\![X_2,\ldots,X_{n}]\!]$. Then $\bar I=\bar J=\bar{\mathfrak m}$ by Nakayama's Lemma. 

\smallskip

\noindent \emph{Step 2.} Let $b\in J$. Then $b$ is congruent modulo $I$ to an element of $\mathfrak m^r$. Writing this latter element as a sum of monomials in $X_1,\ldots, X_n$, and using step 1, we see that $b$ is congruent modulo $I$ to an element in the ideal $(X_1^r)$. Hence it suffices to prove that $X_1^r\in I$. 

\smallskip

\noindent \emph{Step 3.} We claim that in fact  $X_1^s\in I$.  We will show that $X_1^s\in I+\mathfrak m^{kr}$ for all $k$, which will prove the claim. 

We proceed by induction on $k$, the case $k=1$  holding true by hypothesis. Assume that $X_1^s\in I+\mathfrak m^{kr}$. Hence we are assuming that $X_1^s$ is congruent modulo $I$ to an element of $\mathfrak m^{kr}$. Writing this element of $\mathfrak m^{kr}$ as a sum of monomials in $X_1,\ldots,X_n$, we subdivide this sum into 
\begin{itemize}
\item a sum of monomials, where the exponent of $X_1$ is $\geq s$,
\item a sum of monomials, where the exponent of $X_1$ is $<s$.
\end{itemize}
In the first sum, we extract the factor $X_1^s$; since $kr>s$, the remainder lies in $\mathfrak m$. Bringing this first sum to the left hand side, we see that this expression differs from $X_1^s$ by a unit. Hence we may disregard the first sum.

In the second sum, the total degree in $X_2,\ldots,X_n$ of each monomial is strictly larger than $kr-s$, i.e., is at least $ (k-1)r+2$. Now  $X_2,\ldots,X_n$ are congruent modulo $I$ to elements in $\mathfrak m^r$ and  we may replace each  $X_2,\ldots,X_n$ by an element in $\mathfrak m^r$. Then  each summand lies in $\mathfrak m^{((k-1)r+2)r}\subset \mathfrak m^{(k+1)r}$, which concludes the induction step.

 \end{proof}
\vspace{2cm}


%
%
%
%


\begin{thebibliography}{AB3}

\bibitem{AG} A. Aizenbud, D. Gourevitch, S. Rallis, G. Schiffmann, \textit{Multiplicity one theorems}, Ann. of Math. (2) {\bf 172} (2010), no. 2, 1407--1434.

\bibitem{BW} O. B\"ultel, T. Wedhorn, \textit{Congruence relations for Shimura varieties associated to some unitary groups}, J. Inst. Math. Jussieu {\bf 5} (2006), 229--261.

\bibitem{C} R. Carter,  Finite groups of Lie type. Conjugacy classes and complex characters. Pure and Applied Mathematics (New York). A Wiley-Interscience Publication. John Wiley and Sons, Inc., New York, 1985.

\bibitem{DL} P. Deligne, G. Lusztig,  \textit{Representations of reductive groups over finite fields},  Ann. of Math. (2) {\bf 103} (1976), no. 1, 103-161.




\bibitem{G} E. Goren,  Lectures on Hilbert Modular Varieties and Modular Forms, vol 14 of of CRM Monograph series.  AMS, 2002

\bibitem{H} X. He, S. Nie, \textit{Minimal length elements of finite Coxeter groups},  arXiv:1108.0282v1






\bibitem {JR} H. Jacquet, S. Rallis, \textit{On the Gross-Prasad conjecture for the unitary group in three variables}, available at http://www.math.columbia.edu/~hj.


\bibitem{KR} S. Kudla, M. Rapoport, \textit{Special cycles on unitary Shimura varieties, I. Unramified local theory}, Invent. math. {\bf 184} (2011), 629--682. 


\bibitem{KR2} S. Kudla, M. Rapoport, \textit{Special cycles on unitary Shimura varieties, II. Global theory},  arXiv:0912.3758v1


\bibitem{Lu1} G. Lusztig, \textit{Coxeter orbits and eigenspaces of Frobenius}, Invent. math. {\bf 38} (1976), 101--159.

\bibitem{Lu2} G. Lusztig, \textit{From conjugacy classes in the Weyl group to unipotent classes}, arXiv:1003.0412v5.

\bibitem{Lu3} G. Lusztig, \textit{On the Green polynomials of classical groups}, Proc. London Math. Soc. {\bf 33} (1976), 443--475. 

\bibitem{RS}{S. Rallis, G. Schiffmann, \textit{ Multiplicity one
conjectures},  arXiv:0705.2168}


\bibitem{RZ}
M. Rapoport, T. Zink,  Period spaces for $p$-divisible groups. Annals  
of Mathematics Studies, {\bf 141}, Princeton University Press,  
Princeton, 1996.


\bibitem{T} U. Terstiege, \textit{Intersections of arithmetic Hirzebruch-Zagier cycles}, Math. Ann. {\bf 349} (2011), 161--213. 


\bibitem{V}
I. Vollaard, \textit{The supersingular locus of the Shimura variety
for $GU(1,s)$},  Canad. J. Math. {\bf 62} (2010), no. 3, 668--720. 



\bibitem{VW} I. Vollaard, T. Wedhorn,\textit{ The supersingular locus of the
Shimura variety for $GU(1,n-1)$,~II.}, Invent. math. {\bf 184} (2011), 591--627. 

 \bibitem{GY} Z. Yun, \textit{ The fundamental lemma of Jacquet and Rallis.} With an appendix by Julia Gordon. Duke Math. J. {\bf 156} (2011), no. 2, 167--227. 

\bibitem{Z} W. Zhang, \textit{On arithmetic fundamental lemmas},  Invent. math. {\bf 188} (2012), 197--252. 




  \bibitem{Zi} T.  Zink,
\textit{ The display of a formal $p$-divisible group}, in:   
Cohomologies $p$-adiques et applications arithm\'etiques, I. 
Ast\'erisque {\bf 278} (2002), 127--248.

\bibitem{Zi2} T. Zink,
\textit{ Windows for displays of $p$-divisible groups}, in:  Moduli  
of abelian varieties (Texel Island, 1999), 491--518, Progr. Math.,  
{\bf 195}, Birkh\"auser, Basel, 2001.


\end{thebibliography}
\end{document}